\documentclass[12pt,reqno
  % ,fleqn
  ]{amsart}
\usepackage{amssymb,amsmath,amsthm,secdot,bbm,bm}%stmaryrd
\usepackage[english]{babel}
\usepackage[LGR,T1]{fontenc} % spell out all text encodings used
\usepackage{lmodern}
\usepackage[
bookmarks=true,         %generates bookmarks for all entries in the "Table of Contents"
bookmarksnumbered=true, %includes chapter-/header-/section-/subsection-/... -
                       %numbers for the entries in the "Table of Contents".
colorlinks=true, pdfstartview=FitV, linkcolor=blue, citecolor=blue,
urlcolor=blue]{hyperref}
\usepackage[abbrev,msc-links,alphabetic]{amsrefs}
\usepackage[innermargin=1.4in,outermargin=1.4in,bottom=1.5in,marginparwidth=1.1in,marginparsep=3mm]{geometry}
\usepackage[shortlabels] {enumitem}
\usepackage{xcolor}
\usepackage[babel=true,kerning=true]{microtype}
\theoremstyle{plain}
\newtheorem{theorem}{Theorem}[section]

\newtheorem{conjecture}[theorem]{Conjecture}
\newtheorem{corollary}[theorem]{Corollary}

\newtheorem{lemma}[theorem]{Lemma}

\newtheorem{proposition}[theorem]{Proposition}

\theoremstyle{remark}

\newtheorem{condition}[theorem]{Condition}

\theoremstyle{definition}
\newtheorem{remark}[theorem]{Remark}
\newtheorem*{remark*}{Remark}
\newtheorem{definition}[theorem]{Definition}
\newtheorem{example}[theorem]{Example}

\numberwithin{equation}{section}

  \newcommand{\R}{{\mathbb{R}}}
  \newcommand{\Q}{{\mathbb{Q}}}
  \newcommand{\Rd}{{\mathbb{R}^d}}
  \newcommand\E{\mathbb{E}}

  \newcommand{\caa}{{\mathcal A}}
  \newcommand{\cbb}{{\mathcal B}}

  \newcommand{\cee}{{\mathcal E}}
  \newcommand{\cff}{{\mathcal F}}
  \newcommand{\cgg}{{\mathcal G}}
  \newcommand{\chh}{{\mathcal H}}
  \newcommand{\cii}{{\mathcal I}}
  \newcommand{\cjj}{{\mathcal J}}

  \newcommand{\C}{{\mathcal  C}}

  \newcommand{\css}{{\mathcal S}}
  \newcommand{\coo}{{\mathcal O}}
  \newcommand{\cmm}{{\mathcal M}}
  
  \newcommand{\cvv}{{\mathcal V}}

  \newcommand{\lt}{\left}
  \newcommand{\rt}{\right}

  \newcommand{\bj}{{\bm{j}}}

  \newcommand{\fgg}{\mathfrak{G}}

  \renewcommand{\P}{{\mathbb P}}
  \renewcommand{\(}{\lt(}
  \renewcommand{\)}{\rt)}

  \newcommand{\wei}[1]{{\langle#1 \rangle}}
  \newcommand{\bra}[1]{{[#1]}}
  \DeclareMathOperator*{\esssup}{ess\,sup}

  % \colorlet{darkred}{red!90!black}
  % \newcommand{\comment}[2][darkred]{\marginpar{\begin{flushleft}\scriptsize{\textcolor{#1}{#2}}\end{flushleft}}}
  % \newcommand{\comment}[2][darkred]{\marginpar{\scriptsize{\textcolor{#1}{#2}}}}
  % \def\KL#1{\comment[darkred]{KL: #1}}

%%%%%%%%%%%%%%%%%%%%%%%%%%%%%%%%%%%%%%%%%%%%%%%%%%%%%%%%%%%%%%%%%%%
%%                                                               %%
%% No macro definitions below this line please!                  %%
%%                                                               %%
%%%%%%%%%%%%%%%%%%%%%%%%%%%%%%%%%%%%%%%%%%%%%%%%%%%%%%%%%%%%%%%%%%%

\title{A stochastic sewing lemma and applications}
\thanks{Supported by the Leverhulme Trust through Martin Hairer's leadership award.}
%\date{Started on April 28, 2012\\
%This version \today}
\author[K. L\^e]{Khoa L\^e}
\address{Department of Mathematics\\
 South Kensington Campus\\
 Imperial College London\\ 
 London, SW7 2AZ, United Kingdom}
\email{le@math.tu-berlin.de}
\subjclass[2000]{Primary 60H10; Secondary 60H05, 60L20}
\keywords{sewing lemma; Doob-Meyer decomposition; rough paths; regularization by noise; stochastic differential equations; fractional Brownian motion; additive functional; chaos expansion}

\begin{document}
\begin{abstract} We introduce a stochastic version of Gubinelli's sewing lemma (\cite{MR2091358}),
providing a sufficient condition for the convergence in moments of some random Riemann sums. Compared with the deterministic sewing lemma, adaptiveness is required and the regularity restriction is improved by a half.
The limiting process exhibits a Doob-Meyer-type decomposition. Relations with It\^o calculus are established.
To illustrate further potential applications, we use the stochastic sewing lemma in studying stochastic differential equations driven by Brownian motions or fractional Brownian motions with irregular drifts.
\end{abstract}
\maketitle
\tableofcontents
\section{Introduction} % (fold)
\label{sec:introduction}
  The sewing lemma was introduced by Gubinelli in \cite[Proposition 1]{MR2091358}. It generalizes earlier works of Young \cite{young} and Lyons \cite{MR1654527}, provides a sufficient condition ensuring the convergence of some abstract Riemann sums. This point of view was later highlighted in the work of Feyel and de La Pradelle \cite[Lemma 2.1]{MR2261056}, in which the lemma was called sewing lemma. Known for its use in deriving estimates for rough integrals (see for instance \cite[Chapter 4]{MR3289027}), the sewing lemma is one of the essential tools in Lyons' rough path theory (\cite{MR1654527}).

  The current article introduces a stochastic version of the sewing lemma, Theorem \ref{lem:Sew1} below. 
  It relaxes the regularity assumption of the original sewing lemma by a half but instead requires a certain adaptiveness of the considered increment processes.
  In a context of multidimensional parameter spaces, the sewing lemma is called reconstruction theorem and is introduced by Hairer \cite[Theorem 3.23]{MR3274562}.  
  Needless to say, the reconstruction theorem also plays a fundamental role in the theory of regularity structures. 
  However, it is not clear at the moment of writing if a stochastic reconstruction theorem is available.

  We will describe the stochastic sewing lemma in detail in Section \ref{sec:stochastic_sewing_lemma}. While its proof is reminiscent of \cite{MR2261056}, the new observation that we bring in is the use of the Doob's decomposition (\cite{MR0058896}).
  This approach naturally leads to a unique decomposition of the constructed process into a martingale and a remainder (Theorem \ref{cor:1}). 
  Such result is reminiscent of the classical Doob-Meyer decomposition, except that the remainder is not necessary a process of finite variation.
  Relations between the stochastic sewing lemma and It\^o calculus are discussed in Examples \ref{ex1}, \ref{ex2} and \ref{ex3}. 
  In these examples, we show that It\^o integrations, quadratic variations of certain martingales and It\^o formulas can be formulated and obtained by means of the stochastic sewing lemma.
  These examples suggest that the essential elements of the stochastic sewing lemma have deep connections with the foundations of stochastic analysis. Therefore, formulating these elements as a single instrumental lemma provides new insights and leads to new applications.
  To illustrate this point, we have included a few non-trivial applications, which are briefly described below.

  (i) Suppose that $\{f_t\}_{t\ge0}$ is a family of distributions with a certain negative regularity index and $X$ is a Markov process whose transition semigroup maps each $f_t$ to a bounded continuous function. 
  In Section \ref{sec:additive_functionals}, we provide a robust definition for the additive functional $\int_0^\cdot f_s(X_s)ds$ which extends the classical integration in the case $f$ is continuous.   

  (ii) We consider the stochastic differential equation
  \begin{equation}\label{intro:SDE}
    X_t^x=x+\int_0^t b(s,X_s^x)ds+W_t\,, \quad\forall t\in[0,T]\,,
  \end{equation}
  where $x\in\Rd$, $b\in[ L^\infty([0,T];C^\alpha_b(\Rd))]^d$ for some $\alpha\in(0,1)$ and $W$ is a standard Brownian motion. In \cite{MR2593276}, the authors show that the map $x\mapsto X_t^x$ is differentiable and its derivatives are H\"older continuous in the spatial variables. However, because $b$ is not differentiable, it is difficult to write down an equation for the process $Y:=\nabla X^x$ rigorously. We explain in Section \ref{sec:stochastic_flows} that $Y$ satisfies a system of Young-type differential equations. As a consequence, we show that $t\mapsto\nabla X^x_t$ is H\"older continuous for every fixed $x$.

  (iii) In Section \ref{sec:chaos_expansion}, we study weak solutions of the stochastic differential equation \eqref{intro:SDE} with drift $b\in [L^q([0,T];\C^\nu(\Rd))]^d$, for some suitable $q\in[1,\infty]$ and $\nu\in(-1,0)$. Here, $\C^\nu(\Rd)$ is the Besov-H\"older space. 
  Depending on each situation, existence and uniqueness in law of weak solutions to \eqref{intro:SDE} can be derived. 
  We will not ponder on this problem in the article, but rather refer readers to various examples in the literature, \cite{MR1988703,MR1964949,MR3652414,MR3500267,MR3785598,MR3581216}. 
  Starting from a pathwise solution $(W,X)$ defined on a complete probability space, we derive truncated Wiener chaos expansions for $\phi(X_t)$, where $\phi$ is a regular deterministic test function. Consequently, we obtain a criterion to determine if $(W,X)$ is indeed a strong solution. 
  Verifying this criterion, however, is beyond the scope of the article. 
  This result extends previous works of Krylov and Veretennikov in \cite{veretennikov1976explicit,veretennikov1981strong} who considered the cases when the drifts are measurable functions.

  (iv) The stochastic sewing lemma is also capable in situations where Markov properties are not apparent at the first sight. In Section \ref{sec:sdes_driven_by_fractional_brownian_motions}, we consider the stochastic differential equation
  \begin{equation}\label{intro:fbm}
    X_t=x+\int_0^tb(r,X_r)dr+B^H_t\,, \quad t\in[0,T]\,,
  \end{equation}
  where $x\in\Rd$, $b$ is a Borel function in $[L^q([0,T];L^p(\Rd))]^d$, $p,q\in[1,\infty]$. Here $B^H$ is a standard fractional Brownian motion with Hurst parameter $H\in(0,\frac12)$. We obtain weak existence and uniqueness in law for \eqref{intro:fbm} under the condition
  \begin{equation*}
    H\frac dp+\frac1q<\frac12\,.
  \end{equation*}
  In addition, we show that pathwise uniqueness and strong existence hold if
  \begin{equation*}
    H\frac dp+\frac1q<\frac12-H\,.
  \end{equation*}
  The former result relies on Girsanov transformation, while the later is obtained by means of the stochastic sewing lemma. The results of Section \ref{sec:sdes_driven_by_fractional_brownian_motions} extend earlier works of Nualart and Ouknine in \cite{MR1934157,MR2073441} and Ba\~nos, Nilssen and Proske in \cite{banos2015strong}.

  (v) In Section \ref{sec:averaging_along_fractional_brown}, we investigate the averaging effect of fractional Brownian motions. Namely, for a given distribution $f$ in $L^q([0,T];\C^\nu(\R)) $, $\nu\in\R$, the random field
  \begin{equation*}
    (t,x)\to\int_0^t f_r(B^H_r+x)dr
  \end{equation*}
  can be defined and has a joint-H\"older continuous (in the sense of \cite{hule2012}) version. This type of regularity plays a central role in Catellier and Gubinelli's study on path-by-path uniqueness for stochastic differential equations driven by fractional Brownian motions with distributional drifts (\cite{MR3505229}). To obtain joint-H\"older continuity properties for such random fields, the method of \cite{MR3505229} involves Fourier transforms, moment estimates and chaining arguments. Here, we obtain these properties by means of the stochastic sewing lemma and the multiparameter Garsia-Rodemich-Rumsey inequality of Hu and L\^e in \cite{hule2012}.
  In comparison with \cite{MR3505229}, our approach provides explicit regularity exponents in space and time simultaneously.

  After the appearance of the first manuscript of this article, Hairer and Li in \cite{hairer2019averaging} have used the stochastic sewing lemma introduced herein to study averaging dynamics of slow and fast systems where the slow system is driven by fractional Brownian motion with Hurst parameter $H>\frac12$. The stochastic sewing lemma is also used by Butkovsky, Dareiotis and Gerencs\'er in \cite{butkovsky2019approximation} to obtain convergence rate of the Euler-Maruyama scheme for stochastic differential equations driven by fractional Brownian motions with irregular drifts.

  We conclude the introduction with some notation which are used throughout the article.
  For every $\nu\le 0$, $\C^\nu(\Rd)$ denotes the Besov-H\"older space $\cbb^{\nu}_{\infty,\infty}(\Rd)$. 
  For each integer $k\ge0$,  $C^k_b(\Rd)$ denotes the classical space of bounded functions with bounded continuous derivatives up to order $k$. The space of all bounded uniformly continuous functions on $\Rd$ is denoted by $BUC(\Rd)$.
  Let $\css(\Rd)$ be the space of Schwartz functions on $\Rd$. The dual of $\css$ is the space of all tempered distributions $\css'(\Rd)$. 
  The notation $\lesssim $ means $\le C$ for some multiplicative non-negative constant $C$, whose value can change from one line to another.

% section introduction (end)
\section{Stochastic sewing lemma} % (fold)
\label{sec:stochastic_sewing_lemma}
  Hereafter, $d\ge1$ is a dimension, $(\Omega,\cff,\P)$ is a complete probability space, $m\ge2$ is a fixed number, $L_m$ denotes $[L^m(\Omega,\cff,\P)]^d$. 
  Let us begin with the following observation which will be employed several times. 
  Often the case, one would like to estimate moments of a sum of the form
  \begin{equation*}
    S=\sum_{i=1}^n Z_i\,,
  \end{equation*}
  where $Z_i$'s are some random variables in $L_m$.
  Without any additional structure, one at least uses triangle inequality to obtain
  \begin{equation*}
    \|S\|_{L_m}\le\sum_{i=1}^n\|Z_i\|_{L_m}\,.
  \end{equation*} 
  Indeed, this kind of estimate is used to obtain the deterministic sewing lemma (\cite{MR2261056}). 
  Suppose in addition that there is an increasing sequence of $\sigma$-algebras $\cgg_i\subset\cff$ such that $Z_1,\cdots,Z_{i-1}\in\cgg_i$ for every $i$. 
  Then, using the so-called Doob's decomposition (\cite{MR0058896}), we can write
  \begin{equation}\label{S}
    S=\sum_{i=1}^n\E^{\cgg_i}Z_i+\sum_{i=1}^n(Z_i-\E^{\cgg_i}Z_i)=:S_1+S_2\,.
  \end{equation}
  Hereafter, $\E^\cgg$ denotes the expectation conditioned on a given $\sigma$-algebra $\cgg$.
  Estimating $S_1$ by triangle inequality gives
  \begin{align}\label{est:S1}
    \|S_1\|_{L_m}\le \sum_{i=1}^n\|\E^{\cgg_i}Z_i\|_{L_m}\,.
  \end{align}
  $S_2$ is a sum of martingale differences and can be estimated using Burkholder-Davis-Gundy (BDG) inequality (\cite{MR0400380}) and Minkowski inequality, 
  \begin{align}\label{est:S2}
    \|S_2\|_{L_m}
    \le \kappa_{m,d} \lt\|\sum_{i=1}^n|Z_i-\E^{\cgg_i}Z_i|^2\rt\|_{L_{m/2}}^{\frac12}
    \le \kappa_{m,d} \(\sum_{i=1}^n\|Z_i-\E^{\cgg_i}Z_i\|^2_{L_m}\)^{\frac12}\,,
  \end{align}
  where $\kappa_{m,d}$ is the constant in BDG inequality in $L_m$.  
  Hence, we have shown that
  \begin{align}\label{est:S}
    \|S\|_{L_m}\le \sum_{i=1}^n\|\E^{\cgg_i}Z_i\|_{L_m}+ \kappa_{m,d}\(\sum_{i=1}^n\|Z_i-\E^{\cgg_i}Z_i\|^2_{L_m}\)^{\frac12}\,.
  \end{align}
  In some cases, it is more convenient to estimate the second sum on the right-hand side further by mean of triangle inequality and contraction property of conditional expectation. This yields the following inequality
  \begin{align}\label{est:SS}
    \|S\|_{L_m}\le \sum_{i=1}^n\|\E^{\cgg_i}Z_i\|_{L_m}+ 2\kappa_{m,d}\(\sum_{i=1}^n\|Z_i\|^2_{L_m}\)^{\frac12}\,.
  \end{align}
  
  The decomposition \eqref{S} and inequalities \eqref{est:S1}-\eqref{est:SS} are certainly well-known. 
  They appear, for instance, in Davie's \cite[pg. 18]{MR2377011} and in Delarue and Diel's \cite{MR3500267} in an attempt to identify the distributional drift of a diffusion. In the current article, we apply the identity \eqref{S} and inequalities \eqref{est:S1}, \eqref{est:S2} in the sewing lemma  of \cite{MR2091358,MR2261056}. This approach yields a stochastic version of the sewing lemma, Theorem \ref{lem:Sew1} below. 

  Before stating the result, let us introduce some notation which are used throughout the article. Let $\{\cff_t\}_{t\ge0}$ be a filtration on $(\Omega,\cff,\P)$. We always assume that $\cff_0$ contains $\P$-null sets, which ensures that modifications of $\{\cff_t\}$-adapted processes are still $\{\cff_t\}$-adapted. 
  Let $S,T$ be fixed non-negative numbers such that $S<T$. We denote by $[S,T]^2_\le$ the simplex $\{(s,t)\in[S,T]^2:s\le t\}$. 
  Let  $(A_{s,t})_{S\le s\le t\le T}$ be a two-parameter stochastic process with values in $\Rd$. This means that $A_{s,t}$ is a random variable in $\Rd$ for each $(s,t)$ in $[S,T]^2_\le$.
  For every $S\le s\le u\le t\le T$, we define
  \[
    \delta A_{s,u,t}=A_{s,t}-A_{s,u}-A_{u,t}.
  \]
  We say that $A$ is \textit{adapted} to $\{\cff_t\}$ if $A_{s,t}$ is $\cff_t$-measurable for every $(s,t)\in[S,T]^2_\le$; $A$ is $L_m$\textit{-integrable} if $A_{s,t}$ belongs to $L_m$ for every $(s,t)\in[S,T]^2_\le$.
  Similarly, for a one-parameter stochastic process $(\varphi_t)_{S\le t\le T}$ in $\Rd$, we say that $\varphi$ is adapted to $\{\cff_t\}$ if $\varphi_t$ is $\cff_t$-measurable for every $t\in[S,T]$; $\varphi$ is $L_m$-integrable if $\varphi_t$ belongs to $L_m$ for every $t\in[S,T]$.
  We now state our main result. Its proof will be presented later in Section \ref{sub:proofs}.
  \begin{theorem}[Stochastic sewing lemma]\label{lem:Sew1}
    Let $m\ge2$ be a real number and $(A_{s,t} )_{S\le s\le t\le T}$ be a two-parameter stochastic process with values in $\R^d$ which is $L_m$-integrable and adapted to $\{\cff_t\}$. Suppose that there are non-negative constants $\Gamma_1,\Gamma_2$ and positive constants $\varepsilon_1,\varepsilon_2$ such that 
    \begin{align}
      &\label{con:dA1}\|\E^{\cff_s}\delta A_{s,u,t} \|_{L_m}\le \Gamma_1 |t-s|^{1+\varepsilon_1} 
      \quad\textrm{for every} \quad S\le s\le u\le t\le T,
    \end{align}
    and
    \begin{equation}
      \label{con:dA2} \|\delta A_{s,u,t}-\E^{\cff_s} \delta A_{s,u,t}\|_{L_m}\le \Gamma_2|t-s|^{\frac12+\varepsilon_2}
      \quad\textrm{for every} \quad S\le s\le u\le t\le T.
    \end{equation}
    Then, there exists a unique (up to modifications) stochastic process $(\caa_t)_{S\le t\le T}$ with values in $\Rd$ satisfying the following properties
    \begin{enumerate}[wide,itemsep=2pt,labelindent=1pt,label={\upshape(\ref{lem:Sew1}\alph*)}]
      % \item \label{cl:a} the map $\caa:[S,T]\to L_m$ is continuous and $\caa_S=0$,
      \item \label{cl:a} $\caa_S=0$, $\caa$ is $\{\cff_t\}$-adapted and $L_m$-integrable,
      \item\label{cl:b} there are non-negative constants $C_1,C_2$ such that
      \begin{equation}\label{est:A1}
        \|\caa_t-\caa_s-A_{s,t}\|_{L_m}\le C_1|t-s|^{1+\varepsilon_1}+C_2|t-s|^{\frac12+\varepsilon_2}
      \end{equation}
      and
      \begin{equation}\label{est:A2}
        \|\E^{\cff_{s}}(\caa_t-\caa_s-A_{s,t})\|_{L_m}\le C_1|t-s|^{1+\varepsilon_1}
      \end{equation}
      for every $S\le s\le t\le T$.
     \end{enumerate}    
    The least constants $C_1,C_2$ are at most $ {\Gamma_1}({1-2^{-\varepsilon_1}})^{-1}$ and ${\kappa_{m,d}}\Gamma_2({1-2^{-\varepsilon_2}})^{-1}$ respectively.

    Furthermore, for every fixed $t\in[S,T]$ and any partition $\pi=\{S=t_0<t_1<\cdots<t_N=t\}$ of $[S,t]$, define the Riemann sum
    \[
      A^\pi_{t}:=\sum_{i=0}^{N-1}A_{t_i,t_{i+1}}.
    \]
    Then $\{A^\pi_t\}_\pi$ converges to $\caa_t$ in $L_m$ as the mesh size $|\pi|:=\max_i|t_{i+1}-t_i|$ goes to $0$.
  \end{theorem}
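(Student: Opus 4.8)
The plan is to run the dyadic construction underlying the deterministic sewing lemma of \cite{MR2261056}, but to estimate every refinement difference through the Doob splitting \eqref{S} and the master inequality \eqref{est:S} instead of the triangle inequality. Fix $S\le s\le t\le T$, let $\pi_n$ be the partition of $[s,t]$ into $2^n$ equal subintervals with endpoints $t^n_i$ and midpoints $m^n_i$, and write $A^n$ for the associated Riemann sum. Passing from level $n$ to level $n+1$ inserts the midpoints, so
\[
  A^{n+1}-A^n=-\sum_{i=0}^{2^n-1}\delta A_{t^n_i,m^n_i,t^n_{i+1}}.
\]
Here adaptedness is essential: processing the intervals from left to right, each summand $\delta A_{t^n_i,m^n_i,t^n_{i+1}}$ is $\cff_{t^n_{i+1}}$-measurable while all earlier ones are $\cff_{t^n_i}$-measurable, so \eqref{S} applies with $\cgg_i=\cff_{t^n_i}$. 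Feeding \eqref{con:dA1} into the conditional-expectation part and \eqref{con:dA2} into the martingale part of \eqref{est:S} gives
\[
  \|A^{n+1}-A^n\|_{L_m}\le \Gamma_1|t-s|^{1+\varepsilon_1}2^{-n\varepsilon_1}+\kappa_{m,d}\Gamma_2|t-s|^{\frac12+\varepsilon_2}2^{-n\varepsilon_2},
\]
the square root in the Burkholder--Davis--Gundy bound being exactly what turns $2^n$ increments of size $|t-s|^{\frac12+\varepsilon_2}2^{-n(\frac12+\varepsilon_2)}$ into a geometrically decaying term. This is the source of the half-order gain and, in my view, the crux of the whole argument. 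Both terms are summable since $\varepsilon_1,\varepsilon_2>0$, so $(A^n)$ is Cauchy in $L_m$; I call its limit $\caa_{s,t}$.

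Summing the geometric series starting from $A^0=A_{s,t}$ yields \eqref{est:A1} with the advertised constants $C_1=\Gamma_1(1-2^{-\varepsilon_1})^{-1}$ and $C_2=\kappa_{m,d}\Gamma_2(1-2^{-\varepsilon_2})^{-1}$. For the sharper conditional bound \eqref{est:A2} I apply $\E^{\cff_s}$ to the same telescoping sum: since $s\le t^n_i$, the tower property rewrites $\E^{\cff_s}\delta A_{t^n_i,m^n_i,t^n_{i+1}}=\E^{\cff_s}\E^{\cff_{t^n_i}}\delta A_{t^n_i,m^n_i,t^n_{i+1}}$, and the contraction property of conditional expectation together with \eqref{con:dA1} bounds every term by the $\Gamma_1$-rate, so that only the first exponent survives and \eqref{est:A2} follows.

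Next I must upgrade the two-parameter object $\caa_{s,t}$ to a genuine process and prove convergence of arbitrary Riemann sums; both rest on the same refinement estimate. Iterating midpoint insertions from an arbitrary partition $\pi$ of $[s,t]$ (write $\pi^{(j)}$ for the $j$-th iterate), the computation above, now summed over the unequal intervals of $\pi$, shows $A^\pi\to L(\pi):=\lim_j A^{\pi^{(j)}}$ with $\|A^\pi-L(\pi)\|_{L_m}\lesssim|t-s|\,|\pi|^{\varepsilon_1}+|t-s|^{\frac12}|\pi|^{\varepsilon_2}$. To identify $L(\pi)$ with $\caa_{s,t}$ I compare $\pi^{(j)}$ with the dyadic grid $\cdd_j$ through their common refinement $\pi^{(j)}\vee\cdd_j$: inserting the extra points one at a time, left to right, is again a Doob-splittable sum of $\delta A$'s over intervals of size $\lesssim 2^{-j}$, so $\|A^{\pi^{(j)}\vee\cdd_j}-A^{\pi^{(j)}}\|_{L_m}$ and $\|A^{\pi^{(j)}\vee\cdd_j}-A^{\cdd_j}\|_{L_m}$ both vanish; since $A^{\cdd_j}\to\caa_{s,t}$, this forces $L(\pi)=\caa_{s,t}$. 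Taking $s=S$, the Riemann sums $A^\pi_t$ converge to $\caa_t:=\caa_{S,t}$ in $L_m$ as $|\pi|\to0$, the final assertion of the theorem; applying the same convergence to partitions that contain an intermediate point $u$ gives additivity $\caa_{s,t}=\caa_{s,u}+\caa_{u,t}$, hence $\caa_t-\caa_s=\caa_{s,t}$. Thus $\caa$ is adapted, $L_m$-integrable, vanishes at $S$, and satisfies \eqref{est:A1}--\eqref{est:A2} for all $s\le t$, which is \ref{cl:a} and \ref{cl:b}.

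Finally, for uniqueness I take two processes obeying \ref{cl:a}--\ref{cl:b} and set $\psi:=\caa-\tilde\caa$. Subtracting \eqref{est:A1} and \eqref{est:A2} shows the increment process $\psi_{s,t}:=\psi_t-\psi_s$ has $\delta\psi\equiv0$ and satisfies $\|\E^{\cff_s}\psi_{s,t}\|_{L_m}\le 2C_1|t-s|^{1+\varepsilon_1}$ and $\|\psi_{s,t}\|_{L_m}\le 2C_1|t-s|^{1+\varepsilon_1}+2C_2|t-s|^{\frac12+\varepsilon_2}$. Writing $\psi_t-\psi_s=\sum_i\psi_{t_i,t_{i+1}}$ over a uniform $N$-point partition and applying \eqref{est:S} once more with $\cgg_i=\cff_{t_i}$ bounds $\|\psi_t-\psi_s\|_{L_m}$ by a constant times $N^{-\varepsilon_1}|t-s|^{1+\varepsilon_1}+N^{-\varepsilon_2}|t-s|^{\frac12+\varepsilon_2}\to0$, so $\psi_t=\psi_s=\psi_S=0$ almost surely and the process is unique up to modifications. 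I expect the main obstacle to be the bookkeeping in the third paragraph, namely preserving a valid martingale-difference structure while inserting arbitrarily many points to form common refinements, together with the initial recognition that adaptedness is precisely the hypothesis that licenses the Burkholder--Davis--Gundy estimate responsible for the half-order improvement over the deterministic sewing lemma.
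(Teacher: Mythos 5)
Your core mechanism is exactly the paper's: the dyadic refinement difference $A^{n+1}-A^n=-\sum_i\delta A_{t^n_i,m^n_i,t^n_{i+1}}$ is split by Doob's decomposition \eqref{S}, with \eqref{con:dA1} feeding the predictable part and \eqref{con:dA2} feeding the BDG part, and the two geometric series are summed; this is Lemma \ref{lem.dyest}, and it yields \eqref{est:A1}--\eqref{est:A2} with the stated constants just as in Step~2 of the paper's proof. Your uniqueness argument (apply \eqref{est:SS} to the telescoped difference over a uniform partition and let $N\to\infty$) is Step~3 verbatim. The only divergence is the treatment of arbitrary partitions, where the paper uses Yaskov's dyadic allocation (Lemma \ref{lem.Yaskov}) to rewrite $\sum_iA_{t_i,t_{i+1}}-A_{t_0,t_N}$ as $\sum_n\sum_iR^n_i$ with each $R^n_i$ a sum of exactly two $\delta A$'s supported in the $i$-th level-$n$ dyadic interval; this gives the uniform bounds \eqref{est.AN1}--\eqref{est.AN2} for \emph{every} partition in one stroke, whereas you argue in two stages (midpoint-refine $\pi$ to get a limit $L(\pi)$ with a rate in $|\pi|$, then identify $L(\pi)$ with the dyadic limit via common refinements). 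Both get the Riemann-sum convergence, but the paper's version is uniform and partition-independent, while your identification step is only needed pointwise in $\pi$.

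That identification step is also where your sketch has a genuine gap, which you flag but do not close. Inserting the points $u_1<\cdots<u_k$ of one partition into a single interval $[a,b]$ of the other ``one at a time, left to right'' produces the increments $-\delta A_{a,u_1,b},-\delta A_{u_1,u_2,b},\dots,-\delta A_{u_{k-1},u_k,b}$: every one of these carries the same right endpoint $b$, so they are all merely $\cff_b$-measurable and do \emph{not} form a staircase-adapted sequence to which \eqref{S} applies termwise; the claim that the insertion is ``again a Doob-splittable sum'' is false at the level of individual $\delta A$'s. The repair is to group all insertions into $[a,b]$ as a single block $Z_i$, take $\cgg_i=\cff_{a_i}$ over the consecutive receiving intervals, and bound $\|\E^{\cff_a}Z_i\|$ and $\|Z_i-\E^{\cff_a}Z_i\|$ by the tower and contraction properties applied to each $\delta A_{u_{l-1},u_l,b}$ through $\cff_{u_{l-1}}$; this costs a factor $k_i$ per block, so closing the BDG step requires a counting bound on $\sum_ik_i^2$ (available here because the multiplicities are bounded in terms of the fixed coarse partition, uniformly in the refinement level $j$, so both comparison differences do vanish as $j\to\infty$). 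This per-block grouping with exactly controlled multiplicities is precisely what the paper's allocation lemma is engineered to deliver cleanly: each $R^n_i$ consists of two $\delta A$'s of the form $-\delta A_{s_1,s_2,s_3}-\delta A_{s_1,s_3,s_4}$ inside one dyadic interval, so the level-$n$ sum has $2^n$ properly adapted terms and no partition-dependent constants appear. With that repair your argument is complete; without it, the third paragraph does not yet prove that the two limits coincide.
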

  \begin{remark*}
    Up to a modification of the constants $\Gamma_1,\Gamma_2$, the two conditions \eqref{con:dA1} and \eqref{con:dA2} are equivalent to the two conditions \eqref{con:dA1} and
    \begin{equation}\label{con:dA2'}
      \|\delta A_{s,u,t}\|_{L_m}\le \Gamma_2|t-s|^{\frac12+\varepsilon_2}
      \quad\textrm{for every}\quad S\le s\le t\le T.
    \end{equation}  
    We have favored \eqref{con:dA2} over \eqref{con:dA2'} because in this form, it is easier to derive estimates for the martingale decomposition in Theorem \ref{cor:1} (below) from Theorem \ref{lem:Sew1}.
  \end{remark*}
  We also note that no continuity assumption on the map $A:[S,T]^2_\le\to L_m$ is imposed in Theorem \ref{lem:Sew1}. In most known references on the sewing lemma, continuity of $A$ (at least on the diagonal) is usually assumed. An extension of sewing lemma without any continuity assumption on $A$ is due to Yaskov's \cite{MR3860015}. We will indeed use part of his arguments in the proof of Theorem \ref{lem:Sew1} (see Lemma \ref{lem.Yaskov} below). This allows us to drop all regularity assumption on the map $A:[S,T]^2_\le\to L_m$. 
  This fact complements the original ideas of Gubinelli in \cite{MR2091358} that the (sewing) map $A\mapsto \left((s,t)\mapsto\caa_t-\caa_s-A_{s,t}\right) $ is actually a function of $\delta A$, and hence, depends solely on the properties of $\delta A$. 
  Another illustration of this observation appears in Remark \ref{rmk.pfSSL} below, in which we explain that the integrability and adaptiveness of $A$ assumed in Theorem \ref{lem:Sew1} can be replaced by those of $\delta A$.

  With an additional assumption on $A$ (namely \eqref{con:dA3} below), it is possible to decompose $\caa$ into the sum of a martingale $\cmm$ and a remainder process $\cjj$. 
  Such a decomposition is similar to the well-known Doob-Meyer's one. However, in our case, the process $\cjj$ need not be of bounded variation. To ensure that the decomposition $\caa=\cmm+\cjj$ is unique, the bounded variation property is replaced by qualitative bounds on the increments of the process $\cjj$ centered about the process $(s,t)\mapsto\E^{\cff_s}A_{s,t}$ (see \eqref{est:J} below).
  One can also give other different characterizations of such decomposition and of $\cmm$ and $\cjj$ themselves. These findings are formalized in detail in the next theorem.
  \begin{theorem}\label{cor:1}
    Suppose that the hypotheses of Theorem \ref{lem:Sew1} holds. In addition, we assume that there are constants $\Gamma_3\ge0$ and $\varepsilon_3>0$ such that
    \begin{equation}
      \label{con:dA3}\|\E^{\cff_s}A_{u,t} -\E^{\cff_u} A_{u,t} \|_{L_m}\le \Gamma_3 |t-s|^{\frac12+\varepsilon_3} \quad\textrm{for every}\quad S\le s\le u\le t\le T.
    \end{equation}
    Then, there exist stochastic processes $\cmm,\cjj$ and non-negative constants $C_1,C_2,C_3$  satisfying the following properties
    \begin{enumerate}[wide,itemsep=2pt,labelindent=1pt,label={\upshape(\ref{cor:1}\alph*)}]
      \item\label{cl:amj} $\cmm,\cjj$ are $
      \{\cff_t\}$-adapted, $L_m$-integrable and $\caa_t=\cmm_t+\cjj_t$ a.s. for every $t\in[S,T]$,
      \item\label{cl:m} $(\cmm_s)_{S\le s\le T}$ is an $\{\cff_t\}$-martingale with $\cmm_S=0$,
      \item \label{cl:est.m} for any $S\le s\le t\le T$,
      \begin{equation}
        \label{est:M} \| \cmm_t-\cmm_s-A_{s,t}+\E^{\cff_{s}}A_{s,t}\|_{L_m}\le C_2|t-s|^{\frac12+\varepsilon_2}+C_3|t-s|^{\frac12+\varepsilon_3}\,,
      \end{equation}
      \item\label{cl:est.j} for any $S\le s\le t\le T$,
        \begin{align}
          \label{est:J} \| \cjj_t-\cjj_s-\E^{\cff_{s}} A_{s,t}\|_{L_m}\le C_1|t-s|^{1+\varepsilon_1}+C_3|t-s|^{\frac12+\varepsilon_3}\,,
        \end{align}
      \item \label{cl:est.j'} for any $S\le s\le t\le T$,
      \begin{equation}
        \label{est:J'} \| \E^{\cff_{s}}(\cjj_t-\cjj_s-A_{s,t})\|_{L_m}\le C_1|t-s|^{1+\varepsilon_1}\,.
      \end{equation}
     \end{enumerate} 
    Given $A$, we have the following characterizations.
    \begin{enumerate}[resume*]% [(\ref{cor:1}a),resume]
      \item\label{ch1} The pair of processes $(\cmm,\cjj)$ is characterized uniquely by the set of properties \ref{cl:amj}, \ref{cl:m}, \ref{cl:est.m} or, alternatively by the set of properties \ref{cl:amj}, \ref{cl:m}, \ref{cl:est.j}. 
      \item\label{ch2} The process $\cmm$ is characterized uniquely by \ref{cl:m} and \ref{cl:est.m}. 
      \item\label{ch3} The process $\cjj$ is characterized uniquely by \ref{cl:est.j}, \ref{cl:est.j'} and the fact that $\cjj$ is $\{\cff_t\}$-adapted and $\cjj_S=0$.
    \end{enumerate} 
    
    The least constants $C_1,C_2,C_3$ are at most $ {\Gamma_1}({1-2^{-\varepsilon_1}})^{-1}$, ${ \kappa_{m,d}\Gamma_2}({1-2^{-\varepsilon_2}})^{-1}$ and ${\kappa_{m,d}\Gamma_3}({1-2^{-\varepsilon_3}})^{-1}$ respectively.

    Furthermore, for every fixed $t\in[S,T]$ and any partition $\pi=\{S=t_0<t_1<\cdots<t_N=t\}$ of $[S,t]$, define the Riemann sums
    \begin{equation*}
        M^\pi_{t}:=\sum_{i=0}^{N-1}\(A_{t_i,t_{i+1}}-\E^{\cff_{t_i}} A_{t_i,t_{i+1}}\)
        \quad\textrm{and}\quad
        J^\pi_{t}:=\sum_{i=0}^{N-1}\E^{\cff_{t_i}}A_{t_i,t_{i+1}}\,.
    \end{equation*}
    Then $\{M^\pi_{t}\}_\pi$ and $\{J^\pi_{t}\}_\pi $  
    converge to $\cmm_t$ and $\cjj_t$ respectively in $L_m$ as $|\pi|$ goes to 0.
  \end{theorem}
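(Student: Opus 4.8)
The plan is to construct $\cmm$ and $\cjj$ by applying the stochastic sewing lemma (Theorem~\ref{lem:Sew1}) separately to the two natural pieces of $A$. Set $\hat A_{s,t}:=\E^{\cff_s}A_{s,t}$ and $\widetilde A_{s,t}:=A_{s,t}-\E^{\cff_s}A_{s,t}$, so that $A=\hat A+\widetilde A$ and both $\hat A,\widetilde A$ are adapted and $L_m$-integrable. The first step is to record the elementary identities $\E^{\cff_s}\delta\hat A_{s,u,t}=\E^{\cff_s}\delta A_{s,u,t}$ and $\delta\hat A_{s,u,t}-\E^{\cff_s}\delta\hat A_{s,u,t}=\E^{\cff_s}A_{u,t}-\E^{\cff_u}A_{u,t}$, from which $\E^{\cff_s}\delta\widetilde A_{s,u,t}=0$ and $\E^{\cff_s}\widetilde A_{s,t}=0$ follow at once. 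These are routine: $\delta$ and $\E^{\cff_s}$ act linearly, the tower property gives $\E^{\cff_s}\E^{\cff_u}=\E^{\cff_s}$, and the only non-telescoping term comes from replacing $\E^{\cff_u}$ by $\E^{\cff_s}$ on $A_{u,t}$.

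With these identities in hand, $\hat A$ satisfies the hypotheses of Theorem~\ref{lem:Sew1}: condition \eqref{con:dA1} holds with the same $(\Gamma_1,\varepsilon_1)$, while \eqref{con:dA2} holds with $(\Gamma_3,\varepsilon_3)$ precisely by the new assumption \eqref{con:dA3}. Likewise $\widetilde A$ satisfies \eqref{con:dA1} with $\Gamma_1=0$ (its conditional second difference vanishes) and \eqref{con:dA2} with the two-term bound $\Gamma_2|t-s|^{\frac12+\varepsilon_2}+\Gamma_3|t-s|^{\frac12+\varepsilon_3}$. I would then apply Theorem~\ref{lem:Sew1} to $\hat A$ to produce a process $\cjj$, and to $\widetilde A$ to produce $\cmm$; the associated Riemann sums are exactly $J^\pi$ and $M^\pi$, so both converge in $L_m$. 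Since the sewing construction is linear in the increment process (the Riemann sums are, and the limit is unique in $L_m$), $\cmm+\cjj$ is the process sewn from $A$, namely $\caa$; this gives \ref{cl:amj} and identifies the limits. Reading \ref{cl:b} for $\hat A$ (with $\hat A_{s,t}=\E^{\cff_s}A_{s,t}$) yields \eqref{est:J}, \eqref{est:J'} with the claimed $C_1,C_3$, while \eqref{est:A1} applied to $\widetilde A$ yields \eqref{est:M}. The martingale property \ref{cl:m} then comes for free: \eqref{est:A2} for $\widetilde A$ has right-hand side $C_1|t-s|^{1+\varepsilon_1}$ with $C_1=0$, so $\E^{\cff_s}(\cmm_t-\cmm_s-\widetilde A_{s,t})=0$ a.s.; combined with $\E^{\cff_s}\widetilde A_{s,t}=0$ this reads $\E^{\cff_s}\cmm_t=\cmm_s$, and with $\cmm_S=0$ we obtain an $\{\cff_t\}$-martingale.

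For the characterizations \ref{ch1}--\ref{ch3} I would invoke two uniqueness principles. Property \ref{ch3} is exactly the uniqueness half of Theorem~\ref{lem:Sew1} applied to $\hat A$, since \eqref{est:J}, \eqref{est:J'}, adaptedness and $\cjj_S=0$ are precisely conditions \ref{cl:a}--\ref{cl:b} for the sewing of $\hat A$. For \ref{ch2} and the two equivalent descriptions in \ref{ch1}, I would take two admissible candidates and subtract them; the difference $D$ is a martingale with $D_S=0$ whose increments satisfy $\|D_t-D_s\|_{L_m}\lesssim|t-s|^{\frac12+\varepsilon}$ for some $\varepsilon>0$ (the $A$- and $\E^{\cff_s}A$-terms cancel, leaving only the half-regular remainders). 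Over a partition of $[S,t]$, the Burkholder--Davis--Gundy and Minkowski inequalities give $\|D_t\|_{L_m}\le\kappa_{m,d}\bigl(\sum_i\|D_{t_{i+1}}-D_{t_i}\|_{L_m}^2\bigr)^{1/2}\lesssim|\pi|^{\varepsilon}(t-S)^{1/2}\to0$, forcing $D\equiv0$.

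The main obstacle to watch is the bookkeeping that lets the two-term bound on $\delta\widetilde A$ pass through the sewing estimate as the two \emph{separate} constants $C_2,C_3$ in \eqref{est:M}, rather than a single coarser term. This is exactly where one must use that the remainder produced by Theorem~\ref{lem:Sew1} is a linear functional of the centered second difference, so that its bound is additive over the splitting $\Gamma_2|t-s|^{\frac12+\varepsilon_2}+\Gamma_3|t-s|^{\frac12+\varepsilon_3}$ and the vanishing of $\E^{\cff_s}\delta\widetilde A$ kills the $|t-s|^{1+\varepsilon_1}$ contribution entirely. Everything else is a direct transcription of Theorem~\ref{lem:Sew1} together with the short $L_m$ martingale-vanishing argument above.
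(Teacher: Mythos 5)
Your proposal is correct and follows essentially the same route as the paper: the same decomposition of $A_{s,t}$ into $\E^{\cff_s}A_{s,t}$ and $A_{s,t}-\E^{\cff_s}A_{s,t}$, the same identities for the corresponding $\delta$'s, the same application of the sewing machinery to each piece (with the Riemann sums $J^\pi, M^\pi$), and the same BDG/martingale-vanishing argument for the uniqueness claims \ref{ch1}--\ref{ch3}. The one point you flag --- passing the two-term bound on $\delta\widetilde A$ through the sewing estimate so as to obtain the two separate constants $C_2,C_3$ in \eqref{est:M} --- is exactly what the paper handles by restating its dyadic estimate (Lemma \ref{lem.dyest}) with the two-term hypothesis \eqref{con.1032}, which is the precise form of the additivity you invoke.
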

  The above result can be regarded as a consequence of Theorem \ref{lem:Sew1}. Indeed, we define two-parameter processes $J$, $M$ by
  \begin{equation}\label{def.JM}
    J_{s,t}=\E^{\cff_s}A_{s,t}
    \quad\textrm{and}\quad
    M_{s,t}=A_{s,t}-\E^{\cff_s}A_{s,t}\,.
  \end{equation}
  It is evident that $J$ and $M$ are $\{\cff_t\}$-adapted and $L_m$-integrable. 
  In addition, for every $s\le u\le t$, we have
  \begin{align*}
    &\delta J_{s,u,t}=\E^{\cff_s}\delta A_{s,u,t}+\left(\E^{\cff_s}A_{u,t}-\E^{\cff_u}A_{u,t}\right),
    \\&\delta M_{s,u,t}=\left(\delta A_{s,u,t}-\E^{\cff_s}\delta A_{s,u,t}\right)- \left(\E^{\cff_s}A_{u,t}-\E^{\cff_u}A_{u,t}\right),
  \end{align*}
  and hence 
  \begin{equation*}
    \E^{\cff_s}\delta J_{s,u,t}=\E^{\cff_s}\delta A_{s,u,t}\,,
    \quad
    \delta J_{s,u,t}-\E^{\cff_s}\delta J_{s,u,t}=\E^{\cff_s}A_{u,t}-\E^{\cff_u}A_{u,t},
  \end{equation*}
  and 
  \begin{equation}\label{id.EM}
    \E^{\cff_s}\delta M_{s,u,t}=0.
  \end{equation}
  The conditions \eqref{con:dA1}, \eqref{con:dA2} and \eqref{con:dA3} on $A$ imply that
  \begin{equation}\label{est.EdJ}
    \|\E^{\cff_s}\delta J_{s,u,t}\|_{L_m}\le \Gamma_1|t-s|^{1+\varepsilon_1},
  \end{equation}
  \begin{equation}\label{est.dJ}
    \|\delta J_{s,u,t}-\E^{\cff_s}\delta J_{s,u,t}\|_{L_m}\le \Gamma_3|t-s|^{\frac12+\varepsilon_3},
  \end{equation}
  \begin{equation}\label{est.dMfake}
    \|\delta M_{s,u,t}-\E^{\cff_s}\delta M_{s,u,t}\|_{L_m}\le \Gamma_{2,3}|t-s|^{\frac12+\varepsilon_2\wedge \varepsilon_3},
  \end{equation}
  for some constant $\Gamma_{2,3}$ which depends on $\Gamma_2,\Gamma_3,S,T$.
  From here, by applying Theorem \ref{lem:Sew1} to the two-parameter processes $J$ and $M$ respectively, the processes $\cjj$ and $\cmm$ can be constructed which are $\{\cff_t\}$-adapted and $L_m$-integrable. From \eqref{est:A1}, \eqref{est:A2} and \eqref{est.EdJ}, \eqref{est.dJ} we can derive the estimates \eqref{est:J}, \eqref{est:J'}. From \eqref{est:A1}, \eqref{est:A2} and \eqref{id.EM}, \eqref{est.dMfake},  we can derive the following estimates 
  \begin{equation}\label{tmp.M2}
    \|\cmm_t-\cmm_s-A_{s,t}+\E^{\cff_s}A_{s,t}\|_{L_m}\le C|t-s|^{\frac12+ \varepsilon_2\wedge \varepsilon_3}.
  \end{equation}
  and
  \begin{equation}\label{tmp.M1}
    \|\E^{\cff_s}(\cmm_t-\cmm_s-A_{s,t}+\E^{\cff_s}A_{s,t} )\|_{L_m}\le0
  \end{equation}
  for every $s\le t$. Obviously, $\E^{\cff_s}(A_{s,t}-\E^{\cff_s}A_{s,t} )=0$, so \eqref{tmp.M1} implies that $\cmm$ is an $\{\cff_t\}$-martingale. 
  % To see that $\caa=\cmm+\cjj$, we define $\bar\caa=\cmm+\cjj$.
  Unfortunately, the estimate \eqref{tmp.M2} does not imply \eqref{est:M} with the optimal constants stated in Theorem \ref{lem:Sew2}. 
  This is due to the use of \eqref{est.dMfake}. A better estimate for $\delta M$ which should be employed here is the following
  \begin{equation}\label{est.dM}
    \|\delta M_{s,u,t}-\E^{\cff_s}\delta M_{s,u,t}\|_{L_m}\le \Gamma_2|t-s|^{\frac12+\varepsilon_2}+\Gamma_3|t-s|^{\frac12+\varepsilon_3}.
  \end{equation}
  However, the right-hand sides of \eqref{est.dM} and \eqref{con:dA2} are of different forms, preventing us from applying Theorem \ref{lem:Sew1} directly. This is a minor issue and will be resolved in Section \ref{sub:proofs} once we have better understanding on the proof of Theorem \ref{lem:Sew1}.

  In most applications considered herein, it is more convenient to verified a simpler condition than \eqref{con:dA2}. 
  This is described in the next result. 
  \begin{theorem}\label{lem:Sew2}
    Let $m\ge2$ be a real number and $(A_{s,t} )_{S\le s\le t\le T}$ be a stochastic process with values in $\R^d$ which is $L_m$-integrable and adapted to $\{\cff_t\}$. Assume that there are constants $\Gamma_1,\Gamma_4\ge0$ and $\varepsilon_1,\varepsilon_4>0$ such that  \eqref{con:dA1} holds and
    \begin{equation}\label{con:A}
      \|A_{s,t}\|_{L_m}\le \Gamma_4 |t-s|^{\frac12+\varepsilon_4}\quad\textrm{for every } (s,t)\in[S,T]^2_\le.
    \end{equation}
    Then, there exists a unique (up to modifications) stochastic process $(\caa_t)_{S\le t\le T}$ with values in $\Rd$  satisfying the following properties
    \begin{enumerate}[wide,itemsep=2pt,labelindent=0pt,label={\upshape(\ref{lem:Sew2}\alph*)}]
      % \item\label{25a} the map $\caa:[S,T]\to L_m$ is continuous and $\caa_S=0$,
      \item\label{25a} $\caa_S=0$ and for every $t\in[S,T]$, $\caa_t$ is $\cff_t$-measurable and $L_m$-integrable,
      \item\label{25b} there are non-negative constants $C_1,C_4$ such that for every $(s,t)\in[S,T]^2_\le $, $\caa$ satisfies the inequality
      \begin{equation}\label{est:A1'}
        \|\caa_t-\caa_s\|_{L_m}\le C_1|t-s|^{1+\varepsilon_1}+C_4|t-s|^{\frac12+\varepsilon_4} 
        % \quad\forall(s,t)\in[S,T]^2_\le
      \end{equation}
      % for every $(s,t)\in[S,T]^2_\le$
      and inequality \eqref{est:A2}.
    \end{enumerate}
    The least constants $C_1,C_4$ are at most $\Gamma_1(1-2^{-\varepsilon_1})^{-1}$ and $\Gamma_4(6 \kappa_{m,d}(1-2^{-\varepsilon_4})^{-1}+1)$ respectively.
  \end{theorem}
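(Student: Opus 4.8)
The plan is to obtain Theorem \ref{lem:Sew2} as a direct corollary of Theorem \ref{lem:Sew1}: I will check that the pointwise bound \eqref{con:A} on $A$ itself forces the second-order condition \eqref{con:dA2} on $\delta A$, invoke Theorem \ref{lem:Sew1}, and finally convert its conclusion \eqref{est:A1} into the required form \eqref{est:A1'}.

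First I would bound $\delta A$. For $S\le s\le u\le t\le T$, the triangle inequality gives $\|\delta A_{s,u,t}\|_{L_m}\le \|A_{s,t}\|_{L_m}+\|A_{s,u}\|_{L_m}+\|A_{u,t}\|_{L_m}$; since $|u-s|\le|t-s|$ and $|t-u|\le|t-s|$, each summand is at most $\Gamma_4|t-s|^{\frac12+\varepsilon_4}$, so $\|\delta A_{s,u,t}\|_{L_m}\le 3\Gamma_4|t-s|^{\frac12+\varepsilon_4}$. The contraction property of conditional expectation then yields $\|\delta A_{s,u,t}-\E^{\cff_s}\delta A_{s,u,t}\|_{L_m}\le 2\|\delta A_{s,u,t}\|_{L_m}\le 6\Gamma_4|t-s|^{\frac12+\varepsilon_4}$, i.e. \eqref{con:dA2} holds with $\Gamma_2=6\Gamma_4$ and $\varepsilon_2=\varepsilon_4$, while \eqref{con:dA1} is assumed outright.

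Theorem \ref{lem:Sew1} now furnishes a process $\caa$ satisfying \ref{cl:a} and \ref{cl:b}; in particular \ref{25a} and the estimate \eqref{est:A2} hold verbatim, with $C_1\le \Gamma_1(1-2^{-\varepsilon_1})^{-1}$. To reach \eqref{est:A1'} I would add and subtract $A_{s,t}$ and use \eqref{con:A} once more: $\|\caa_t-\caa_s\|_{L_m}\le \|\caa_t-\caa_s-A_{s,t}\|_{L_m}+\|A_{s,t}\|_{L_m}\le C_1|t-s|^{1+\varepsilon_1}+(C_2+\Gamma_4)|t-s|^{\frac12+\varepsilon_4}$, where the two half-order terms merge precisely because $\varepsilon_2=\varepsilon_4$. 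Setting $C_4=C_2+\Gamma_4$ and substituting the bound $C_2\le \kappa_{m,d}\Gamma_2(1-2^{-\varepsilon_2})^{-1}=6\kappa_{m,d}\Gamma_4(1-2^{-\varepsilon_4})^{-1}$ supplied by Theorem \ref{lem:Sew1} gives exactly $C_4\le \Gamma_4(6\kappa_{m,d}(1-2^{-\varepsilon_4})^{-1}+1)$, as claimed.

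For uniqueness, the key observation is that under the standing hypothesis \eqref{con:A} the two descriptions coincide: a process obeys \ref{25a}--\ref{25b} if and only if it obeys \ref{cl:a}--\ref{cl:b}, since $\|\caa_t-\caa_s-A_{s,t}\|_{L_m}$ and $\|\caa_t-\caa_s\|_{L_m}$ differ by at most $\|A_{s,t}\|_{L_m}\le\Gamma_4|t-s|^{\frac12+\varepsilon_4}$ in either direction and $\varepsilon_2=\varepsilon_4$. Thus the class of admissible processes is identical to the one in Theorem \ref{lem:Sew1}, and uniqueness up to modification is inherited. I expect the only point requiring genuine care to be this exponent bookkeeping, ensuring the two half-order terms truly combine so that the equivalence of characterizations is exact rather than lossy; there is no analytic obstacle. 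As a self-contained alternative, uniqueness follows directly: the difference $D=\caa-\tilde\caa$ of two candidates is an exact adapted increment, so telescoping $D_t=\sum_i(D_{t_{i+1}}-D_{t_i})$ over a partition of $[S,t]$ and applying the Doob-decomposition estimate \eqref{est:S} with $\cgg_i=\cff_{t_i}$ bounds $\|D_t\|_{L_m}$ above by a sum dominated by $\sum_i|t_{i+1}-t_i|^{1+\varepsilon_1}$ together with $(\sum_i|t_{i+1}-t_i|^{1+2\varepsilon_4})^{1/2}$, both of which vanish as $|\pi|\to0$, forcing $D_t=0$ almost surely.
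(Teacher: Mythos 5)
Your proposal is correct and follows exactly the paper's route: the paper likewise deduces \eqref{con:dA2} from \eqref{con:A} with $\Gamma_2=6\Gamma_4$ and $\varepsilon_2=\varepsilon_4$, invokes Theorem \ref{lem:Sew1}, and obtains \eqref{est:A1'} by combining \eqref{est:A1} with \eqref{con:A}. Your explicit derivation of the constants and the uniqueness discussion simply fill in details the paper leaves implicit.
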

  \begin{proof}
    This result is a direct application of Theorem \ref{lem:Sew1}.
    We note that condition \eqref{con:A} implies condition \eqref{con:dA2} with $\Gamma_2=6 \Gamma_4$ and $\varepsilon_2=\varepsilon_4$. Hence, Theorem \ref{lem:Sew1} is applied to construct the process $\caa$ satisfying \ref{cl:a} and \ref{cl:b}. The estimate \eqref{est:A1} and condition \eqref{con:A} imply \eqref{est:A1'}.
  \end{proof}
  \begin{remark}\label{rmk.0m}
    We note that if $A$ satisfies the hypotheses of Theorem \ref{lem:Sew2}, then it also satisfies the hypotheses of Theorem \ref{cor:1}. This is because condition \eqref{con:A} also implies condition \eqref{con:dA3} with $\Gamma_3=2 \Gamma_4$ and $\varepsilon_3=\varepsilon_4$. Consequently, the process $\caa$ in Theorem \ref{lem:Sew2} is uniquely decomposed into $\cmm+\cjj$ where $\cmm,\cjj$ are the processes in Theorem \ref{cor:1}. 
    It turns out that the martingale part of $\caa$, namely $\cmm$, has to be $0$. This can be seen by directly checking that $\cmm\equiv0$ satisfies \eqref{est:M} and hence, by uniqueness, property \ref{ch2}, the martingale part of $\caa$ has to vanish. 
    Consequently, for each $t\in[S,T]$, the Riemann sums $\{A^\pi_t\}_\pi$ and $\{J^\pi_t\}_\pi$ both converge to $\caa_t$ in $L_m$ while $\{M^\pi_t\}_\pi$ converges to $0$ in $L_m$.
  \end{remark}
  
    In Theorem \ref{lem:Sew1} and its proof (in Section \ref{sub:proofs}), it is seen that for each $t\in[S,T]$, $\caa_t$ is the limit in $L_m$ of certain Riemann sums. Conversely, starting from the estimates \eqref{est:A1} and \eqref{est:A2}, one can show the convergence of Riemann sums in $L_m$, which is the content of the following result. The deterministic counterpart of this result can be found, for instance, in \cite[Corollary 2.4]{MR2261056} and \cite[Corollary 1]{MR2091358}.
  \begin{proposition}\label{prop.rie}
    Let $m\ge2$ be a real number, $(\varphi_t)_{S\le t\le T}$ and $(\mu_{s,t})_{S\le s\le t\le T}$ be $\{\cff_t\}$-adapted and $L_m$-integrable processes with values in $\Rd$. Suppose that there are constants $C\ge0$ and $\varepsilon>0$ such that
    \begin{equation}\label{con.phi2}
      \|\varphi_t- \varphi_s- \mu_{s,t}\|_{L_m}\le C|t-s|^{\frac12+\varepsilon}
    \end{equation}
    and
    \begin{equation}\label{con.phi1}
      \|\E^{\cff_s}(\varphi_t- \varphi_s- \mu_{s,t})\|_{L_m}\le C|t-s|^{1+\varepsilon}
    \end{equation}
    for every $(s,t)\in[S,T]^2_\le$. 
    For every fixed $(s,t)\in[S,T]^2_\le$ and any partition $\pi=\{s=t_0<t_1<\cdots<t_N=t\}$ of $[s,t]$, define the Riemann sum
    \begin{equation*}
      \mu^\pi_{s,t}:=\sum_{i=0}^{N-1}\mu_{t_i,t_{i+1}}\,.
    \end{equation*}
    Then $\{\mu^\pi_{s,t}\}_\pi$ converges to $ \varphi_t- \varphi_s$ in $L_m$ as the mesh size $|\pi|$ goes to 0.
  \end{proposition}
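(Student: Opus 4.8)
The plan is to avoid reconstructing any limiting process and instead to estimate the error $\mu^\pi_{s,t}-(\varphi_t-\varphi_s)$ directly, exploiting the telescoping of $\varphi$. Writing $R_{u,v}:=\varphi_v-\varphi_u-\mu_{u,v}$ for the remainder controlled by \eqref{con.phi2}--\eqref{con.phi1}, I would first observe that for any partition $\pi=\{s=t_0<\cdots<t_N=t\}$ the telescoping identity $\varphi_t-\varphi_s=\sum_{i=0}^{N-1}(\varphi_{t_{i+1}}-\varphi_{t_i})$ gives
\[
  \mu^\pi_{s,t}-(\varphi_t-\varphi_s)=-\sum_{i=0}^{N-1}R_{t_i,t_{i+1}}.
\]
Hence the entire statement reduces to showing that this sum of remainders tends to $0$ in $L_m$ as $|\pi|\to0$.

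To estimate $\sum_i R_{t_i,t_{i+1}}$, I would apply the Doob-decomposition inequality \eqref{est:S} with $Z_i:=R_{t_{i-1},t_i}$ (re-indexed $i=1,\dots,N$, covering the same terms) and $\cgg_i:=\cff_{t_{i-1}}$. The increasing-filtration hypothesis required by \eqref{est:S} holds precisely because $\varphi$ and $\mu$ are $\{\cff_t\}$-adapted: for $j<i$ the variable $Z_j=R_{t_{j-1},t_j}$ is $\cff_{t_j}$-measurable and $t_j\le t_{i-1}$, so $Z_j\in\cgg_i$. This adaptivity verification is the one structural point that must be checked with care; it is exactly where the hypothesis that $\varphi,\mu$ are adapted is spent, and it is what makes the martingale (BDG) estimate available.

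With this set-up, \eqref{est:S} bounds $\|\sum_i Z_i\|_{L_m}$ by $\sum_i\|\E^{\cgg_i}Z_i\|_{L_m}+\kappa_{m,d}\bigl(\sum_i\|Z_i-\E^{\cgg_i}Z_i\|_{L_m}^2\bigr)^{1/2}$. For the first (drift-like) sum I would use the stronger bound \eqref{con.phi1}, which gives $\|\E^{\cff_{t_{i-1}}}R_{t_{i-1},t_i}\|_{L_m}\le C|t_i-t_{i-1}|^{1+\varepsilon}$, so that $\sum_i\|\E^{\cgg_i}Z_i\|_{L_m}\le C(t-s)|\pi|^{\varepsilon}$. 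For the second (martingale) sum only the weaker bound \eqref{con.phi2} is needed: combined with the contraction of conditional expectation, $\|Z_i-\E^{\cgg_i}Z_i\|_{L_m}\le 2\|Z_i\|_{L_m}\le 2C|t_i-t_{i-1}|^{\frac12+\varepsilon}$, whence $\bigl(\sum_i\|Z_i-\E^{\cgg_i}Z_i\|_{L_m}^2\bigr)^{1/2}\le 2C(t-s)^{1/2}|\pi|^{\varepsilon}$. Both contributions carry a factor $|\pi|^{\varepsilon}$ and therefore vanish as $|\pi|\to0$, which proves the claim.

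The argument is essentially routine once \eqref{est:S} is in hand; the only genuinely delicate points are the adaptivity check above and the matching of the two regularity hypotheses to the two halves of the Doob decomposition. The stronger exponent $1+\varepsilon$ of \eqref{con.phi1} is consumed by the conditional-expectation part, which is summed in $\ell^1$, while the weaker exponent $\tfrac12+\varepsilon$ of \eqref{con.phi2} already suffices for the martingale part thanks to the $\ell^2$ summation in BDG; this is the same mechanism that underlies Theorem \ref{lem:Sew1}. An alternative, slightly heavier route would be to verify that $A_{s,t}:=\mu_{s,t}$ satisfies \eqref{con:dA1}--\eqref{con:dA2} (using $\delta\mu_{s,u,t}=-\delta R_{s,u,t}$ and the tower property $\E^{\cff_s}R_{u,t}=\E^{\cff_s}\E^{\cff_u}R_{u,t}$ to dispose of the middle term), apply Theorem \ref{lem:Sew1} on $[s,t]$, and then identify its output with $\varphi_\cdot-\varphi_s$ via the uniqueness clause; but the direct estimate above is cleaner and self-contained.
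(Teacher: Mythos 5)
Your proof is correct and follows essentially the same route as the paper: telescope $\varphi_t-\varphi_s$ over the partition, apply the Doob-decomposition inequality (the paper uses \eqref{est:SS}, which is exactly your \eqref{est:S} plus the contraction bound $\|Z_i-\E^{\cgg_i}Z_i\|_{L_m}\le 2\|Z_i\|_{L_m}$), and feed \eqref{con.phi1} into the $\ell^1$-summed conditional part and \eqref{con.phi2} into the $\ell^2$-summed martingale part to extract the common factor $|\pi|^{\varepsilon}$. The adaptivity check and the matching of exponents are carried out exactly as the paper intends.
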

  \begin{proof}
    We write
    \begin{align*}
      \varphi_t-\varphi_s-\mu^\pi_{s,t}
      =\sum_i\(\varphi_{t_{i+1}}-\varphi_{t_i}-\mu_{t_i,t_{i+1}}\)
    \end{align*}
    and apply inequality \eqref{est:SS} to obtain that
    \begin{align*}
      \|\varphi_t-\varphi_s-\mu^\pi_{s,t}\|_{L_m}&\le\sum_i\|\E^{\cff_{t_i}}(\varphi_{t_{i+1}}-\varphi_{t_i}-\mu_{t_i,t_{i+1}})\|_{L_m}
      \\&\quad+2 \kappa_{m,d} \(\sum_i\|\varphi_{t_{i+1}}-\varphi_{t_i}-\mu_{t_i,t_{i+1}}\|^2_{L_m}\)^{\frac12}\,.
    \end{align*}
    In conjunction with \eqref{con.phi2} and \eqref{con.phi1}, the above inequality implies that $$\|\varphi_t-\varphi_s-\mu^\pi_{s,t}\|_{L_m}\lesssim |\pi|^{\varepsilon}.$$
    This means that $\lim_{|\pi|\to0}\mu^\pi_{s,t}=\varphi_t-\varphi_s$ in $L_m$.
  \end{proof}
  For later purposes, it is convenient to view the resulting processes in Theorems \ref{lem:Sew1}, \ref{cor:1} and \ref{lem:Sew2} as operators whose input is the increment process $A$. This leads to the following convention.
  \begin{definition}\label{def.I}
    Let $t$ be in $[S,T]$ and let $\pi=\{S=t_0<\cdots<t_N=T\}$ denote a partition of $[S,t]$. Let $A^\pi_t, M^\pi_t$ and $J^\pi_t$ be the Riemann sums defined in Theorems \ref{lem:Sew1} and \ref{cor:1}. We define $\cii_t[A], \cmm_t[A]$ and $\cjj_t[A]$ respectively by $\lim_{|\pi|\downarrow0}A^\pi_t$, $\lim_{|\pi|\downarrow0}M^\pi_t$ and $\lim_{|\pi|\downarrow0}J^\pi_t$ whenever these limits exists in probability.
    % In the contexts of Theorems \ref{lem:Sew1}, \ref{cor:1} and \ref{lem:Sew2}, we denote $\cii[A]=\caa$, $\cmm[A]=\cmm$ and $\cjj[A]=\cjj$.
  \end{definition}
  Theorems \ref{lem:Sew1}, \ref{cor:1} and \ref{lem:Sew2} provides sufficient conditions for the well-posedness of $\cii[A],\cmm[A]$ and $\cjj[A]$. 
  In addition, from the discussion succeeding Theorem \ref{cor:1}, we see that under hypotheses of Theorem \ref{cor:1}, $\cjj[A],\cmm[A]$ and $\cii[A]$ are related by
  \begin{equation*}
    \cjj[A]=\cjj=\cii\lt[(s,t)\mapsto\E^{\cff_s}A_{s,t} \rt]
  \end{equation*}
  and
  \[  
    \cmm[A] =\cmm=\cii\lt[(s,t)\mapsto A_{s,t}- \E^{\cff_s}A_{s,t} \rt].
  \]
  It is also possible to write $\cjj,\cmm$ as actions of $\cii$ on $\caa$. To see this, we need the following observation.
  \begin{proposition}\label{prop:AA}
    Let $m\ge2$ be a real number and $(A)_{S\le s\le t\le T}$ be a two-parameter process in $\Rd$ which is $\{\cff_t\}$-adapted and $L_m$-integrable. Suppose that there are non-negative constants $\Gamma_4,\Gamma_5$ and  positive constants $\varepsilon_4,\varepsilon_5$ such that \eqref{con:A} holds and
    \begin{equation}\label{con:AA2}
      \|\E^{\cff_s} A_{s,t}\|_{L_m}\le C_5|t-s|^{1+\varepsilon_5}
    \end{equation}
    for every $(s,t)$ in $[S,T]^2_\le$. Then $A$ satisfies all conditions of Theorem \ref{lem:Sew1} and $\cii[ A]=0$.
  \end{proposition}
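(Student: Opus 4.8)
The plan is to reduce the whole statement to Theorem \ref{lem:Sew1} and then exploit the uniqueness it provides. The key realization is that, once $A$ is shown to fulfil the hypotheses \eqref{con:dA1} and \eqref{con:dA2}, the \emph{constant} process $\caa\equiv0$ already satisfies the characterizing properties \ref{cl:a} and \ref{cl:b}; the uniqueness (up to modification) in Theorem \ref{lem:Sew1} then forces the sewn process to be the null process, whence $\cii[A]=0$. So there are really only two tasks: check the hypotheses of Theorem \ref{lem:Sew1}, and check that $\caa\equiv0$ meets the defining estimates.

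First I would verify \eqref{con:dA1}. Writing $\E^{\cff_s}\delta A_{s,u,t}=\E^{\cff_s}A_{s,t}-\E^{\cff_s}A_{s,u}-\E^{\cff_s}A_{u,t}$, the first two summands are controlled directly by \eqref{con:AA2} (using $|u-s|\le|t-s|$), while for the last one I would use the tower property $\E^{\cff_s}A_{u,t}=\E^{\cff_s}\E^{\cff_u}A_{u,t}$ (valid since $s\le u$), the contraction property of conditional expectation, and \eqref{con:AA2} applied at the base point $u$, giving $\|\E^{\cff_s}A_{u,t}\|_{L_m}\le C_5|t-u|^{1+\varepsilon_5}\le C_5|t-s|^{1+\varepsilon_5}$. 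Summing yields \eqref{con:dA1} with $\varepsilon_1=\varepsilon_5$ and $\Gamma_1=3C_5$. Next I would verify \eqref{con:dA2} via the triangle inequality $\|\delta A_{s,u,t}-\E^{\cff_s}\delta A_{s,u,t}\|_{L_m}\le\|\delta A_{s,u,t}\|_{L_m}+\|\E^{\cff_s}\delta A_{s,u,t}\|_{L_m}$: the first term is at most $3\Gamma_4|t-s|^{\frac12+\varepsilon_4}$ by \eqref{con:A}, and the second is the bound just obtained. Setting $\varepsilon_2=\min(\varepsilon_4,\tfrac12+\varepsilon_5)$ and absorbing a power of $(T-S)$ into the constant, the higher-order term collapses into the required $|t-s|^{\frac12+\varepsilon_2}$ form, so \eqref{con:dA2} holds. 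At this point Theorem \ref{lem:Sew1} applies and produces $\caa=\cii[A]$.

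Finally, to identify this process as $0$, I would check that $\caa\equiv0$ satisfies the characterizing properties. Property \ref{cl:a} is immediate. For \ref{cl:b}, with $\caa\equiv0$ the two required estimates become $\|A_{s,t}\|_{L_m}\le C_1|t-s|^{1+\varepsilon_1}+C_2|t-s|^{\frac12+\varepsilon_2}$ and $\|\E^{\cff_s}A_{s,t}\|_{L_m}\le C_1|t-s|^{1+\varepsilon_1}$; these are exactly \eqref{con:A} (up to a factor $(T-S)^{\varepsilon_4-\varepsilon_2}$, since $\varepsilon_2\le\varepsilon_4$) and \eqref{con:AA2}, so one may take $C_1=C_5$ and $C_2=\Gamma_4(T-S)^{\varepsilon_4-\varepsilon_2}$. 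Thus $\caa\equiv0$ meets \ref{cl:a} and \ref{cl:b} with the same exponents $\varepsilon_1,\varepsilon_2$ for which the sewn process was constructed, and uniqueness gives $\caa\equiv0$; since the $L_m$-convergence $A^\pi_t\to\caa_t=0$ of Theorem \ref{lem:Sew1} implies convergence in probability, Definition \ref{def.I} yields $\cii[A]=0$. I expect no genuine analytic obstacle here: the only point requiring care is the exponent bookkeeping, namely aligning the powers and the $(T-S)$ factors so that the null process verifies \ref{cl:b} with precisely the $\varepsilon_1,\varepsilon_2$ produced in the hypothesis-checking step, after which the argument rests entirely on the uniqueness clause of Theorem \ref{lem:Sew1}.
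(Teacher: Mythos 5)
Your proposal is correct and follows exactly the route the paper takes (the paper merely states that \eqref{con:A} and \eqref{con:AA2} let one "directly verify" \eqref{con:dA1}, \eqref{con:dA2} and that the zero process satisfies \ref{cl:a} and \ref{cl:b}, then invokes uniqueness); your write-up simply supplies the tower-property/contraction step and the exponent bookkeeping that the paper leaves implicit. Note that the exponent alignment you worry about at the end is not even strictly needed, since the uniqueness argument in Step 3 of the proof of Theorem \ref{lem:Sew1} tolerates a competitor satisfying the estimates with arbitrary positive constants $\bar C,\bar\varepsilon$.
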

  \begin{proof}
    % It is straightforward to verify that $ A$ satisfies conditions \eqref{con:dA1} and \eqref{con:dA2}. Hence, $\cii[ A]$ is well-defined.
    Using \eqref{con:A} and \eqref{con:AA2}, we can directly verify \eqref{con:dA1}, \eqref{con:dA2} and that the zero process satisfies \ref{cl:a} and \ref{cl:b}. By uniqueness of Theorem \ref{lem:Sew1}, this means $\cii_t[ A]=0$ a.s. for every $t\in[S,T]$. 
  \end{proof}
  \begin{corollary}\label{cor:AA}
    Suppose that the hypotheses of Theorem \ref{cor:1} are satisfied.
    Let $\caa,\cmm,\cjj$ be the processes in Theorem \ref{cor:1}. For every $s,t\in[S,T]$, define
    \begin{equation*}
      A^{(1)}_{s,t}=\E^{\cff_s}(\caa_t-\caa_s)
      \quad\textrm{and}\quad
      A^{(2)}_{s,t}=\caa_t-\caa_s-\E^{\cff_s}(\caa_t-\caa_s)\,.
    \end{equation*}
    % \KL{this result is not used at the moment}
    Then $A^{(1)}, A^{(2)}$ satisfy the hypotheses of Theorem \ref{lem:Sew1} and $\cii[A^{(1)}]=\cjj=\cjj[A]$ and $\cii[A^{(2)}]=\cmm=\cmm[A]$.
  \end{corollary}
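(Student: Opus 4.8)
The plan is to reduce the statement to the linearity of the sewing operator $\cii$ combined with Proposition \ref{prop:AA}. Set $R_{s,t}:=\caa_t-\caa_s-A_{s,t}$, so that \eqref{est:A1} and \eqref{est:A2} read $\|R_{s,t}\|_{L_m}\le C_1|t-s|^{1+\varepsilon_1}+C_2|t-s|^{\frac12+\varepsilon_2}$ and $\|\E^{\cff_s}R_{s,t}\|_{L_m}\le C_1|t-s|^{1+\varepsilon_1}$. Recalling $J_{s,t}=\E^{\cff_s}A_{s,t}$ and $M_{s,t}=A_{s,t}-\E^{\cff_s}A_{s,t}$ from \eqref{def.JM} and substituting $\caa_t-\caa_s=A_{s,t}+R_{s,t}$, I obtain the decompositions $A^{(1)}_{s,t}=J_{s,t}+D_{s,t}$ and $A^{(2)}_{s,t}=M_{s,t}+E_{s,t}$, where $D_{s,t}:=\E^{\cff_s}R_{s,t}$ and $E_{s,t}:=R_{s,t}-\E^{\cff_s}R_{s,t}$.

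First I would note that $A^{(1)}$ and $A^{(2)}$ are $\{\cff_t\}$-adapted and $L_m$-integrable, which is immediate from property \ref{cl:a} of $\caa$. Next I would verify that the two error processes $D$ and $E$ are covered by Proposition \ref{prop:AA}. For $D$, condition \eqref{con:A} follows from the bound on $\|\E^{\cff_s}R_{s,t}\|_{L_m}$ (taking $\varepsilon_4=\frac12+\varepsilon_1$ and $\Gamma_4=C_1$), while \eqref{con:AA2} holds because $D_{s,t}$ is already $\cff_s$-measurable, so that $\E^{\cff_s}D_{s,t}=D_{s,t}$ satisfies the same bound. For $E$, condition \eqref{con:A} follows from $\|E_{s,t}\|_{L_m}\le\|R_{s,t}\|_{L_m}+\|\E^{\cff_s}R_{s,t}\|_{L_m}$, the two H\"older exponents $1+\varepsilon_1$ and $\frac12+\varepsilon_2$ being merged into a single exponent $\frac12+\varepsilon_4$ (with $\varepsilon_4>0$ since both exceed $\frac12$) using $|t-s|\le T-S$; condition \eqref{con:AA2} is trivial because $\E^{\cff_s}E_{s,t}=0$. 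Proposition \ref{prop:AA} then shows that $D$ and $E$ satisfy all hypotheses of Theorem \ref{lem:Sew1} and, crucially, that $\cii[D]=\cii[E]=0$.

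It then remains to assemble the pieces. The processes $J$ and $M$ satisfy the hypotheses of Theorem \ref{lem:Sew1} by the estimates \eqref{est.EdJ}, \eqref{est.dJ}, \eqref{id.EM} and \eqref{est.dM} established in the discussion following Theorem \ref{cor:1}. Since conditions \eqref{con:dA1} and \eqref{con:dA2} bound only the $L_m$-norms of the linear quantities $\E^{\cff_s}\delta(\cdot)$ and $\delta(\cdot)-\E^{\cff_s}\delta(\cdot)$, they are stable under addition (again merging H\"older exponents through $|t-s|\le T-S$); hence $A^{(1)}=J+D$ and $A^{(2)}=M+E$ also satisfy these hypotheses, and in particular $\cii[A^{(1)}]$ and $\cii[A^{(2)}]$ are well defined. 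Because each Riemann sum is linear in its increment process, $\cii$ is additive wherever the summands converge, so $\cii[A^{(1)}]=\cii[J]+\cii[D]=\cii[J]$ and $\cii[A^{(2)}]=\cii[M]+\cii[E]=\cii[M]$. Invoking the identities $\cjj=\cjj[A]=\cii[J]$ and $\cmm=\cmm[A]=\cii[M]$ from the discussion after Theorem \ref{cor:1} finishes the argument.

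The only genuinely delicate point is the bookkeeping with the two distinct H\"older exponents, and in particular confirming that $E$ (whose increments inherit only the weaker order $\frac12+\varepsilon_2$ from $R$) still satisfies \eqref{con:A}; this is fine precisely because $\varepsilon_2>0$. Conceptually, everything hinges on recognizing that the remainders $D$ and $E$ are sewing-negligible, which is exactly the content of Proposition \ref{prop:AA}; once the decomposition and these mild checks are in place, the identification of $\cii[A^{(1)}]$ and $\cii[A^{(2)}]$ with $\cjj$ and $\cmm$ is forced.
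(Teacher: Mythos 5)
Your proof is correct and follows essentially the same route as the paper: the paper's own argument consists precisely of applying Proposition \ref{prop:AA} to the differences $A^{(1)}-J$ and $A^{(2)}-M$ (your $D$ and $E$), with conditions \eqref{con:A} and \eqref{con:AA2} supplied by \eqref{est:A1} and \eqref{est:A2}, and then concluding by linearity of the Riemann sums. You have simply spelled out the exponent bookkeeping that the paper leaves implicit.
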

  \begin{proof}
    Let $J,M$ be the processes defined in \eqref{def.JM}.
    This result amounts to verifying conditions \eqref{con:A} and \eqref{con:AA2} of Proposition \ref{prop:AA} for $A^{(1)}-J$ and $A^{(2)}-M$. In both cases, \eqref{con:A} and \eqref{con:AA2} are consequences of \eqref{est:A1} and \eqref{est:A2}.
  \end{proof}

  We give an example hinting that the restriction $m\ge2$ is necessary in Theorem \ref{lem:Sew1}.
  \begin{example}\label{ex.Poi}
    Let $(N_t,\cff_t)_{t\ge0}$ be a Poisson point process on $\R$ with intensity $\lambda>0$ (see \cite[Definition 3.3]{MR1121940}).  It is well-known that $\mathrm{Var\,}(N_t-N_s)=\E(N_t-N_s)=\lambda(t-s)$ and that $\(N_t- \lambda t,\cff_t\)_{t\ge0}$ is a square integrable martingale. Define $A_{s,t}=N_t-N_s- \lambda(t-s)$.
    Then $\delta A_{s,u,t}=0$ and hence we can apply Theorem \ref{lem:Sew1}, with $m=2$ for simplicity, to $A$ to find a unique process $\caa$ satisfying \ref{cl:a} and \ref{cl:b}. Since $\delta A=0$, we find that $C_1$ and $C_2$ can be taken to be $0$ in \eqref{est:A1} and \eqref{est:A2}. This implies that $\caa_t=N_t- \lambda t$ for every $t$.

    On the other hand, we also have
    \begin{equation}\label{tmp.346}
      \E^{\cff_s}(A_{s,t})=0
      \quad\textrm{and}\quad
      \|A_{s,t}\|_{L_n}\le (2\lambda)^{\frac1n}(t-s)^{\frac1n}
    \end{equation}
    for every $s\le t$ and $n\in[1,2]$. In the above, the identity is obvious while the inequality is derived by interpolating
    \[
      \E|A_{s,t}|^2=\lambda(t-s)
      \quad\textrm{and}\quad
      \E|A_{s,t}|\le 2 \lambda(t-s).
    \] 
    Note that when $n\in[1,2)$, $\frac1n>\frac12$. 
    This means that Theorem \ref{lem:Sew1} does not hold for $m=n\in[1,2)$. For if it were true, \eqref{tmp.346}  and Proposition \ref{prop:AA} (a consequence of Theorem \ref{lem:Sew1}) would imply that $\caa\equiv0$. In other words, we would have $N_t- \lambda t=0$ for all $t$, which is a contradiction.
  \end{example}

  \subsection{Relations with It\^o calculus} % (fold)
  \label{sub:relations_with_ito}
    
  % subsection relations_with_ito (end)
  To see that the conditions \eqref{con:dA1}, \eqref{con:dA2} and \eqref{con:dA3} are natural, let us see how the stochastic sewing lemma is related to It\^o calculus. This is established through the following examples.
  \begin{example}\label{ex1}
    Let $B$ be a standard Brownian motion in $\Rd$ with respect to a filtration $\{\cff_t\}$. Assume that $\{\cff_0\}$ contains $\P$-null sets.
    We wish to define the It\^o integral $\int_0^T f(B_s)\otimes dB_s$, where $f:\R^d\to\R^d$ is a H\"older continuous function with exponent $\tau\in(0,1]$. We define $A_{s,t}=f(B_s)\otimes (B_t-B_s)$. Then for every $s\le u\le t$,
    \begin{equation*}
      \delta A_{s,u,t}=-[f(B_u)-f(B_s)]\otimes(B_t-B_u)\,.
    \end{equation*}
    It follows that for every $m\ge2$,
    \begin{equation*}
      \|\delta A_{s,u,t}\|_{L_m}\le \|f\|_{C^\tau}|t-s|^{\frac12+\frac{\tau}2}
      \quad\textrm{and}\quad
      \E^{\cff_s}\delta A_{s,u,t}=0\,.
    \end{equation*}
    In other words, $A$ satisfies conditions \eqref{con:dA1} and \eqref{con:dA2}, respectively with $\Gamma_1=0$, $\Gamma_2=\|f\|_{C^\tau}$ and $\varepsilon_2=\frac \tau2$. By Theorem \ref{lem:Sew1}, we can define
    \begin{equation*}
      \int_0^Tf(B_s)\otimes dB_s:=\lim_{\max_i|t_{i+1}-t_i|\downarrow0} \sum_i f(B_{t_i})\otimes (B_{t_{i+1}}-B_{t_i})
      \quad\textrm{in} \quad L_m,
    \end{equation*}
    where $\{t_i\}$ is any partition of $[0,T]$. 
    The estimates \eqref{est:A1} and \eqref{est:A2}, respectively, become
    \begin{align*}
      \|\int_s^tf(B_r)\otimes dB_r-f(B_s)\otimes(B_t-B_s)\|_{L_m}\le  \kappa_{m,d}(1-2^{-\frac \tau2})^{-1}\|f\|_{C^\tau}|t-s|^{\frac12+\frac \tau2}
    \end{align*}
    and
    \begin{align*}
      \|\E^{\cff_s}\int_s^tf(B_r)\otimes dB_r-\E^{\cff_s}f(B_s)\otimes(B_t-B_s)\|_{L_m}\le 0\,.
    \end{align*}
    Since $\E^{\cff_s}f(B_s)\otimes(B_t-B_s)=0$, the previous estimate implies that $\int_0^\cdot f(B_r)\otimes dB_r$ is a martingale.

    Alternatively, we can also see that $\int_0^\cdot f(B_r)\otimes dB_r$ is a martingale from Theorem \ref{cor:1}. Indeed, we note that $\E^{\cff_s}A_{s,t}=0$ for every $s\le t$. In particular, \eqref{con:dA3} is satisfied with $\Gamma_3=0$. By Theorem \ref{cor:1}, we have the decomposition $\int_0^tf(B_s)\otimes dB_s=\cmm_t+\cjj_t $. We observe that the process $\cjj\equiv0$ satisfies \eqref{est:J}. Hence, by \ref{ch1}, $\int_0^\cdot f(B_s)\otimes dB_s=\cmm$, which is a martingale. 

    Lastly, we point out an interesting implication of the uniqueness part of Theorem \ref{lem:Sew1}. The It\^o integral $\int_0^\cdot f(B)\otimes dB$ is the unique $\{\cff_t\}$-adapted process $\varphi:[0,T]\to L_m$ such that $\varphi_0=0$ and
    \begin{gather}
      \|\varphi_t-\varphi_s-f(B_s)\otimes(B_t-B_s)\|_{L_m}\lesssim|t-s|^{\frac12+\varepsilon}\,,
      \label{c1}\\\|\E^{\cff_s}(\varphi_t-\varphi_s)\|_{L_m}\lesssim|t-s|^{1+\varepsilon}
      \label{c2}
    \end{gather}
    for every $s\le t$ for some $\varepsilon>0$.
  \end{example} 
  \begin{example}[Quadratic variation]\label{ex2} Let $M$ be a $L_4$-integrable $\{\cff_t\}$-martingale in $\Rd$. Assume that $\cff_0$ contains $\P$-null sets and $M$ satisfies 
  \begin{equation}\label{con.regM}
    \|M_{s,u}\otimes M_{u,t}\|_{L_2}\le C|t-s|^{\frac12+\varepsilon}\quad\forall s\le u\le t
  \end{equation}
  for some constants $\varepsilon>0$ and $C>0$. 
  Here, we adopt the notation $M_{s,t}=M_t-M_s$.
  We consider $A_{s,t}=M_{s,t}\otimes M_{s,t} $, which is a random element in $\Rd\otimes\Rd$ for every fixed $s\le t$. It is straightforward to verify that for every $s\le u\le t$
  \begin{equation*}
    \delta A_{s,u,t}=M_{s,u}\otimes M_{u,t}+M_{u,t}\otimes M_{s,u}\,.
  \end{equation*}
  It follows that $\|\delta A_{s,u,t}\|_{L_2}\le 2C|t-s|^{\frac12+\varepsilon} $ and $\E^{\cff_s}\delta A_{s,u,t}=0 $. By Theorem \ref{lem:Sew1}, there exists a unique adapted process, denoted by $\bra{M}:[0,T]\to [L^2(\Omega)]^{d\times d}$, such that  $\bra{M}_0=0$,
  \begin{equation}\label{eqn:quadM}
    \|\bra{M}_{s,t}-M_{s,t}\otimes M_{s,t}\|_{L_2}\lesssim |t-s|^{\frac12+\varepsilon}
    \quad\textrm{and}\quad
    \E^{\cff_s}\bra{M}_{s,t} =\E^{\cff_s}\(M_{s,t}\otimes M_{s,t}\)
  \end{equation}
  for every $s\le t$. Here, we use the notation $\bra{M}_{s,t}=\bra{M}_t-\bra{M}_s$.
  In addition, we have
  \begin{equation*}
    \bra{M}_t=\lim_{\max_i|t_{i+1}-t_i|\downarrow0}\sum_i M_{t_i,t_{i+1}}\otimes M_{t_i,t_{i+1}}
    \quad\textrm{in}\quad L_2
  \end{equation*}
  for every partition $\{t_i\}$ of $[0,t]$ which shows that $\bra{M}$ (defined via Theorem \ref{lem:Sew1} as above) is indeed the quadratic variation of $M$ (see for instance \cite[Chapter 1, (2.3)]{MR1725357}).

  Observing that $\E^{\cff_s}(M_{s,t}\otimes M_{s,t})=\E^{\cff_s}(M_t\otimes M_t-M_s\otimes M_s)$, the identity in \eqref{eqn:quadM} implies that $M_\cdot\otimes M_\cdot-\bra{M}_\cdot$ is a martingale.
  
  Finally, for $L_4$-integrable martingales satisfying \eqref{con.regM}, the quadratic variation $\bra{M}$ is the unique adapted process $\varphi:[0,T]\to [L^2(\Omega)]^{d\times d}$ such that
  \begin{gather}
    \|\varphi_t- \varphi_s-M_{s,t}\otimes M_{s,t}\|_{L_2}\lesssim |t-s|^{\frac12+\varepsilon}\,,\label{q1}
    \\\|\E^{\cff_s}(\varphi_t- \varphi_s-M_{s,t}\otimes M_{s,t})\|_{L_2}\lesssim |t-s|^{1+\varepsilon}\label{q2}
  \end{gather}
  for every $s\le t$ and for some $\varepsilon>0$. 

  We illustrate the usefulness of the previous characterization in computing the quadratic variations by considering two specific cases of Brownian motion and compensated Poisson process.

  In the case when $M=B$ is a standard Brownian motion in $\Rd$, we have $\|B_{s,t}\otimes B_{s,t}\|_{L_2}\lesssim\|B_{s,t}\|_{L_4}\lesssim |t-s|$ and $\E^{\cff_s}(B_{s,t}\otimes B_{s,t})=(t-s)I_d$, where $I_d$ is the $d\times d$-identity matrix. Hence, from \eqref{q1} and \eqref{q2}, the quadratic variation of $B$ is the unique adapted process $\varphi:[0,T]\to [L^2(\Omega)]^{d\times d}$ such that
  \[
    \|\varphi_t- \varphi_s\|_{L_2}\lesssim |t-s|^{\frac12+\varepsilon}
    \quad\textrm{and}\quad
    \|\E^{\cff_s}(\varphi_t- \varphi_s-(t-s)I_d)\|_{L_2}\lesssim |t-s|^{1+\varepsilon}
  \]
  for every $s\le t$ and for some $\varepsilon>0$. It is evident that the process  $\varphi_t=tI_d$ satisfies the above inequalities. By uniqueness, $\bra{B}_t=tI_d$. 
  
  Consider the case when $M=\bar N$ is a compensated Poisson point process on $\R$ with intensity $\lambda$. This means that $\bar N_t=N_t- \lambda t$ where $(N_t,\cff_t)_{t\ge0}$ is a Poisson point process on $\R$ with intensity $\lambda>0$ (see \cite[Definition 3.3]{MR1121940}). By direct calculations, we have $\E(\bar N_{s,u}\bar N_{u,t})^2=\lambda^2(u-s)(t-u)$, $\E^{\cff_s}(\bar N_{s,t}\bar N_{s,t})=\lambda(t-s)$ and $\E(N_{s,t}-\bar N_{s,t}\bar N_{s,t})^2=2\lambda^2(t-s)^2$.
  We see that condition \eqref{con.regM} is satisfied and hence, from \eqref{q1} and \eqref{q2}, the quadratic variation of $\bar N$ is the unique adapted process $\varphi:[0,T]\to L^2(\Omega)$ such that
  \[
    \|\varphi_t- \varphi_s- N_{s,t}\|_{L_2}\lesssim |t-s|^{\frac12+\varepsilon}
    \quad\textrm{and}\quad
    \|\E^{\cff_s}(\varphi_t- \varphi_s- \lambda(t-s))\|_{L_2}\lesssim |t-s|^{1+\varepsilon}
  \]
  for every $s\le t$ and for some $\varepsilon>0$. It is evident that the process  $\varphi_t=N_t$ satisfies the above inequalities. By uniqueness, $\bra{\bar N}_t=N_t$. 
  \end{example}
  \begin{example}[It\^o formula] \label{ex3}Let $M$ be a martingale such that
  \begin{equation}\label{con.M6}
    \|M_{s,t}\|_{L_6}\le C|t-s|^{\frac13+\varepsilon} \quad\forall (s,t)\in[0,1]^2_\le
  \end{equation}
  for some constants $\varepsilon>0$ and $C\ge0$. We recall the notation $M_{s,t}=M_t-M_s$ used in the previous example. From Cauchy-Schwarz inequality, it is easy to see that \eqref{con.M6} implies condition \eqref{con.regM}. Hence the quadratic variation of $M$, denoted by $[M]$, is well-defined by Example \ref{ex2} and satisfies
  \begin{equation}\label{est.Mbra}
    \|[M]_{s,t}\|_{L_2}\lesssim|t-s|^{\frac23+2 \varepsilon}.
  \end{equation}
  Let $f$ be a function in $C^3_b(\Rd)$. Let us explain how to obtain the following It\^o's formula, which is well-known, by stochastic sewing lemma
    \begin{equation*}
      f(M_1)-f(M_0)=\int_0^1\wei{\nabla f(M_s),dM_s}+\frac12\int_0^1 \wei{\nabla^2 f(M_s),d\bra{M}_s}\,.
    \end{equation*}
  In the right-hand side above, $\wei{\cdot,\cdot}$ denotes the inner product, the former integral is an It\^o integral and the later integral is a Young integral in $L^1(\Omega)$.

  We consider $A_{s,t}=f(M_t)-f(M_s)$. It is obvious that $f(M_1)-f(M_0)=\cii_1[A]$, where $\cii$ is defined in Definition \ref{def.I}. By Taylor expansion, we have
    \begin{align*}
      A_{s,t}&=\wei{\nabla f(M_s),M_{s,t}}+\frac12\wei{\nabla^2f(M_s),M_{s,t}\otimes M_{s,t}}+\coo(|M_{s,t}|^3)
      \\&=: A^{(1)}_{s,t}+A^{(2)}_{s,t}+A^{(3)}_{s,t}\,.
    \end{align*}
    where $\coo(h)$ is some quantity such that $\|\coo(h)\|_{L_2}\lesssim h$. Obviously, $\cii[A]=\cii[A^{(1)}]+\cii[A^{(2)}]+\cii[A^{(3)}]$ whenever each term on the right-hand side exists, hence, it suffices to show that $\cii[A^{(1)}]=\int_0^\cdot \wei{\nabla f(M_s),dM_s}$, $\cii[A^{(2)}]=\frac12\int_0^\cdot\wei{\nabla^2 f(M_s),d[M]_s}$ and $\cii[A^{(3)}]=0$.
    Reasoning as in Example \eqref{ex1}, it is straightforward to see that $\cii[A^{(1)}]=\int_0^\cdot \wei{\nabla f(M_s),dM_s}$. To compute $\cii[A^{(2)}]$, we write
    \begin{align*}
      A^{(2)}_{s,t}=\frac12\wei{\nabla^2f(M_s),[M]_{s,t}}+\frac12\wei{\nabla^2f(M_s),[M]_{s,t}-M_{s,t}\otimes M_{s,t}}=:A^{(4)}_{s,t}+A^{(5)}_{s,t}.
    \end{align*}
    From $\delta A^{(4)}_{s,u,t}=-\frac12\wei{\nabla^2f(M_u)-\nabla^2f(M_s),[M]_{u,t}}$, applying Cauchy-Schwarz inequality, \eqref{con.M6} and \eqref{est.Mbra}, we obtain
    \[
      \|\delta A^{(4)}_{s,u,t}\|_{L_1}\lesssim|\nabla^3f|_\infty\|M_{s,u}\|_{L_2}\|[M]_{u,t}\|_{L_2}
      \lesssim|\nabla^3f|_\infty|t-s|^{1+3 \varepsilon}.
    \]
    Applying the sewing lemma (\cite[Lemma 2.1]{MR2261056}), we see that $\cii[A^{(4)}]=\int_0^\cdot f(M_s)d[M]_s$.
    From \eqref{eqn:quadM}, we see that
    \[
      \|A^{(5)}_{s,t}\|_{L_2}\lesssim|\nabla^2f|_\infty|t-s|^{\frac12+\bar\varepsilon}
      \quad\textrm{and}\quad
      \E^{\cff_s}A^{(5)}_{s,t}=0
      \]
    for some $\bar \varepsilon>0$. By Proposition \ref{prop:AA}, $\cii[A^{(5)}]=0$ and hence $\cii[A^{(2)}]=\int_0^\cdot f(M_s)d[M]_s$. 
    Finally, we note that $\|A^{(3)}_{s,t}\|_{L_2}\lesssim\|M_{s,t}\|_{L_6}^3\lesssim|t-s|^{1+3 \varepsilon}$ and apply Proposition \ref{prop:AA} once again to see that $\cii[A^{(3)}]=0$.
  \end{example}

  \subsection{Proofs} % (fold)
  \label{sub:proofs}
  
  % subsection proofs (end)
  We present herein the proofs of Theorems \ref{lem:Sew1} and \ref{cor:1}.
  We first show in Lemma \ref{lem.dyest} two uniform estimates for the Riemann sums along dyadic partitions, which are straightforward  applications of inequalities \eqref{est:S1} and \eqref{est:S2}. 
  In Lemma \ref{lem.ANA}, similar uniform estimates for Riemann sums along \textit{any} partitions are obtained using a dyadic allocation procedure adapted from \cite[Lemma 2.2]{MR3860015}. The procedure is described in Lemma \ref{lem.Yaskov}, which asserts that every partition can be arranged into dyadic partitions so that the estimates for Riemann sums along dyadic partitions can be carried over more or less in the same fashion.

  \begin{lemma}\label{lem.dyest}
    Let $(A_{s,t})_{S\le s\le t\le T}$ be a two-parameter process in $\Rd$ which is $L_m$-integrable and adapted to $\{\cff_t\}$. We assume that there exist constants $\Gamma_1,\Gamma_2,\Gamma_3\ge0$ and $\varepsilon_1,\varepsilon_2,\varepsilon_3>0$ such that \eqref{con:dA1} holds and
    \begin{equation}\label{con.1032}
      \|\delta A_{s,u,t}-\E^{\cff_s}\delta A_{s,u,t}\|_{L_m}\le \Gamma_2|t-s|^{\frac12+\varepsilon_2}+\Gamma_3|t-s|^{\frac12+\varepsilon_3}
    \end{equation}
    for every $S\le s\le u\le t\le T$.
    For each $(s,t)$ in $[S,T]^2_\le$ and each integer $n\ge1$, let $\pi^n_{s,t}=\{s=t^n_0<t^n_1<\cdots<t^n_{2^n}=t\}$ be the dyadic partition of $[s,t]$ with mesh size $|\pi^n_{s,t}|=|t^n_i-t^n_{i+1}|=2^{-n}(t-s)$. 
    Define
    \begin{equation}\label{def.apin}
      A^n_{s,t}=\sum_{i=0}^{2^n-1}A_{t^n_i,t^n_{i+1}}\,.
    \end{equation}
    Then for every $n\ge1$, we have
    \begin{equation}
      \|A^n_{s,t}-A_{s,t}\|_{L_m}
      \le \frac{\Gamma_1}{1-2^{-\varepsilon_1}}|t-s|^{1+\varepsilon_1}+\frac{ \kappa_{m,d}\Gamma_2}{1-2^{-\varepsilon_2}}|t-s|^{\frac12+\varepsilon_2}+\frac{ \kappa_{m,d}\Gamma_3}{1-2^{-\varepsilon_3}}|t-s|^{\frac12+\varepsilon_3},
      \label{est.dyes1}
    \end{equation}
    and
    \begin{align}
      \|\E^{\cff_{s}}(A^n_{s,t}-A_{s,t})\|_{L_m}
      \le \frac{\Gamma_1}{1-2^{-\varepsilon_1}}|t-s|^{1+\varepsilon_1}\,.
      \label{est.dyes2}
    \end{align}
  \end{lemma}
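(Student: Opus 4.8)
The plan is to prove both \eqref{est.dyes1} and \eqref{est.dyes2} by a dyadic telescoping argument: I control the increment between two consecutive dyadic refinements and then sum a geometric series. Writing $\delta_n:=2^{-n}|t-s|$ for the common mesh of $\pi^n_{s,t}$, note that $A^0_{s,t}=A_{s,t}$ (the level-$0$ partition is just $\{s,t\}$), so by telescoping $A^n_{s,t}-A_{s,t}=\sum_{k=1}^n(A^k_{s,t}-A^{k-1}_{s,t})$ and it suffices to estimate each term. Since the level-$k$ dyadic partition refines the level-$(k-1)$ one by bisecting each of its $2^{k-1}$ subintervals at midpoints $m_i=\tfrac12(t^{k-1}_i+t^{k-1}_{i+1})$, the definition of $\delta A$ gives the key refinement identity
\begin{equation*}
  A^k_{s,t}-A^{k-1}_{s,t}=-\sum_{i=0}^{2^{k-1}-1}\delta A_{t^{k-1}_i,m_i,t^{k-1}_{i+1}}.
\end{equation*}

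Next I would apply the Doob-decomposition estimates \eqref{est:S1} and \eqref{est:S2} to this finite sum, taking $Z_i=-\delta A_{t^{k-1}_i,m_i,t^{k-1}_{i+1}}$ and $\cgg_i=\cff_{t^{k-1}_i}$. The hypothesis that makes \eqref{est:S2} legitimate is exactly the adaptiveness built into \eqref{S}, so the first thing to verify is that $Z_0,\dots,Z_{i-1}\in\cgg_i$; this holds because $A$ is $\{\cff_t\}$-adapted, so each $Z_j$ is $\cff_{t^{k-1}_{j+1}}$-measurable and $t^{k-1}_{j+1}\le t^{k-1}_i$ for $j\le i-1$. The conditional part is handled by \eqref{con:dA1}, which gives $\|\E^{\cgg_i}Z_i\|_{L_m}\le\Gamma_1\delta_{k-1}^{1+\varepsilon_1}$; summing over the $2^{k-1}$ identical subintervals and using $2^{k-1}\delta_{k-1}=|t-s|$ produces the decay factor $2^{-(k-1)\varepsilon_1}$. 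For the centered part, \eqref{con.1032} bounds each $\|Z_i-\E^{\cgg_i}Z_i\|_{L_m}$ by $\Gamma_2\delta_{k-1}^{1/2+\varepsilon_2}+\Gamma_3\delta_{k-1}^{1/2+\varepsilon_3}$; feeding this into the BDG square sum $\bigl(\sum_i\|Z_i-\E^{\cgg_i}Z_i\|_{L_m}^2\bigr)^{1/2}$ and separating the two contributions by Minkowski's inequality in $\ell^2$ yields $\kappa_{m,d}\Gamma_2|t-s|^{1/2+\varepsilon_2}2^{-(k-1)\varepsilon_2}+\kappa_{m,d}\Gamma_3|t-s|^{1/2+\varepsilon_3}2^{-(k-1)\varepsilon_3}$. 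Summing the resulting three geometric series over $k\ge1$ against $\sum_{k\ge1}2^{-(k-1)\varepsilon}\le(1-2^{-\varepsilon})^{-1}$ gives \eqref{est.dyes1} with the stated constants.

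For the conditional estimate \eqref{est.dyes2} I would keep the same refinement identity but apply $\E^{\cff_s}$ termwise. Since $s\le t^{k-1}_i$, the tower property writes $\E^{\cff_s}\delta A_{t^{k-1}_i,m_i,t^{k-1}_{i+1}}=\E^{\cff_s}\E^{\cff_{t^{k-1}_i}}\delta A_{t^{k-1}_i,m_i,t^{k-1}_{i+1}}$, and the contraction property of conditional expectation together with \eqref{con:dA1} bounds its $L_m$-norm by $\Gamma_1\delta_{k-1}^{1+\varepsilon_1}$; summing the $2^{k-1}$ terms and then over $k$ reproduces only the first geometric series, giving $\Gamma_1(1-2^{-\varepsilon_1})^{-1}|t-s|^{1+\varepsilon_1}$. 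I do not expect any genuine obstacle here: the argument is entirely mechanical once the refinement identity is in place. The only points requiring care are the adaptiveness check that licenses the martingale/BDG step, the $\ell^2$-Minkowski split needed to keep the $\Gamma_2$ and $\Gamma_3$ terms separate (so that the constants match \eqref{est.dyes1} exactly), and the observation that all geometric sums are bounded uniformly in $n$, which is what makes the estimates dyadic-level-independent.
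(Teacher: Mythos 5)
Your proof is correct and follows essentially the same route as the paper: the midpoint refinement identity expressing $A^k_{s,t}-A^{k-1}_{s,t}$ as a sum of $\delta A$ terms over the coarser dyadic partition, the Doob decomposition \eqref{S} with the estimates \eqref{est:S1}--\eqref{est:S2}, the $\ell^2$-Minkowski split to keep the $\Gamma_2$ and $\Gamma_3$ contributions separate, and geometric summation over the levels; the conditional bound \eqref{est.dyes2} is likewise obtained exactly as in the paper via the tower/contraction property, which kills the martingale part. The only cosmetic difference is your indexing of the telescoping sum (finite, from $1$ to $n$, dominated by the full geometric series) versus the paper's $\sum_{k=0}^{\infty}\|A^k_{s,t}-A^{k+1}_{s,t}\|_{L_m}$, which changes nothing.
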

  \begin{proof}
    For each $k,i$, let $u^k_i$ be the midpoint of $[t^k_i,t^k_{i+1}]$. We have
    \begin{align}\label{tmp:Ann}
      A^{k}_{s,t}-A^{k+1}_{s,t}=\sum_{i=0}^{2^k-1}\delta A_{t^k_i,u^k_i,t^k_{i+1}}
      =I_1+I_2
    \end{align}
    where
    \begin{align}\label{def:I12}
      I_1=\sum_{i=0}^{2^k-1}\E^{\cff_{t^k_i}}\delta A_{t^k_i,u^k_i,t^k_{i+1}}
      \,\textrm{ and }\, I_2=\sum_{i=0}^{2^k-1}\(\delta A_{t^k_i,u^k_i,t^k_{i+1}}-\E^{\cff_{t^k_i}}\delta A_{t^k_i,u^k_i,t^k_{i+1}}\)\,.
    \end{align}
    Observe that $A^{k}_{s,t}-A^{k+1}_{s,t}, I_1,I_2$ correspond respectively to $S, S_1, S_2$ in \eqref{S}.
    We use \eqref{est:S1} and \eqref{con:dA1} to estimate $I_1$, which yields
    \begin{align}
      \|I_1\|_{L_m}
      &\le \sum_{i=0}^{2^k-1}\|\E^{\cff_{t^k_i}}\delta A_{t^k_i,u^k_i,t^k_{i+1}}\|_{L_m}
      \nonumber\\& \le \Gamma_1|t-s|^{1+\varepsilon_1}\sum_{i=0}^{2^k-1}2^{-k(1+\varepsilon_1)}
      =\Gamma_1|t-s|^{1+\varepsilon_1}2^{-k\varepsilon_1}\,.
      \label{tmp.i1}
    \end{align}
    Similarly, we use \eqref{est:S2}, \eqref{con.1032} and Minkowski inequality to estimate $I_2$, which gives
    \begin{align}
      &\|I_2\|_{L_m}
      \le \kappa_{m,d} \(\sum_{i=0}^{2^k-1}\|\delta A_{t^k_i,u^k_i,t^k_{i+1}}-\E^{\cff_{t^k_i}}\delta A_{t^k_i,u^k_i,t^k_{i+1}}\|_{L_{m}}^2 \)^{\frac12}
      \nonumber\\&\le \kappa_{m,d}\( \Gamma_2^2 |t-s|^{1+2 \varepsilon_2}\sum_{i=0}^{2^k-1}2^{-k(1+2 \varepsilon_2)} \)^{\frac12}+\kappa_{m,d}\( \Gamma_3^2 |t-s|^{1+2 \varepsilon_3}\sum_{i=0}^{2^k-1}2^{-k(1+2 \varepsilon_3)} \)^{\frac12}
      \nonumber\\&=\kappa_{m,d} \Gamma_2|t-s|^{\frac12+\varepsilon_2}2^{-k \varepsilon_2}+\kappa_{m,d} \Gamma_3|t-s|^{\frac12+\varepsilon_3}2^{-k \varepsilon_3}\,.
      \label{tmp.i2}
    \end{align}
    Hence, we have shown that
    \begin{multline}\label{tmp.an}
      \|A^{k}_{s,t}-A^{k+1}_{s,t}\|_{L_m}\le \Gamma_1|t-s|^{1+\varepsilon_1}2^{-k\varepsilon_1}
      \\+\kappa_{m,d} \Gamma_2|t-s|^{\frac12+\varepsilon_2}2^{-k \varepsilon_2}+\kappa_{m,d} \Gamma_3|t-s|^{\frac12+\varepsilon_3}2^{-k \varepsilon_3}\,.
    \end{multline}
    Using triangle inequality, we have
    \begin{align*}
      \|A^n_{s,t}-A_{s,t}\|_{L_m}
      &\le\sum_{k=0}^\infty\|A^{k}_{s,t}-A^{k+1}_{s,t}\|_{L_m}.
      % \\&\le \frac{\Gamma_1}{1-2^{-\varepsilon_1}}|t-s|^{1+\varepsilon_1}+\frac{2 \kappa_{m,d}\Gamma_2}{1-2^{-\varepsilon_2}}|t-s|^{\frac12+\varepsilon_2}\,.
    \end{align*}
    Taking into account the estimate \eqref{tmp.an} and summing up the resulting geometric series in $k$, we obtain \eqref{est.dyes1} from the above inequality.
    Similarly, using triangle inequality, we have
    \[
      \|\E^{\cff_s}(A^n_{s,t}-A_{s,t})\|_{L_m}
      \le\sum_{k=0}^\infty\|\E^{\cff_s}(A^{k}_{s,t}-A^{k+1}_{s,t})\|_{L_m}.
    \]
    To estimate the right-hand side above, we use \eqref{tmp:Ann}, \eqref{tmp.i1} and the fact that $\E^{\cff_{s}}I_2=0$ to obtain that
    \begin{equation*}
      \|\E^{\cff_{s}}(A^{k}_{s,t}-A^{k+1}_{s,t})\|_{L_m} =\|\E^{\cff_{s}}I_1\|_{L_m}\le \Gamma_1|t-s|^{1+\varepsilon_1}2^{-n\varepsilon_1}\,.
    \end{equation*}
    Combining the previous inequalities together and sum up the resulting geometric series in $k$ yields \eqref{est.dyes2}.
  \end{proof}

  To obtain similar estimates to \eqref{est.dyes1} and \eqref{est.dyes2} for arbitrary partitions, the following result, which is adapted from Yaskov's \cite[Lemma 2.2]{MR3860015}, is needed.
  \begin{lemma}[Dyadic allocation] \label{lem.Yaskov}
    Let $(A_{s,t})_{S\le s\le t\le T}$ be a two-parameter process in $\Rd$ which is $L_m$-integrable and adapted to $\{\cff_t\}$. We assume that $A_{s,s}=0$ for every $s\in[S,T]$.
    Let $s,t$ be in $[S,T]$ with $s<t$. For each $n$, let $\{t^n_i\}_{i=0}^{2^n}$ be the dyadic partition of $[s,t]$ of mesh size $2^{-n}(t-s)$, that is $t^n_i=s+i2^{-n}(t-s)$ for each $n,i$.
    Then, for every $N\ge0$ and every  $s\le t_0<\cdots <t_N\le t$, there exist a positive integer $n_0$ and random variables $R^n_i$, $i=0,\cdots,2^n-1$, $n\ge0$, such that
    \begin{enumerate}[wide,itemsep=2pt,labelindent=0pt,label={\upshape(\roman*)}]
      \item\label{p.i} $R^n_i=0$ for every $n\ge n_0$ and every $i$;
      \item\label{p.ii} for each $n,i$, there exist four (not necessarily distinct) points $s^{n,i}_1\le s^{n,i}_2\le s^{n,i}_3\le s^{n,i}_4$ in $[t^n_i,t^n_{i+1}]$ so that
      \begin{equation}\label{form.Rni}
        R^n_i= A_{s^{n,i}_1,s^{n,i}_2}+A_{s^{n,i}_2,s^{n,i}_3}+A_{s^{n,i}_3,s^{n,i}_4}-A_{s^{n,i}_1,s^{n,i}_4}
      \end{equation}
      \item\label{p.iii} the following identity holds 
      \begin{equation}\label{id.yaskov}
        \sum_{i=0}^{N-1} A_{t_i,t_{i+1}}-A_{t_0,t_N}=\sum_{n\ge0}\sum_{i=0}^{2^n-1}R^n_i.
      \end{equation}
    \end{enumerate}   
  \end{lemma}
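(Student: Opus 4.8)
My plan is to build the whole family $\{R^n_i\}_{n\ge0,\,0\le i\le 2^n-1}$ by one top--down recursion on the dyadic tree of $[s,t]$, splitting the global defect $\sum_{i=0}^{N-1}A_{t_i,t_{i+1}}-A_{t_0,t_N}$ across the scales by cutting at dyadic midpoints. To each dyadic interval $I^n_i:=[t^n_i,t^n_{i+1}]$ I attach a finite increasing list of \emph{active points} $p_0<\cdots<p_k$, all lying in $\{t_0,\dots,t_N\}\cap I^n_i$, and the associated local defect
\[
  D^n_i:=\sum_{j=0}^{k-1}A_{p_j,p_{j+1}}-A_{p_0,p_k}
\]
(set to $0$ when $k\le1$). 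At the root the active list is the whole collection $t_0<\cdots<t_N$, so that $D^0_0$ equals the left-hand side of \eqref{id.yaskov}.

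At $I^n_i$, let $c=t^{n+1}_{2i+1}$ be its midpoint and $I_0,I_1$ its two children. If some active point equals $c$, say $c=p_a$, I pass $\{p_0,\dots,p_a\}$ to $I_0$, $\{p_a,\dots,p_k\}$ to $I_1$, and record $R^n_i:=A_{p_0,p_a}+A_{p_a,p_k}-A_{p_0,p_k}$. Otherwise I take $p_a$ to be the largest active point below $c$ and $p_{a+1}$ the smallest above $c$ (if one side is empty I forward every point to the nonempty child and set $R^n_i:=0$), pass $\{p_0,\dots,p_a\}$ to $I_0$ and $\{p_{a+1},\dots,p_k\}$ to $I_1$, and record
\[
  R^n_i:=A_{p_0,p_a}+A_{p_a,p_{a+1}}+A_{p_{a+1},p_k}-A_{p_0,p_k}.
\]
Dyadic intervals never reached by the recursion, or carrying at most one active point, get $R^n_i:=0$. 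The elementary identity driving the step is
\[
  D^n_i=R^n_i+D^{n+1}_{2i}+D^{n+1}_{2i+1},
\]
which in the generic case is the mere regrouping of $\sum_j A_{p_j,p_{j+1}}$ that isolates the straddling increment $A_{p_a,p_{a+1}}$, and in the special case is its degeneration with the two middle points collapsed onto $c$.

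The step I expect to require the most care is checking property \ref{p.ii}: each $R^n_i$ must be of the four-point form \eqref{form.Rni} \emph{and} have all four points inside $I^n_i$. This is exactly why I never insert $c$ as a new node and keep the straddling increment $A_{p_a,p_{a+1}}$ unsplit: in the generic case $R^n_i$ is literally \eqref{form.Rni} with $(s_1,s_2,s_3,s_4)=(p_0,p_a,p_{a+1},p_k)$, where $p_0,p_a\in I_0$ and $p_{a+1},p_k\in I_1$ force all four into $I^n_i$; the special case is \eqref{form.Rni} with $s_2=s_3=c$, and the untouched intervals use four equal points. Here the hypothesis $A_{u,u}=0$ is what makes these degenerate choices genuinely equal to the stated $R^n_i$ (and to $0$). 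Since every active list is a subset of $\{t_0,\dots,t_N\}$, an interval of length $2^{-n}(t-s)<\min_{0\le j<N}(t_{j+1}-t_j)$ can contain at most one $t_j$; hence picking $n_0$ past this threshold yields $R^n_i=0$ for all $n\ge n_0$, which is \ref{p.i} and also forces the recursion to terminate.

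It then remains to read off \eqref{id.yaskov} by unrolling $D^n_i=R^n_i+D^{n+1}_{2i}+D^{n+1}_{2i+1}$ from the root: summing over all intervals of scales $0,\dots,n_0-1$, the interior defects telescope and those at scale $n_0$ vanish, leaving $D^0_0=\sum_{n\ge0}\sum_i R^n_i$, which is precisely \ref{p.iii}. I would phrase this last bookkeeping as a downward induction on the number of scales separating a given interval from $n_0$, with the trivial scale-$n_0$ intervals as the base case.
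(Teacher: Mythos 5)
Your proof is correct and follows essentially the same dyadic-allocation argument as the paper: your local defect $D^n_i$ is the paper's $I(\pi^n_i)$ with $\pi^n_i=\{t_0,\dots,t_N\}\cap[t^n_i,t^n_{i+1})$, your recursion $D^n_i=R^n_i+D^{n+1}_{2i}+D^{n+1}_{2i+1}$ is exactly the paper's defining identity $R^n_i=I(\pi^n_i)-I(\pi^{n+1}_{2i})-I(\pi^{n+1}_{2i+1})$, and the termination and telescoping steps coincide. The only cosmetic difference is the midpoint convention: the paper uses half-open subintervals so that an active point at the midpoint is assigned to the right child only, whereas you duplicate it into both children and realize \eqref{form.Rni} with $s^{n,i}_2=s^{n,i}_3=c$ via $A_{c,c}=0$; both conventions are valid.
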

  \begin{proof}
    For each collection $\pi=\{s_i\}_{i=0}^K$ we define
    \[
      I(\pi)=\sum_{i=0}^{K-1} A_{s_i,s_{i+1}}-A_{s_0,s_K}
      \quad\textmd{if}\quad K\ge1
    \] 
    and $I(\pi)=0$ if either $K=0$ or $\pi$ is empty.
    For any two finite collections $\pi_1,\pi_2$, define
    \begin{equation}\label{def.dI}
      \delta I(\pi_1,\pi_2)=I(\pi_1\cup \pi_2)-I(\pi_1)-I(\pi_2).
    \end{equation}
    Put $\pi^0_0=\{t_i\}_{i=0}^N$, which is a subset of $[t^0_0,t^0_1]$. The main idea of the proof is to allocate the elements of $\pi^0_0$ into the dyadic subintervals of $[s,t]$ while keeping track of the resulting changes in $I(\pi^0_0)$ during the process.
    For each $n\ge1$, define
    \[
      \pi^n_{2^n-1}=\pi^0_0\cap[t^n_{2^n-1},t^n_{2^n}]
      \quad\textrm{and}\quad
      \pi^n_i=\pi^0_0\cap[t^n_i,t^n_{i+1})
      \quad\textmd{for}\quad i=0,\dots,2^n-2.
    \]
    For each $n\ge0$ and $i=0,\dots,2^n-1$, define
    \begin{equation}\label{def.Rni}
      R^n_i:=\delta I(\pi^{n+1}_{2i},\pi^{n+1}_{2i+1})= I(\pi^n_i)-I(\pi^{n+1}_{2i})-I(\pi^{n+1}_{2i+1})
    \end{equation}
    where the second identity comes from the fact that $\pi^n_i=\pi^{n+1}_{2i}\cup \pi^{n+1}_{2i+1}$. We verify that the random variables $\{R^n_i\}_{n,i}$ satisfy \ref{p.i}-\ref{p.iii}.

    Since $\pi^0_0$ is a finite set, there exists a finite integer $n_0\ge1$ so that $[t^n_i,t^n_{i+1}]\cap \pi^0_0$ contains at most one point for every $n\ge n_0$ and every $i=0,\dots,2^{n}-1$. Hence, when $n\ge n_0$, we have $I(\pi^{n}_i)=0$ and $R^n_i=0$ for every $i$. This shows \ref{p.i}. 

    % Since points in $\pi^n_i$ are $\cff_{t^n_{i+1}}$-measurable, we deduce from \eqref{def.Rni} that $R^n_i$ is $\cff_{t^n_{i+1}}$-measurable. 
    If either $\pi^{n+1}_{2i}$ or $\pi^{n+1}_{2i+1}$ is empty, then $R^n_i=0$ and \eqref{form.Rni} is satisfied with $s^{n,i}_j=t^n_i$ for $j=1,2,3,4$. In the case when $\pi^{n+1}_{2i}$ and $\pi^{n+1}_{2i+1}$ are not empty, we define 
    \[
      s^{n,i}_1=\min \pi^{n+1}_{2i},\, s^{n,i}_2=\max \pi^{n+1}_{2i},\, s^{n,i}_3=\min \pi^{n+1}_{2i+1}\, \mbox{ and } \,s^{n,i}_4=\max \pi^{n+1}_{2i+1}.
    \] Here, $\min$ (respectively $\max$) of a nonempty finite set $F$ is the smallest (respectively largest) element of $F$.  We derive \eqref{form.Rni} from \eqref{def.Rni} and the definition of $I$ at the beginning of the proof.
    Hence, \ref{p.ii} is verified.

    Lastly, to show \ref{p.iii}, we apply \eqref{def.Rni} recursively to see that
    \begin{align*}
      I(\pi^0_0)
      &=I(\pi^1_0)+I(\pi^1_1)+R^0_0
      \\&=\sum_{i=0}^{2^n-1}I(\pi^n_i)+\sum_{k=0}^{n-1}\sum_{i=0}^{2^k-1}R^k_i
      \quad\textrm{for every}\quad n\ge1.
    \end{align*}
    Since $I(\pi^n_i)=0$ as soon as $n\ge n_0$, the previous identity implies \eqref{id.yaskov}. This completes the proof.
  \end{proof}
  When combined with estimates \eqref{con:dA1} and \eqref{con:dA2} on $\delta A$, the previous lemma yields uniform estimates on Riemann sums along arbitrary partitions.
  \begin{lemma}\label{lem.ANA}
    Let $A$ be the process in Theorem \ref{lem:Sew1}. Then for every $N\ge0$ and every $S\le t_0<\cdots<t_N\le T$, we have
    \begin{equation}\label{est.AN1}
      \|\E^{\cff_{t_0}}\left(\sum_{i=0}^{N-1}A_{t_i,t_{i+1}}-A_{t_0,t_N}\right)\|_{L_m}\le \frac{2\Gamma_1}{1-2^{-\varepsilon_1}} |t_N-t_0|^{1+\varepsilon_1}
    \end{equation}
    and
    \begin{align}\label{est.AN2}
      \|\sum_{i=0}^{N-1}A_{t_i,t_{i+1}}-A_{t_0,t_N}\|_{L_m}\le \frac{2 \Gamma_1}{1-2^{-\varepsilon_1}} |t_N-t_0|^{1+\varepsilon_1}+\frac{2 \kappa_{m,d}\Gamma_2}{1-2^{-\varepsilon_2}} |t_N-t_0|^{\frac12+\varepsilon_2}.
    \end{align}
  \end{lemma}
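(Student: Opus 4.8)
The plan is to feed the dyadic-allocation identity of Lemma~\ref{lem.Yaskov} into the single-scale estimates \eqref{est:S1} and \eqref{est:S2}. Applying Lemma~\ref{lem.Yaskov} with $s=t_0$ and $t=t_N$ (note that setting $s=u=t$ in \eqref{con:dA1} forces $A_{s,s}=0$, so the standing hypothesis of that lemma holds), the quantity to be estimated becomes the \emph{finite} double sum
\[
  \sum_{i=0}^{N-1}A_{t_i,t_{i+1}}-A_{t_0,t_N}=\sum_{n=0}^{n_0-1}\sum_{i=0}^{2^n-1}R^n_i,
\]
where each $R^n_i$ is supported on the dyadic cell $[t^n_i,t^n_{i+1}]$ of length $2^{-n}|t_N-t_0|$. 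The first move is the algebraic observation that the four-point expression \eqref{form.Rni} collapses into two second-order increments sharing the \emph{same left endpoint}: writing $a=s^{n,i}_1\le b=s^{n,i}_2\le c=s^{n,i}_3\le e=s^{n,i}_4$, one has $R^n_i=-\delta A_{a,b,c}-\delta A_{a,c,e}$. This is the form on which \eqref{con:dA1} and \eqref{con:dA2} bite directly, since both constituent increments have left endpoint $a$ and total length at most $2^{-n}|t_N-t_0|$.

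Next I would split $R^n_i$ through its conditional expectation at $a=s^{n,i}_1$, that is $R^n_i=\E^{\cff_{a}}R^n_i+\big(R^n_i-\E^{\cff_{a}}R^n_i\big)$. For the predictable part, the triangle inequality together with the tower property (here $\cff_{t^n_i}\subset\cff_{a}$, so conditioning is a contraction) and \eqref{con:dA1} give $\|\E^{\cff_a}R^n_i\|_{L_m}\le 2\Gamma_1(2^{-n}|t_N-t_0|)^{1+\varepsilon_1}$; summing the $2^n$ terms at level $n$ and the resulting geometric series in $n$ produces exactly the constant $2\Gamma_1(1-2^{-\varepsilon_1})^{-1}$. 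Because $t_0\le a$ for every cell, applying $\E^{\cff_{t_0}}$ annihilates the centered remainder, so this same computation already yields \eqref{est.AN1}.

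The heart of the matter is the centered part $D^n_i:=R^n_i-\E^{\cff_a}R^n_i$, and here lies the main obstacle: the natural centering point $a=s^{n,i}_1$ on which \eqref{con:dA2} gives the clean bound $\|D^n_i\|_{L_m}\le 2\Gamma_2(2^{-n}|t_N-t_0|)^{\frac12+\varepsilon_2}$ is \emph{not} a point of the dyadic grid, so one cannot immediately invoke the martingale structure behind \eqref{est:S2}. The resolution I would use is that, at a fixed level $n$, the family $\{D^n_i\}_i$ is nonetheless a martingale-difference sequence for the grid filtration $\{\cff_{t^n_i}\}_i$: each $D^n_i$ is $\cff_{t^n_{i+1}}$-measurable, the earlier $D^n_0,\dots,D^n_{i-1}$ are $\cff_{t^n_i}$-measurable, and since $\cff_{t^n_i}\subset\cff_{a}$ the tower property gives $\E^{\cff_{t^n_i}}D^n_i=\E^{\cff_{t^n_i}}R^n_i-\E^{\cff_{t^n_i}}R^n_i=0$. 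Thus \eqref{est:S2} (with $\cgg_i=\cff_{t^n_i}$) applies to $\sum_i D^n_i$ and, combined with the bound on $\|D^n_i\|_{L_m}$, yields $\|\sum_i D^n_i\|_{L_m}\le 2\kappa_{m,d}\Gamma_2|t_N-t_0|^{\frac12+\varepsilon_2}2^{-n\varepsilon_2}$; summing the geometric series in $n$ and adding the predictable contribution from the previous paragraph gives \eqref{est.AN2}. The decisive trick is therefore this decoupling of the centering point from the filtration index, which keeps $\Gamma_1$ out of the martingale term and delivers the stated optimal constants.
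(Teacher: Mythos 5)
Your proof is correct and follows essentially the same route as the paper: dyadic allocation via Lemma~\ref{lem.Yaskov}, the rewriting $R^n_i=-\delta A_{s^{n,i}_1,s^{n,i}_2,s^{n,i}_3}-\delta A_{s^{n,i}_1,s^{n,i}_3,s^{n,i}_4}$, a Doob split at $\cff_{s^{n,i}_1}$, and the estimates \eqref{est:S1}--\eqref{est:S2} summed as geometric series. The only (immaterial) difference is that the paper sidesteps your ``off-grid centering'' issue by taking the filtration $\cgg^n_i=\cff_{s^{n,i}_1}$ directly, whereas you keep the grid filtration $\{\cff_{t^n_i}\}$ and restore the martingale-difference property by the tower argument; both yield the same constants.
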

  \begin{proof}
    Put $s=t_0$ and $t=t_N$. Applying Lemma \ref{lem.Yaskov}, we can find random variables $R^n_i$, $i=0,\cdots, 2^n-1$, $n\ge0$ which satisfy properties \ref{p.i}-\ref{p.iii} stated there. 

    Let $n\ge0$ be fixed. Define $\cgg^n_{2^n}=\cff_{t^n_{2^n}}$
    and $\cgg_i^n=\cff_{s_1^{n,i}}$ for each $i=0,\dots,2^n-1$. We recall that $s^{n,i}_1$ is defined in \ref{p.ii}. The sequence $\{\cgg_i^n\}_{i=0}^{2^n}$ forms a filtration such that $R^n_i$ is $\cgg^n_{i+1}$-measurable for every $i=0,\cdots,2^n-1$.
    The formula \eqref{form.Rni} can be written as
    \[
      R^n_i=-\delta A_{s^{n,i}_1,s^{n,i}_2,s^{n,i}_3}-\delta A_{s^{n,i}_1,s^{n,i}_3,s^{n,i}_4}
    \]
    Applying the conditions \eqref{con:dA1} and \eqref{con:dA2}, we obtain from the previous identity that
    \begin{equation}\label{tmp.rni}
      \|\E^{\cgg^n_i}R^n_i\|_{L_m}\le 2 \Gamma_1|t^n_{i+1}-t^n_{i}|^{1+\varepsilon_1}
      \quad\textrm{and}\quad
      \|R^n_i-\E^{\cgg^n_i}R^n_i\|_{L_m}\le 2 \Gamma_2|t^n_{i+1}-t^n_{i}|^{\frac12+\varepsilon_2}.
    \end{equation}
    For each $n\ge0$, we apply inequality \eqref{est:S} to obtain that
    \begin{align*}
      \|\sum_{i=0}^{2^n-1}R^n_i\|_{L_m}
      &\le\sum_{i=0}^{2^n-1}\|\E^{\cgg^n_i}R^n_i\|_{L_m}+ \kappa_{m,d}\left(\sum_{i=0}^{2^n-1}\|R^n_i-\E^{\cgg^n_i}R^n_i\|_{L_m}^2\right)^{\frac12}.
    \end{align*}
    The bounds in \eqref{tmp.rni} are applied to estimate each sum on the right-hand side above
    \begin{align}\label{tmp.R1}
      \sum_{i=0}^{2^n-1}\|\E^{\cgg^n_i}R^n_i\|_{L_m}\le 2\Gamma_1|t-s|^{1+\varepsilon_1}\sum_{i=0}^{2^n-1}2^{-n(1+\varepsilon_1)}
      =2\Gamma_1|t-s|^{1+\varepsilon_1}2^{-n\varepsilon_1}
    \end{align}
    and
    \begin{align*}
      \sum_{i=0}^{2^n-1}\|R^n_i-\E^{\cgg^n_i}R^n_i\|_{L_{m}}^2 
      \le  4\Gamma_2^2 |t-s|^{1+2 \varepsilon_2}\sum_{i=0}^{2^n-1}2^{-n(1+2 \varepsilon_2)} 
      = \left(2\Gamma_2|t-s|^{\frac12+\varepsilon_2}2^{-n \varepsilon_2}\right)^2.
    \end{align*}
    Combining the previous three estimates yields
    \begin{align}\label{tmp.R2}
      \|\sum_{i=0}^{2^n-1}R^n_i\|_{L_m}
      \le 2\Gamma_1|t-s|^{1+\varepsilon_1}2^{-n\varepsilon_1}+2\kappa_{m,d}\Gamma_2|t-s|^{\frac12+\varepsilon_2}2^{-n \varepsilon_2}.
    \end{align}

    From \eqref{id.yaskov}, applying triangle inequality and contraction property of conditional expectation (noting that $\cff_{t_0}\subset\cgg^n_i$) , we see that
    \begin{align*}
      \|\E^{\cff_{t_0}}\left(\sum_{i=0}^{N-1}A_{t_i,t_{i+1}}-A_{t_0,t_N}\right)\|_{L_m}
      \le \sum_{n\ge0}\sum_{i=0}^{2^n-1}\|\E^{\cgg^n_i}R^n_i\|_{L_m}
    \end{align*}
    and
    \begin{align*}
      \|\sum_{i=0}^{N-1}A_{t_i,t_{i+1}}-A_{t_0,t_N}\|_{L_m}
      \le \sum_{n\ge0}\|\sum_{i=0}^{2^n-1}R^n_i\|_{L_m}.
    \end{align*}
    We apply inequalities \eqref{tmp.R1} and \eqref{tmp.R2} to the previous two estimates and compute the resulting geometric sums in $n$ to obtain \eqref{est.AN1} and \eqref{est.AN2}.
  \end{proof}
  \begin{remark}\label{rmk.23}
     Of course, one could consider a more general situation where conditions \eqref{con:dA1} and \eqref{con:dA2} are replaced by
    \begin{equation}
      \|\E^{\cff_s}\delta A_{s,u,t}\|_{L_m}\le \rho_1(s,t)
      \quad\textrm{and}\quad
      \|\delta A_{s,u,t}-\E^{\cff_s}\delta A_{s,u,t}\|_{L_m}\le \rho_2(s,t)
    \end{equation}
    for every $S\le s\le u\le t\le T$, where $\rho_1,\rho_2$ are some non-negative functions with certain properties. As in Lyons' rough path theory (\cite{MR2314753,MR2604669}), we expect that such an extension is necessary to enlarge the scope of applications of the stochastic sewing lemma and possibly improve some of the results stated in the current paper. However, we will not consider such general situation on this occasion and defer it for later publications.
   \end{remark}
  \begin{proof}[Proof of Theorem \ref{lem:Sew1}] We divide the proof into three steps. In the first step, a process $\caa$ is constructed as limit of some Riemann sums using Lemma \ref{lem.ANA}. The second step verifies the properties \ref{cl:a} and \ref{cl:b} using Lemma \ref{lem.dyest}. In the last step, uniqueness of $\caa$ is shown using \ref{cl:a} and \ref{cl:b}.

    \textit{Step 1.} Let $t$ be fixed but arbitrary in $[S,T]$. We show that the Riemann sums
    \[
      A_t^\pi=\sum_{i=0}^{n-1} A_{t_i,t_{i+1}}
    \]
    over partitions $\pi=\{t_i\}_{i=0}^n$ of $[0,t]$ has a limit in $L_m$, denoted by $\caa_t$, as the mesh size $|\pi|:=\max_i|t_{i+1}-t_i|$ shrinks to $0$. 
    Let $\pi'=\{s_i\}_{i=0}^{n'}$ be another partition of $[0,t]$ and define $\pi''=\pi\cup \pi'$. We denote the points in $\pi''$ by $\{u_i\}_{i=0}^{n''}$ with $u_0\le u_1\le\cdots\le u_{n''}$ and some integer $n''\le n+n'$.  Then we have
    \begin{align*}
      A_t^{\pi''}-A_t^\pi=\sum_{i=0}^{n-1}Z_i
      \quad\textrm{where}\quad
      Z_i=\sum_{j:t_i\le u_j<t_{i+1}}A_{u_j,u_{j+1}}-A_{t_i,t_{i+1}}.
    \end{align*}
    Applying inequality \eqref{est:S}, we have
    \begin{align*}
      \|A_t^{\pi''}-A_t^\pi\|_{L_m} 
      &\lesssim \sum_{i=0}^{n-1}\|\E^{\cff_{t_i}}Z_i\|_{L_m}+\left(\sum_{i=0}^{n-1}\|Z_i\|_{L_m}^2\right)^{\frac12}.
    \end{align*}
    Lemma \ref{lem.ANA} is applied to obtain that
    \begin{align*}
      \|\E^{\cff_{t_i}}Z_i\|_{L_m}\lesssim|t_{i+1}-t_i|^{1+\varepsilon_1}
      \quad\textrm{and}\quad
      \|Z_i\|_{L_m}\lesssim|t_{i+1}-t_i|^{\frac12+\varepsilon_2}.
    \end{align*}
    This implies that
    \[
      \sum_{i=0}^{n-1}\|\E^{\cff_{t_i}}Z_i\|_{L_m}\lesssim \sum_{i=0}^{n-1}|t_{i+1}-t_{i}|^{1+\varepsilon_1}\lesssim |\pi|^{\varepsilon_1}
    \]
    and similarly $\sum_{i=0}^{n-1}\|Z_i\|_{L_m}^2\lesssim |\pi|^{\varepsilon_2}$.
     Hence, we have
    \begin{align*}
      \|A_t^\pi-A_t^{\pi''}\|_{L_m} 
      &\lesssim |\pi|^{\varepsilon_1}+|\pi|^{\varepsilon_2}.
    \end{align*}
    The same argument is applied to $A_t^{\pi'}-A_t^{\pi''}$ which gives $\|A_t^{\pi'}-A_t^{\pi''}\|_{L_m}\lesssim |\pi'|^{\varepsilon_1}+|\pi'|^{\varepsilon_2}$. Hence, by triangle inequality, 
    \begin{align*}
      \|A_t^\pi-A_t^{\pi'}\|_{L_m}\le\|A_t^\pi-A_t^{\pi''}\|_{L_m}+\|A_t^{\pi'}-A_t^{\pi''}\|_{L_m}
      \lesssim |\pi|^{\varepsilon_1}+|\pi|^{\varepsilon_2}+|\pi'|^{\varepsilon_1}+|\pi'|^{\varepsilon_2}.
    \end{align*}
    This implies that $\{A_t^\pi\}_\pi$ is Cauchy in $L_m$ and hence $\caa_t:=\lim_{|\pi|\to0}A_t^\pi$ is well-defined in $L_m$.

    \textit{Step 2.} We show that the process $(\caa_t)_{S\le t\le T}$ defined in the previous step satisfies \ref{cl:a} and \ref{cl:b}.

    The condition \eqref{con:dA2} with $s=u=t$ implies that $A_{s,s}=0$ for every $s\in[S,T]$. Hence, it is evident that $\caa_S=0$. The fact that $A$ is $\{\cff_t\}$-adapted implies that $\caa$ is $\{\cff_t\}$-adapted. Obviously, $\caa_t$, being a limit in $L_m$, belongs to $L_m$ for each $t$. This shows \ref{cl:a}.

    Let $(s,t)$ be fixed but arbitrary in $[S,T]^2_\le$. For each integer $n\ge1$, define $A^n_{s,t}$ as in \eqref{def.apin}. Applying Lemma \ref{lem.dyest}, we obtain that
    \begin{equation}
      \|A^n_{s,t}-A_{s,t}\|_{L_m}
      \le \frac{\Gamma_1}{1-2^{-\varepsilon_1}}|t-s|^{1+\varepsilon_1}+\frac{ \kappa_{m,d}\Gamma_2}{1-2^{-\varepsilon_2}}|t-s|^{\frac12+\varepsilon_2}\,,
      \label{tmp2:estA}
    \end{equation}
    and
    \begin{align}
      \|\E^{\cff_{s}}(A^n_{s,t}-A_{s,t})\|_{L_m}
       \le \frac{\Gamma_1}{1-2^{-\varepsilon_1}}|t-s|^{1+\varepsilon_1}\,.
      \label{tmp2:estA2}
    \end{align}
    From construction of $\caa$ in the previous step, we see that
    \begin{equation}\label{tmp.aa}
      \caa_t-\caa_s=\lim_{n\to\infty}A^n_{s,t} \quad\textrm{in}\quad L_m.
    \end{equation}
    Hence, passing through the limit $n\to\infty$ in \eqref{tmp2:estA} and \eqref{tmp2:estA2}, we obtain \eqref{est:A1} and \eqref{est:A2} respectively with constants $C_1= \Gamma_1(1-2^{-\varepsilon_1})^{-1}$ and $C_2=\kappa_{m,d}\Gamma_2(1-2^{-\varepsilon_2})^{-1}$. This shows \ref{cl:b}.
   
    \textit{Step 3. Uniqueness:} We show that properties \ref{cl:a} and \ref{cl:b} uniquely determine the process $\caa$. Suppose $\bar\caa$ is (another) $\{\cff_t\}$-adapted, $L_m$-integrable process on $[S,T]$ which satisfies $\bar\caa_S=0$ and for every $S\le s\le t\le T$
    \begin{gather*}
      \|\bar\caa_t-\bar\caa_s-A_{s,t}\|_{L_m}\le \bar C|t-s|^{\frac12+\bar\varepsilon}\,,
      \\\|\E^{\cff_{s}}(\bar\caa_t-\bar\caa_s-A_{s,t})\|_{L_m}\le \bar C|t-s|^{1+\bar\varepsilon}
    \end{gather*}
    for some positive constants $\bar C,\bar \varepsilon$. Then the difference $\tilde \caa=\caa-\bar \caa$ satisfies 
    \begin{equation}\label{tmp:tildeA}
      \|\tilde\caa_t-\tilde\caa_s\|_{L_m}\le C|t-s|^{\frac12+\varepsilon}
      \quad\textrm{and}\quad\|\E^{\cff_{s}}(\tilde\caa_t-\tilde\caa_s)\|_{L_m}\le  C|t-s|^{1+\varepsilon}
    \end{equation}
    for every $S\le s\le t\le T$, for some positive constants $C,\varepsilon$. Let us now fix $t\in[S,T]$, an integer $n\ge1$ and put $t_i=S+i2^{-n}(t-S) $ for each $i=0,\dots,2^{n}$. We write
    \begin{align*}
      \tilde\caa_t=\sum_{i=0}^{2^n-1}\(\tilde\caa_{t_{i+1}}-\tilde\caa_{t_i}\)
    \end{align*}
    and apply inequality \eqref{est:SS} (with $\cgg_i=\cff_{t_i}$) to obtain that
    \begin{align*}
      \|\tilde\caa_t\|_{L_m}\le \sum_{i=0}^{2^n-1}\|\E^{\cff_{t_i}}\(\tilde\caa_{t_{i+1}}-\tilde\caa_{t_i}\)\|_{L_m}+2 \kappa_{m,d}\(\sum_{i=0}^{2^n-1}\|\tilde\caa_{t_{i+1}}-\tilde\caa_{t_i}\|^2_{L_m}\)^{\frac12}\,.
    \end{align*}
    In view of \eqref{tmp:tildeA}, the above inequality yields
    $\|\tilde \caa_t\|_{L_m}\lesssim 2^{-n\varepsilon} $ for all integer $n\ge1$. This means $\caa_t=\bar\caa_t$ a.s. for all $t\in [S,T]$.
  \end{proof}
  \begin{remark}\label{rmk.pfSSL}
    (i) In the second step of the above proof, we can use Lemma \ref{lem.ANA} in place of Lemma \ref{lem.dyest}. However, this will produce estimates \eqref{est:A1} and \eqref{est:A2} with constants $C_1=2 \Gamma_1(1-2^{-\varepsilon_1})^{-1}$ and $C_2=2\kappa_{m,d}\Gamma_2(1-2^{-\varepsilon_2})^{-1}$.

    \noindent(ii) It is evident from the above proof that we can replace the condition that $A$ is $L_m$-integrable by the condition that $\delta A$ is $L_m$-integrable, meaning that $\delta A_{s,u,t}$ is $L_m$-integrable for every $s\le u\le t$. In this case, it is necessary to replace \ref{cl:a} in Theorem \ref{lem:Sew1} by
    \begin{enumerate}[wide,itemsep=2pt,labelindent=1pt,label={(\ref{lem:Sew1}a')}]
      \item  $\caa_S=0$, $\caa$ is $\{\cff_t\}$-adapted and $\caa_t-A_{S,t}$ is $L_m$-integrable for every $t\in[S,T]$,
    \end{enumerate}
    and we have $\{A^\pi_t-A_{S,t}\}_\pi$ converges to $\caa_t-A_{S,t}$ in $L_m$ as $|\pi|\downarrow0$.
    Similarly, the condition that $A$ is $\{\cff_t\}$-adapted can be replaced by the condition that $\delta A$ is $\{\cff_t\}$-adapted, meaning that $\delta A_{s,u,t}$ is $\cff_t$-measurable for every $s\le u\le t$. 
    In Theorem \ref{lem:Sew1}, if one only assumes that $\delta A$ is $\{\cff_t\}$-adapted and $L_m$-integrable, it is necessary to replace \ref{cl:a}  by
    \begin{enumerate}[wide,itemsep=2pt,labelindent=1pt,label={(\ref{lem:Sew1}a'')}]
      \item $\caa_S=0$, $\caa_t-A_{S,t}$ is $\cff_t$-measurable and $L_m$-integrable for every $t\in[S,T]$.
    \end{enumerate}
  \end{remark}
  \begin{proof}[Proof of Theorem \ref{cor:1}]
    We recall the definitions of $J,M$ in \eqref{def.JM}. 
    % From the estimates \eqref{est.EdJ}, \eqref{est.dJ} and \eqref{id.EM}, \eqref{est.dM}, we see that $J,M$ satisfies the hypotheses of Theorem \ref{lem:Sew1}. 
    We have seen from the discussion succeeding \eqref{def.JM} the existence of the processes $\cjj,\cmm$ which are $\{\cff_t\}$-adapted, $L_m$-integrable and satisfy \ref{cl:m}, \ref{cl:est.j}, \ref{cl:est.j'}. 
    In addition, the same arguments as Step 1 in the proof of Theorem \ref{lem:Sew1} imply that for every $t\in[S,T]$
    \begin{equation}\label{tmp.jjmm}
      \cjj_t=\lim_{|\pi|\to0}\sum_{i=0}^{N-1}J_{t_i,t_{i+1}}
      \quad\textrm{and}\quad
      \cmm_t=\lim_{|\pi|\to0}\sum_{i=0}^{N-1}M_{t_i,t_{i+1}}
    \end{equation}
    where the limits are taken in $L_m$ and $\pi=\{0=t_0<\cdots <t_N=t\}$ denotes a partition of $[0,t]$.
    
    We now show the remaining properties stated in Theorem \ref{cor:1}. Let $(s,t)$ be fixed but arbitrary in $[S,T]^2_\le$. 
    For each integer $n\ge1$, let $\pi^n_{s,t}=\{s=t^n_0<t^n_1<\cdots<t^n_{2^n}=t\}$ be the dyadic partition of $[s,t]$ as defined in Lemma \ref{lem.dyest}.
    Define
    \begin{equation*}
      M^n_{s,t}=\sum_{i=0}^{2^n-1}M_{t^n_i,t^n_{i+1}}\,.
    \end{equation*}
    From \eqref{id.EM}, \eqref{est.dM} and Lemma \ref{lem.dyest}, we obtain that
    \begin{align*}
      \|M^n_{s,t}-M_{s,t}\|_{L_m}
      &\le 
      \frac{\kappa_{m,d}\Gamma_2}{1-2^{-\varepsilon_2}} |t-s|^{\frac12+\varepsilon_2}+\frac{\kappa_{m,d}\Gamma_3}{1-2^{-\varepsilon_3}} |t-s|^{\frac12+\varepsilon_3}.
    \end{align*}
    Using the property \eqref{tmp.jjmm}, the above inequality implies \eqref{est:M} with constants $C_2= \kappa_{m,d}\Gamma_2(1-2^{-\varepsilon_2})^{-1}$ and $C_3= \kappa_{m,d}\Gamma_3(1-2^{-\varepsilon_3})^{-1}$. Hence \ref{cl:est.m} is verified.

    To see that $\caa=\cmm+\cjj$, we note that for any partition $\pi=\{0=t_0<\cdots<t_N=t\}$ of  $[0,t]$, we have
    \[
      \sum_{i=0}^{N-1}A_{t_i,t_{i+1}}=\sum_{i=0}^{N-1}M_{t_i,t_{i+1}}+\sum_{i=0}^{N-1}J_{t_i,t_{i+1}}.
    \]
    In view of \eqref{tmp.aa}, \eqref{tmp.jjmm}, the above identity implies that $\caa_t=\cmm_t+\cjj_t$ a.s. This shows \ref{cl:amj}.

        \textit{Uniqueness:} Suppose $(\cmm,\cjj)$ and $(\bar\cmm,\bar\cjj)$ are two pairs of processes which satisfy \ref{cl:amj}, \ref{cl:m} and \ref{cl:est.m} with the same processes $A$ and $\caa$. Then the difference processes $\tilde\cmm=\cmm-\bar\cmm$ and $\tilde\cjj=\cjj-\bar\cjj$ satisfy
    \begin{equation}\label{tmp.527}
      \tilde\cmm_t=-\tilde\cjj_t \quad \textrm{a.s.} \quad\textrm{and}\quad\|\tilde\cjj_t-\tilde\cjj_s\|_{L_m}\le C|t-s|^{\frac12+\varepsilon}
    \end{equation}
    for every $S\le s\le t\le T$, for some  constants $C\ge0$ and $\varepsilon>0$. The process $\tilde\cmm$, being a square integrable martingale with super-diffusive increments, ought to be a constant process. Alternatively, one could use the same arguments used in Step 3 of the proof of Theorem \ref{lem:Sew1} to conclude that $\tilde\cmm$ is a constant process. Since $\cmm_S=\bar\cmm_S=0$, this implies that $\cmm=\bar\cmm$ and $\cjj=\bar\cjj$.

    If instead, $(\cmm,\cjj)$ and $(\bar\cmm,\bar\cjj)$ satisfy \ref{cl:amj}, \ref{cl:m} and  \ref{cl:est.j} with the same processes $A$ and $\caa$, the difference processes $\tilde\cmm$ and $\tilde\cjj$ also satisfy \eqref{tmp.527}. Hence, we also have $\cmm=\bar\cmm$ and $\cjj=\bar\cjj$. This shows the claim \ref{ch1}.

    Since $\cmm$ is obtained by applying Theorem \ref{lem:Sew1} to $(s,t)\mapsto M_{s,t}=A_{s,t}-\E^{\cff_s}A_{s,t}$, the characterization of $\cmm$ in \ref{ch2} follows from the uniqueness part of Theorem \ref{lem:Sew1}. One can show the characterization of $\cjj$ in  \ref{ch3} analogously. 
  \end{proof}
  
% section stochastic_sewing_lemma (end)
\section{Distributive functionals of Markov processes} % (fold)
\label{sec:additive_functionals}
  In probability theory and in the study of stochastic differential equations with distributional drifts (\cite{MR1964949,MR3500267,MR3652414,MR3785598}), one encounters the integral of the form $\int_0^\cdot f(X_r)dr$ where $X$ is a solution to a stochastic differential equation (for instance a Brownian motion) and $f$ is a distribution. 
  To define and study such integrals, previous methods  involve explicit moment computations (as in the study of local times, \cite{MR2250510}) or the so-call Zvonkin transformation, also known as the It\^o-Tanaka trick (commonly used in \cite{MR1964949,MR3581216,MR3500267}). In the present section, a new method is proposed which is based on Theorem \ref{lem:Sew2}.
  This method relies on a certain smoothing effect of the transition probability kernel associated to $X$ and does not require explicit moment computations nor It\^o calculus to work.
  The main results are described below in Propositions \ref{prop:defA} and \ref{prop:Lip}.

  We proceed with the detail. Let us fix a time horizon $T>0$ and let $(\Omega,\cff,\P)$  be a complete probability space equipped with a filtration $\{\cff_t\}$ such that $\cff_0$ contains $\P$-null sets.
  For each $m\ge1$ and $\tau\in(0,1]$, the space $C^\tau_T L_m=C^\tau([0,T];[L^m(\Omega,\cff,\P)]^d)$ contains all stochastic processes $\psi$ with values in $\R^d$ such that
  \begin{equation*}
    \|\psi\|_{C^\tau_T L_m} := \sup_{t\in[0,T]}\|\psi_t\|_{L_m}+\sup_{s,t\in[0,T];\,s\neq t}\frac{\|\psi_t- \psi_s\|_{L_m}}{|t-s|^\tau}<\infty\,.
  \end{equation*}
  Similarly, the space $C_TL_m=C([0,T];[L^m(\Omega,\cff,\P)]^d)$ contains all stochastic processes $\psi$ with values in $\R^d$ such that $\psi:[0,T]\to L_m$ is continuous. The norm of $\psi$ in $C_TL_m$ is defined by $\|\psi\|_{C_T L_m} := \sup_{t\in[0,T]}\|\psi_t\|_{L_m}$.

  Let $X=\{X_t\}_{t\ge0}$ be a Markov process in $\R^d$ with respect to $\{\cff_t\}$ with Lebesgue-measurable sample paths.
  Let $(\chh,\|\cdot\|_\chh)$ be a normed vector space, which contains $C_b(\R^d)$ and is a subset of $\css'(\R^d)$. Let $U=\{U_{s,t}\}_{0\le s\le t}$ be a two-parameter family of bounded operators on $\chh$. The following condition relates $X,U$ and $\chh$.
  \begin{condition}\label{con:set1}
    $(X,U,\chh)$ satisfies: 
    \begin{enumerate}[label={\upshape(\roman*)}]
      % \item\label{set1a} for every $0\le s< t$, $U_{s,t}$ maps $\chh$ to $C_b(\R^d)$,
      \item\label{set1c} there are finite numbers $\|U\|_{\chh,\nu}\ge0$ and $\nu\le 0$ such that for every $h\in\chh$ and every $0\le s< t\le T$, $U_{s,t}h$ is a bounded continuous function on $\Rd$ and satisfies
      \begin{equation}\label{con:U1}
        \|U_{s,t}h\|_{\infty}\le \|U\|_{\chh,\nu}\|h\|_\chh |t-s|^{\frac{\nu}2}\,,
      \end{equation}
      where $\|\cdot\|_\infty$ is the supremum norm,
      \item\label{set1b} for every $s\ge0$, $x\in\Rd$ and $h\in\chh$, the map $[s,T]\ni r\to U_{s,r}h(x)\in\R$ is Lebesgue measurable,
      % \item\label{set1group} for every $s\le r< t$, $x\in\Rd$ and $h\in\chh$, $U_{s,r}[U_{r,t}h](x)=U_{s,t}h(x)$,
      \item\label{set1EU} for every $s\le r< t$, $x\in\Rd$ and $h\in\chh$, $\E^{\cff_s}[U_{r,t}h](X_r)=U_{s,t}h(X_s)$.    
     \end{enumerate}
  \end{condition}
  The first two conditions \ref{set1c},\ref{set1b} are purely analytic which are necessary for integrations. The last condition \ref{set1EU} 
  % relates the propagator $U$ with the Markov process $X$. Heuristically,  it 
  enforces that $U$ is the propagator which corresponds to the transition function of $X$ (see \cite[pg. 156]{MR838085}).

  We think of $\chh$ as a space containing irregular functions, measures or distributions with a certain negative regularity index. To be more specific, we briefly recall the definition of H\"older-Besov spaces.
  For each $\phi\in\css(\Rd)$, $x\in\Rd$ and $\lambda>0$, $\phi_x^\lambda$ is defined by $\phi_x^\lambda(y)=\lambda^{-d}\phi(\lambda^{-1}(y-x))$.
  For each integer $r\ge0$, let $\mathfrak{B}^r$ be the set of all smooth functions $\phi$ on $\Rd$ which are supported on the unit ball $\{x\in\Rd:|x|\le 1\}$ and satisfy $\|\phi\|_{C^r_b(\Rd)}\le1$.
  \begin{definition}\label{def.Besov}
    Let $\gamma\le0$ be a real number and $r>|\gamma|$ be an integer. The homogeneous H\"older-Besov space $\C^\gamma(\Rd)$ consists of all tempered distributions $g$ such that 
      \[
        \|g\|_{\C^\gamma}:=\sup_{\lambda\in(0,1]}\sup_{x\in\Rd} \sup_{\phi\in \mathfrak{B}^r} \lambda^{-\gamma} |g(\phi^\lambda_x)|<\infty.
      \]
   \end{definition}
  We refer to \cite{MR1228209} for more detail on the Besov-H\"older spaces. 
  As a toy example of a tuple $(X,U,\chh)$ which satisfies Condition \ref{con:set1}, one may take $(X,\{\cff_t\})$ as a standard Brownian motion in $\Rd$, $U_{s,t}=P_{s,t}$ which is the operator associated to the Gaussian kernel $(2 \pi(t-s))^{-\frac d2}e^{-|x-y|^2/(2(t-s))}$, and $\chh$ as the Besov-H\"older space $\C^{\nu}(\Rd)$. Since $P_{s,t}$ has a smooth kernel whenever $s<t$, the action of $P_{s,t}$ on any distribution is a smooth function. Combining with known Schauder estimates of heat kernels (see e.g. \cite[Lemma 2.8]{MR3358965} or \cite[Prop. 2.4]{MR3785598}), we see that for every $g\in\C^\nu$ and $s<t$, $P_{s,t}g$ is continuous on $\Rd$ and satisfies
  \begin{equation}\label{P.schau}
    \|P_{s,t}g\|_\infty\le C|t-s|^{\frac \nu2}\|g\|_{\C^\nu}
  \end{equation}
  for some positive constant $C$ independent from $s,t$ and $g$.
  It follows that Condition \ref{con:set1} is satisfied for every $\nu\le 0$. Other examples will appear in subsequent sections. 
  
  For each integer $k\ge0$, let $\cee_T$ and $\cee_T^k$ be respectively the sets of measurable finite-valued functions from $[0,T]$ to $\chh$ and $C_b^k(\Rd)$. A function $f$ in $\cee_T$ has the following form
  \begin{equation}\label{eqn:fcee}
    f_r=\sum_{i\in F}1_{I_i}(r)h_i \quad\forall r\in[0,T]\,,
  \end{equation}
  where $F$ is a finite set, $I_i$'s are Lebesgue-measurable subsets of $[0,T]$ and $h_i$'s are elements in $\chh$. Similarly, a function $f$ in $\cee^k_T$ has the form \eqref{eqn:fcee} with $h_i\in C_b^k(\Rd)$ for every $i$. Functions in $\cee_T$ and $\cee_T^k$ are also called simple functions.

  A function $f:[0,T]\to\chh$ is strongly Lebesgue-measurable if there exists a sequence of simple functions $\{f^n\}$ in $\cee_T$ such that $\lim_n f^n_r=f_r$ in $\chh$ for Lebesgue-a.e. $r\in[0,T]$.
  For each $q\in[1,\infty)$, the Bochner space $L^q([0,T];\chh)$ is the linear space of all (equivalence classes of) strongly Lebesgue-measurable functions $f:[0,T]\to\chh$ for which the Lebesgue integral $\int_{[0,T]}\|f_r\|_\chh^qdr$ is finite.
  We define $L^\infty([0,T];\chh)$ as the linear space of all (equivalence classes of) strongly Lebesgue-measurable functions $f:[0,T]\to\chh$ for which there exists a real number $N\ge0$ such that the set $\{r\in[0,T]:\|f_r\|_\chh>N \}$ has Lebesgue measure $0$. For brevity, we abbreviate $L^q_T\chh$ for $L^q([0,T];\chh)$ for every $q\in[1,\infty]$.
  The norm on $L^q_T\chh$ is defined by
  \[
    \|f\|_{L^q_T\chh}:=\left(\int_{[0,T]}\|f_r\|_\chh^q dr\right)^{\frac1q}
    \quad\textrm{for finite } q
  \]
  and when $q=\infty$ by
  \[
    \|f\|_{L^\infty_T\chh}:=\esssup_{r\in[0,T]}\|f_r\|_\chh.
  \]
  For further properties on Bochner spaces and measurability of Banach-valued functions, we refer the reader to \cite[Chapter 1]{MR3617205}.

  For a given $f\in L^q_T\chh$, measurability and integrability of the map $r\mapsto U_{s,r}f_r$ are discussed in the following two lemmas.
  \begin{lemma}\label{lem.meas}
    % Let $f$ be in $L^q_T\chh$,
    Let $(X,U,\chh)$ satisfy Condition \ref{con:set1}.
    For every $f\in L^q_T\chh$ and $x\in\Rd$, the map $r\mapsto U_{s,r}f_r(x)$ is Lebesgue-measurable from $[s,T]$ to $\R$.  
  \end{lemma}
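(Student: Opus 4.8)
The plan is to establish the claim first for simple functions $f\in\cee_T$, where it reduces to the measurability hypothesis built into Condition \ref{con:set1}, and then to pass to a general $f\in L^q_T\chh$ by an almost-everywhere approximation argument powered by the boundedness estimate \eqref{con:U1}. The value of $q$ plays no role; only the strong Lebesgue measurability of $f$ is used.

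First I would treat a simple function $f$ of the form \eqref{eqn:fcee}, namely $f_r=\sum_{i\in F}1_{I_i}(r)h_i$. Since each $U_{s,r}$ is a bounded linear operator on $\chh$ and, for $r>s$, $U_{s,r}h_i$ is a genuine bounded continuous function by Condition \ref{con:set1}\ref{set1c}, linearity gives, for every $x\in\Rd$,
\[
  U_{s,r}f_r(x)=\sum_{i\in F}1_{I_i}(r)\,(U_{s,r}h_i)(x).
\]
Each indicator $r\mapsto 1_{I_i}(r)$ is Lebesgue-measurable because $I_i$ is a Lebesgue-measurable subset of $[0,T]$, and each map $r\mapsto (U_{s,r}h_i)(x)$ is Lebesgue-measurable by Condition \ref{con:set1}\ref{set1b}. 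A finite sum of products of measurable functions being measurable, the map $r\mapsto U_{s,r}f_r(x)$ is Lebesgue-measurable on $[s,T]$, which settles the case $f\in\cee_T$.

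For general $f\in L^q_T\chh$, by the definition of strong Lebesgue measurability I would fix a sequence $\{f^n\}\subset\cee_T$ with $f^n_r\to f_r$ in $\chh$ for Lebesgue-a.e. $r\in[0,T]$. For each fixed $r\in(s,T]$ the operator $U_{s,r}$ is bounded, so \eqref{con:U1} yields
\[
  |U_{s,r}f^n_r(x)-U_{s,r}f_r(x)|\le\|U_{s,r}(f^n_r-f_r)\|_\infty\le\|U\|_{\chh,\nu}\,|r-s|^{\frac{\nu}2}\,\|f^n_r-f_r\|_\chh.
\]
For each such $r$ the weight $|r-s|^{\nu/2}$ is a finite constant, so the right-hand side tends to $0$ as $n\to\infty$; hence $U_{s,r}f^n_r(x)\to U_{s,r}f_r(x)$ for Lebesgue-a.e. $r\in(s,T]$. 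Thus $r\mapsto U_{s,r}f_r(x)$ is, off a Lebesgue-null set, the pointwise limit of the measurable functions $r\mapsto U_{s,r}f^n_r(x)$ produced in the previous step, and is therefore Lebesgue-measurable by completeness of Lebesgue measure (the single endpoint $r=s$ being irrelevant).

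The only delicate point is that the estimate \eqref{con:U1} degenerates like $|r-s|^{\nu/2}$ as $r\downarrow s$ when $\nu<0$, so there is no uniform-in-$r$ control of $U_{s,r}(f^n_r-f_r)$ near the diagonal. This is harmless, however: I only need convergence for each fixed $r$, and for fixed $r>s$ the weight is a mere constant, while the excluded point $r=s$ has measure zero and does not affect measurability on $[s,T]$.
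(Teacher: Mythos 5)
Your proof is correct and follows essentially the same route as the paper: establish measurability for simple functions via Condition \ref{con:set1}\ref{set1b} and the explicit form \eqref{eqn:fcee}, then pass to general $f$ by pointwise a.e.\ convergence using the bound \eqref{con:U1} at each fixed $r>s$. The extra remark about the degenerating weight $|r-s|^{\nu/2}$ near the diagonal is a sensible clarification but does not change the argument.
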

  \begin{proof}
    Since $f$ is strongly Lebesgue-measurable, we can find a sequence of simple functions $\{f^n\}$ in $\cee_T$ such that $\lim_nf_r^n=f_r$ in $\chh$ for a.e. $r\in[s,T]$. For each $n$, if $f^n_r$ has the form \eqref{eqn:fcee}, then for every $r\in(s,T]$,
    \[
      U_{s,r}f^n_r=\sum_{i\in F}1_{I_i}(r)U_{s,r}h_i.
    \]
    In view of Condition \ref{con:set1}\ref{set1b}, the above identity implies that $U_{s,\cdot}f_{\cdot}^n(x):[s,T]\to\R$ is measurable for every $x\in\Rd$. For each $r\in(s,T]$, using the bound \eqref{con:U1}, we see that $\lim_nU_{s,r}f_r^n(x)=U_{s,r}f_r(x)$  for every $x\in\Rd$. This implies, in particular, that $U_{s,\cdot}f_\cdot(x):[s,T]\to\R$ is Lebesgue-measurable.
  \end{proof}

  \begin{lemma}\label{lem:UL1}
    Suppose that Condition \ref{con:set1} holds. Let $f$ be a function in $L^q_T\chh$, with $q\in[1,\infty]$ is such that 
    \begin{equation}\label{con.1136}
      (\nu,q)=(0,1)
      \quad\textrm{or}\quad
      \frac{\nu}2-\frac1q>-1\,.
    \end{equation}
    Then, for every $s\in[0,T]$ and $x\in\Rd$, the maps $[s,T]\ni r\mapsto U_{s,r}f_r(x)\in\Rd$ and $[s,T]\ni r\mapsto\|U_{s,r}f_r\|_\infty\in\R$ are Lebesgue-measurable and Lebesgue-integrable.
    In addition, for every $(s,t)\in[0,T]^2_\le$, we have
    \begin{equation}\label{est:intUf}
      % \sup_{x\in\Rd}|\int_{[s,t]}U_{s,r}f_r(x)dr|\le  
      \int_{[s,t]}\|U_{s,r}f_r\|_\infty dr\le c(\nu,q)\|U\|_{\chh,\nu}\|f1_{[s,t]}\|_{L^q_{T}\chh}(t-s)^{1+\frac{\nu}2-\frac1q}\,,
    \end{equation}
    where $c(\nu,q)=1$ if $q=1$ and $c(\nu,q)=(1+\frac{\nu}2 \frac{q}{q-1})^{\frac1q-1}$ otherwise.
  \end{lemma}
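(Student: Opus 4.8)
The plan is to reduce everything to the pointwise Schauder-type bound \eqref{con:U1} followed by a single application of H\"older's inequality in the time variable; the assumption \eqref{con.1136} will turn out to be precisely the condition guaranteeing that the resulting time integral converges.

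First I would settle the measurability of $r\mapsto\|U_{s,r}f_r\|_\infty$. By Lemma \ref{lem.meas}, for each fixed $x\in\Rd$ the map $r\mapsto U_{s,r}f_r(x)$ is Lebesgue-measurable on $[s,T]$. Since Condition \ref{con:set1}\ref{set1c} guarantees that $U_{s,r}f_r$ is a bounded continuous function on $\Rd$ whenever $r>s$, its supremum norm is attained as a supremum over any countable dense set, so that $\|U_{s,r}f_r\|_\infty=\sup_{x\in\Q^d}|U_{s,r}f_r(x)|$. This exhibits $r\mapsto\|U_{s,r}f_r\|_\infty$ as a countable supremum of measurable functions, hence measurable; the single point $r=s$ is negligible and can be ignored throughout.

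Next I would invoke \eqref{con:U1} pointwise in $r$, which for every $s<r\le T$ gives $\|U_{s,r}f_r\|_\infty\le\|U\|_{\chh,\nu}\,\|f_r\|_\chh\,(r-s)^{\nu/2}$. Integrating over $[s,t]$, it then suffices to bound $\int_s^t\|f_r\|_\chh(r-s)^{\nu/2}\,dr$. When $q=1$ the hypothesis \eqref{con.1136} forces $\nu=0$ (as $\nu\le0$), so the weight is $1$ and the integral equals $\|f1_{[s,t]}\|_{L^1_T\chh}$, giving \eqref{est:intUf} with $c(\nu,q)=1$ and exponent $(t-s)^{0}$. When $q>1$ I would apply H\"older's inequality with conjugate exponent $q'=q/(q-1)$, separating $\|f_r\|_\chh$ from the weight:
\[
  \int_s^t\|f_r\|_\chh(r-s)^{\frac{\nu}2}\,dr
  \le\|f1_{[s,t]}\|_{L^q_T\chh}\left(\int_s^t(r-s)^{\frac{\nu}2 q'}\,dr\right)^{\frac1{q'}}.
\]

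The remaining step, and the only genuine computation, is to evaluate the weight integral and match constants. Setting $\alpha=\frac{\nu}2 q'=\frac{\nu}2\frac{q}{q-1}$, the integral $\int_s^t(r-s)^\alpha\,dr$ is finite exactly when $\alpha>-1$; a short manipulation shows that $\alpha>-1$ is equivalent to $\frac{\nu}2-\frac1q>-1$, i.e.\ precisely \eqref{con.1136}. Under this condition the integral equals $(\alpha+1)^{-1}(t-s)^{\alpha+1}$, and raising to the power $1/q'$ produces the factor $(\alpha+1)^{1/q-1}=(1+\frac{\nu}2\frac{q}{q-1})^{\frac1q-1}=c(\nu,q)$ together with the power $(t-s)^{(\alpha+1)/q'}$. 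A direct algebraic simplification of $(\alpha+1)/q'$ using $1/q'=(q-1)/q$ yields $1+\frac{\nu}2-\frac1q$, which is the exponent claimed in \eqref{est:intUf}. Finiteness of this bound simultaneously establishes the Lebesgue-integrability of the integrand, completing the argument. The main obstacle is purely bookkeeping: verifying that the integrability threshold for the time weight coincides exactly with the stated hypothesis, and that the constant and power emerge in the precise form asserted.
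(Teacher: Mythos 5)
Your proposal is correct and follows essentially the same route as the paper's proof: measurability via Lemma \ref{lem.meas} and the countable supremum over $\Q^d$, the pointwise bound from \eqref{con:U1}, the case split at $q=1$, and H\"older's inequality in time with the explicit evaluation of the weight integral producing exactly the constant $c(\nu,q)$ and the exponent $1+\frac{\nu}{2}-\frac1q$. No gaps.
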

  \begin{proof}
    From Lemma \ref{lem.meas}, we see that  $[s,T]\ni r\mapsto U_{s,r}f_r(x)\in\Rd$ is Lebesgue-measurable for every $x\in\Rd$. Since for each $r>s$, $U_{s,r}f_r:\Rd\to\R$ is continuous, we have
    \[
      \|U_{s,r}f_r\|_\infty=\sup_{x\in\Q^d}|U_{s,r}f_r(x)|
    \]
    where $\Q$ is the set of rationals on $\R$. This implies that $\|U_{s,\cdot}f_\cdot\|_\infty:[s,T]\to\R$ is Lebesgue-measurable.

    From Condition \ref{con:set1}\ref{set1c}, we have
    \[
      \|U_{s,r}f_r\|_\infty\le\|U\|_{\chh,\nu}|r-s|^{\frac \nu2}\|f_r\|_\chh.
    \]
    When $q=1$, \eqref{con.1136} implies that $\nu=0$, so we have
    \[
      \int_{[s,t]}\|U_{s,r}f_r\|_\infty\le \|U\|_{\chh,\nu}\|f\|_{L^1_T\chh}.
    \]
    When $q>1$, we apply H\"older inequality to see that
    \[
      \int_{[s,t]}|r-s|^{\frac \nu2}\|f_r\|_\chh dr\le c(\nu,q)\|f1_{[s,t]}\|_{L^q_{T}\chh}(t-s)^{1+\frac \nu2-\frac1q}\,.
    \]
    The above argument shows that $\|U_{s,\cdot}f_\cdot\|_\infty$ is integrable and the estimate \eqref{est:intUf} holds.
    Obviously, for every $x\in\Rd$ and $r>s$, $|U_{s,r}f_r(x)|\le\|U_{s,r}f_r\|_\infty$, so that $U_{s,\cdot}f_\cdot(x)$ is also Lebesgue integrable. This completes the proof.
  \end{proof}
  For each $t>0$ and each simple function $f$ in $\cee^0_T$ having the form \eqref{eqn:fcee}, the integration $\int_{[0,t]}f_r(X_r)dr$ is defined by
  \begin{equation*}
    \int_{[0,t]}f_r(X_r)dr:=\sum_{i\in F}\int_{I_i\cap[0,t]}h_i(X_r)dr\,.
  \end{equation*}
  Since $X$ has measurable sample paths, each $h_i(X(\omega)_\cdot):[0,T]\to\R$ is a bounded measurable function and each summand in the right-hand side of the above identity is a Lebesgue integral for a.s. $\omega\in \Omega$.
  For a function $f$ in $L^\infty_T C_b(\Rd)$, one can find (see \cite[Corollary 1.1.21]{MR3617205}) a sequence of simple functions $\{f_n\}$ in $\cee^0_T$ such that for a.e. $r\in[0,T]$
  \[
    \|f_n(r,\cdot)\|_\infty\le\|f(r,\cdot)\|_\infty
    \quad\textrm{and}\quad
    \lim_n\|f_n(r,\cdot)- f(r,\cdot)\|_\infty =0.
  \]
  It follows that $f:[0,T]\times\Rd\to\R$ and $f(\cdot,X_\cdot):[0,T]\to\R$ are Lebesgue measurable. Consequently, the integral $\int_{[0,t]}f(r,X_r)dr$ is well-defined as a Lebesgue integral and
  \[
    \int_{[0,t]}f(r,X_r)dr=\lim_n\int_{[0,t]}f_n(r,X_r)dr
  \]
  by dominated convergence theorem.
  We are going to define a linear map $\caa^X$ on $L^q_T\chh$ such that $\caa^X$ is continuous on $L^q_T\chh$ and for every $t\in[0,T]$, $\caa^X_t[f]= \int_{[0,t]} f_r(X_r)dr$ whenever $f\in L^\infty_TC_b(\Rd)$. 
  The process $t\mapsto\caa^X_t[f]$ is a natural extension to the integral $\int_0^\cdot f_r(X_r)dr$ in situations when $f_r$ is not a function or when the composition $f_r(X_r)$ is a priori ill-posed.
 The main toolbox we employ here is the stochastic sewing lemma described in Section \ref{sec:stochastic_sewing_lemma}, more specifically Theorem \ref{lem:Sew2}.

  Let $f$ be an element in $L^q_T\chh$. For every $s\le t$, we put
  \begin{equation}\label{def.afU}
    A^X_{s,t}[f]=\int_{[s,t]} U_{s,r}f_r(X_s)dr
    \end{equation}
  Condition \ref{con:set1}\ref{set1c} ensures that $U_{s,r}f_r$ is a continuous function for each $s<r$, hence evaluating $U_{s,r}f_r$ at any point makes perfect sense and there is no ambiguity in \eqref{def.afU}.
  Furthermore, the estimate \eqref{est:intUf} implies that
  \begin{equation}\label{est:supa}
    \|A^X_{s,t}[f]\|_{L_m}\le c(\nu,q)\|U\|_{\chh,\nu}\|f1_{[s,t]}\|_{L^q_{T}\chh}(t-s)^{1+\frac \nu2-\frac 1q}\,.
  \end{equation}
  If $f$ is a bounded continuous function on $[0,T]\times\Rd$, we can write
  \begin{equation*}
    A^X_{s,t}[f]=\E^{\cff_s}\int_s^t f_r(X_r)dr\,.
  \end{equation*}
  For every triplet $s\le u\le t$, we have
  \begin{equation*}
    \delta A^X_{s,u,t}[f]=\int_{[u,t]} U_{s,r}f_r(X_s)dr-\int_{[u,t]}U_{u,r}f_r(X_u)dr\,.
  \end{equation*}
  Applying Fubini's theorem (which is eligible due to \eqref{est:intUf}) and Condition \ref{con:set1}\ref{set1EU}, we see that
  \begin{align*}
    \E^{\cff_s}\int_{[u,t]}U_{u,r}f_r(X_u)dr
    =\int_{[u,t]}\E^{\cff_s} U_{u,r}f_r(X_u)dr
    % =\int_{[u,t]}U_{s,u} U_{u,r}f_r(X_s)dr
    =\int_{[u,t]}  U_{s,r}f_r(X_s)dr.
  \end{align*} 
  It follows that
  \begin{equation}\label{id.Amarkov}
    \E^{\cff_s} \delta A^X_{s,u,t}[f]=0.
  \end{equation}
  In view of \eqref{est:supa} and the above identity, we see that the hypotheses of Theorem \ref{lem:Sew2} are satisfied provided that the exponent of $t-s$ in \eqref{est:supa} is at least a half. This leads to the following result.
  \begin{proposition}\label{prop:defA}
    Let $m\ge2$ be fixed and assume that Condition \ref{con:set1} is satisfied. Let $q$ be a number in $(2,\infty]$ such that
    \begin{equation}\label{con:nuq}
      \frac{\nu}2-\frac1q>-\frac12\,.
    \end{equation} 
    There exists a linear map $\caa^X$ from $L^q_T\chh$ to $C^{1+\frac{\nu}2-\frac1q}_TL_m$ which satisfies the following properties.
    \begin{enumerate}[label={\upshape(\alph*)}]
      \item\label{pa} For every $f\in L^q_T\chh$ and every $0\le s\le t\le T$, $\caa^X_t[f]$ is $\cff_t$-measurable,
      \[\E^{\cff_s}(\caa^X_t[f]-\caa^X_s[f])=\int_{[s,t]} U_{s,r}f_r(X_s)dr \quad \textrm{a.s.}\]
      and
      \begin{equation}\label{est:Ast}
        \|\caa^X_t[f]-\caa^X_s[f]\|_{L_m}\lesssim \|U\|_{\chh,\nu}\|f1_{[s,t]}\|_{L^q_T\chh}|t-s|^{1+\frac{\nu}2-\frac1q}\,.
      \end{equation}
       \item\label{pa2} For every $f\in L^q_T\chh$ and every $t$ in $[0,T]$, $\caa^X_t[f]$ is the limit in $L_m$ of the Riemann sum
      \begin{equation*}
        \sum_i \int_{[{t_i},{t_{i+1}}]}U_{t_i,r}f_r(X_{t_i})dr
      \end{equation*}
      as $\max_i|t_{i+1}-t_i|\to0$. Here $\{t_i\}_i$ is any finite partition of $[0,t]$.
    \end{enumerate}
  \end{proposition}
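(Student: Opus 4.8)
The plan is to verify, for each fixed $f\in L^q_T\chh$, that the two-parameter process $A^X[f]$ from \eqref{def.afU} meets the hypotheses of Theorem \ref{lem:Sew2} on $[0,T]$, and then to read off \ref{pa} and \ref{pa2} from its conclusions. First I would note that $A^X_{s,t}[f]=\int_{[s,t]}U_{s,r}f_r(X_s)\,dr$ is a deterministic functional of $X_s$, hence $\cff_s$-measurable and in particular adapted, and that it is $L_m$-integrable with $\|A^X_{s,t}[f]\|_{L_m}\le\int_{[s,t]}\|U_{s,r}f_r\|_\infty\,dr<\infty$; the required measurability and finiteness are furnished by Lemmas \ref{lem.meas} and \ref{lem:UL1}. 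Condition \eqref{con:dA1} then holds with $\Gamma_1=0$ and any $\varepsilon_1>0$, thanks to the Markovian identity \eqref{id.Amarkov} stating $\E^{\cff_s}\delta A^X_{s,u,t}[f]=0$. Condition \eqref{con:A} holds by the bound \eqref{est:supa}: with $\varepsilon_4:=\frac12+\frac{\nu}2-\frac1q$, which is positive exactly by the standing hypothesis \eqref{con:nuq}, one has $\frac12+\varepsilon_4=1+\frac{\nu}2-\frac1q$ and may take $\Gamma_4=c(\nu,q)\|U\|_{\chh,\nu}\|f\|_{L^q_T\chh}$. Theorem \ref{lem:Sew2} therefore yields a unique adapted, $L_m$-integrable process $\caa^X_\cdot[f]$ with $\caa^X_0[f]=0$.

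I would then translate the conclusions. Since $\Gamma_1=0$ forces the constant $C_1$ in \eqref{est:A2} to vanish, \eqref{est:A2} reads $\E^{\cff_s}(\caa^X_t[f]-\caa^X_s[f]-A^X_{s,t}[f])=0$, and the $\cff_s$-measurability of $A^X_{s,t}[f]$ turns this into the identity $\E^{\cff_s}(\caa^X_t[f]-\caa^X_s[f])=\int_{[s,t]}U_{s,r}f_r(X_s)\,dr$ of part \ref{pa}. Part \ref{pa2}, the convergence of the Riemann sums $\sum_i\int_{[t_i,t_{i+1}]}U_{t_i,r}f_r(X_{t_i})\,dr=\sum_iA^X_{t_i,t_{i+1}}[f]$ to $\caa^X_t[f]$ in $L_m$, is precisely the ``Furthermore'' assertion of Theorem \ref{lem:Sew1} applied to $A^X[f]$ (whose hypotheses are implied by those of Theorem \ref{lem:Sew2}).

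The delicate point is the estimate \eqref{est:Ast}, which must feature the localized norm $\|f1_{[s,t]}\|_{L^q_T\chh}$ and not the global $\|f\|_{L^q_T\chh}$. To secure it I would fix $(s,t)\in[0,T]^2_\le$ and reapply Theorem \ref{lem:Sew2} to the restriction of $A^X[f]$ to the simplex $[s,t]^2_\le$. There every subinterval $[\sigma,\tau]$ lies in $[s,t]$, so \eqref{est:supa} gives $\|A^X_{\sigma,\tau}[f]\|_{L_m}\le c(\nu,q)\|U\|_{\chh,\nu}\|f1_{[s,t]}\|_{L^q_T\chh}(\tau-\sigma)^{1+\frac{\nu}2-\frac1q}$; that is, \eqref{con:A} holds on $[s,t]$ with the improved constant $\Gamma_4^{s,t}=c(\nu,q)\|U\|_{\chh,\nu}\|f1_{[s,t]}\|_{L^q_T\chh}$. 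The process $\tilde\caa$ produced on $[s,t]$ thus satisfies $\|\tilde\caa_t-\tilde\caa_s\|_{L_m}\lesssim\Gamma_4^{s,t}|t-s|^{1+\frac{\nu}2-\frac1q}$; since $\tau\mapsto\caa^X_\tau[f]-\caa^X_s[f]$ is adapted, vanishes at $\tau=s$, and obeys \eqref{est:A1'} and \eqref{est:A2} on $[s,t]$ relative to the same $A^X[f]$, the uniqueness clause of Theorem \ref{lem:Sew2} identifies it with $\tilde\caa$, giving \eqref{est:Ast}. Finally, linearity of $f\mapsto\caa^X[f]$ follows from the linearity of $f\mapsto A^X_{s,t}[f]$ together with uniqueness (the process $\alpha\caa^X[f]+\beta\caa^X[g]$ satisfies the defining properties for $\alpha f+\beta g$), and taking $s=0$ in \eqref{est:Ast} bounds $\sup_t\|\caa^X_t[f]\|_{L_m}$ while \eqref{est:Ast} itself bounds the H\"older seminorm; both are $\lesssim\|f\|_{L^q_T\chh}$, so $\caa^X$ maps $L^q_T\chh$ boundedly, hence continuously, into $C^{1+\frac{\nu}2-\frac1q}_TL_m$. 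The localization argument is the only genuine obstacle; the remainder is a direct reading of Theorem \ref{lem:Sew2}.
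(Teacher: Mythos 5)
Your proof is correct and follows essentially the same route as the paper: verify that $A^X_{s,t}[f]=\int_{[s,t]}U_{s,r}f_r(X_s)\,dr$ satisfies the hypotheses of Theorem \ref{lem:Sew2} via \eqref{est:supa} and the martingale-type identity \eqref{id.Amarkov}, then read off \ref{pa} and \ref{pa2} from the conclusions (with $\Gamma_1=0$ forcing $C_1=0$ in \eqref{est:A2}, which upgrades that estimate to the exact conditional-expectation identity). Your extra step of reapplying the theorem on the sub-simplex $[s,t]^2_\le$ and invoking uniqueness to obtain the localized norm $\|f1_{[s,t]}\|_{L^q_T\chh}$ in \eqref{est:Ast} is a legitimate and welcome elaboration of a point the paper leaves implicit.
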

  \begin{proof}
    Let $f\in L^q_T\chh $ be fixed and define $A^X_{s,t}[f]$ as in \eqref{def.afU}. 
    From \eqref{con:nuq} and the estimate \eqref{est:supa}, we infer that $A^X[f]$ satisfies \eqref{con:A} with $\varepsilon_4=\frac12+\frac \nu2-\frac1q$.
    We apply Theorem \ref{lem:Sew2} to $A_{s,t}[f]$ to obtain an $\{\cff_t\}$-adapted and $L_m$-integrable process $\caa^X[f]$ which satisfies properties \ref{pa} and \ref{pa2}. 
    Inequality \eqref{est:Ast} also implies that $\caa^X[f]$ belongs to $C_T^{1+\frac \nu2-\frac1q}L_m$. 
    \end{proof}
  The next result connects $\caa^X[f]$ with the usual integration $\int_0^\cdot f(r,X_r)dr$ for a class of test functions.
  \begin{definition}\label{def.goodtestfunct}
    $\mathfrak{G}_T(X)$ is the set of all functions $f$ in $L^\infty_TC_b(\Rd)$ such that for a.e. $r\in[0,T]$ and a.s. $\omega$, $\E^{\cff_s} f_r(X_r(\omega))=U_{s,r}f_r(X_s(\omega))$  for every $s\in[0,r]$.
  \end{definition}
  \begin{proposition}\label{prop.K}
    Suppose that the hypotheses of Proposition \ref{prop:defA} are satisfied. 
    Then, $\caa^X_t[f]=\int_{[0,t]}f(r,X_r)dr$ a.s. for every $t\in[0,T]$ and every $f\in\fgg_T(X)$.
  \end{proposition}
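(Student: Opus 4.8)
The plan is to exhibit the process $\varphi_t:=\int_{[0,t]}f(r,X_r)\,dr$ as a candidate satisfying the very properties \ref{25a}--\ref{25b} that, by Theorem \ref{lem:Sew2}, determine $\caa^X[f]$ uniquely, and then conclude by uniqueness. As recalled in the discussion preceding Proposition \ref{prop:defA}, for $f\in\fgg_T(X)\subset L^\infty_TC_b(\Rd)$ the integral defining $\varphi$ is an honest Lebesgue integral for a.e.\ $\omega$; thus $\varphi_0=0$, $\varphi$ is $\{\cff_t\}$-adapted (because $X$ has measurable paths and each $f_r$ is continuous and bounded), and $\|\varphi_t-\varphi_s\|_{L_m}\le\big(\esssup_{r\in[0,T]}\|f_r\|_\infty\big)\,|t-s|$, so $\varphi$ is $L_m$-integrable. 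In particular $\varphi$ satisfies \ref{25a} and the bound \eqref{est:A1'} (with $\varepsilon_4=\tfrac12$), so it remains only to match $\varphi$ with $A^X[f]$ through \eqref{est:A2}.

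First I would establish the key identity $\E^{\cff_s}(\varphi_t-\varphi_s)=A^X_{s,t}[f]$ for each fixed $0\le s\le t\le T$. Writing $\varphi_t-\varphi_s=\int_{[s,t]}f_r(X_r)\,dr$ and using the conditional Fubini theorem---legitimate since $(r,\omega)\mapsto f_r(X_r(\omega))$ is jointly measurable and bounded---gives $\E^{\cff_s}(\varphi_t-\varphi_s)=\int_{[s,t]}\E^{\cff_s}f_r(X_r)\,dr$. The defining property of $\fgg_T(X)$ (Definition \ref{def.goodtestfunct}) states precisely that $\E^{\cff_s}f_r(X_r)=U_{s,r}f_r(X_s)$ for a.e.\ $r$ and a.s.\ $\omega$, so the right-hand side equals $\int_{[s,t]}U_{s,r}f_r(X_s)\,dr=A^X_{s,t}[f]$. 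This is the same interchange already performed in the text to obtain \eqref{id.Amarkov}, and the measurability and integrability it requires are supplied by Lemma \ref{lem:UL1}.

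Granting this identity, $\E^{\cff_s}(\varphi_t-\varphi_s-A^X_{s,t}[f])=0$, so \eqref{est:A2} holds for $\varphi$ with $C_1=0$. Hence $\varphi$ satisfies both \ref{25a} and \ref{25b} relative to the increment process $A^X[f]$, exactly as $\caa^X[f]$ does by Proposition \ref{prop:defA}. The uniqueness (up to modifications) asserted in Theorem \ref{lem:Sew2} then forces $\varphi_t=\caa^X_t[f]$ a.s.\ for every $t\in[0,T]$, which is the claim.

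The step deserving the most care is the second one: one must justify the interchange of $\E^{\cff_s}$ with the time integral and handle the ``a.e.\ $r$, a.s.\ $\omega$'' quantifiers in the definition of $\fgg_T(X)$, checking that the exceptional null sets do not obstruct the integrated identity. Because the identity is only needed for one fixed pair $(s,t)$ at a time and both integrands are bounded, a routine application of the conditional Fubini/Tonelli theorem resolves this, and no uniform-in-$s$ control of the null sets is required.
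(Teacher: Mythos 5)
Your proposal is correct and follows essentially the same route as the paper: both exhibit $\varphi_t=\int_{[0,t]}f_r(X_r)\,dr$ as a competitor satisfying the defining properties of Theorem \ref{lem:Sew2} for the increment process $A^X[f]$, using Fubini and the defining identity of $\fgg_T(X)$ to get $\E^{\cff_s}(\varphi_t-\varphi_s)=A^X_{s,t}[f]$, and then invoke uniqueness. The only cosmetic difference is that the paper records the bound $\|\int_{[s,t]}f_r(X_r)\,dr-A^X_{s,t}[f]\|_{L_m}\le 2|t-s|\esssup_r\|f_r\|_\infty$ directly, whereas you bound $\|\varphi_t-\varphi_s\|_{L_m}$ alone; both suffice for the uniqueness criterion.
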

  \begin{proof}
    By Fubini's theorem and the definition of $\fgg_T(X)$, for every $s\le t$, we have
    \begin{equation*}
      \E^{\cff_s}\int_{[s,t]}f_r(X_r)dr=\int_{[s,t]}\E^{\cff_s}f_r(X_r)dr=\int_{[s,t]}U_{s,r}f_r(X_s)dr \quad\textrm{a.s.}
    \end{equation*}
    It follows that
    \begin{equation*}
      \|\int_{[s,t]}f_r(X_r)dr-\int_{[s,t]}U_{s,r}f_r(X_s)dr\|_{L_m}\le 2|t-s|\esssup_{r\in[s,t]}\|f_r\|_\infty.
    \end{equation*}
    Hence, by Proposition \ref{prop:defA} and the uniqueness part of Theorem \ref{lem:Sew2}, $\caa^X_t[f]=\int_{[0,t]} f_r(X_r)dr $ a.s. for every $t\ge0$. 
    % This shows property \ref{pa1}.
  \end{proof}
 \begin{remark}\label{rem.intf} Let $f\in L^q_T\chh$ be fixed.

    (i) Since the process $\caa^X[f]$ is constructed from Theorem \ref{lem:Sew2}, it is the unique $\{\cff_t\}$-adapted and $L_m$-integrable process $\varphi$ such that $\varphi_0=0$, 
    \begin{equation*}
      \|\varphi_t- \varphi_s\|_{L_m}\lesssim |t-s|^{\frac12+\varepsilon}
      \quad\textrm{and}\quad
      \|\E^{\cff_s}(\varphi_t- \varphi_s-\int_{[s,t]}U_{s,r}f_r(X_s)dr)\|_{L_m}\lesssim|t-s|^{1+\varepsilon}
    \end{equation*}
    for every  $(s,t)$ in $[0,T]^2_\le$, for some $\varepsilon>0$. This provides a characterization of the process $\caa^X[f]$ for a given $f$.

    (ii) By choosing $m\ge2$ sufficiently large in \eqref{est:Ast} and applying Kolmogorov continuity theorem, we can find a modification of $\caa^X[f]$ which is a.s. $\tau$-H\"older continuous on $[0,T]$ for every $\tau\in(0,1+\frac \nu2-\frac1q)$.
    In the remaining of the section, we will always work with this continuous version of $\caa^X[f]$.
  \end{remark}
  \begin{corollary}\label{cor:Acont}
    Assume that the hypotheses of Proposition \ref{prop:defA} hold. Let $\tau$ be any number in $(\frac12,1+\frac \nu2-\frac1q)$.
    There exists a number $m_{\tau}\ge2$ such that
    for every $f\in L^q_T\chh$ and every sequence $\{f_n\}_n\subset \fgg_T(X)$ convergent to $f$ in $L^q_T\chh$, 
    \begin{equation}
      \lim_{n}\lt\|\big\|\int_{[0,\cdot]} f_{n}(r,X_r)dr-\caa^X_\cdot[f]\big\|_{C^\tau([0,T])} \rt\|_{L_m}=0
    \end{equation}
    for every $m\ge m_{\tau}$.
  \end{corollary}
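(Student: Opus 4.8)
The plan is to reduce everything to the linearity of $\caa^X$, the single increment bound \eqref{est:Ast}, and a quantitative Kolmogorov/Garsia--Rodemich--Rumsey (GRR) estimate. Write $\beta:=1+\frac{\nu}2-\frac1q$, so that by hypothesis $\tau\in(\frac12,\beta)$. Since each $f_n$ lies in $\fgg_T(X)$, Proposition \ref{prop.K} gives $\int_{[0,t]}f_n(r,X_r)dr=\caa^X_t[f_n]$ a.s.\ for every $t$, and linearity of $\caa^X$ lets us rewrite the quantity inside the norm as $\caa^X_\cdot[f_n-f]$. Setting $g_n:=f_n-f$, which tends to $0$ in $L^q_T\chh$, the claim becomes
\[
  \lim_n\Big\|\,\|\caa^X_\cdot[g_n]\|_{C^\tau([0,T])}\,\Big\|_{L_m}=0.
\]

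First I would record the increment estimate for $g_n$. Applying \eqref{est:Ast} together with $\|g_n 1_{[s,t]}\|_{L^q_T\chh}\le\|g_n\|_{L^q_T\chh}$ yields, for every $(s,t)\in[0,T]^2_\le$,
\[
  \|\caa^X_t[g_n]-\caa^X_s[g_n]\|_{L_m}\lesssim \|g_n\|_{L^q_T\chh}\,|t-s|^{\beta},
\]
with a constant independent of $n$ and of $(s,t)$. Throughout I work with the continuous modification of $\caa^X[g_n]$ furnished by Remark \ref{rem.intf}(ii); by linearity this is indistinguishable from the difference of the continuous modifications of $\caa^X[f_n]$ and $\caa^X[f]$, so the supremum and Hölder seminorm below are well defined.

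Next I would convert this moment bound on increments into a moment bound on the full $C^\tau$-norm via the quantitative Kolmogorov continuity theorem in its GRR form. Applying GRR with $\Psi(x)=|x|^m$ and $p(u)=u^{\tau+2/m}$ and then taking expectations gives
\[
  \Big\|\sup_{s\ne t}\frac{|\caa^X_t[g_n]-\caa^X_s[g_n]|}{|t-s|^\tau}\Big\|_{L_m}
  \lesssim \Big(\int_{[0,T]^2}\frac{\|\caa^X_t[g_n]-\caa^X_s[g_n]\|_{L_m}^m}{|t-s|^{\tau m+2}}\,ds\,dt\Big)^{\frac1m}.
\]
Inserting the increment bound, the integrand is dominated by $\|g_n\|_{L^q_T\chh}^m\,|t-s|^{(\beta-\tau)m-2}$, and the double integral over $[0,T]^2$ is finite precisely when $(\beta-\tau)m>1$. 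I therefore set $m_\tau:=\max\{2,\lceil(\beta-\tau)^{-1}\rceil+1\}$; then every $m\ge m_\tau$ satisfies $(\beta-\tau)m>1$, and the Hölder seminorm of $\caa^X[g_n]$ is bounded in $L_m$ by a constant times $\|g_n\|_{L^q_T\chh}$. Because $\caa^X_0[g_n]=0$, the supremum is controlled by the seminorm through $\sup_t|\caa^X_t[g_n]|\le T^\tau\sup_{s\ne t}|t-s|^{-\tau}\,|\caa^X_t[g_n]-\caa^X_s[g_n]|$, so altogether $\big\|\,\|\caa^X_\cdot[g_n]\|_{C^\tau([0,T])}\,\big\|_{L_m}\lesssim\|g_n\|_{L^q_T\chh}$. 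Letting $n\to\infty$ and using $\|g_n\|_{L^q_T\chh}\to0$ completes the proof.

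The main obstacle is the bookkeeping in the GRR step: one must choose $m_\tau$ so that, for \emph{all} $m\ge m_\tau$ simultaneously, the exponent $(\beta-\tau)m-2$ exceeds $-1$ (ensuring integrability across the diagonal) while keeping $m\ge2$, which is needed for \eqref{est:Ast} and the underlying sewing lemma to apply. Everything else is a routine combination of linearity of $\caa^X$, the one increment estimate \eqref{est:Ast}, and the standard quantitative Hölder-continuity argument.
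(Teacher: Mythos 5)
Your proposal is correct and follows essentially the same route as the paper: identify $\int_{[0,\cdot]}f_n(r,X_r)dr$ with $\caa^X_\cdot[f_n]$ via Proposition \ref{prop.K}, use linearity and the increment bound \eqref{est:Ast} on $\caa^X[f_n-f]$, and then apply the Garsia--Rodemich--Rumsey inequality to upgrade the moment bound on increments to a bound on the $C^\tau$-norm in $L_m$. The only difference is that you spell out the GRR bookkeeping (the choice of $\Psi$, $p$, and the integrability condition $(\beta-\tau)m>1$ determining $m_\tau$), which the paper leaves implicit.
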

  \begin{proof}
    From \eqref{est:Ast}, we have
    \begin{align*}
      \|\caa_t^X[f_{n}-f]-\caa_s^X[f_{n}-f]\|_{L_m}\lesssim \|f_n-f\|_{L^q_T\chh}|t-s|^{1+\frac \nu2-\frac1q}\,.
    \end{align*}
    for every $s\le t$. 
    Applying Garsia-Rodemich-Rumsey inequality, there exists $m_\tau\ge2$ such that for every $m\ge m_\tau$, we have
    \begin{align*}
      \|\|\caa_\cdot^X[f_{n}-f]\|_{C^\tau([0,T])}\| _{L_m}\lesssim \|f_n-f\|_{L^q_T\chh}.
    \end{align*}
    Observing that $\caa^X_t[f_n]=\int_{[0,t]}f_n(r,X_r)dr$ (by Proposition \ref{prop.K}), the previous estimate implies the result.
  \end{proof}
  
  It worths noting that the process $\caa^X_\cdot[f]$ depends on the trajectory of $X$, the element $f$ and the transition law of $(X,\{\cff_t\})$ (via the transition propagator $U$). Compare to the situation when $f$ is a continuous bounded function, $\int_0^\cdot f_r(X_r)dr$ only depends on the first two factors. 
  In the last part of the current section, we investigate the continuity dependence of  process $\caa^X_{\cdot}[f]$ with respect to the factors: the transition propagator corresponding to $(X,\{\cff_t\})$, the trajectory of $X$ and the element $f$. 
  % In the remaining of the current section, we establish continuity property of $\caa^X[f]$ with respect to these factors. 

  Let $X,Y$ be two Markov processes with respect to $\{\cff_t\}$ and let $U^X=\{U^X_{s,t}\}_{s\le t}$, $U^Y=\{U^Y_{s,t}\}_{s\le t}$ be two families of bounded operators on $\chh$. Assume that $(X,U^X,\chh)$ and $(Y,U^Y,\chh)$ satisfy Condition \ref{con:set1} with the same $\nu\le 0$.
   We denote by $\|U^X-U^Y\|_{\chh,\nu}$ the smallest number $M$ such that
    \begin{equation*}
      \|(U^X_{s,t}-U^Y_{s,t})\phi\|_\infty\le M \|\phi\|_\chh|t-s|^{\frac{\nu}2}
    \end{equation*}
  for every  $\phi\in\chh$ and $0\le s< t\le T$. From the bound \eqref{con:U1}, we see that $\|U^X-U^Y\|_{\chh,\nu}$ is necessarily finite. For every $\nu_1\le 0$ and every $h>0$, we define
  \begin{equation}\label{con:U3}
    \rho^X_{\nu_1}(h):=\sup_{\phi:\|\phi\|_\chh\le 1}\sup_{0\le s<t\le T} \sup_{|x-y|\le h}\frac{|U^X_{s,t}\phi(x)-U^X_{s,t}\phi(y) |}{|t-s|^{\frac{\nu_1}2}}\,.
  \end{equation}
  We note that by triangle inequality 
  \[
    \rho^X_{\nu_1}(h)\le 2\sup_{\phi:\|\phi\|_\chh\le 1}\sup_{0\le s<t\le T}\frac{\|U^X_{s,t}\phi\|_\infty}{|t-s|^{\frac{\nu_1}2}}
  \] 
  so that $\rho^X_{\nu_1}(h)$ is finite whenever $\nu_1\le \nu$.
  \begin{proposition}\label{prop:Lip}
    We assume that $(X,U^X,\chh)$ and $(Y,U^Y,\chh)$ satisfy the Condition \ref{con:set1} with the same $\nu\le 0$. Let $q\in(2,\infty]$ satisfy inequality \eqref{con:nuq} and let $\nu_1\le 0$ satisfy $\frac{\nu_1}2-\frac1q>-\frac12$.
    We assume that $\rho^X_{\nu_1}(h)$ is finite for every $h>0$.
    Then, for every $f,g\in L^q_T\chh$ and for every $(s,t)\in[0,T]^2_\le$,
    \begin{multline*}
      \|\caa^X_t[g]-\caa^X_s[g]-\caa^Y_t[f]+\caa^Y_s[f]\|_{L_m}
      \\\lesssim\(\|f-g\|_{L^q_T\chh}\|U^X\|_{\chh,\nu}+\|f\|_{L^q_T\chh}D_{\nu,\nu_1}(X,Y)\)|t-s|^{1+\frac{\nu\wedge \nu_1}2-\frac1q}\,,
    \end{multline*}
    where
    \begin{equation*}
      D_{\nu,\nu_1}(X,Y)=\|U^X-U^Y\|_{\chh,\nu}+\sup_{r\in[0,T]}\|\rho^X_{\nu_1}(|X_r-Y_r|)\|_{L_m}.
    \end{equation*}
   \end{proposition}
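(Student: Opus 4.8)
The plan is to represent the difference $\Phi_t := \caa^X_t[g] - \caa^Y_t[f]$ as the output of the stochastic sewing lemma applied to a single increment process, and then to extract the estimate from the quantitative constant $C_4$ in Theorem \ref{lem:Sew2}. To this end I would introduce
\[
  A_{s,t} := A^X_{s,t}[g] - A^Y_{s,t}[f],
\]
where $A^X[g]$ and $A^Y[f]$ are the increment processes defined by \eqref{def.afU} for $(X,U^X,g)$ and $(Y,U^Y,f)$ respectively. This $A$ is $\{\cff_t\}$-adapted and $L_m$-integrable, and since each of $A^X[g]$, $A^Y[f]$ satisfies the martingale identity \eqref{id.Amarkov}, their difference obeys $\E^{\cff_s}\delta A_{s,u,t}=0$; hence hypothesis \eqref{con:dA1} of Theorem \ref{lem:Sew2} holds with $\Gamma_1=0$. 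The only substantial work is therefore to verify \eqref{con:A}, namely to bound $\|A_{s,t}\|_{L_m}$ by a constant times $|t-s|^{\frac12+\varepsilon_4}$ with $\varepsilon_4 = \frac12 + \frac{\nu\wedge\nu_1}2 - \frac1q>0$.

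The crux is a three-term telescoping decomposition arranged so that the \emph{spatial} increment is routed through $U^X$, whose modulus of continuity $\rho^X_{\nu_1}$ is controlled:
\begin{align*}
  A_{s,t} &= \int_{[s,t]}U^X_{s,r}(g_r-f_r)(X_s)\,dr
  + \int_{[s,t]}\bigl(U^X_{s,r}f_r(X_s)-U^X_{s,r}f_r(Y_s)\bigr)\,dr \\
  &\quad + \int_{[s,t]}\bigl(U^X_{s,r}-U^Y_{s,r}\bigr)f_r(Y_s)\,dr.
\end{align*}
For the first term I would apply \eqref{con:U1} for $U^X$ and the H\"older bound of Lemma \ref{lem:UL1} to get a contribution $\lesssim \|U^X\|_{\chh,\nu}\|f-g\|_{L^q_T\chh}|t-s|^{1+\frac\nu2-\frac1q}$; for the third term I would use the definition of $\|U^X-U^Y\|_{\chh,\nu}$ in place of \eqref{con:U1}, giving $\lesssim \|U^X-U^Y\|_{\chh,\nu}\|f\|_{L^q_T\chh}|t-s|^{1+\frac\nu2-\frac1q}$. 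For the middle term, the definition \eqref{con:U3} yields pointwise in $r$ the bound $|U^X_{s,r}f_r(X_s)-U^X_{s,r}f_r(Y_s)|\le \rho^X_{\nu_1}(|X_s-Y_s|)\|f_r\|_\chh|r-s|^{\frac{\nu_1}2}$, so pulling out the ($\cff_s$-measurable) factor $\rho^X_{\nu_1}(|X_s-Y_s|)$, applying H\"older to the deterministic time integral, and taking $L_m$-norms produces $\lesssim \|f\|_{L^q_T\chh}\,\|\rho^X_{\nu_1}(|X_s-Y_s|)\|_{L_m}\,|t-s|^{1+\frac{\nu_1}2-\frac1q}$. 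Bounding $\|\rho^X_{\nu_1}(|X_s-Y_s|)\|_{L_m}$ by its supremum over $[0,T]$ and collapsing the two exponents $1+\frac\nu2-\frac1q$ and $1+\frac{\nu_1}2-\frac1q$ to the smaller value $1+\frac{\nu\wedge\nu_1}2-\frac1q=\frac12+\varepsilon_4$ (absorbing a power of $T$ into the constant) establishes \eqref{con:A} with $\Gamma_4\lesssim \|f-g\|_{L^q_T\chh}\|U^X\|_{\chh,\nu}+\|f\|_{L^q_T\chh}D_{\nu,\nu_1}(X,Y)$.

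It then remains only to identify the sewing output with $\Phi$. By the conditional-expectation identity in Proposition \ref{prop:defA}\ref{pa} applied to both pairs, $\E^{\cff_s}(\Phi_t-\Phi_s)=A_{s,t}$, so $\Phi$ satisfies \eqref{est:A2} with $C_1=0$; moreover the individual increment bounds \eqref{est:Ast} give $\|\Phi_t-\Phi_s\|_{L_m}\lesssim|t-s|^{\frac12+\varepsilon_4}$, so $\Phi$ satisfies \eqref{est:A1'} for some constants. Since $\Phi_0=0$ and $\Phi$ is adapted and $L_m$-integrable, the uniqueness assertion of Theorem \ref{lem:Sew2} forces $\Phi=\caa[A]$ up to modification. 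Feeding the value of $\Gamma_4$ into the bound $C_4\le\Gamma_4(6\kappa_{m,d}(1-2^{-\varepsilon_4})^{-1}+1)$ of Theorem \ref{lem:Sew2} and reading off \eqref{est:A1'} then gives precisely the claimed inequality. I expect the main obstacle to be the choice of telescoping in the second paragraph---routing the spatial increment through $U^X$ rather than $U^Y$---since that is what makes $\rho^X_{\nu_1}$ and the exponent $\nu_1$ appear; the remaining steps are bookkeeping of H\"older exponents already carried out in Lemma \ref{lem:UL1}.
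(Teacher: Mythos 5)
Your proposal is correct and follows essentially the same route as the paper: the same increment process $A_{s,t}=A^X_{s,t}[g]-A^Y_{s,t}[f]$, the identical three-term telescoping (routing the spatial increment through $U^X$ so that $\rho^X_{\nu_1}$ appears), the same term-by-term bounds via \eqref{con:U1}, \eqref{con:U3} and the H\"older computation of Lemma \ref{lem:UL1}, and the same conclusion via $\E^{\cff_s}\delta A_{s,u,t}=0$ and Theorem \ref{lem:Sew2}. Your extra paragraph identifying the sewing output with $\caa^X[g]-\caa^Y[f]$ through the uniqueness clause is a point the paper leaves implicit, and it is handled correctly.
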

  \begin{proof}
    We apply Theorem \ref{lem:Sew2} to
    \begin{equation*}
      A_{s,t}:=A_{s,t}^X[g]-A_{s,t}^Y[f]= \int_{[s,t]} U^X_{s,r}g_r(X_s)dr-\int_{[s,t]} U^Y_{s,r}f_r(Y_s)dr\,.
    \end{equation*}
    Note that
    \begin{align*}
      A_{s,t}
      &=\int_{[s,t]}U_{s,r}^X(g_r-f_r)(X_s)dr
      +\int_{[s,t]} [U_{s,r}^Xf_r(X_s)-U_{s,r}^Xf_r(Y_s)]dr
      \\&\quad+\int_{[s,t]} [U_{s,r}^Xf_r(Y_s)-U_{s,r}^Yf_r(Y_s)]dr\,.
    \end{align*}
    We use \eqref{con:U1}, \eqref{est:supa} and \eqref{con:U3} to estimate these terms, which yields
    \begin{align*}
      |A_{s,t}|&\lesssim \|U^X\|_{\chh,\nu}\|g-f\|_{L^q_T\chh}|t-s|^{1+\frac \nu2-\frac1q}
      + \rho^X_{\nu_1}(|X_s-Y_s|)\|f\|_{L^q_T\chh}|t-s|^{1+\frac{\nu_1}2-\frac1q}
      \\&\quad+ \|U^X-U^Y\|_{\chh,\nu}\|f\|_{L^q_T\chh}|t-s|^{1+\frac{\nu}2-\frac1q}\,.
     \end{align*}
    This implies that
    \[
      \|A_{s,t}\|_{L_m}\lesssim \(\|f-g\|_{L^q_T\chh}\|U^X\|_{\chh,\nu}+\|f\|_{L^q_T\chh}D_{\nu,\nu_1}(X,Y)\)|t-s|^{1+\frac{\nu\wedge \nu_1}2-\frac1q}.
    \]
    Similar to \eqref{id.Amarkov}, we also have $\E^{\cff_s} \delta A_{s,u,t}=0$ for every triplet $s\le u\le t$. Since $1+\frac{\nu\wedge \nu_1}2-\frac1q>\frac12$ by our assumptions, Theorem \ref{lem:Sew2} is applicable and \eqref{est:A1'} yields the stated inequality.
  \end{proof}

% section distributive_functionals_of_martingales (end)

\section{Stochastic flows} % (fold)
\label{sec:stochastic_flows}
  Let $\alpha$ be a fixed number in $(0,1)$ and $b$ be a function in $[L^\infty([0,T]; C_b^\alpha(\Rd))]^{d}$. Given $x\in \Rd$, consider the stochastic differential equation (SDE)
  \begin{equation}\label{eqn:FGP}
    dX_t=b(t,X_t)dt+dW_t\,, \quad t\in[0,T]\,,\quad X_0=x\,,
  \end{equation}
  where $(W,\{\cff_t\})$ is a standard Brownian motion in $\Rd$.
  Let $X_t^x$ denote the solution to \eqref{eqn:FGP} starting from $X_0=x$ at time $t=0$ which is adapted to $\{\cff_t\}$.
  It is shown in \cite[Theorem 5(ii)]{MR2593276} that equation \eqref{eqn:FGP} has a $C^{1+\alpha'}$ - stochastic flow for any $\alpha'\in(0,\alpha)$. In particular, the process $Y_t^{ij}=\partial_{x_i} X_t^{j,x}$ is well-defined. (For simplicity, we omit the dependence of $x$ in $Y$.) Formally, $Y$ satisfies the following equations
  \begin{equation}\label{eqn:Y}
    Y_t^{ij}=\delta_{ij}+\sum_{k=1}^d\int_0^t \partial_k b^j(r,X^x_r)Y_r^{ik}dr\,,\quad\forall i,j\in\{1,\dots,d\}\,,
  \end{equation}
  where $\delta_{ij}$ is the Kronecker delta symbol.
  Since $\nabla b(r,\cdot)$ is only a distribution, the composition  $\nabla  b(r,X_r^x)$ is a priori ill-posed. Hence, equation \eqref{eqn:Y} is not mathematically rigorous and an equation describing the dynamic of $Y$ is missing in the literature.
  Filling this gap is the main purpose of the current section.
 
  Indeed, we can make sense of the process
  \begin{equation}\label{def.v.flow}
    V^{kj}_t=V^{kj}_t(b,X) =\caa^X_t[\partial_kb^j]%= \int_0^t \partial_k b^j(r,X_r)dr 
  \end{equation}
  for every $t\ge0$ and $j,k\in\{1,\dots,d\}$ by applying Proposition \ref{prop:defA}. Moreover, it turns out that the process $V^{kj}$ has a modification which belongs to $C^{\frac{1+\alpha'}2}([0,T])$ a.s. for every $\alpha'\in(0,\alpha)$. Then, a rigorous interpretation of the system of equations in \eqref{eqn:Y} is
  \begin{equation}\label{eqn:RSY}
    Y_t^{ij}=\delta_{ij}+\sum_{k=1}^d\int_0^t Y_r^{ki}dV^{kj}_r(b,X) \,,\quad\forall i,j\in\{1,\dots,d\},
  \end{equation}
  where the integrals on the right-hand side are Young integrals (\cite{young}). Our main result in the current section can be stated in the following theorem.
  \begin{theorem}\label{thm:eqnDX}
    Let $x\in\Rd$ be fixed but arbitrary. With probability one, the process $\partial_{x_i}X^{j,x} $ is the unique solution to the system of Young-type equations \eqref{eqn:RSY}. In addition,  the map $t\to \nabla X^x_t$ is a.s. $\frac{1+\alpha'}2$ - H\"older continuous for every $\alpha'\in(0,\alpha)$.
  \end{theorem}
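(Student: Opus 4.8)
The plan is to obtain \eqref{eqn:RSY} by a mollification argument and then to read off both uniqueness and the H\"older regularity from the theory of linear Young differential equations. First I would fix a spatial mollifier and set $b^n=b*\rho_n$, so that $b^n\in[L^\infty_TC_b^\infty(\Rd)]^d$ with $\sup_n\|b^n\|_{L^\infty_TC^\alpha_b}<\infty$ and $\partial_kb^{j,n}\to\partial_kb^j$ in $L^\infty_T\C^{\alpha'-1}$ for every $\alpha'\in(0,\alpha)$. Let $X^{n,x}$ be the classical strong solution of \eqref{eqn:FGP} driven by the \emph{same} $W$ with drift $b^n$, and let $Y^n=\nabla X^{n,x}$ be its derivative flow, which solves the genuine linear ODE \eqref{eqn:Y} with $b$ replaced by $b^n$. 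Because $b^n$ is smooth, $\partial_kb^{j,n}$ is a bounded continuous function belonging to $\fgg_T(X^n)$, so by Proposition \ref{prop.K} the process $V^{kj,n}:=\caa^{X^n}_\cdot[\partial_kb^{j,n}]$ coincides with the ordinary integral $\int_0^\cdot\partial_kb^{j,n}(r,X^n_r)\,dr$, which is $C^1$ in time. Consequently the classical ODE for $Y^n$ can be rewritten, with no ambiguity in the integrals, as the linear Young equation $Y^{n,ij}_t=\delta_{ij}+\sum_k\int_0^tY^{n,ki}_r\,dV^{kj,n}_r$.

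The second step is to pass to the limit $n\to\infty$. Applying Proposition \ref{prop:defA} to $(X^n,U^{X^n},\C^{\alpha-1})$ with $\nu=\alpha-1$ and $q=\infty$ (legitimate since $\alpha>0$ forces $\frac\nu2>-\frac12$), with constants uniform in $n$ thanks to the uniform smoothing estimate for the propagators $U^{X^n}$ (itself a consequence of Girsanov's theorem combined with the Gaussian Schauder bound \eqref{P.schau} and the uniform bound on $\|b^n\|_{C^\alpha}$), yields a bound on $V^{kj,n}$ in $C^{\frac{1+\alpha'}2}_TL_m$ that is uniform in $n$. To identify the limit I would invoke Proposition \ref{prop:Lip} with $(X,U^X)$ and $(X^n,U^{X^n})$, $g=\partial_kb^j$ and $f=\partial_kb^{j,n}$: it controls the increments of $V^{kj}-V^{kj,n}$ by $\|\partial_kb^{j,n}-\partial_kb^j\|_{L^\infty_T\C^{\alpha'-1}}$ and by $D_{\nu,\nu_1}(X,X^n)$, and hence gives $V^{kj,n}\to V^{kj}$ in $C^{\frac{1+\alpha'}2}_TL_m$. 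On the flow side, the stability of the $C^{1+\alpha'}$-flow established in \cite{MR2593276} under $C^\alpha$-convergence of the drift provides $X^{n,x}\to X^x$ and $Y^n\to\nabla X^x$ uniformly. Combining the uniform-in-$n$ bound on $Y^n$ in $C^{\frac{1+\alpha'}2}$ (which follows from the linear Young equation together with the uniform bound on $V^{kj,n}$) with Arzel\`a--Ascoli, every subsequential limit of $Y^n$ lies in $C^{\frac{1+\alpha'}2}$ and, by the flow convergence, equals $\nabla X^x$; continuity of the Young integral map then lets me pass to the limit in the equation for $Y^n$ and conclude that $Y=\nabla X^x$ solves \eqref{eqn:RSY}, simultaneously certifying that the Young integrals in \eqref{eqn:RSY} are well defined.

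For uniqueness and regularity I would appeal to the linear structure of \eqref{eqn:RSY}. Since $V^{kj}\in C^{\frac{1+\alpha'}2}$ with $\frac{1+\alpha'}2>\frac12$, the system \eqref{eqn:RSY} is a linear Young differential equation and thus admits a unique solution in $C^\beta$ for any $\beta>\frac{1-\alpha'}2$, which gives the uniqueness claim. The solution inherits the regularity of the driver: from the standard Young estimate one has $|Y_t-Y_s|\lesssim\|Y\|_\infty|V_t-V_s|+\|Y\|_{C^\beta}\|V\|_{C^{(1+\alpha')/2}}|t-s|^{\beta+(1+\alpha')/2}$, and since both exponents exceed $\frac{1+\alpha'}2$ one concludes $Y\in C^{\frac{1+\alpha'}2}$. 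Because $\nabla X^x$ is this unique solution, the map $t\mapsto\nabla X^x_t$ is a.s. $\frac{1+\alpha'}2$-H\"older for every $\alpha'\in(0,\alpha)$.

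The main obstacle I anticipate is the limit procedure of the second step, and within it the verification that $D_{\nu,\nu_1}(X,X^n)\to0$ — that is, the convergence of the transition propagators $\|U^X-U^{X^n}\|_{\C^{\alpha-1},\alpha-1}\to0$ together with the equicontinuity control $\sup_r\|\rho^X_{\nu_1}(|X_r-X^n_r|)\|_{L_m}\to0$, the latter requiring both $X^n_r\to X_r$ in $L_m$ uniformly in $r$ and a genuine modulus-of-continuity bound $\rho^X_{\nu_1}(h)\to0$ as $h\to0$. A secondary but delicate point is the apparent circularity in \eqref{eqn:RSY}, where defining the Young integral against $V^{kj}$ presupposes a time-regularity of $Y=\nabla X^x$ that is itself part of the conclusion; this is precisely what the approximation resolves, since at finite $n$ the integrator $V^{kj,n}$ is $C^1$ and the regularity of $Y$ is produced in the limit rather than assumed.
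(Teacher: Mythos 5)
Your proposal follows essentially the same route as the paper: mollify the drift, write the derivative flow of the mollified SDE as a linear Young equation driven by $V^{n}=\int_0^\cdot\nabla b^n_r(X^n_r)\,dr$, prove $V^n\to V(b,X)$ in $C^{\frac{1+\alpha'}2}$ via the stochastic sewing estimates (Propositions \ref{prop:defA} and \ref{prop:Lip}, with the key step being $D_{\alpha-1,\beta-1}(X,X^n)\to0$, which is exactly the content of Lemma \ref{lem:Vconvg}), invoke the flow stability \eqref{est:FGP2} of \cite{MR2593276}, and conclude by uniqueness and stability of Young differential equations. The only substantive deviation is your suggestion that the uniform Schauder bound for the propagators $U^{X^n}$ on $\C^{\alpha-1}$ comes from Girsanov; the paper instead derives it from the mild formulation and a Gronwall-type argument (Lemma \ref{lem:sch.flow}), which is the mechanism that actually works for distributional data, but this does not affect the correctness of your overall argument.
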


  To show the above stated result, we first show that the process $V$ given by \eqref{def.v.flow} is well-defined and has a $\frac{1+\alpha'}2$-H\"older continuous modification. Then we show that $\partial_{x_i}X^{j,x} $ satisfies equation \eqref{eqn:RSY} using smooth approximations of $b$. The first step amounts in verifying the hypothesis of Proposition \ref{prop:defA}, Condition \ref{con:set1}.
  To pass through the limit in the second step, we rely on
   stability properties of Young-type differential equations (see e.g. \cite{MR3505229,MR2397797}) and of the functional $\caa^X$ (Corollary \ref{cor:Acont}).
 
  To verify Condition \ref{con:set1}, we study the transition propagator of $X$, which is denoted by $U=\{U_{s,t}\}_{s\le t}$. Each $U_{s,t}$, with $s<t$, takes values in $C_b(\Rd)$ and is defined first on $C^{2+\alpha}_b(\Rd)$, then extended to larger classes of bounded uniformly continuous functions and H\"older-Besov distributions.
  The definition of $U_{s,t}$ as an operator on $C^{2+\alpha}_b(\Rd)$ is a direct consequence of the regularity theory of Krylov and Priola in \cite{MR2748616} for the Kolmogorov backward parabolic differential equations associated to the SDE \eqref{eqn:FGP}. To extend the domain of $U_{s,t}$ to the larger classes of uniformly continuous functions and H\"older-Besov distributions, we rely on a maximum principle shown in Lemma \ref{lem.maximum} and a priori estimates in Lemma \ref{lem:sch.flow} showing that solutions to Kolmogorov backward equations are more regular away from the terminal time.  

  Let us proceed with more detail. For each $t\in[0,T]$, $f\in L^\infty([0,T];C^\alpha_b(\Rd))$ and $g\in C^{2+\alpha}_b(\Rd)$, consider the Kolmogorov backward equation
  \begin{equation}\label{eqn:BK}
    \(\partial_s +\frac12\Delta +b\cdot\nabla \)u(s,x)=f\quad\forall (s,x)\in[0, t]\times\Rd\,,  \quad u(t,\cdot)=g(\cdot)\,.
  \end{equation}
  Since the coefficients $b,f$ are only measurable in time, the notion of solutions to \eqref{eqn:BK} is non-standard. As in \cite{MR2593276,MR2748616}, a function $u^t:[0,t]\times \Rd\to\R$ is a solution to \eqref{eqn:BK} if $u^t$ belongs to $L^\infty([0,t];C^{2+\alpha}_b(\Rd))$ and satisfies 
  \begin{equation}\label{def.KPsol}
    u_t^t(x)=g(x)
    \quad\textrm{and}\quad
    u_s^t(x)-u_r^t(x)=\int_r^s [-L^b_\theta u^t_\theta(x)+f_\theta(x)]d \theta
  \end{equation}
  for every $0\le r\le s\le t$, $x\in\Rd$. Here for every $\theta\in[0,T]$, $L^b_\theta=\frac12 \Delta+b_\theta\cdot\nabla$.

  It is shown in \cite[Theorem 2.8 and Remark 2.9]{MR2748616} that equation \eqref{eqn:BK} has unique solution in $L^\infty([0,t];C^{2+\alpha}_b(\Rd))$ for every $f\in L^\infty([0,T];C^\alpha_b(\Rd))$ and $g\in C^{2+\alpha}_b(\Rd)$. In addition, for this solution, we have
  \begin{equation}\label{est.KP}
    \sup_{s\in[0,t]}\|u_s^t\|_{C^{2+\alpha}_b}\le C(\alpha,d)(\|f\|_{L^\infty C^\alpha_b}+\|g\|_{C^{2+\alpha}_b}).
  \end{equation}

  We define for every $s\le t$ and $x\in\Rd$, $U_{s,t}g(x)=u^t_s(x)$, where $u^t$ is the solution \eqref{eqn:BK} with $f\equiv0$. Then $\{U_{s,t}\}_{s\le t}$ is a propagator on $C^{2+\alpha}_b(\Rd)$.
  This means that $U_{s,t}$ is a bounded linear operator on $C^{2+\alpha}_b(\Rd)$ and 
  \begin{equation}\label{def.propagator}
    U_{s,t}g(x)=U_{s,r}U_{r,t}g(x)
  \end{equation} 
  for every $s\le r\le t$, $x\in\Rd$ and $g\in C^{2+\alpha}_b(\Rd)$. 
  The boundedness of $U_{s,t}$ as a linear operator on $C^{2+\alpha}_b(\Rd)$ follows from the bound \eqref{est.KP} and the identity \eqref{def.propagator} follows from the fact that equation \eqref{eqn:BK} is uniquely solvable with terminal data in $C^{2+\alpha}_b(\Rd)$.

  At this point, we reason that $\{U_{s,t}\}$ can be extended to a propagator on $BUC(\Rd)$, the Banach space of all bounded uniformly continuous real functions on $\Rd$ equipped with the supremum norm $\|\cdot\|_\infty$. This requires a maximum principle for equation \eqref{eqn:BK}.
  In \cite[pages 10,11]{MR2593276} and in \cite[Theorem 4.1]{MR2748616}, a maximum principle is shown for second-order differential operators with negative potentials. These results can not be applied to equation \eqref{eqn:BK} with a null potential. Nevertheless, the following It\^o formula, shown in \cite{MR2593276}, is useful for our considerations.
  \begin{lemma}[{\cite[Lemma 3]{MR2593276}}]\label{lem.itoFPG}
    Let $u:[0,T]\times\Rd\to\R$ be a function in $L^\infty([0,T];C^{2+\alpha}_b(\Rd))$ such that
    \[
      u_t(x)-u_s(x)=\int_s^t v_r(x)dr
    \]
    for every $s\le t$, $x\in\Rd$ with $v\in L^\infty([0,T];C^\alpha_b(\Rd))$. Let $(X_t)_{t\ge0}$ be a continuous adapted process of the form
    \[
      X_t=X_0+\int_0^t b_sds+W_t
    \]
    where $b$ is a progressively measurable process, $b$ is integrable in $t$ with probability one. Then
    \begin{align*}
      u_t(X_t)=u_s(X_s)+\int_s^t(v_r+b_r\cdot\nabla u_r+\frac12 \Delta u_r)(X_r)dr+\int_s^t\nabla u_r(X_r)\cdot dW_r.
    \end{align*}
  \end{lemma}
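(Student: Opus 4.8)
The plan is to reduce to the classical It\^o formula for functions that are $C^1$ in time and $C^2$ in space by regularizing $u$ in the time variable. Since the only defect of $u$ is that it is merely absolutely continuous in time (with density $v$) rather than continuously differentiable, a time-mollification produces genuinely smooth-in-time approximants to which the standard formula applies. First I would extend $u$ and $v$ to all of $\R$ in time by setting $v_r=0$ for $r\notin[0,T]$, $u_r=u_0$ for $r<0$ and $u_r=u_T$ for $r>T$; with this choice the identity $u_t(x)-u_s(x)=\int_s^t v_r(x)\,dr$ persists for all real $s\le t$, the extended $u$ still takes values in $C^{2+\alpha}_b(\Rd)$ with $\sup_r\|u_r\|_{C^{2+\alpha}_b}<\infty$, and the extended $v$ lies in $L^\infty(\R;C^\alpha_b(\Rd))$. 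Then, with a standard mollifier $\rho$ on $\R$ and $\rho^\varepsilon(\cdot)=\varepsilon^{-1}\rho(\cdot/\varepsilon)$, I set $u^\varepsilon_t(x)=\int_\R u_\sigma(x)\rho^\varepsilon(t-\sigma)\,d\sigma$.

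Because mollification in time commutes with spatial differentiation, each $u^\varepsilon_t$ belongs to $C^{2+\alpha}_b(\Rd)$ with $\nabla u^\varepsilon_t=(\nabla u)^\varepsilon_t$ and $\Delta u^\varepsilon_t=(\Delta u)^\varepsilon_t$, and these norms are bounded uniformly in $\varepsilon$ and $t$. Integrating by parts in $\sigma$ and using the absolute continuity of $u_\sigma(x)$ shows that $u^\varepsilon$ is $C^1$ in time with $\partial_t u^\varepsilon_t(x)=\int_\R v_\sigma(x)\rho^\varepsilon(t-\sigma)\,d\sigma=:v^\varepsilon_t(x)$. Thus $u^\varepsilon$ is $C^1$ in time and $C^2$ in space, and the classical It\^o formula applies on $[s,t]$:
\[
  u^\varepsilon_t(X_t)=u^\varepsilon_s(X_s)+\int_s^t\big(v^\varepsilon_r+b_r\cdot\nabla u^\varepsilon_r+\tfrac12\Delta u^\varepsilon_r\big)(X_r)\,dr+\int_s^t\nabla u^\varepsilon_r(X_r)\cdot dW_r.
\]
It remains to let $\varepsilon\downarrow0$ in each term.

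The passage to the limit rests on two regularity facts. First, $r\mapsto u_r$ is Lipschitz in the supremum norm (since $\|u_t-u_s\|_\infty\le\|v\|_{L^\infty C^\alpha_b}|t-s|$), so $u^\varepsilon_r\to u_r$ uniformly in $x$ and $r$; in particular $u^\varepsilon_t(X_t)\to u_t(X_t)$ and $u^\varepsilon_s(X_s)\to u_s(X_s)$ almost surely. Second, interpolating the uniform bound $\sup_r\|u_r\|_{C^{2+\alpha}_b}<\infty$ against the Lipschitz bound in $C^0$ gives $\|u_r-u_{r'}\|_{C^2}\lesssim|r-r'|^{\alpha/(2+\alpha)}$, so $\nabla u$ and $\Delta u$ are (uniformly-in-$x$) H\"older continuous in time; consequently $\nabla u^\varepsilon_r\to\nabla u_r$ and $\Delta u^\varepsilon_r\to\Delta u_r$ uniformly in $(r,x)$. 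This uniform convergence lets me pass to the limit in the $b_r\cdot\nabla u^\varepsilon_r$ and $\tfrac12\Delta u^\varepsilon_r$ terms (using the a.s. time-integrability of $b$) and, via the It\^o isometry, in the stochastic integral, since $\nabla u^\varepsilon_r(X_r)\to\nabla u_r(X_r)$ boundedly so that $\E\int_s^t|\nabla u^\varepsilon_r(X_r)-\nabla u_r(X_r)|^2\,dr\to0$. For the $v^\varepsilon$ term only $L^1$-in-time convergence is available: since $v$ is strongly measurable and bounded as a $C^\alpha_b(\Rd)$-valued map, the mollifications satisfy $\|v^\varepsilon_r-v_r\|_\infty\to0$ in $L^1([s,t])$, whence $\int_s^t v^\varepsilon_r(X_r)\,dr\to\int_s^t v_r(X_r)\,dr$ almost surely by dominated convergence. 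Collecting these limits yields the asserted identity.

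The main obstacle is precisely the low time-regularity: $u$ is not classically $C^1$ in time and $v$ is only bounded measurable in time and H\"older in space, so the standard It\^o formula cannot be invoked directly, and one cannot differentiate the relation $u_t-u_s=\int_s^t v_r\,dr$ in space to control the time-modulus of $\nabla u$ and $\Delta u$. The two devices that overcome this are the H\"older-space interpolation argument, which upgrades Lipschitz time-continuity in $C^0$ to H\"older time-continuity in $C^2$ (giving the uniform convergence needed in the drift and martingale terms), and the observation that the lowest-order term $v^\varepsilon$ need only converge in $L^1$ in time, which is all the mollifier delivers for a merely $L^\infty$-in-time density.
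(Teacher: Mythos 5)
The paper does not actually prove this lemma: it is imported verbatim as \cite[Lemma 3]{MR2593276} (Flandoli--Gubinelli--Priola), so there is no internal proof to compare against. Your time-mollification argument is correct and is essentially the standard way such "weak-in-time" It\^o formulas are established: the integration by parts giving $\partial_t u^\varepsilon = v^\varepsilon$ is legitimate because $r\mapsto u_r(x)$ is absolutely continuous and $\rho^\varepsilon$ is compactly supported; the classical formula applies to $u^\varepsilon\in C^{1,2}$ and a continuous semimartingale with a.s.\ integrable drift; and the limit passages are all justified by the uniform-in-$(r,x)$ convergence of $u^\varepsilon,\nabla u^\varepsilon,\Delta u^\varepsilon$ together with dominated convergence and the It\^o isometry. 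Two small points are worth tightening. First, the interpolation step: since $C^2_b$ sits at an integer order, the inequality $\|f\|_{C^2}\lesssim\|f\|_{C^0}^{1-\lambda}\|f\|_{C^{2+\alpha}}^{\lambda}$ is cleanest if you interpolate to a non-integer intermediate space such as $C^{2+\alpha/2}_b$ (or simply iterate the Landau inequality $\|\nabla f\|_\infty\lesssim\|f\|_\infty^{1/2}\|f\|_{C^2}^{1/2}$); the exponent changes but only its positivity matters. Second, for the $v^\varepsilon$ term you invoke strong (Bochner) measurability of $v$ as a $C^\alpha_b$-valued map; since $C^\alpha_b(\Rd)$ is non-separable this is a genuine hypothesis, though it is consistent with the paper's own definition of $L^\infty([0,T];\chh)$, and even without it the conclusion follows from the pointwise Lebesgue-point argument for $r\mapsto v_r(x)$ on a countable dense set of $x$ combined with the uniform spatial H\"older bound. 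Neither point is a gap; the proof stands.
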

    \begin{lemma}[Maximum principle]\label{lem.maximum} For every $f\in L^\infty([0,T];C^{\alpha}_b(\Rd))$, $g\in C^{2+\alpha}_b(\Rd)$ and every $s\le t$,
    \begin{equation}\label{id.FK}
      u^t_s(x)=\E g(X^x_{s,t})-\E\int_s^tf_r(X^x_{s,r})dr,
    \end{equation}
    and
    \begin{equation}\label{est.maxPrinci}
      \|u^t_s\|_\infty\le \|g\|_\infty+(t-s)\|f\|_{L^\infty C_b}.
    \end{equation}
    Here, $u^t$ is the solution to \eqref{eqn:BK}.
  \end{lemma}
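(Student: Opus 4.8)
The plan is to establish the Feynman--Kac representation \eqref{id.FK} by applying the It\^o formula of Lemma \ref{lem.itoFPG} to the solution $u^t$ evaluated along the diffusion $X^x_{s,\cdot}$, and then to read off the bound \eqref{est.maxPrinci} as an immediate consequence.

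First I would fix $s\le t$ and $x\in\Rd$ and consider, on $[s,t]$, the process $r\mapsto X^x_{s,r}$, namely the solution of \eqref{eqn:FGP} started from $x$ at time $s$. Its drift $r\mapsto b_r(X^x_{s,r})$ is progressively measurable and bounded, hence integrable, since $b\in[L^\infty([0,T];C^\alpha_b(\Rd))]^d$; thus $X^x_{s,\cdot}$ has exactly the form required by Lemma \ref{lem.itoFPG}. From the defining relation \eqref{def.KPsol}, the time increments of $u^t$ are governed by $v_\theta:=-L^b_\theta u^t_\theta+f_\theta$, and since $u^t\in L^\infty([0,t];C^{2+\alpha}_b(\Rd))$ one checks that $v\in L^\infty([0,t];C^\alpha_b(\Rd))$: indeed $\Delta u^t_\theta$, the product $b_\theta\cdot\nabla u^t_\theta$, and $f_\theta$ all lie in $C^\alpha_b(\Rd)$ with norms bounded uniformly in $\theta$ by \eqref{est.KP}. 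This places us precisely in the hypotheses of Lemma \ref{lem.itoFPG}.

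Applying that lemma on $[s,t]$ to $u^t$ and $X^x_{s,\cdot}$, and using $u^t_t=g$ together with $X^x_{s,s}=x$, I obtain
\[
g(X^x_{s,t})=u^t_s(x)+\int_s^t\big(v_\theta+b_\theta\cdot\nabla u^t_\theta+\frac12\Delta u^t_\theta\big)(X^x_{s,\theta})\,d\theta+\int_s^t\nabla u^t_\theta(X^x_{s,\theta})\cdot dW_\theta.
\]
The crucial cancellation is that $v_\theta+L^b_\theta u^t_\theta=f_\theta$, because $L^b_\theta=\frac12\Delta+b_\theta\cdot\nabla$, so the drift integrand collapses to $f_\theta(X^x_{s,\theta})$. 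Taking expectations and noting that the stochastic integral is a genuine martingale---its integrand $\nabla u^t_\theta$ is uniformly bounded thanks to $\sup_\theta\|u^t_\theta\|_{C^{2+\alpha}_b}<\infty$, hence square integrable---the last term drops out, and rearranging gives \eqref{id.FK}. The estimate \eqref{est.maxPrinci} then follows at once by taking absolute values in \eqref{id.FK}, which yields $|u^t_s(x)|\le\|g\|_\infty+(t-s)\|f\|_{L^\infty C_b}$, and then taking the supremum over $x\in\Rd$.

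The only genuinely delicate point is the martingale (rather than merely local martingale) property of the It\^o integral that makes its expectation vanish; this is secured by the uniform bound \eqref{est.KP}, which controls $\nabla u^t$ and renders the integrand square integrable. Everything else is a direct substitution into Lemma \ref{lem.itoFPG} combined with the algebraic identity $v_\theta+L^b_\theta u^t_\theta=f_\theta$.
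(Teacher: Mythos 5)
Your proposal is correct and follows essentially the same route as the paper: apply the It\^o formula of Lemma \ref{lem.itoFPG} to $r\mapsto u^t_r(X^x_{s,r})$, use \eqref{def.KPsol} so that the drift integrand collapses to $f_r(X^x_{s,r})$, invoke \eqref{est.KP} to see the stochastic integral is a genuine square-integrable martingale, and take expectations. The sup bound \eqref{est.maxPrinci} is then read off from \eqref{id.FK} exactly as you do.
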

  \begin{proof}
    Let $s\le t$ be fixed
    % , $f$ be in $L^\infty([0,T];C^{2+\alpha}_b(\Rd))$, $g$ be in $C^{2+\alpha}_b(\Rd)$ 
    and $(X^x_{s,r})_{r\ge s}$ be the solution to \eqref{eqn:FGP} starting from time $s$ at $X^x_{s,s}=x$. 
    Let $u^t$ be the solution to \eqref{eqn:BK}. 
    Applying the It\^o formula in Lemma \ref{lem.itoFPG} to $r\mapsto u^t_r(X^x_{s,r})$, using the relation \eqref{def.KPsol}, we have
    \begin{equation*}
      g(X^x_{s,t})=u^t_s(x)+\int_s^tf_r(X^x_{s,r})d r +\int_s^t\nabla u^t_r(X^x_{s,r})\cdot dW_r.
    \end{equation*}
    By \eqref{est.KP}, the stochastic integral above is a square integrable martingale.
    Taking expectation, we obtain \eqref{id.FK}, which is a probabilistic presentation for the solution to \eqref{eqn:BK}.
    % By approximation, \eqref{}
    The formula \eqref{id.FK} implies the estimate \eqref{est.maxPrinci}.
     % for every $s\le t$ and $g\in C^{2+\alpha}_b(\Rd)$.
    % By approximation, \eqref{est.maxPrinci} holds for every $s\le t$ and $g\in BUC(\Rd)$.
  \end{proof}
  As a consequence of \eqref{id.FK}, we have for every $g\in C^{2+\alpha}_b(\Rd)$,
  \begin{equation}\label{id.UFK}
    U_{s,t}g(x)=\E g(X^x_{s,t}).
  \end{equation}
  This implies that for every $s\le t$ and $g\in C^{2+\alpha}_b(\Rd)$,
  \begin{equation}\label{est.Umax}
    \|U_{s,t}g\|_\infty\le \|g\|_\infty.
  \end{equation}
  We note that $C^\gamma_b(\Rd)$ is dense in $BUC(\Rd)$ for every $\gamma>0$.
  By approximation, the above inequality holds for every $s\le t$ and $g\in BUC(\Rd)$. This means that $U_{s,t}$ can be extended to a bounded operator on $BUC(\Rd)$ for each $s\le t$. The family $\{U_{s,t}\}$ then forms a $C_0$-propagator on $BUC(\Rd)$. The precise meaning of this statement is the content of the following result.
  \begin{proposition}\label{prop.Uc0propa}
    The two-parameter family $\{U_{s,t}\}_{s\le t}$ satisfies
    \begin{enumerate}[label={\upshape(\alph*)}]
      \item\label{u1} $U_{s,t}$ is a contraction on $BUC(\Rd)$ for every $s\le t$,
      \item\label{u2} $U_{s,t}=U_{s,r}U_{r,t}$ as operators on $BUC(\Rd)$ for every $s\le r\le t$,
      \item\label{u3} $\lim_{h\to0}\sup_{s,t:|t-s|\le h}\|U_{s,t}g-g\|_\infty=0$ for every $g\in BUC(\Rd)$.
    \end{enumerate}
  \end{proposition}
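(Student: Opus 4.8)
The plan is to prove all three properties first for smooth terminal data $g\in C^{2+\alpha}_b(\Rd)$, where $U_{s,t}g$ coincides with the solution $u^t_s$ of the backward equation \eqref{eqn:BK} and all the a priori estimates are available, and then to transfer each statement to $BUC(\Rd)$ using the density of $C^{2+\alpha}_b(\Rd)$ in $BUC(\Rd)$ together with the contraction bound \eqref{est.Umax}, which is exactly the bound that was used to define the extension of $U_{s,t}$ to $BUC(\Rd)$ in the first place. Throughout, the extension is characterized by $U_{s,t}g=\lim_n U_{s,t}g_n$ in $\|\cdot\|_\infty$ whenever $g_n\to g$ uniformly with $g_n\in C^{2+\alpha}_b(\Rd)$.

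Property \ref{u1} is then immediate: passing to the limit in $\|U_{s,t}g_n\|_\infty\le\|g_n\|_\infty$ gives $\|U_{s,t}g\|_\infty\le\|g\|_\infty$ for every $g\in BUC(\Rd)$. For \ref{u2} I would argue by continuity from the flow identity \eqref{def.propagator}, which holds on $C^{2+\alpha}_b(\Rd)$. Given $g\in BUC(\Rd)$ and $g_n\to g$ uniformly with $g_n\in C^{2+\alpha}_b(\Rd)$, two applications of \ref{u1} show that $U_{r,t}g_n\to U_{r,t}g$ and hence $U_{s,r}U_{r,t}g_n\to U_{s,r}U_{r,t}g$ in $\|\cdot\|_\infty$, while $U_{s,t}g_n\to U_{s,t}g$; letting $n\to\infty$ in $U_{s,t}g_n=U_{s,r}U_{r,t}g_n$ yields \ref{u2}.

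The essential step is \ref{u3}. For $g\in C^{2+\alpha}_b(\Rd)$ I would read the integral form \eqref{def.KPsol} of the backward equation (with $f\equiv0$) between times $s$ and $t$ to obtain $U_{s,t}g(x)-g(x)=u^t_s(x)-u^t_t(x)=\int_s^t L^b_\theta u^t_\theta(x)\,d\theta$. The Schauder estimate \eqref{est.KP} with $f\equiv0$ controls $\sup_{\theta}\|u^t_\theta\|_{C^{2+\alpha}_b}$ by $C(\alpha,d)\|g\|_{C^{2+\alpha}_b}$ uniformly in the terminal time $t$, and since $L^b_\theta=\frac12\Delta+b_\theta\cdot\nabla$ satisfies $\|L^b_\theta u^t_\theta\|_\infty\le\frac12\|u^t_\theta\|_{C^2_b}+\|b\|_{L^\infty C^\alpha_b}\|u^t_\theta\|_{C^1_b}\le C\|g\|_{C^{2+\alpha}_b}$, integrating over $[s,t]$ gives the short-time bound $\|U_{s,t}g-g\|_\infty\le C\|g\|_{C^{2+\alpha}_b}\,|t-s|$. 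This settles \ref{u3} for smooth $g$. For general $g\in BUC(\Rd)$ I would run the standard $C_0$-approximation: given $\varepsilon>0$, choose $g_\varepsilon\in C^{2+\alpha}_b(\Rd)$ with $\|g-g_\varepsilon\|_\infty<\varepsilon$ and estimate, using \ref{u1}, $\|U_{s,t}g-g\|_\infty\le\|U_{s,t}(g-g_\varepsilon)\|_\infty+\|U_{s,t}g_\varepsilon-g_\varepsilon\|_\infty+\|g_\varepsilon-g\|_\infty\le 2\varepsilon+C\|g_\varepsilon\|_{C^{2+\alpha}_b}\,|t-s|$; taking $\sup_{|t-s|\le h}$ and then $h\to0$ yields $\limsup_{h\to0}\sup_{|t-s|\le h}\|U_{s,t}g-g\|_\infty\le2\varepsilon$, and $\varepsilon$ is arbitrary.

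The main obstacle is the quantitative short-time estimate for smooth data in \ref{u3}: everything hinges on the uniform Schauder control \eqref{est.KP}, which lets $L^b_\theta u^t_\theta$ be bounded in the supremum norm uniformly in both $\theta$ and the terminal time $t$. Once that uniform bound is in hand, the linear factor $|t-s|$ and the density extension are routine, and properties \ref{u1} and \ref{u2} follow from soft continuity arguments with no further input.
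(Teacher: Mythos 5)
Your proposal is correct and follows essentially the same route as the paper: establish \ref{u1} by passing the maximum-principle bound \eqref{est.Umax} through the density of smooth functions in $BUC(\Rd)$, deduce \ref{u2} by taking limits in the flow identity \eqref{def.propagator}, and prove \ref{u3} by combining the short-time bound $\|u^t_s-g\|_\infty\le\int_s^t\|L^b_\theta u^t_\theta\|_\infty\,d\theta\lesssim(t-s)\|g\|_{C^{2+\alpha}_b}$ (from \eqref{def.KPsol} and \eqref{est.KP}) with a standard approximation argument. The only cosmetic difference is that the paper approximates in $C^\alpha_b(\Rd)$ while you work directly with $C^{2+\alpha}_b(\Rd)$ data, which changes nothing.
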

  \begin{proof}
    \ref{u1} is shown in the discussion prior to the proposition. Let $g$ be in $BUC(\Rd)$ and $\{g^n\}_n$ be a sequence in $C^\alpha_b(\Rd)$ which converges to $g$ in $BUC(\Rd)$. From \eqref{est.Umax}, it follows that $\lim_nU_{s,t}g^n=U_{s,t}g$ in $BUC(\Rd)$. From \eqref{def.propagator}, we have $U_{s,t}g^n=U_{s,r}U_{r,t}g^n$ for each $n$. Sending $n\to\infty$, using the continuity of $U_{s,r}$ and $U_{r,t}$ on $BUC(\Rd)$, yields $U_{s,t}g=U_{s,r}U_{r,t}g$, which implies \ref{u2}. To show \ref{u3}, let $u^{t,n}$ be the solution to \eqref{eqn:BK} with $f\equiv0$ and terminal condition $u^{t,n}_t=g^n$. We note that by triangle inequality, contraction property \ref{u1} and definition of $U$,
    \begin{align*}
      \|U_{s,t}g-g\|_\infty
      &\le\|U_{s,t}g^n-g^n\|_\infty+\|U_{s,t}g^n-U_{s,t}g\|_\infty+\|g^n-g\|_\infty
      \\&\le\|u^{t,n}_s -g^n\|_\infty+2\|g^n-g\|_\infty.
    \end{align*} 
    It follows directly from \eqref{def.KPsol} and \eqref{est.KP}
    % that $s\mapsto u^{t,n}_s(x)$ is continuous for each $x\in\Rd$ and 
    that
    \begin{align*}
      \|u^{t,n}_s-g^n\|_\infty
      \le\int_s^t\|L^b_\theta u^{t,n}_\theta\|_\infty d \theta
      \lesssim (t-s)\|u^{t,n}\|_{L^\infty C^{2}_b}\lesssim (t-s)\|g^n\|_{C^{2+\alpha}_b} .
    \end{align*}
    Combining the previous estimates we obtain that
    \[
      \limsup_{h\to0}\sup_{|t-s|\le h}\|U_{s,t}g-g\|_\infty\le2\|g^n-g\|_\infty
    \]
    for each $n$. Finally, sending $n\to\infty$ we obtain \ref{u3}.
  \end{proof}
  \begin{remark}
    Let $\{Q_t\}_{t\ge0}$ be a family of bounded operators on $BUC(\Rd)$.
    It is straightforward to verify that $\{U_{s,t}\}_{s\le t}:=\{Q_{t-s}\}_{s\le t}$ satisfies the properties \ref{u1}-\ref{u3} listed above if and only if $\{Q_t\}_{t\ge0}$ is a contraction $C_0$-semigroup. This justifies our terminology of $C_0$-propagator.
  \end{remark}

  We show that $\{U_{s,t}\}$ is the transition propagator of $X^x$, the solution of \eqref{eqn:FGP}, in the following sense. 
  \begin{lemma}\label{lem.Upropa} For every $g$ in $BUC(\Rd)$ and every $s\le t$
    \begin{equation*}
      \E^{\cff_s}g(X^x_{t})=U_{s,t}g(X^x_{s}).
    \end{equation*}
  \end{lemma}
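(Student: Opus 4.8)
The plan is to prove the identity first for smooth terminal data $g\in C^{2+\alpha}_b(\Rd)$, where the backward Kolmogorov machinery and the It\^o formula of Lemma \ref{lem.itoFPG} are directly available, and then to pass to an arbitrary $g\in BUC(\Rd)$ by density together with the contraction estimate of Proposition \ref{prop.Uc0propa}.

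For $g\in C^{2+\alpha}_b(\Rd)$, let $u^t$ be the solution of \eqref{eqn:BK} with $f\equiv0$ and terminal data $u^t_t=g$, so that $U_{s,t}g=u^t_s$ by definition. I would apply the It\^o formula of Lemma \ref{lem.itoFPG} to the map $r\mapsto u^t_r(X^x_r)$ on $[s,t]$, exactly as in the proof of Lemma \ref{lem.maximum} but now to the global solution $X^x$ (started at $x$ at time $0$) instead of to $X^x_{s,\cdot}$; note $X^x_r=x+\int_0^r b_\theta\,d\theta+W_r$ has the required form and the increment $v_r=\partial_r u^t_r=-L^b_r u^t_r$ lies in $L^\infty([0,T];C^\alpha_b(\Rd))$ by \eqref{est.KP}. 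Since $u^t$ satisfies \eqref{def.KPsol} with $f\equiv0$, the term $v_r$ exactly cancels the $L^b_r u^t_r$ drift produced by the It\^o formula, leaving
\[
  g(X^x_t)=U_{s,t}g(X^x_s)+\int_s^t\nabla u^t_r(X^x_r)\cdot dW_r.
\]
By \eqref{est.KP} the integrand is bounded, so the stochastic integral is a square-integrable martingale over $[s,t]$ and hence has vanishing $\cff_s$-conditional expectation. As $U_{s,t}g(X^x_s)$ is $\cff_s$-measurable, taking $\E^{\cff_s}$ on both sides gives $\E^{\cff_s}g(X^x_t)=U_{s,t}g(X^x_s)$ for every $g\in C^{2+\alpha}_b(\Rd)$.

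To extend to $g\in BUC(\Rd)$, I would pick $\{g^n\}\subset C^{2+\alpha}_b(\Rd)$ with $g^n\to g$ uniformly, which exists because $C^\gamma_b(\Rd)$ is dense in $BUC(\Rd)$ for every $\gamma>0$. On the left-hand side, $g^n(X^x_t)\to g(X^x_t)$ uniformly and boundedly, so the conditional dominated convergence theorem yields $\E^{\cff_s}g^n(X^x_t)\to\E^{\cff_s}g(X^x_t)$ in $L^1$. On the right-hand side, the contraction property \ref{u1} of Proposition \ref{prop.Uc0propa} gives $\|U_{s,t}g^n-U_{s,t}g\|_\infty\le\|g^n-g\|_\infty\to0$, whence $U_{s,t}g^n(X^x_s)\to U_{s,t}g(X^x_s)$ uniformly. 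Passing to the limit in the identity established for $g^n$ delivers the claim for $g$.

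The It\^o computation is essentially a transcription of the argument already used in Lemma \ref{lem.maximum}, so I do not expect any genuine difficulty there; the only points demanding care are the two convergences in the density step and the observation that the stochastic integral is a true martingale, which is guaranteed by the uniform bound \eqref{est.KP} on $\nabla u^t$. All of the heavy lifting---well-posedness and regularity of \eqref{eqn:BK}, validity of the It\^o formula for coefficients merely measurable in time, and the $BUC(\Rd)$ contraction estimate---has already been secured in the preceding results, so the proof amounts to assembling these ingredients.
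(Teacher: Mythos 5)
Your proposal is correct and follows essentially the same route as the paper: reduce to $g\in C^{2+\alpha}_b(\Rd)$ by density via the contraction estimate, apply the It\^o formula of Lemma \ref{lem.itoFPG} to $r\mapsto U_{r,t}g(X^x_r)$ so that the backward equation cancels the drift, and kill the resulting stochastic integral (a true martingale by the gradient bound from \eqref{est.KP}) under $\E^{\cff_s}$. The only difference is that you spell out the approximation step in more detail than the paper, which simply asserts that the smooth case suffices.
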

  \begin{proof}
    In view of \eqref{est.Umax}, it suffices to show the result for $g$ in $C^{2+\alpha}_b(\Rd)$. Applying It\^o formula in Lemma \ref{lem.itoFPG} to $r\mapsto U_{r,t}g(X^x_{r})$
    \begin{align*}
      g(X^x_{t})=U_{s,t}g(X^x_{s})+\int_s^t\nabla U_{r,t}(X^x_{r})\cdot dW_r
    \end{align*}
    Taking conditional expectation with respect to $\cff_s$, noting that $\nabla U_{r,t}$ is uniformly bounded (by \eqref{est.KP}), we obtain the claim.
  \end{proof}
  
  Next, we show that $U_{s,t}$ maps $\C^{\beta-1}(\Rd)$ with $\beta\in(0,1]$, to $BUC(\Rd)$ provided that $s<t$. For concreteness, we first describe this procedure for the heat propagator. Let $\{P_{s,t}\}$ be the propagator associated to $\frac 12 \Delta$. This means that for each $s<t$, $g\in BUC(\Rd)$ and $x\in\Rd$, $P_{s,s}g=g$ and
  \[
    P_{s,t}g(x):=\E g(W_t-W_s+x)=(2 \pi(t-s))^{-\frac d2}\int_\Rd e^{-\frac{|x-y|^2}{2(t-s)}}g(y)dy .
  \]
  Let $s<t$ be fixed. We aim to extend the domain of $P_{s,t}$ to H\"older-Besov space $\C^{\beta-1}(\Rd)$ without appealing to the explicit formula of the kernel. This approach has the advantage that it can be carried out for $U_{s,t}$ once certain Schauder estimates are available. We proceed by approximation. Let $g$ be in $\C^{\beta-1}(\Rd)$ and let $\{g^n\}_n$ be a mollifying sequence of $g$. This means that $g^n(x)=\wei{g(x-\cdot),\rho_{n}(\cdot)}$, $\rho_{n}(y)=n^{d}\rho(ny)$, $\rho$ is a smooth non-negative function $\rho$ which equals $1$ on $\{x\in\Rd:|x|\le1\}$ and vanishes outside $\{x\in\Rd:|x|\le 2\}$.
  H\"older-Besov spaces are unfortunately not separable. One cannot expect that $\{g^n\}_n$ converges to $g$ in $\C^{\beta-1}(\Rd)$. Nevertheless, it is true that $\lim g^n=g$ in $\C^{\beta'-1}$ for every $\beta'<\beta$. From \eqref{P.schau}, we have
  \begin{equation}\label{tmp.Pgnk}
    \|P_{s,t}g^n-P_{s,t}g^k\|_\infty\lesssim|t-s|^{\frac {\beta'}2-\frac 12}\|g^n- g^k\|_{\C^{\beta'-1}}
  \end{equation}
  for every $n,k$.  This implies that $\lim_n P_{s,t}g^n$ exists in $BUC(\Rd)$ and we denote the resulting limit by $P_{s,t}g$, which is a function in $BUC(\Rd)$. 
  % In terms of Schwarz pairing, $P_{s,t}g^n(x)=\wei{p_{t-s}(x-\cdot),g^n}$ so we have $P_{s,t}g(x)=\wei{p_{t-s}(x-\cdot),g}$. 
  Using \eqref{P.schau} once again, we see that
  \[
    \|P_{s,t}g^n\|_\infty
    \lesssim |t-s|^{\frac \beta2-\frac12}\|g^n\|_{\C^{\beta-1}}
    \lesssim |t-s|^{\frac \beta2-\frac12}\|g\|_{\C^{\beta-1}}.
  \]
  Sending $n\to\infty$, we obtain
  \[
    \|P_{s,t}g\|_\infty
    \lesssim |t-s|^{\frac \beta2-\frac12}\|g\|_{\C^{\beta-1}}.
  \]
  The extension of $U_{s,t}$ on $\C^{\beta-1}(\Rd)$ is carried out through the same procedure. To obtain bounds for $U_{s,t}$ similar to \eqref{tmp.Pgnk}, we rely on some a priori estimates for the solutions of \eqref{eqn:BK} obtained through the mild formulation.

  \begin{lemma}[Mild formulation]
    For every $f\in L^\infty([0,T];C^\alpha_b(\Rd))$, $g\in C^{2+\alpha}_b(\Rd)$ and every $s\le t$, $x\in\Rd$
    \begin{equation}\label{id.mild}
      u^t_s(x)= P_{s,t}g(x)+\int_s^t P_{s,r}(b_r\cdot \nabla u^t_r-f_r)(x)dr.
    \end{equation}
  \end{lemma}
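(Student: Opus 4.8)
The plan is to read off \eqref{id.mild} as a Duhamel (variation-of-constants) identity for the operator $\tfrac12\Delta$, whose propagator is exactly $\{P_{s,t}\}$, and to make this rigorous through the It\^o formula of Lemma \ref{lem.itoFPG} applied to a \emph{driftless} Brownian motion. First I would rewrite the defining relation \eqref{def.KPsol} in differential form as $\partial_r u^t_r=-L^b_r u^t_r+f_r$, so that $u^t$ fits the hypotheses of Lemma \ref{lem.itoFPG} with $v_r:=-L^b_r u^t_r+f_r$; note $v\in L^\infty([0,T];C^\alpha_b(\Rd))$ because $u^t\in L^\infty([0,t];C^{2+\alpha}_b(\Rd))$, $b\in L^\infty C^\alpha_b$ and $f\in L^\infty C^\alpha_b$. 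I would then fix $x\in\Rd$, set $B_r=x+W_r$ (which is of the required form with zero drift and deterministic initial value $x$), and apply Lemma \ref{lem.itoFPG} on $[s,t]$. The algebraic heart is the cancellation of the Laplacian: taking the drift in the It\^o formula to be $0$, the $dr$-integrand becomes $v_r+\tfrac12\Delta u^t_r=-L^b_r u^t_r+f_r+\tfrac12\Delta u^t_r=f_r-b_r\cdot\nabla u^t_r$, so that
\[
  g(x+W_t)=u^t_s(x+W_s)+\int_s^t (f_r-b_r\cdot\nabla u^t_r)(x+W_r)\,dr+\int_s^t\nabla u^t_r(x+W_r)\cdot dW_r.
\]

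Next I would take the conditional expectation $\E^{\cff_s}$ of this identity. The stochastic integral starts at time $s$ and has a bounded integrand (by the a priori estimate \eqref{est.KP}, which controls $\|\nabla u^t_r\|_\infty$), hence it is a martingale increment and $\E^{\cff_s}$ of it vanishes. For the remaining terms I would use that $W_t-W_s$ and $W_r-W_s$ are independent of $\cff_s$ together with the definition $P_{s,r}\phi(z)=\E\,\phi(z+W_r-W_s)$: freezing the $\cff_s$-measurable quantity $x+W_s$ gives $\E^{\cff_s}g(x+W_t)=P_{s,t}g(x+W_s)$, and a conditional Fubini argument (again legitimate by the uniform bounds above) gives $\E^{\cff_s}\int_s^t(f_r-b_r\cdot\nabla u^t_r)(x+W_r)\,dr=\int_s^t P_{s,r}(f_r-b_r\cdot\nabla u^t_r)(x+W_s)\,dr$. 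This yields, almost surely,
\[
  P_{s,t}g(x+W_s)=u^t_s(x+W_s)+\int_s^t P_{s,r}(f_r-b_r\cdot\nabla u^t_r)(x+W_s)\,dr.
\]

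Finally I would upgrade this to a genuine pointwise identity. Writing $\Phi(z)$ for the difference of the two sides, $\Phi$ is a continuous function of $z$ (each $P_{s,r}\phi_r$ is continuous for $r>s$, and the $r$-integral converges uniformly by \eqref{est.KP}), and the previous display says $\Phi(x+W_s)=0$ a.s. Since the law of $x+W_s$ has full support in $\Rd$ and $\Phi$ is continuous, $\Phi\equiv0$; evaluating at $z=x$ and rearranging gives exactly \eqref{id.mild}. I expect the main obstacle to be not the computation but the two interchange/transfer steps: justifying the (conditional) Fubini theorem and, more delicately, passing from the identity evaluated at the random argument $x+W_s$ to a deterministic pointwise statement. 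Both are resolved by the uniform bound \eqref{est.KP} (boundedness of $u^t$, $\nabla u^t$ and hence of the integrands) and by continuity combined with the full support of the Gaussian law. As an alternative, purely analytic route one could instead \emph{define} the right-hand side of \eqref{id.mild} and verify directly, using the semigroup property of $P$ and differentiation under the integral, that it solves \eqref{def.KPsol}, then invoke the uniqueness of solutions to \eqref{eqn:BK} from \cite{MR2748616}; this avoids probability but requires Schauder estimates for the heat propagator to control the regularity of the Duhamel term, so I would favor the It\^o-based argument above since it only uses tools already established in this section.
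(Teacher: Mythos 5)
Your proof is correct and follows essentially the same route as the paper: apply the It\^o formula of Lemma \ref{lem.itoFPG} to a driftless Brownian motion, use the cancellation $v_r+\tfrac12\Delta u^t_r=f_r-b_r\cdot\nabla u^t_r$, kill the stochastic integral via \eqref{est.KP}, and identify the heat propagator. The only difference is that the paper sets $X_r=W_r-W_s+x$, so that $X_s=x$ is \emph{deterministic}; a plain expectation plus Fubini then gives \eqref{id.mild} directly, sidestepping entirely the conditional-expectation, conditional-Fubini and full-support/continuity steps that you (correctly, but unnecessarily) identify as the delicate part of your argument.
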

  \begin{proof}
    Let $(W_r)_{r\ge0}$ be a standard Brownian motion in $\Rd$ and let $s\le t$ be fixed. Define $X_r=W_r-W_s+x$ for every $r\ge s$. We note that $(W_r-W_s)_{r\ge s}$ is a standard Brownian motion starting at time $s$. Applying the It\^o formula in Lemma \ref{lem.itoFPG} and using \eqref{def.KPsol}, we have
    \begin{align*}
      u^t_t(X_t)=u^t_s(X_s)+\int_s^t(f_r-b_r\cdot\nabla u^t_r)(X_r)dr+\int_s^t\nabla u^t_r(X_r)\cdot dW_r.  
    \end{align*}
    By \eqref{est.KP}, the stochastic integral above is square integrable. Hence, we can take expectation in the above formula and use the identities $u^t_t(X_t)=g(W_t-W_r+x)$, $u^t_s(X_s)=u^t_s(x)$ to obtain that
    \[
      u^t_s(x)=\E g(W_t-W_s+x)+\E\int_s^t (b_r\cdot \nabla u^t_r-f_r)(W_r-W_s+x)dr.
    \] 
    By Fubini's theorem, we can interchange the expectation and the integration in $dr$ in the last term of the above identity. The identity \eqref{id.mild} follows by definition of $P$.    
  \end{proof}
  \begin{lemma}\label{lem:sch.flow}
    Let $g$ be in $C^{2+\alpha}_b(\Rd)$, $f$ be in $L^\infty([0,t]; C^\alpha_b(\Rd))$ and $u^t$ be the solution to \eqref{eqn:BK} with terminal condition $g$ at time $t$ and right-hand side $f$.
    Let $\beta$ be a fixed number in $(0,1]$.
    We define for each $s\in[0,t]$,
    \begin{equation}\label{def.F}
      F^{(\beta)}_s=(t-s)^{1-\frac \beta2} \int_s^t(r-s)^{-\frac12}\|f_r\|_\infty dr
      \quad\textrm{and}\quad
      F^{(\beta),*}_s=\sup_{r\in[s,t]}F^{(\beta)}_r.
    \end{equation}
    Then, we have for every $s\in[0,t)$
    \begin{equation}\label{est.gradU}
      \|\nabla u^t_{s}\|_{\infty}\le C |t-s|^{\frac \beta2-1} \left(\|g\|_{\C^{\beta-1}}+F^{(\beta),*}_s\right)
    \end{equation}
    and
    \begin{equation}\label{est.supU}
      \| u^t_{s}\|_{\infty}\le C|t-s|^{\frac \beta2-\frac12} (\|g\|_{\C^{\beta-1}}+F^{(\beta)}_s)+C(t-s)^{\frac \beta2}F^{(\beta),*}_s
    \end{equation}
    for a constant $C=C(\|b\|_{L^\infty C_b},T,\beta)$.
    In particular, 
    \begin{equation}\label{est.Uschauder}
      \|\nabla U_{s,t}g\|_\infty\le C|t-s|^{\frac \beta2-1}\|g\|_{\C^{\beta-1}}
      \quad\textrm{and} \quad 
      \|U_{s,t}g\|_{\infty}\le C|t-s|^{\frac \beta2-\frac12}\|g\|_{\C^{\beta-1}}
    \end{equation}
    for every $s<t$.
    \end{lemma}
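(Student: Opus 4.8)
The plan is to bootstrap on the mild formulation \eqref{id.mild}, first bounding the gradient $\nabla u^t$ in a time-weighted norm and then feeding this back into \eqref{id.mild} to bound $u^t$ itself. Throughout, the a priori regularity $u^t\in L^\infty([0,t];C^{2+\alpha}_b(\Rd))$ from \eqref{est.KP} guarantees that $s\mapsto\|\nabla u^t_s\|_\infty$ is a finite (indeed bounded) function; this is all that is needed to legitimize the absorption step below, whose point is to replace the crude $C^{2+\alpha}_b$-dependent bound by one depending only on $\|g\|_{\C^{\beta-1}}$ and $f$. I will use four heat-kernel facts: the Schauder bound \eqref{P.schau} and its gradient counterpart (obtained from the same kernel computation), namely $\|P_{s,r}h\|_\infty\lesssim|r-s|^{\frac\beta2-\frac12}\|h\|_{\C^{\beta-1}}$ and $\|\nabla P_{s,r}h\|_\infty\lesssim|r-s|^{\frac\beta2-1}\|h\|_{\C^{\beta-1}}$, together with, for a merely bounded function $h$, the contraction $\|P_{s,r}h\|_\infty\le\|h\|_\infty$ and the smoothing estimate $\|\nabla P_{s,r}h\|_\infty\lesssim|r-s|^{-\frac12}\|h\|_\infty$.

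\emph{Step 1 (gradient estimate \eqref{est.gradU}).} Differentiating \eqref{id.mild} in $x$ and applying the bounds above to the bounded function $b_r\cdot\nabla u^t_r-f_r$ gives, with $\phi(s):=\|\nabla u^t_s\|_\infty$ and $\int_s^t(r-s)^{-1/2}\|f_r\|_\infty\,dr=(t-s)^{\frac\beta2-1}F^{(\beta)}_s$, the linear Volterra inequality
\[
\phi(s)\le C|t-s|^{\frac\beta2-1}\big(\|g\|_{\C^{\beta-1}}+F^{(\beta)}_s\big)+C\|b\|_\infty\int_s^t(r-s)^{-\frac12}\phi(r)\,dr.
\]
Reversing time via $\psi(\sigma):=\phi(t-\sigma)$ turns this into the standard forward form $\psi(\sigma)\le\tilde a(\sigma)+C\|b\|_\infty\int_0^\sigma(\sigma-\rho)^{-\frac12}\psi(\rho)\,d\rho$ with driving term $\tilde a(\sigma)=C\sigma^{\frac\beta2-1}(\|g\|_{\C^{\beta-1}}+F^{(\beta)}_{t-\sigma})$. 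Since $\beta/2-1>-1$, this singularity is integrable, so the generalized (singular) Gronwall lemma applies; its resolvent is a sum of terms $\sigma^{n/2-1}$ whose convolution against $\tilde a$ only raises the power of $\sigma$, so on $[0,T]$ the solution retains the singularity $\sigma^{\frac\beta2-1}$ with a constant depending on $\|b\|_\infty,\beta,T$. Undoing the reversal and using $\sup_{r\in[s,t]}F^{(\beta)}_r=F^{(\beta),*}_s$ yields \eqref{est.gradU}.

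\emph{Step 2 (sup estimate \eqref{est.supU} and the corollary).} Inserting \eqref{est.gradU} into the undifferentiated formula \eqref{id.mild} and using the contraction $\|P_{s,r}\cdot\|_\infty\le\|\cdot\|_\infty$ gives
\[
\|u^t_s\|_\infty\le C|t-s|^{\frac\beta2-\frac12}\|g\|_{\C^{\beta-1}}+\int_s^t\big(\|b\|_\infty\|\nabla u^t_r\|_\infty+\|f_r\|_\infty\big)\,dr.
\]
The gradient term is bounded by $C\|b\|_\infty\int_s^t(t-r)^{\frac\beta2-1}\big(\|g\|_{\C^{\beta-1}}+F^{(\beta),*}_r\big)\,dr\lesssim(t-s)^{\frac\beta2}\big(\|g\|_{\C^{\beta-1}}+F^{(\beta),*}_s\big)$, while $\int_s^t\|f_r\|_\infty\,dr\le(t-s)^{\frac12}\int_s^t(r-s)^{-\frac12}\|f_r\|_\infty\,dr=(t-s)^{\frac\beta2-\frac12}F^{(\beta)}_s$; absorbing the term $(t-s)^{\frac\beta2}\|g\|_{\C^{\beta-1}}$ into $(t-s)^{\frac\beta2-\frac12}\|g\|_{\C^{\beta-1}}$ at the cost of a factor $T^{1/2}$ produces exactly \eqref{est.supU}. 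Finally, setting $f\equiv0$ forces $F^{(\beta)}\equiv F^{(\beta),*}\equiv0$ and $u^t_s=U_{s,t}g$, so \eqref{est.gradU} and \eqref{est.supU} specialize to \eqref{est.Uschauder}.

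The main obstacle is Step 1: the kernel $(r-s)^{-1/2}$ makes the Volterra inequality genuinely singular, so a plain Gronwall fails and a naive iteration over subintervals would degrade the constant at each $C^{2+\alpha}_b$-to-$\C^{\beta-1}$ re-estimation. The crux is therefore to invoke the singular Gronwall lemma and to verify that its resolvent, convolved against the singular driving term $|t-s|^{\frac\beta2-1}$, reproduces the same singularity with only a $T$-dependent constant; this is precisely what keeps the estimates sharp in the boundary-layer regime $s\uparrow t$ that the weights $F^{(\beta)}_s,F^{(\beta),*}_s$ are designed to track.
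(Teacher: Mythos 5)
Your proposal is correct, and its overall architecture coincides with the paper's: derive the gradient bound from the differentiated mild formulation \eqref{id.mild}, then feed it back into \eqref{id.mild} to get the sup bound, and specialize to $f\equiv 0$ for \eqref{est.Uschauder}; your Step 2 is essentially verbatim the paper's. The one genuine difference is how the singular Volterra inequality in Step 1 is closed. You time-reverse and invoke the singular (Henry-type) Gronwall lemma, arguing that the resolvent kernel $\sum_n c_n\sigma^{n/2-1}$ convolved against the driving singularity $\sigma^{\beta/2-1}$ only raises the exponent, so the bound $\sigma^{\beta/2-1}$ survives on $[0,T]$ with a constant depending on $\|b\|_{L^\infty C_b},\beta,T$ — and you correctly note that the variable factor $F^{(\beta)}_{t-\rho}$ inside the convolution must be replaced by its supremum, which is why $F^{(\beta),*}_s$ appears in \eqref{est.gradU}. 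The paper instead uses a Bielecki-type weighted supremum: it sets $m_s=e^{-\lambda(t-s)}(t-s)^{1-\beta/2}\|\nabla u^t_s\|_\infty$, shows $m^*_s\lesssim \|g\|_{\C^{\beta-1}}+F^{(\beta),*}_s+\lambda^{-1/2}\|b\|_{L^\infty C_b}\,m^*_s$ by splitting the convolution integral at the midpoint, and absorbs by taking $\lambda$ large. The two devices are interchangeable here: yours requires quoting (or reproving) the singular Gronwall lemma and its resolvent asymptotics but is arguably more modular, while the paper's is self-contained at the cost of the explicit integral estimates for the weighted kernel. Both rely on the same a priori finiteness of $\|\nabla u^t_\cdot\|_\infty$ from \eqref{est.KP}, which you correctly flag as the point legitimizing the closure.
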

  \begin{proof} From \eqref{id.mild}, we have
    \begin{equation*}
      \|\nabla u^t_s\|_\infty\le\|\nabla P_{s,t}g\|_\infty+\int_s^t\|\nabla P_{s,r}f_r\|_\infty dr+ \int_s^t\|\nabla P_{s,r}(b_r\cdot\nabla u^t_r)\|_\infty dr.
    \end{equation*}
    From \eqref{P.schau} and direct estimations, we know that
    \[
      \|\nabla P_{s,t}g\|_\infty\lesssim|t-s|^{\frac \beta2-1}\|g\|_{\C^{\beta-1}}
      ,\quad
      \|\nabla P_{s,r}f_r\|_\infty\lesssim|r-s|^{-\frac12}\|f_r\|_\infty
    \]
    and
    \[
      \|\nabla P_{s,r}(b_r\cdot\nabla u^t_r) \|_\infty
      \lesssim|r-s|^{-\frac12}\|b_r\cdot\nabla u^t_r \|_\infty
      \lesssim|r-s|^{-\frac12}\|b\|_{L^\infty C_b} \|\nabla u^t_r \|_\infty.
    \]
    It follows that
    \begin{align*}
      \|\nabla u^t_s\|_\infty\lesssim|t-s|^{\frac \beta2-1}(\|g\|_{\C^{\beta-1}}+F^{(\beta)}_s) +\|b\|_{L^\infty C_b} \int_s^t|r-s|^{-\frac12}\|\nabla u^t_r\|_\infty dr.
    \end{align*}
    Let $\lambda$ be a positive constant to be chosen later. For every $s\in[0,t]$, we define
    \[
      m_s=e^{-\lambda(t-s)} (t-s)^{1-\frac \beta2}\|\nabla u^t_s\|_\infty
      \quad\textrm{and}\quad
      m^*_s=\sup_{r\in[s,t]} m_r,
    \]
    which are finite by \eqref{est.KP}.
    The previous estimate implies that
    \begin{align*}
      m_s\lesssim\|g\|_{\C^{\beta-1}}+F^{(\beta)}_s+m^*_s\|b\|_{L^\infty C_b}(t-s)^{1-\frac \beta2}\int_s^t|r-s|^{-\frac12}|t-r|^{\frac \beta2-1}e^{-\lambda(r-s)}dr .
    \end{align*}
    To estimate the integral on the right-hand side, we split it into two regions
    \begin{align*}
      \int_s^{\frac{s+t}{2} }|r-s|^{-\frac12}|t-r|^{\frac \beta2-1}e^{-\lambda(r-s)}dr
      &\lesssim|t-s|^{\frac\beta2-1}\int_s^{\infty}(r-s)^{-\frac12}e^{-\lambda(r-s)}dr
      \\&\lesssim |t-s|^{\frac \beta2-1}\lambda^{-\frac12}
    \end{align*}
    and
    \begin{align*}
      \int_{\frac{s+t}{2}}^t|r-s|^{-\frac12}|t-r|^{\frac \beta2-1}e^{-\lambda(r-s)}dr
      &\lesssim |t-s|^{-\frac12}e^{-\frac \lambda2 (t-s)}\int_{\frac{s+t}{2}}^t(t-r)^{\frac \beta2-1}dr
      \\&\lesssim |t-s|^{\frac \beta2-\frac12}e^{-\frac \lambda2(t-s)}.
    \end{align*}
    Noting that $e^{-\frac \lambda2(t-s)}\lesssim \lambda^{-\frac12}(t-s)^{-\frac12}$, the previous two bounds yields
    \[
      \int_s^t|r-s|^{-\frac12}|t-r|^{\frac \beta2-1}e^{-\lambda(r-s)}dr\lesssim |t-s|^{\frac \beta2-1}\lambda^{-\frac12}.
    \]
    Hence, we have
    \[
      m_s\lesssim\|g\|_{\C^{\beta-1}}+F^{(\beta)}_s+m^*_s\|b\|_{L^\infty C_b} \lambda^{-\frac12}
    \]
    and 
    \[
      m^*_s\lesssim\|g\|_{\C^{\beta-1}}+F^{(\beta),*}_s+m^*_s\|b\|_{L^\infty C_b} \lambda^{-\frac12}
    \]
    for every $s\in[0,t]$.
    From here, choosing $\lambda$ sufficiently large, we obtain that $m^*_s\le C(\|g\|_{\C^{\beta-1}}+F^{(\beta),*}_s)$ for every $s\in[0,t]$ for a constant $C>0$ depending only on $\beta$ and $\|b\|_{L^\infty C_b}$. This implies \eqref{est.gradU}.

    To show \eqref{est.supU}, we derive from \eqref{id.mild} that
    \[
      \|u^t_s\|_{\infty}\le\|P_{s,t}g\|_\infty+\int_s^t\|P_{s,r}f_r\|_\infty dr+ \int_s^t\|P_{s,r}(b_r\cdot\nabla u^t_r)\|_\infty dr.
    \]
    We estimate the first term by
    \[
      \|P_{s,t}g\|_\infty\lesssim|t-s|^{\frac \beta2-\frac12}\|g\|_{\C^{\beta-1}},
    \]
    the second term by
    \[
      \int_s^t\|P_{s,r}f_r\|_\infty dr\le\int_s^t\|f_r\|_{\infty}dr\le(t-s)^{\frac \beta2-\frac12}F^{(\beta)}_s ,
    \]
    and the last term by
    \begin{align*}
      \|P_{s,r}(b_r\cdot\nabla u^t_r)\|_\infty\le\|b_r\|_\infty\|\nabla u^t_r\|_\infty
      \le C(\|b\|_{L^\infty C_b})(\|g\|_{\C^{\beta-1}}+F^{(\beta),*}_s)|t-r|^{\frac \beta2-1}
    \end{align*}
    in which we have used \eqref{est.gradU} to obtain the last inequality. Combining these bounds together, we obtain \eqref{est.supU}.
  \end{proof}
  % Next, we show that $U_{s,t}$ maps $\C^{\alpha-1}$ to $BUC(\Rd)$. 
  \begin{lemma}\label{lem.UCbeta}
    For every $s<t$, the operator $U_{s,t}:BUC(\Rd)\to BUC(\Rd)$ can be extended uniquely to a bounded linear operator $U_{s,t}:\C^{\beta-1}(\Rd)\to BUC(\Rd)$.
    In addition, there exists a positive constant $C=C(\|b\|_{L^\infty C_b},T,\beta)$ such that
    \begin{equation}\label{est.Ua1}
      \|U_{s,t}g\|_\infty\le C \|g\|_{\C^{\beta-1}}|t-s|^{\frac{\beta-1}2}
    \end{equation}
    for every $g\in\C^{\beta-1}(\Rd)$.
  \end{lemma}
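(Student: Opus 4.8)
The plan is to mimic, for the propagator $U_{s,t}$, the extension of the heat propagator $P_{s,t}$ carried out in the discussion preceding Lemma~\ref{lem:sch.flow}, now using the Schauder-type bound \eqref{est.Uschauder} in place of \eqref{P.schau}. Fix $s<t$. Given $g\in\C^{\beta-1}(\Rd)$, let $\{g^n\}_n$ be the mollifying sequence introduced before the lemma; the relevant facts are that each $g^n$ lies in $C^{2+\alpha}_b(\Rd)$ (so that \eqref{est.Uschauder} applies to it), that $\sup_n\|g^n\|_{\C^{\beta-1}}\lesssim\|g\|_{\C^{\beta-1}}$, and that $g^n\to g$ in $\C^{\beta'-1}(\Rd)$ for every $\beta'<\beta$ — though, the space being non-separable, \emph{not} in $\C^{\beta-1}$ itself.

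First I would construct the extension. Choosing any $\beta'\in(0,\beta)$ and applying the supremum bound in \eqref{est.Uschauder}, which is valid on $C^{2+\alpha}_b(\Rd)$, to the differences $g^n-g^k$, one gets
\[
  \|U_{s,t}g^n-U_{s,t}g^k\|_\infty\le C|t-s|^{\frac{\beta'-1}2}\|g^n-g^k\|_{\C^{\beta'-1}}.
\]
Since $\{g^n\}$ is Cauchy in $\C^{\beta'-1}$, the sequence $\{U_{s,t}g^n\}$ is Cauchy in the Banach space $BUC(\Rd)$, and I define $U_{s,t}g:=\lim_n U_{s,t}g^n$. The same inequality shows the limit is independent of the mollifying sequence (any two converge to $g$ in $\C^{\beta'-1}$), so $U_{s,t}g$ is well defined and linear; and for $g\in BUC(\Rd)$ it agrees with the operator already defined there, by the contraction property \eqref{est.Umax} and \ref{u3}.

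Next the bound \eqref{est.Ua1}. Applying \eqref{est.Uschauder} to $g^n$ now with the exponent $\beta$ (not $\beta'$) and using the uniform control of the mollification in the stronger norm,
\[
  \|U_{s,t}g^n\|_\infty\le C|t-s|^{\frac{\beta-1}2}\|g^n\|_{\C^{\beta-1}}\lesssim C|t-s|^{\frac{\beta-1}2}\|g\|_{\C^{\beta-1}},
\]
and letting $n\to\infty$ gives \eqref{est.Ua1}, so $U_{s,t}\colon\C^{\beta-1}(\Rd)\to BUC(\Rd)$ is bounded. Note the two-scale structure: Cauchyness is extracted in the weaker norm $\C^{\beta'-1}$, where $\{g^n\}$ genuinely converges, while the final estimate is propagated in the stronger norm $\C^{\beta-1}$, in which the sequence is merely bounded.

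For uniqueness, any bounded linear extension agreeing with $U_{s,t}$ on $BUC(\Rd)$ is forced, on each mollification $g^n\in C^{2+\alpha}_b(\Rd)\subset BUC(\Rd)$, to equal $U_{s,t}g^n$; the constructed operator is characterized as the unique extension satisfying $Vg=\lim_n Vg^n$ along mollifications. I expect this last point to be the main obstacle: because $\C^{\beta-1}$ is non-separable and the $g^n$ do \emph{not} converge in its norm, the usual ``bounded operator determined on a dense subspace'' argument is unavailable, and one must instead justify continuity along the mollifications by combining the $\C^{\beta'-1}$ convergence (which controls the error) with the uniform $\C^{\beta-1}$ bound (which controls the size), exactly the two-scale mechanism used for $P_{s,t}$.
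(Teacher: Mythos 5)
Your proposal is correct and follows essentially the same route as the paper: approximate by a mollifying sequence, extract Cauchyness of $\{U_{s,t}g^n\}$ in $BUC(\Rd)$ via the bound \eqref{est.Uschauder} in the weaker norm $\C^{\beta'-1}$, then pass the estimate \eqref{est.Ua1} to the limit using the uniform $\C^{\beta-1}$ bound on the mollifications. Your closing remark on the uniqueness step — that the non-separability forces one to interpret "unique continuous extension" via the two-norm mechanism rather than by density in $\C^{\beta-1}$ itself — accurately identifies the sense in which the paper's uniqueness claim is meant.
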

  \begin{proof}
    Let $\{g^n\}_n$ be a sequence in $C^{3}_b(\Rd)$ such that $g^n$ converges to $g$ in $\C^{\beta'-1}$ for some $\beta'\in(0,\beta)$ and that $\sup_n\|g^n\|_{\C^{\beta-1}}\lesssim\|g\|_{\C^{\beta-1}}$. 
    Let $s,t$ be fixed but arbitrary in $[0,T]$, $s<t$. 
    From \eqref{est.Uschauder}, we have for every $n,k$
    \[
      \|U_{s,t}g^n-U_{s,t}g^k\|_\infty\lesssim |t-s|^{\frac{\beta'}2-\frac12}\|g^n-g^k\|_{\C^{\beta'-1}}.
    \]
    This implies that $\lim_n U_{s,t}g^n$ exists in $BUC(\Rd)$ and the limit is denoted by $U_{s,t}g$. It is standard to  check that $U_{s,t}g$ is independent from the choices of the approximating sequence $\{g^n\}_n$ and that $U_{s,t}:\C^{\beta-1}\to BUC(\Rd)$ is linear. In view of the above estimate, such extension is necessary unique in the sense that if $\bar U_{s,t}:\C^{\beta-1}(\Rd)\to BUC(\Rd)$ is another continuous extension of $U_{s,t}:BUC(\Rd)\to BUC(\Rd)$, then $U_{s,t}=\bar U_{s,t}$ on $\C^{\beta-1}(\Rd)$. In addition, from \eqref{est.Uschauder}, we have
    \[
      \|U_{s,t}g\|_\infty=\lim_n\|U_{s,t}g^n\|_\infty\lesssim|t-s|^{\frac \beta2-\frac12}\limsup_n\|g^n\|_{\C^{\beta-1}}\lesssim|t-s|^{\frac \beta2-\frac12}\|g\|_{\C^{\beta-1}},
    \]
    which shows \eqref{est.Ua1}.
   \end{proof}
  \begin{proposition}\label{prop.U31}
    For every $\beta$ in $(0,1)$,
    $(X,U,\C^{\beta-1}(\Rd))$ satisfies Condition \ref{con:set1} with $\nu=\beta-1$.
  \end{proposition}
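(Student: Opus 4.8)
The plan is to verify the three requirements of Condition~\ref{con:set1} for the tuple $(X, U, \C^{\beta-1}(\Rd))$ with $\nu = \beta - 1 \in (-1, 0)$, assembling the pieces already proved for the propagator $U$. Requirement~\ref{set1c} is almost immediate from Lemma~\ref{lem.UCbeta}: for every $s < t$ that lemma extends $U_{s,t}$ to a bounded operator from $\C^{\beta-1}(\Rd)$ into $BUC(\Rd)$, so $U_{s,t}h$ is a bounded continuous function, and the bound \eqref{est.Ua1} is exactly \eqref{con:U1} with $\frac{\nu}{2} = \frac{\beta-1}{2}$ and $\|U\|_{\chh,\nu} = C(\|b\|_{L^\infty C_b}, T, \beta)$.

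For the measurability requirement~\ref{set1b}, I would argue by the same approximation used in Lemma~\ref{lem.UCbeta}. Fix $s$, $x$ and $h \in \C^{\beta-1}(\Rd)$, and take $\{h^n\} \subset C^3_b(\Rd)$ with $h^n \to h$ in $\C^{\beta'-1}$ for some $\beta' \in (0, \beta)$. For the smooth data $h^n$, the representation \eqref{id.UFK} gives $U_{s,r}h^n(x) = \E\, h^n(X^x_{s,r})$; since $r \mapsto X^x_{s,r}$ has continuous paths and $h^n$ is bounded and continuous, dominated convergence shows that $r \mapsto U_{s,r}h^n(x)$ is continuous on $[s,T]$, hence measurable. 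Applying \eqref{est.Ua1} to $h^n - h^k$ shows that, for each fixed $r \in (s,T]$, the sequence $U_{s,r}h^n(x)$ is Cauchy and converges to $U_{s,r}h(x)$. Thus $r \mapsto U_{s,r}h(x)$ is a pointwise limit of measurable functions on $(s,T]$ and therefore Lebesgue measurable on $[s,T]$.

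The substantive requirement is~\ref{set1EU}, i.e. $\E^{\cff_s}[U_{r,t}h](X_r) = U_{s,t}h(X_s)$ for $s \le r < t$. The case $s = r$ is trivial because $U_{s,t}h(X_s)$ is already $\cff_s$-measurable, so I assume $s < r < t$. I would split the claim into two facts. First, since $r < t$, the function $g := U_{r,t}h$ belongs to $BUC(\Rd)$ by Lemma~\ref{lem.UCbeta}, so the propagator property of $X$ on bounded continuous functions, Lemma~\ref{lem.Upropa}, yields $\E^{\cff_s}[U_{r,t}h](X_r) = U_{s,r}\!\left(U_{r,t}h\right)(X_s)$. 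Second, I must upgrade the propagator (Chapman--Kolmogorov) identity from smooth data to $\C^{\beta-1}$: for $h^n \in C^3_b(\Rd)$ one has $U_{r,t}h^n = u^t_r \in C^{2+\alpha}_b(\Rd)$, so \eqref{def.propagator} gives $U_{s,t}h^n = U_{s,r}U_{r,t}h^n$; letting $n \to \infty$, the left-hand side converges to $U_{s,t}h$ in $BUC(\Rd)$, while, since $U_{r,t}h^n \to U_{r,t}h$ in $BUC(\Rd)$ and $U_{s,r}$ is a contraction on $BUC(\Rd)$ (Proposition~\ref{prop.Uc0propa}\ref{u1}), the right-hand side converges to $U_{s,r}U_{r,t}h$. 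Hence $U_{s,t}h = U_{s,r}U_{r,t}h$, and combining with the first fact gives~\ref{set1EU}.

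The main obstacle is precisely this last step: making the semigroup relation $U_{s,r}U_{r,t} = U_{s,t}$ rigorous on the distributional space $\C^{\beta-1}(\Rd)$. The point is that one cannot compose the extended operators directly, but must route through $BUC(\Rd)$ at the intermediate time $r$ --- exploiting that $U_{r,t}$ smooths distributions into bounded continuous functions for $r < t$ --- and then track the two limits carefully so that the $BUC$-contraction of $U_{s,r}$ can be invoked. The remaining verifications are routine once the relevant objects have been identified.
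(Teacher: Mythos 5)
Your proof is correct and follows essentially the same route as the paper: property (i) from Lemma \ref{lem.UCbeta}, property (ii) by approximating $h$ with smooth $h^n$ and using the probabilistic representation \eqref{id.UFK}, and property (iii) by routing through $BUC(\Rd)$ at the intermediate time via Lemma \ref{lem.Upropa} and the propagator identity of Proposition \ref{prop.Uc0propa}. The only difference is that you spell out the limiting argument needed to extend $U_{s,t}=U_{s,r}U_{r,t}$ to data in $\C^{\beta-1}(\Rd)$, a step the paper leaves implicit.
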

  \begin{proof}
    Let $\chh=\C^{\beta-1}(\Rd)$ and $\nu=\beta-1$.
    It is evident that Lemma \ref{lem.UCbeta} implies property \ref{set1c} of Condition \ref{con:set1}. Let $h$ be in $\C^{\beta-1}(\Rd)$ and let $\{h^n\}_n$ be a sequence in $C^3_b(\Rd)$ which converges to $h$ in $\C^{\beta'-1}(\Rd)$ for some $\beta'\in(0,\beta)$. For each $s\ge0$ and $n$, it follows from \eqref{id.UFK} that  $t\mapsto U_{s,t}h^n(x)$ is measurable. Since $\lim_n U_{s,t}h^n=U_{s,t}h$ in $BUC(\Rd)$ for every $t>s$, this implies the property \ref{set1b} of Condition \ref{con:set1}. Finally, since for $r<t$, $U_{r,t}h$ belongs to $BUC(\Rd)$ (by Lemma \ref{lem.UCbeta}), we derive from Lemma \ref{lem.Upropa} and Proposition \ref{prop.Uc0propa}\ref{u2} that $\E^{\cff_s}U_{r,t}h=U_{s,r}U_{r,t}h=U_{s,t}h$ for every $s\le r<t$. This shows property \ref{set1EU} of Condition \ref{con:set1}.
  \end{proof}

   Note that $\nabla b$ belongs to $[L^\infty_T\C^{\alpha-1}(\Rd;\Rd)]^d$ and $(\nu,q)=(\alpha-1,\infty)$ satisfies condition \eqref{con:nuq}. Applying Proposition \ref{prop:defA}, we can define the process $V$ in \eqref{def.v.flow}. The estimate \eqref{est:Ast} implies that
  \begin{equation*}
    \|V_t(b,X)-V_s(b,X)\|_{L_m}\lesssim |t-s|^{\frac{\alpha+1}2}
  \end{equation*}
  for every $s<t$ and $m\ge2$. By choosing $m$ sufficiently large and applying Kolmogorov continuity theorem, we can find a continuous version of $V$ which is a.s. $\frac{\alpha'+1}2$-H\"older continuous  for every $\alpha'\in(0,\alpha)$.

  Let $\{b^n\}_n$ be a sequence of functions in $L^\infty([0,T];C^{4}_b(\Rd;\Rd))$ which converges to $b$ in $L^\infty([0,T];C^{\beta}(\Rd;\Rd))$ for every $\beta\in(0,\alpha)$. Let $X^n, U^n$ denote respectively the solution to \eqref{eqn:FGP} and the transition semigroup defined in \eqref{eqn:BK} with $b^n$ in place of $b$.
  From \cite[Theorem 5]{MR2593276}, for every $m\ge2$, we have the following stability estimates
  \begin{gather}\label{est:FGP}
    \lim_n\sup_{x\in\Rd}\E\sup_{r\in[0,T]}|X^n_r(x)-X_r(x)|^m=0\,,
    % \\\sup_n\sup_{x\in\Rd}\E\sup_{r\in[0,T]}|\nabla X^n_{r}(x)|^m<\infty
    \\\lim_n\sup_{x\in\Rd}\E\sup_{r\in[0,T]}|\nabla X^n_{r}(x)-\nabla X_{r}(x)|^m=0\,.
    \label{est:FGP2}
  \end{gather}

  \begin{lemma}\label{lem:Vconvg}
    For every $\alpha'\in(0,\alpha)$,  $\{\int_{[0,\cdot]} \nabla b^n_r(X^n_r)dr\}_n$ converges to $V(b,X) $ in $C^{\frac{1+\alpha'}2}([0,T])$ in probability.
  \end{lemma}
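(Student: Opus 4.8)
The plan is to interpret the approximating integrals as values of the functional $\caa$ for the pair $(b^n,X^n)$, to bound their deviation from $V(b,X)$ through the stability estimate of Proposition \ref{prop:Lip}, and finally to upgrade the resulting $L_m$-moment bounds to convergence of the $C^{\frac{1+\alpha'}2}$-norm via a Garsia--Rodemich--Rumsey argument. Fix once and for all $\beta\in(\alpha',\alpha)$ and take $\chh=\C^{\beta-1}$, $\nu=\beta-1$, $q=\infty$; since $\nabla b\in L^\infty_T\C^{\alpha-1}\subset L^\infty_T\C^{\beta-1}$ and $\frac{\beta-1}2>-\frac12$, Proposition \ref{prop:defA} applies, and by the uniqueness characterization in Remark \ref{rem.intf} the process $V(b,X)=\caa^X[\nabla b]$ does not depend on this choice of $\beta$. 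Each $b^n$ being smooth, $\nabla b^n_r\in BUC(\Rd)$, so Lemma \ref{lem.Upropa} applied to $(X^n,U^n)$ gives $\nabla b^n\in\fgg_T(X^n)$, and Proposition \ref{prop.K} yields the identification $\int_{[0,\cdot]}\nabla b^n_r(X^n_r)\,dr=\caa^{X^n}_\cdot[\nabla b^n]=V_\cdot(b^n,X^n)$. It therefore suffices to prove that $V(b^n,X^n)\to V(b,X)$ in $C^{\frac{1+\alpha'}2}([0,T])$ in probability.

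Applying Proposition \ref{prop:Lip} with the triples $(X^n,U^n,\chh)$, $g=\nabla b^n$ and $(X,U,\chh)$, $f=\nabla b$, and with $\nu_1=\beta-1-\theta$ for a small $\theta\in(0,\beta-\alpha')$, gives for all $s\le t$
\[
  \|V_t(b^n,X^n)-V_s(b^n,X^n)-V_t(b,X)+V_s(b,X)\|_{L_m}\lesssim\eta_n\,|t-s|^{\frac{1+\beta-\theta}2},
\]
where $\eta_n=\|\nabla b^n-\nabla b\|_{L^\infty_T\C^{\beta-1}}\|U^n\|_{\chh,\nu}+\|\nabla b\|_{L^\infty_T\C^{\beta-1}}\,D_{\nu,\nu_1}(X^n,X)$. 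The Schauder bounds of Lemma \ref{lem:sch.flow} have constants depending only on $\|b^n\|_{L^\infty C_b}$, hence are uniform in $n$ because $\sup_n\|b^n\|_{L^\infty C_b}<\infty$; in particular $\sup_n\|U^n\|_{\chh,\nu}<\infty$. The first summand of $\eta_n$ tends to $0$ since $\|\nabla b^n-\nabla b\|_{L^\infty_T\C^{\beta-1}}\lesssim\|b^n-b\|_{L^\infty_T C^\beta}\to0$. For the modulus term in $D_{\nu,\nu_1}$, interpolating the uniform bounds $\|U^n_{s,t}\phi\|_\infty\lesssim|t-s|^{\frac{\beta-1}2}\|\phi\|_{\C^{\beta-1}}$ and $\|\nabla U^n_{s,t}\phi\|_\infty\lesssim|t-s|^{\frac\beta2-1}\|\phi\|_{\C^{\beta-1}}$ produces $\rho^{X^n}_{\nu_1}(h)\lesssim h^\theta$ uniformly in $n$, so that $\sup_r\|\rho^{X^n}_{\nu_1}(|X^n_r-X_r|)\|_{L_m}\lesssim\sup_r\|X^n_r-X_r\|_{L_{\theta m}}^\theta\to0$ by the stability estimate \eqref{est:FGP}.

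The main obstacle is the remaining propagator term $\|U^n-U\|_{\chh,\nu}$, which I would control at the PDE level. For $g\in C^{2+\alpha}_b(\Rd)$ the difference $w^n:=U^n_{\cdot,t}g-U_{\cdot,t}g$ solves the backward Kolmogorov equation with drift $b^n$, vanishing terminal data, and source $-(b^n-b)\cdot\nabla U_{\cdot,t}g$. Inserting the bound $\|(b^n_r-b_r)\cdot\nabla U_{r,t}g\|_\infty\lesssim\|b^n-b\|_{L^\infty C_b}|t-r|^{\frac\beta2-1}\|g\|_{\C^{\beta-1}}$, coming from \eqref{est.Uschauder}, into the a priori estimate \eqref{est.supU} of Lemma \ref{lem:sch.flow} (with zero terminal data) and evaluating the resulting Beta-type integral yields $\|w^n_s\|_\infty\lesssim\|b^n-b\|_{L^\infty C_b}\|g\|_{\C^{\beta-1}}|t-s|^{\beta/2}$. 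Since $|t-s|^{\beta/2}\le T^{1/2}|t-s|^{(\beta-1)/2}$ on $[0,T]$, and after extending from $C^{2+\alpha}_b(\Rd)$ to $\C^{\beta-1}$ by the mollification argument of Lemma \ref{lem.UCbeta}, this gives $\|U^n-U\|_{\chh,\nu}\lesssim\|b^n-b\|_{L^\infty_T C_b}\to0$, completing $\eta_n\to0$.

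Finally, since $\frac{1+\beta-\theta}2>\frac{1+\alpha'}2$, I choose $m$ so large that $\frac{1+\beta-\theta}2-\frac1m>\frac{1+\alpha'}2$. Both $V(b^n,X^n)$ and $V(b,X)$ vanish at $t=0$, so the two-point moment bound of the second paragraph together with the Garsia--Rodemich--Rumsey inequality (used exactly as in Corollary \ref{cor:Acont}) give
\[
  \Big\|\,\|V(b^n,X^n)-V(b,X)\|_{C^{(1+\alpha')/2}([0,T])}\Big\|_{L_m}\lesssim\eta_n\to0.
\]
Convergence of the $C^{\frac{1+\alpha'}2}$-norm in $L_m$ implies convergence in probability, which is the assertion. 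The one genuinely delicate point is the propagator estimate of the third paragraph; everything else is a direct application of the machinery of Section \ref{sec:additive_functionals} together with the stability estimate \eqref{est:FGP}.
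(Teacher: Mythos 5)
Your proof is correct and follows essentially the same route as the paper's: the identification of the approximating integrals via Proposition \ref{prop.K}, the stability estimate of Proposition \ref{prop:Lip}, the backward-PDE bound for the propagator difference fed through Lemma \ref{lem:sch.flow}, interpolation of the Schauder bounds for the modulus term, and the Garsia--Rodemich--Rumsey upgrade. The only organizational difference is that the paper first sends $\nabla b^n\to\nabla b$ along the fixed process $X$ (Corollary \ref{cor:Acont}) and then compares $X$ with $X^n$ at fixed $f^n$, which lets it work with the modulus $\rho^{X}_{\nu_1}$ of the single propagator $U$, whereas your one-shot application of Proposition \ref{prop:Lip} with $(X^n,U^n)$ in the first slot requires the Schauder constants for $U^n$ to be uniform in $n$ --- which you correctly justify from $\sup_n\|b^n\|_{L^\infty C_b}<\infty$. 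Likewise your source term $-(b^n-b)\cdot\nabla U_{\cdot,t}g$ with drift $b^n$ is the mirror image of the paper's $(b-b^n)\cdot\nabla U^n_{\cdot,t}g$ with drift $b$; both yield the same estimate on $\|U^n_{s,t}-U_{s,t}\|$.
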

  \begin{proof}
    Let $j,k$ be fixed indices in $\{1,\dots,d\}$. Put $ f=\partial_k b^j\in L^\infty_T \C^{\alpha-1}(\Rd)$, $f^n=\partial_k b^{n,j}$ (the $x_k$-partial derivative of the $j$-th component of $b^n$). 
    Let $\alpha'$ and $\beta$ be positive numbers such that $\alpha'<\beta<\alpha$.
    We note that each $f^n$ belongs to $\fgg_T(X)$ (by Lemma \ref{lem.Upropa} and recalling the Definition \ref{def.goodtestfunct}) and that $\{f^n\}_n$ converges to $f$ in $L^\infty_T\C^{\beta-1}$. In addition, $(X,U^X,\C^{\beta-1}(\Rd))$ satisfies Condition \ref{con:set1} with  $\nu=\beta-1$ (Proposition \ref{prop.U31}). Hence,  we can apply Corollary \ref{cor:Acont} to see that $\{\int_{[0,\cdot]} f^n_r(X_r)dr\}_n$ converges to $\caa_\cdot^X[f]$ in $C^{(1+\alpha')/2}([0,T])$ in probability.
    It suffices to show that $\{\int_{[0,\cdot]} f^n_r(X_r)dr-\int_{[0,\cdot]} f^n_r(X^n_r)dr\}_n$ converges to $0$ in $C^{({1+\alpha'})/2}([0,T])$ in probability. 
    We are going to apply Proposition \ref{prop:Lip} which requires an estimate for $D_{\alpha-1,\beta-1}(X,X^n)$.    
    The following bounds are sufficient for this purpose
    \begin{gather}\label{tmp:uf1}
      \|[U^n_{s,t}-U_{s,t}]g\|_{\infty}\lesssim \|b-b^n\|_{L^\infty C_b}\|g\|_{\C^{\alpha-1}}|t-s|^{\frac \alpha2}
    \end{gather}
    and
    \begin{equation}\label{tmp:uf2}
      |U_{s,t}g(x)-U_{s,t}g(y)|
      \lesssim |t-s|^{-\frac12+\frac{\beta}2}\|g\|_{ \C^{\alpha-1}}|x-y|^{\alpha- \beta}
    \end{equation}
    for every $g\in \C^{\alpha-1}(\Rd)$, every $x,y\in\Rd$ and every $s<t$. 
    
    Reasoning as in Lemma \ref{lem.UCbeta}, it suffices to show \eqref{tmp:uf1} and \eqref{tmp:uf2} for $g\in C^3_b(\Rd)$.
    To obtain \eqref{tmp:uf1}, note that the function $w_s(x):=[U^n_{s,t}-U_{s,t}]g(x) $ satisfies $w_t(x)=0$ and
    \[
      w_s(x)-w_r(x)=\int_r^s[-L^bw_\theta(x)+(b_\theta-b^n_\theta)\nabla U^n_{\theta,t}g(x)]d \theta
    \]
    for every $0\le r\le s\le t$, $x\in\Rd$. In other words, $w$ is the solution to \eqref{eqn:BK} with right-hand side $f_\theta=(b_\theta-b^n_\theta)\cdot\nabla U^n_{\theta,t}g$ and terminal datum $0$ at time $t$. 
    To estimate $f$, we apply the first inequality in \eqref{est.Uschauder} to obtain 
    \begin{align*}
      \|f_r\|_\infty 
      &\lesssim\|b-b^n\|_{L^\infty C_b}\|\nabla U^n_{r,t}g\|_\infty 
      \lesssim\|b-b^n\|_{L^\infty C_b}\|g\|_{\C^{\alpha-1}}|t-r|^{\frac \alpha2-1}.
    \end{align*}
    % We recall the definition of $F$ in \eqref{def.F}.
    Recall the definition of $F^{(\alpha)}$ in Lemma \ref{lem:sch.flow}.  It follows that
    \begin{align*}
      F^{(\alpha)}_s
      &:=(t-s)^{1-\frac \alpha2}\int_s^t|r-s|^{-\frac12}\|f_r\|_\infty dr
      \lesssim\|b-b^n\|_{L^\infty C_b}\|g\|_{\C^{\alpha-1}}|t-s|^{\frac12}
    \end{align*}
    and hence $F^{(\alpha),*}_s\lesssim\|b-b^n\|_{L^\infty C_b}\|g\|_{\C^{\alpha-1}}|t-s|^{\frac12}$.
    Applying the estimate \eqref{est.supU}, we have
    \begin{equation*}
      \|w_s\|_\infty\lesssim \|b-b^n\|_{L^\infty C_b}\|g\|_{\C^{\alpha-1}}|t-s|^{\frac \alpha2}
    \end{equation*}
    which is equivalent to \eqref{tmp:uf1}. The estimate \eqref{tmp:uf2} is obtained by interpolating the two inequalities in \eqref{est.Uschauder}.

    In the notation of Proposition \ref{prop:Lip}, the inequalities \eqref{tmp:uf1} and \eqref{tmp:uf2} imply that
    \begin{equation*}
      \|U^{X}-U^{X_n}\|_{\C^{\alpha-1},\alpha-1}\lesssim \|b-b^n\|_{L^\infty C_b}
      \quad\textrm{and}\quad
      \rho^{X}_{\beta-1}(h) \lesssim h^{\alpha- \beta}.
    \end{equation*}
    Hence,
    \[
      D_{\alpha-1,\beta-1}(X,X^n)\lesssim \|b-b^n\|_{L^\infty C_b}+\sup_{r\in[0,T]}\|X_r-X^n_r\|_{L_{(\alpha- \beta) m}}^{\alpha- \beta},
    \]
    which converges to $0$ as $n\to\infty$ by \eqref{est:FGP}.
     From Proposition \ref{prop.U31}, we know  that $(X,U^X,\C^{\alpha-1}(\Rd))$ and $(X^n,U^n,\C^{\alpha-1}(\Rd))$ satisfy Condition \ref{con:set1} with $\nu=\alpha-1$. Hence, Proposition \ref{prop:Lip} is applicable and yields
    \begin{align*}
      &\|\int_{[s,t]} f^n_r(X_r)dr-\int_{[s,t]}f^n_r(X^n_r)dr\|_{L_m}
      \lesssim\|f^n\|_{L^\infty \C^{\beta-1}}D_{\alpha-1,\beta-1}(X,X^n) |t-s|^{\frac{1+\beta}2}.
    \end{align*}
    Here, we have used the identities $\caa_t^X[f^n]=\int_{[0,t]}f^n_r(X_r)dr$ and $\caa_t^{X^n}[f^n]=\int_{[0,t]}f^n_r(X^n_r)dr$ from Proposition \ref{prop.K}.
     The above inequality combined with Garsia-Rodemich-Rumsey inequality (\cite{garsiarodemich}) and the fact that $$\lim_n D_{\alpha-1,\beta-1}(X,X^n)=0$$ implies that
    \[
      \lim_n\left[\int_{[0,\cdot]} f^n_r(X_r)dr-\int_{[0,\cdot]} f^n_r(X^n_r)dr\right]=0
    \] in $C^{({1+\alpha'})/2}([0,T])$ in probability.  This completes the proof.
  \end{proof}
  \begin{proof}[Proof of Theorem \ref{thm:eqnDX}]
    In what follows, we fix a trajectory of $V$ which is $\frac{1+\alpha'}2$-H\"older continuous for some $\alpha'\in(0,\alpha)$. The system \eqref{eqn:RSY} is of Young-type, hence it  has a unique solution $Y$ which is $\frac{1+\alpha'}2$-H\"older continuous (see e.g. \cite[Theorems 2.9 and 2.16]{MR3505229} or \cite{MR2397797}). 
    On the other hand, by direct differentiations, it is evident that $Y^{n,ij}:=\partial_i X^{n,j}$ satisfies
    \begin{equation*}
      Y_t^{n,ij}=\delta_{ij}+\sum_{k=1}^d\int_0^t Y_r^{n,ki}dV^{n,kj}_r\,,\quad\forall i,j\in\{1,\dots,d\},
    \end{equation*}
    where $V^n=V(b^n,X^n)=\int_{[0,\cdot]} \nabla b^n_r(X^{n}_r)dr $. 
    Sending $n\to\infty$ in the above identity, applying \eqref{est:FGP2}, Lemma \ref{lem:Vconvg} and stability of Young differential equations (see \cite[Theorem 2.16]{MR3505229} or \cite[Theorem 4]{MR2397797}), we see that $\nabla X$ satisfies equation \eqref{eqn:RSY}. The result follows.
  \end{proof}

% section stochastic_flows (end)
\section{Chaos expansions} % (fold)
\label{sec:chaos_expansion}
  Let $(\Omega,\cff,\P)$ be a complete probability space equipped with a filtration $\{\cff_t\}$ such that $\cff_0$ contains $\P$-null sets.
  As in Section \ref{sec:additive_functionals}, let $T>0$ be a fixed time horizon and let $(\chh,\|\cdot\|_\chh)$ be a normed vector space, which contains $C_b(\R^d)$ and is a subset of $\css'(\R^d)$. For each $q\in[1,\infty]$, $L_T^q\chh$ denotes the Bochner space $L^q([0,T];\chh)$.
  Let $U=\{U_{s,t}\}_{0\le s\le t}$ be a two-parameter family of bounded operators on $\chh$. We recall that the set $\fgg_T(X)$ defined in Definition \ref{def.goodtestfunct}.
  Let $\sigma$ be a continuous bounded function on $\R_+\times\R^d$ and $b$ be an element in $L^{q}_T\chh$. We also assume that for every $t\ge0$, $\sigma(t,\cdot)$ is H\"older continuous with some positive exponent.
  In the current section, we consider chaos expansions for pathwise solutions to the stochastic differential equation
  \begin{equation}\label{SDE}
      dX_t=b(t,X_t)dt+\sigma(t,X_t)dW_t\,, \quad X_0=x\,.
  \end{equation}  
  The results obtained herein extend those of \cite{veretennikov1976explicit,veretennikov1981strong} and \cite{MR1481650} in which SDEs with more regular coefficients were considered.
  Since $b(t,\cdot)$ is allowed to be a distribution, a solution to \eqref{SDE} requires a non-standard definition.
  A triplet $(W,X,\{\cff_t\})$ is a pathwise solution to the stochastic differential equation \eqref{SDE} on $[0,T]$ if
  \begin{enumerate}[label={\upshape(\roman*)}]
    \item $W$ is a Brownian motion with respect to $\{\cff_t\}$,
    \item $X$ is adapted with respect to $\{\cff_t\}$,
    \item there is a sequence $\{b_n\}_n$ of bounded uniformly continuous functions on $[0,T]\times\Rd$ converging to $b$ in $L^q\chh$ such that
    \begin{equation}\label{SDEn}
      X_t=x+\mathrm{ucp-}\lim_{n}\int_0^t b_n(r,X_r)dr+\int_0^t \sigma(r,X_r)dW_r \quad\forall\, t\in[0,T]\,.
    \end{equation}
  \end{enumerate}
  In the above, $\mathrm{ucp-}\lim$ means convergence uniformly in $t$ over $[0,T]$ in probability.
  Condition (iii) is inspired by the work \cite{MR1964949} of Bass and Chen.
   \begin{definition}\label{def.reg.sol}
    A pathwise solution $(W,X,\{\cff_t\})$ is called regular if $(X,\{\cff_t\})$ is a Markov process, 
    $\fgg_T(X)$ contains the set of bounded uniformly continuous functions on $[0,T]\times\Rd$ 
    and $(X,U,\chh)$ satisfies Condition \ref{con:set1} with $\frac \nu2>\frac1q-\frac1{2}$.
  \end{definition}
  A trivial example of regular pathwise solution is the SDE in Section \ref{sec:stochastic_flows}.
  Other examples which we have in mind are the SDEs considered in \cite{MR3500267,MR3785598} where $b$ is a controlled distribution in $[C([0,T];\C^\nu(\Rd))]^d$ with $\nu>-\frac23$.
  In what follows, we always assume that pathwise solutions are regular.
  In such case, we can define the linear map $\caa^X$ as in Proposition \ref{prop:defA}. Furthermore, for every $f\in L^q_T\chh$, we will always work with the continuous version of $\caa^X[f]$ chosen in Remark \ref{rem.intf}(iii). 
  From Corollary \ref{cor:Acont}, we see that $\{\int_{[0,\cdot]} b_n(r,X_r)dr\}_n$ converges to $\caa^X[b]$ in $C^\tau([0,T])$ in probability for some (in fact any) $\tau\in(\frac12,1+\frac \nu2-\frac1q)$. Consequently, the equation \eqref{SDEn} can be written as
  \begin{equation}\label{id:X}
    X_t=x+\caa^X_t[b]+\int_0^t \sigma(r,X_r)dW_r \quad\forall t\ge0.
  \end{equation}
  In particular, we see that $X$ is a.s. $\alpha$-H\"older continuous for every $\alpha\in(0,\frac12)$.
  % It follows that $X$ is a Dirichlet process (a sum of a martingale and a zero energy process).
  
  We define the following formal differential symbols
  \begin{equation*}
    L_t:= \sum_{i,j=1}^d\frac12 (\sigma^\dagger\sigma )^{i,j}(t,\cdot)\partial^2_{x_ix_j}
    \quad\textrm{and}\quad L^b_t=L_t+\sum_{j=1}^d b_j(t,\cdot)\partial_{x_j}\,.
  \end{equation*}
  and introduce the notation
  \begin{equation*}
    \partial^\sigma_j g(r,x)=(\sigma^\dagger\nabla)_jg(r,x) =\sum_{i=1}^d\sigma^{i,j}(r,x)\partial_{x_i}g(r,x)
  \end{equation*}
  and $\nabla^\sigma=(\partial^\sigma_1,\dots,\partial^\sigma_d)$. The following change-of-variable formula is a slight extension of the classical It\^o formula.
  \begin{lemma}[It\^o formula]\label{lem.ItoYoung}
    For every $u$ in $C^2_b([0,T]\times\Rd)$ and $t\in[0,T]$, we have
    \begin{multline}\label{eqn:cito}
      u(t,X_t)=u(0,x)+\int_0^t(\partial_r+L_r)u(r,X_r)dr+\int_0^t\nabla u(r,X_r)\cdot \caa^X[b](dr)
      \\+\int_0^t \nabla^\sigma u(r,X_r)\cdot dW_r\,,
    \end{multline} 
    where the integration with respect to $\caa^X[b](dr)$ is in Young sense.
  \end{lemma}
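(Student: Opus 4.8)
The plan is to reduce \eqref{eqn:cito} to the classical It\^o formula by regularizing the drift, and then to pass to the limit, the only delicate term being the Young integral. Invoking Definition \ref{def.reg.sol} together with the definition of a pathwise solution, I fix a sequence $\{b_n\}_n$ of bounded uniformly continuous functions on $[0,T]\times\Rd$ with $b_n\to b$ in $L^q_T\chh$; regularity of the solution guarantees $b_n\in\fgg_T(X)$ for each $n$. I then introduce the auxiliary processes
\begin{equation*}
  X^n_t:=x+\int_0^t b_n(r,X_r)\,dr+\int_0^t\sigma(r,X_r)\,dW_r,\qquad t\in[0,T].
\end{equation*}
The key structural point is that $X^n$ and $X$ have the \emph{same} martingale part, so $X^n-X=\int_0^\cdot b_n(r,X_r)\,dr-\caa^X_\cdot[b]$ has finite variation and $\langle X^{n,i},X^{n,j}\rangle=\langle X^i,X^j\rangle$. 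Writing $V^n_\cdot:=\int_{[0,\cdot]}b_n(r,X_r)\,dr=\caa^X_\cdot[b_n]$ (Proposition \ref{prop.K}), Corollary \ref{cor:Acont} yields $V^n\to\caa^X[b]$ in $C^\tau([0,T])$ in probability for every $\tau\in(\tfrac12,1+\tfrac{\nu}2-\tfrac1q)$; in particular $X^n\to X$ in $C^\tau([0,T])$, and uniformly, in probability.

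Since $X^n$ is a continuous semimartingale and $u\in C^2_b([0,T]\times\Rd)$, the classical It\^o formula gives
\begin{equation*}
  u(t,X^n_t)=u(0,x)+\int_0^t\partial_r u(r,X^n_r)\,dr+\int_0^t\nabla u(r,X^n_r)\cdot dX^n_r+\frac12\int_0^t\sum_{i,j}\partial^2_{x_ix_j}u(r,X^n_r)\,d\langle X^{n,i},X^{n,j}\rangle_r.
\end{equation*}
I split $\int_0^t\nabla u(r,X^n_r)\cdot dX^n_r=\int_0^t\nabla u(r,X^n_r)\cdot b_n(r,X_r)\,dr+\int_0^t\nabla u(r,X^n_r)\cdot\sigma(r,X_r)\,dW_r$, and observe that, $V^n$ being absolutely continuous, the first integral is exactly the Young integral $\int_0^t\nabla u(r,X^n_r)\cdot V^n(dr)$.

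It remains to let $n\to\infty$. Using $X^n\to X$ uniformly, boundedness and continuity of $\partial_r u$, $\partial^2_{x_ix_j}u$ and $\sigma$, and dominated convergence, the time integral converges to $\int_0^t\partial_r u(r,X_r)\,dr$ and the quadratic-variation correction (whose integrator coincides with that of $X$) converges to $\int_0^t L_r u(r,X_r)\,dr$; the stochastic integral converges to $\int_0^t\nabla^\sigma u(r,X_r)\cdot dW_r$ by the It\^o isometry, the integrands being uniformly bounded and convergent in probability. The main obstacle is the Young term. Here I use stability of the Young integral: since $u\in C^2_b$, $\nabla u$ is Lipschitz with bounded derivatives, so $\nabla u(\cdot,X^n_\cdot)\to\nabla u(\cdot,X_\cdot)$ in $C^\alpha([0,T])$ (in probability) for every $\alpha<\tfrac12$, using $X^n\to X$ in $C^\tau\subset C^\alpha$. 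The regularity condition $\tfrac\nu2>\tfrac1q-\tfrac12$ in Definition \ref{def.reg.sol} forces $1-\tau=\tfrac1q-\tfrac\nu2<\tfrac12$, so one may fix $\alpha\in(1-\tau,\tfrac12)$ with $\alpha+\tau>1$, making the Young integral $\int_0^t\nabla u(r,X_r)\cdot\caa^X[b](dr)$ well-defined. Continuity of the Young integral with respect to these H\"older topologies (\cite[Theorem 2.16]{MR3505229}, \cite[Theorem 4]{MR2397797}) then gives $\int_0^t\nabla u(r,X^n_r)\cdot V^n(dr)\to\int_0^t\nabla u(r,X_r)\cdot\caa^X[b](dr)$ in probability. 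Passing to the limit in the displayed It\^o formula and combining $\int_0^t\partial_r u(r,X_r)\,dr$ with $\int_0^t L_r u(r,X_r)\,dr$ into $\int_0^t(\partial_r+L_r)u(r,X_r)\,dr$ yields \eqref{eqn:cito}.
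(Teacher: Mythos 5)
Your argument is correct, but it takes a genuinely different route from the paper. The paper proves \eqref{eqn:cito} entirely inside the sewing framework: it sets $A_{s,t}=u(t,X_t)-u(s,X_s)$, Taylor-expands this increment, and identifies $\cii_1$ of each piece --- the drift piece via Young's theory, the martingale piece via It\^o integration, the quadratic piece as in Example \ref{ex3}, and the remainders via Proposition \ref{prop:AA} --- first for $u\in C^3_b$ (third derivatives are needed to control the Taylor remainder) and then mollifying $u$ to reach $C^2_b$. You instead mollify the \emph{drift}: your auxiliary process $X^n=x+\int_0^\cdot b_n(r,X_r)\,dr+\int_0^\cdot\sigma(r,X_r)\,dW_r$, with coefficients evaluated along the true solution $X$, is an explicit semimartingale sharing the martingale part of $X$, so the classical It\^o formula applies and the passage to the limit reduces to Corollary \ref{cor:Acont} plus Young-integral stability. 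What each approach buys: the paper's is self-contained (it re-derives rather than invokes the classical It\^o formula, which is in the spirit of Section \ref{sub:relations_with_ito}) but pays with the extra mollification of $u$; yours works directly for $u\in C^2_b$, makes transparent why the drift contribution is a Young integral against $\caa^X[b]$, and reuses the stability results already proved. One step you state too quickly: the convergence $\nabla u(\cdot,X^n_\cdot)\to\nabla u(\cdot,X_\cdot)$ in $C^\alpha$ does not follow merely from ``$X^n\to X$ in $C^\tau\subset C^\alpha$'' and Lipschitz continuity of $\nabla u$; composition with a Lipschitz map does not transport H\"older convergence directly. You need the standard interpolation argument --- uniform convergence of $\nabla u(\cdot,X^n_\cdot)$ together with boundedness (in probability) of its $C^{\alpha'}$ seminorms for some $\alpha'\in(\alpha,\tfrac12)$, the latter coming from the uniform $C^{\alpha'}$ bounds on $X^n$ --- to conclude convergence in $C^\alpha$ for every $\alpha<\alpha'$. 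With that point made explicit, the choice $\alpha\in(1-\tau,\tfrac12)$ and the continuity of the Young integral close the argument as you describe.
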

  \begin{proof}
    Without loss of generality, assume $t=1$ in \eqref{eqn:cito}.
    For brevity, we put $\caa=\caa^X[b]$ and $M=\int_0^\cdot \sigma(r,X_r)dW_r$. We know that $\caa$ is a.s. H\"older continuous with some exponent $\tau>\frac12$ and that $M$ is an $L_m$-martingale for every $m\ge2$. In addition, $M$ satisfies condition \eqref{con.M6} and $[M]_t=\int_0^t(\sigma^\dagger \sigma)(r,X_r)dr$.
    Heuristically, to show \eqref{eqn:cito}, we use Taylor expansion and the facts that $[\caa]=[\caa,M]=0$. 
    Consider first the case $u\in C^3_b([0,T]\times\Rd)$. 
    For every $s<t$, put $A_{s,t}=u(t,X_t)-u(s,X_s)$. Obviously, $\cii_1[A]=u(1,X_1)-u(0,X_0)$. From Taylor expansion, we have
    \begin{align*}
      A_{s,t}
      &=\partial_tu(s,X_s)(t-s)+\wei{\nabla u(s,X_s),X_t-X_s}
      \\&\quad
      +\frac12\wei{\nabla^2u(s,X_s),(X_t-X_s)\otimes(X_t-X_s)}+R(t,X_t,s,X_s)
      \\&=:A^{(1)}_{s,t}+A^{(2)}_{s,t}+A^{(3)}_{s,t}+A^{(4)}_{s,t},
    \end{align*}
    where the Taylor remainder $R(t,x,s,y)$ satisfies
    \[
      |R(t,x,s,y)|\le\|u\|_{C^3_b}(|t-s|^2+|x-y|^3).
    \]
    It is clear that $\cii_1[A^{(1)}]=\int_0^1 \partial_t u(r,X_r)dr$. We write
    \[
      A^{(2)}_{s,t}=\wei{\nabla u(s,X_s),A_t-A_s}+\wei{\nabla u(s,X_s),M_t-M_s}.
    \]
    The action of $\cii$ to each term on the right-hand side above is computed using Young's theory \cite{young} and It\^o stochastic theory respectively, which yields
    \[
      \cii_1[A^{(2)}]=\int_0^1 \nabla u(r,X_r)\cdot dA_r+\int_0^1\wei{\nabla u(r,X_r),\sigma(r,X_r)dW_r}.
    \]
    For $A^{(3)}$, we define $A^{(3a)}_{s,t}=\frac12\wei{\nabla^2u(s,X_s),(M_t-M_s)\otimes(M_t-M_s)}$ and $A^{(3b)}_{s,t}=A^{3}_{s,t}-A^{(3a)}_{s,t}$. As in Example \ref{ex3}, we have
    \[
      \cii_1[A^{(3a)}]
      =\frac12\int_0^1\wei{\nabla^2u(r,X_r),d[M]_r}
      =\frac12\int_0^1\wei{\nabla^2u(r,X_r),(\sigma^\dagger \sigma)(r,X_r)}dr.
    \]
    We note that $\|A^{(3b)}_{s,t}\|_{L_2}\lesssim|t-s|^{1+\varepsilon}$ and $\|A^{4}_{s,t}\|_{L_2}\lesssim|t-s|^{1+\varepsilon}$  for some $\varepsilon>0$. Applying Proposition \ref{prop:AA}, we see that $\cii[A^{(3b)}]=\cii[A^{(4)}]=0$. This implies \eqref{eqn:cito} with $u$ in $C^3_b([0,1]\times\Rd)$.

    In the case $u\in C^{2}_b([0,T]\times\Rd)$, take a mollifying sequence $\{u^n\}_n$ in $C^3_b([0,T]\times\Rd)$ such that $\partial_t u^n,\nabla u^n,\nabla ^2 u^n$ converge uniformly over compact sets respectively to $\partial_t u,\nabla u,\nabla^2 u$. The It\^o formula \eqref{eqn:cito} for $u$ is obtained by passing through the limit from the It\^o formula for each $u^n$. In such procedure, the convergence of the Young integrals is an almost sure convergence, the convergence of the It\^o stochastic integrals is in probability.
  \end{proof}
  
  To proceed further, we need to introduce an additional structure on $\chh$. 
  A normed vector space $\cvv\subset\css'(\R^d)$ is called an $\chh$-multiplier if the multiplication $$\cdot:\cvv\times\chh\to\chh$$ is a continuous bilinear map and is an extension of classical multiplication operation.
  An extension of classical multiplication operation means that if $(g,f)\in\cvv\times\chh\cap C_b(\R^d)\times C_b(\R^d) $ then $g\cdot f= gf$, the product of two continuous functions.

  \begin{definition}
    A normed vector space $\chh\subset\css'(\R^d)$ is admissible if it contains $C_b(\R^d) $ and $C_b^1(\R^d)$ is an $\chh$-multiplier. 
  \end{definition}
  We note that if $\chh$ is admissible, for every $t\in[0,T]$, $[\partial_t+L^b_t](f_t)$ belongs to $\chh$ for every $f$ in $C^2_b([0,T]\times\R^d)$.
  
  An interesting choice of the function $u$ in \eqref{eqn:cito} is $u(r,x)=U_{r,t}\phi(x)$, where $t$ is a fixed terminal time and $\phi$ is a regular function.
  Heuristically, such $u$ is a solution to the Kolmogorov backward equation
  \[
    (\partial_t+L_r^b)u(r,\cdot)=0, \quad u(t,\cdot)=\phi(\cdot).
  \] 
  However, unlike the cases treated in \cite{MR2748616}, a satisfactory theory for such equation is not yet available in our general setup.
 We therefore proceed with an approximation and impose sufficient conditions for its convergence. In the following, we denote $C^+=\cup_{\alpha\in(0,1)}C^\alpha_b(\Rd)$.
  Let $\{b_n\}_n$ be a sequence of bounded uniformly continuous functions on $[0,T]\times\Rd$. For each $n$, $\phi\in C^+$ and $t\in[0,T]$, a function $u:[0,t]\times \Rd\to\R$, let $U^n_{r,t} \phi(r,x)$ denote a solution of the parabolic backward equation 
  \begin{equation}\label{eqn.lbn}
    (\partial_t+L+b_n\cdot\nabla)u(r,x)=0,\quad u(t,x)=\phi(x) \quad
    \forall (s,x)\in[0,t]\times\Rd.
  \end{equation} 
  \begin{condition}\label{con:set2}
    There exists a sequence of bounded uniformly continuous functions $\{b_n\}$ converging to $b$ in $L^q\chh$ such that for every $t'<t$ and every $\phi\in C^+$
    \begin{enumerate}[label={\upshape(\roman*)}]
      \item\label{con:fifi} $\nabla U_{t',t}\phi$ is a function in $C^+$;
      \item\label{con:Ulimst} $\lim_{s\uparrow t}U_{s,t}\phi(x)=\phi(x)$ uniformly in $x$ over compact sets of $\Rd$;
      \item\label{con.unfiexist} $U^n \phi$ exists, $(s,x)\to U^n_{s,t}\phi(x)$ belongs to $C^2_b([0,t']\times\Rd)$ and $\|U^n_{s,t}\phi\|_\infty\le\|\phi\|_\infty$;
      \item\label{con.unfi} $(s,x)\to U^n_{s,t}\phi(x)$ converges to $(s,x)\to U_{s,t}\phi(x) $ uniformly over compact sets of $[0,t']\times\R^d$ and in $L^2([0,t'];L^\infty(\R^d)) $;
      \item\label{con:UUn} $(s,x)\to\nabla U^n_{s,t}\phi(x)$ converges to $(s,x)\to\nabla U_{s,t}\phi(x) $ in $L^2([0,t'];L^\infty(\R^d;\R^d)) $;
      \item\label{con:Un} $(s,x)\to 1_{[0,t']}(s) (b-b_n)(s,x)\cdot \nabla U^n_{s,t}\phi(x) $ converges to 0 in $L^q_T\chh$.
    \end{enumerate}
  \end{condition}
  \begin{remark}
    Condition \ref{con:set2} implies that $U_{s,t}$ is a contraction map on $BUC(\Rd)$ for every $s<t$. Indeed, for every $\phi\in C^+$, $U_{s,t}\phi$ belongs to $BUC(\Rd)$ (by \ref{con:set2}\ref{con:fifi}) and satisfies (by \ref{con:set2}\ref{con.unfiexist},\ref{con.unfi})
    \begin{equation*}
      \|U_{s,t}\phi\|_\infty=\lim_{n}\|U^n_{s,t}\phi\|_\infty\le\|\phi\|_\infty\,.
    \end{equation*}
    By a density argument, the previous inequality also holds for $\phi$ in $BUC(\Rd)$.
  \end{remark}

  \begin{theorem}
    Assuming Condition \ref{con:set2}. Let $\phi$ be a function in $C^+$.
    Then for every $s<t$, 
    \begin{equation}\label{id:CO}
      \phi(X_t)=U_{s,t}\phi(X_s)+\sum_{j=1}^d \int_s^t \partial^\sigma_j U_{r,t}\phi(X_r)dW^j_r \,.
    \end{equation}
  \end{theorem}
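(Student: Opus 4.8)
The plan is to apply the It\^o formula of Lemma \ref{lem.ItoYoung} to the smooth approximants $u^n:=U^n_{\cdot,t}\phi$ furnished by Condition \ref{con:set2}, rather than to the barely regular $U_{\cdot,t}\phi$, and then to pass to the limit first in $n$ and afterwards in a regularisation of the terminal time. Fix $s<t$ and $t'\in(s,t)$. By Condition \ref{con:set2}\ref{con.unfiexist} the function $u^n$ lies in $C^2_b([0,t']\times\Rd)$ and solves \eqref{eqn.lbn}, so $(\partial_r+L_r)u^n=-b_n\cdot\nabla u^n$ on $[0,t']$. Applying the It\^o formula of Lemma \ref{lem.ItoYoung} to $u^n$ on $[s,t']$ and inserting this relation into the drift term, the Lebesgue drift $-\int_s^{t'}b_n(r,X_r)\cdot\nabla u^n(r,X_r)\,dr$ merges with the Young integral against $\caa^X[b]$: since $b_n\in\fgg_T(X)$, Proposition \ref{prop.K} gives $\caa^X_r[b_n]=\int_0^r b_n(\theta,X_\theta)\,d\theta$, and the linearity of $\caa^X$ from Proposition \ref{prop:defA} yields
\begin{equation*}
  U^n_{t',t}\phi(X_{t'})=U^n_{s,t}\phi(X_s)+\int_s^{t'}\nabla u^n(r,X_r)\cdot\caa^X[b-b_n](dr)+\int_s^{t'}\nabla^\sigma u^n(r,X_r)\cdot dW_r.
\end{equation*}

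Next I would send $n\to\infty$ with $t'$ fixed. The two boundary terms converge in probability to $U_{t',t}\phi(X_{t'})$ and $U_{s,t}\phi(X_s)$ by Condition \ref{con:set2}\ref{con.unfi}, while Condition \ref{con:set2}\ref{con:UUn}, the It\^o isometry and boundedness of $\sigma$ give convergence of the stochastic integral to $\int_s^{t'}\nabla^\sigma U_{r,t}\phi(X_r)\cdot dW_r$. The crux is that the Young integral vanishes in the limit. Because $\chh$ is admissible and $\nabla U^n_{r,t}\phi\in C^1_b(\Rd)$, the product $f^n_r:=(b-b_n)_r\cdot\nabla U^n_{r,t}\phi$ is a well-defined element of $L^q_T\chh$, and I would establish the identity
\begin{equation*}
  \int_s^{t'}\nabla u^n(r,X_r)\cdot\caa^X[b-b_n](dr)=\caa^X_{t'}[f^n]-\caa^X_s[f^n].
\end{equation*}
Granting this, estimate \eqref{est:Ast} bounds the right-hand side by a constant times $\|f^n\mathbf 1_{[0,t']}\|_{L^q_T\chh}$, which tends to $0$ by Condition \ref{con:set2}\ref{con:Un}, leaving $U_{t',t}\phi(X_{t'})=U_{s,t}\phi(X_s)+\int_s^{t'}\nabla^\sigma U_{r,t}\phi(X_r)\cdot dW_r$.

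The main obstacle is precisely this integration-by-composition identity, a Leibniz-type rule for the propagator. I would prove it by comparing the two two-parameter increments whose sewing limits are the two sides: $A_{u,v}=\nabla u^n(u,X_u)\cdot\caa^X_{u,v}[b-b_n]$, with $\cii[A]$ equal to the Young integral, and $A^X_{u,v}[f^n]=\int_u^v U_{u,r}f^n_r(X_u)\,dr$, with $\cii[A^X[f^n]]=\caa^X[f^n]$. Using $\E^{\cff_u}\caa^X_{u,v}[b-b_n]=\int_u^v U_{u,r}(b-b_n)_r(X_u)\,dr$ from Proposition \ref{prop:defA}\ref{pa}, the difference $A-A^X[f^n]$ should meet the hypotheses of Proposition \ref{prop:AA}, forcing its sewing limit to vanish. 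The genuinely delicate input is a commutator estimate controlling
\begin{equation*}
  \nabla u^n(u,X_u)\,U_{u,r}(b-b_n)_r(X_u)-U_{u,r}\bigl(\nabla u^n(u,\cdot)(b-b_n)_r\bigr)(X_u)
\end{equation*}
to order $|v-u|^{1+\varepsilon}$ after integration in $r$; this is the one point where the smoothing of $U_{u,r}$ and the $\chh$-multiplier structure must be exploited jointly, since the separate bounds of order $|v-u|^{1+\nu/2}$ on each term are not sharp enough.

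Finally I would let $t'\uparrow t$. Condition \ref{con:set2}\ref{con:Ulimst} together with the sample-path continuity of $X$ gives $U_{t',t}\phi(X_{t'})\to\phi(X_t)$; since the left-hand side then converges, so does the family of stochastic integrals over $[s,t']$, and its limit is by definition $\int_s^t\nabla^\sigma U_{r,t}\phi(X_r)\cdot dW_r=\sum_{j=1}^d\int_s^t\partial^\sigma_j U_{r,t}\phi(X_r)\,dW^j_r$. This is the asserted expansion \eqref{id:CO}.
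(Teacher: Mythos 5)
Your proposal follows the paper's own proof essentially step for step: It\^o's formula (Lemma \ref{lem.ItoYoung}) applied to $U^n_{\cdot,t}\phi$ on $[s,t']$, the backward equation \eqref{eqn.lbn} to convert the drift into increments of $\caa^X[(b-b_n)\cdot\nabla U^n_{\cdot,t}\phi]$, Condition \ref{con:set2}\ref{con:Un} together with \eqref{est:Ast} to kill that term, Conditions \ref{con:set2}\ref{con.unfi} and \ref{con:set2}\ref{con:UUn} for the boundary and stochastic-integral terms as $n\to\infty$, and Condition \ref{con:set2}\ref{con:Ulimst} plus continuity of $X$ (with the $L^2$ bound and martingale convergence) for the final limit $t'\uparrow t$. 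The one place you go beyond the paper is in making explicit the Leibniz-type identity $\int_s^{t'}\nabla U^n_{r,t}\phi(X_r)\cdot\caa^X[b](dr)=\caa^X_{t'}[b\cdot\nabla U^n_{\cdot,t}\phi]-\caa^X_s[b\cdot\nabla U^n_{\cdot,t}\phi]$, which the paper absorbs without comment into its display \eqref{tmp:i}; your diagnosis is accurate that, via Proposition \ref{prop:AA}, this reduces to showing the conditional-expectation bound \eqref{con:AA2} for the difference of germs, i.e.\ a commutator estimate of order $|v-u|^{1+\varepsilon}$ for $\int_{[u,v]}\bigl[\nabla U^n_{u,t}\phi(X_u)\cdot U_{u,r}b_r(X_u)-U_{u,r}(b_r\cdot\nabla U^n_{r,t}\phi)(X_u)\bigr]dr$, and that the separate bounds of order $|v-u|^{1+\nu/2-1/q}$ do not suffice. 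You sketch but do not prove that estimate (after subtracting the harmless pieces, the term $\int_{[u,v]}U_{u,r}\bigl((g_r-g_r(X_u))b_r\bigr)(X_u)\,dr$ with $g=\nabla U^n_{\cdot,t}\phi$ is exactly where the smoothing of $U_{u,r}$ and the $\chh$-multiplier structure must be used jointly, and this does not follow from Conditions \ref{con:set1}--\ref{con:set2} alone without further work); since the paper supplies no argument for this step either, your write-up is at worst on par with, and in its bookkeeping more transparent than, the published proof.
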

  \begin{proof}
    Let $\{b_n\}_n$ be the sequence given in Condition \ref{con:set2} and $t'$ be a fixed number in $[s,t)$.  We note that
    \begin{equation*}
      (\partial_r+L_r)U^n_{r,t}\phi(x)=-b_n(r,x)\cdot\nabla U^n_{r,t}\phi(x) \quad\forall (r,x)\in[s,t']\times\Rd\,,
    \end{equation*}
    and that 
    \[
      \int_0^\cdot b_n(r,X_r)\cdot\nabla U^n_{r,t}\phi(X_r)dr=\caa[b_n\cdot\nabla U^n_{\cdot,t}\phi]
    \]
    by uniform continuity of $b_n(r,\cdot)\cdot\nabla U^n_{r,t}\phi(\cdot)$, Definition \ref{def.reg.sol} and Proposition \ref{prop.K}.
    Hence, applying the  It\^o formula \eqref{eqn:cito} for $u(r,x)=U^n_{r,t}\phi(x)$ where $r\in[s,t']$,
    we get
    \begin{multline}\label{tmp:i}
      U^n_{t',t} \phi(X_{t'})=U^n_{s,t}\phi(X_s)+\caa^X_{t'}[(b-b_n)\cdot\nabla U^n_{\cdot,t}\phi]-\caa^X_{s}[(b-b_n)\cdot\nabla U^n_{\cdot,t}\phi]
      \\+\sum_{j=1}^d \int_s^{t'}  \partial^\sigma_j U^n_{r,t}\phi(X_r)dW^j_r\,.
    \end{multline}
    Using Condition \ref{con:set2}\ref{con:Un} and Proposition \ref{prop:defA}, we have
    \begin{align*}
      \lim_n\caa^X_{t'}[(b-b_n)\cdot\nabla U^n_{\cdot,t}\phi]=\lim_n\caa^X_{s}[(b-b_n)\cdot\nabla U^n_{\cdot,t}\phi]=0
    \end{align*}
    in probability.
    In addition, using Condition \ref{con:set2}\ref{con:UUn}, it is straightforward to verify that
    \begin{equation*}
      \lim_n\|\int_s^{t'} \sigma^{i,j}(r,X_r)(\partial_iU^n_{r,t}\phi(X_r)-\partial_iU_{r,t}\phi(X_r))dW^j_r\|_{L^2(\Omega)}=0 \,.  
    \end{equation*} 
    Hence, we send $n\to\infty$ in \eqref{tmp:i} to obtain
    \begin{equation}\label{tmp:itt}
      U_{t',t}\phi(X_{t'})=U_{s,t}\phi(X_s)+\sum_{j=1}^d \int_s^{t'}  \partial^\sigma_j U_{r,t}\phi(X_r)dW^j_r\,.
    \end{equation}
    Using  Condition \ref{con:set2}\ref{con:Ulimst} and the fact that $X$ is continuous, we see that $$\lim_{t'\uparrow t}U_{t',t}\phi(X_{t'})=\phi(X_t).$$
    On the other hand, we rise the relation \eqref{tmp:itt} to second power, take conditional expectation with respect to $\cff_s$ and use It\^o isometry to obtain that
    \[
      \E^{\cff_s}|U_{t',t}\phi(X_{t'})|^2=\E^{\cff_s}|U_{s,t}\phi(X_s)|^2+\int_s^{t'}\E^{\cff_s}|\nabla^\sigma U_{r,t}\phi(X_r)|^2dr.
    \]
    This implies that
    \begin{equation*}
       \int_s^{t'} \E^{\cff_s} |\nabla^\sigma U_{r,t}\phi(X_r)|^2dr
       \le \E^{\cff_s} |U_{t',t}\phi(X_{t'})|^2
       \le \|\phi\|_\infty^2
    \end{equation*}
    for every $t'<t$. Hence, by martingale convergence theorem, the limit 
    \begin{equation*}
      \lim_{t'\uparrow t}\sum_{j=1}^d \int_s^{t'}  \partial^\sigma_j U_{r,t}\phi(X_r)dW^j_r
    \end{equation*}
    exists a.s. and in $L_2$, which we denote by $\sum_{j=1}^d \int_s^{t}  \partial^\sigma_j U_{r,t}\phi(X_r)dW^j_r$.
    We can send $t'$ to $t$ from below in \eqref{tmp:itt} to obtain formula \eqref{id:CO}.
  \end{proof}
  For each positive integer $n$, denote $[d]^n=\{1,\dots,d\}^n$. An element $\bj$ in $[d]^n$ is an $n$-tuple $\bj=(j_1,\dots,j_n)$ such that $j_i\in\{1,\dots,d\}$ for every $i=1,\dots,n$. For each $\bj\in[d]^n$, let $I^\bj_{s,t}$ denote the $n$-fold iterated integral with respect to $W^{j_1},\dots,W^{j_n}$ over the interval $[s,t]$. That is for every $f\in L^2([s,t]^n) $
  \begin{equation*}
    I^\bj_{s,t}(f)=\int_{[s,t]^n_{>}}f(r_1,\dots,r_n)dW^{j_n}_{r_n}\cdots dW^{j_1}_{r_1}
  \end{equation*}
  where $[s,t]^n_>=\{(r_1,\dots,r_n)\in[s,t]^n:t>r_1>r_2>\cdots>r_n>s\}$.
  
  From Condition \ref{con:set2}\ref{con:fifi}, we see that if $\phi$ is a function in $C^+$, then so is $\partial_j^\sigma U_{r,t}\phi $ for every $r< t$. Hence, we can apply \eqref{id:CO} for $\partial_j^\sigma U_{r,t}\phi $ to obtain
  \begin{equation*}
    \partial^\sigma_j U_{r,t}\phi(X_r)=U_{s,r}\partial^\sigma_j U_{r,t}\phi(X_s)+\sum_{k\in[d]}\int_s^r \partial^\sigma_k U_{r_1,r}\partial^\sigma_j U_{r,t}\phi(X_{r_1})dW^k_{r_1}
  \end{equation*}
  and so
  \begin{align}
    \phi(X_t)=U_{s,t}\phi(X_s)&+\sum_{j\in [d]}\int_s^tU_{s,r}\partial^\sigma_j U_{r,t}\phi(X_s)dW^j_r
    \nonumber\\&\quad+\sum_{(j,k)\in[d]^2}\int_s^t\int_s^r\partial^\sigma_j U_{r_1,r}\partial^\sigma_k U_{r,t}\phi(X_{r_1})dW^k_{r_1}dW^j_r\,.
  \end{align}
  It is evident that this procedure can be iterated. 
  \begin{theorem} Assuming Condition \ref{con:set2}. Let $\phi$ be a function in $C^+$. For every integer $n\ge1$ and every $0\le s\le t$
    \begin{equation}\label{chaos:n}
      \phi(X_t)=U_{s,t}\phi(X_s)+ \sum_{k=1}^n\sum_{\bj\in[d]^k} I^\bj_{s,t}(f_{s,t}^\bj)+\sum_{\bj\in[d]^{n+1}} I^\bj_{s,t}(g_{s,t}^{\bj})
    \end{equation}
    where for every $\bj=(j_1,\dots,j_n)\in[d]^n$ and every $(s_1,\dots,s_n)\in[s,t]^n_> $
    \begin{equation}\label{def:g}
      g^\bj_{s,t}(s_1,\dots,s_n)=\partial^\sigma_{j_n} U_{s_n,s_{n-1}}\cdots \partial^\sigma_{j_1} U_{s_1,t}\phi(X_{s_n})\,,
    \end{equation}
    and
    \begin{equation}\label{def:f}
      f^\bj_{s,t}(s_1,\dots,s_n) =\E \(g^\bj_{s,t}(s_1,\dots,s_n)|\cff_s\)=U_{s,s_n}\partial^\sigma_{j_n} U_{s_n,s_{n-1}}\cdots \partial^\sigma_{j_1} U_{s_1,t}\phi(X_s)\,.
    \end{equation}
  \end{theorem}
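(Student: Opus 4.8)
The plan is to establish \eqref{chaos:n} by induction on $n$, the single application of \eqref{id:CO} being the elementary building block. The base case $n=1$ is obtained by inserting into \eqref{id:CO} the expansion of each outer integrand $\partial^\sigma_j U_{r,t}\phi(X_r)$: since $\partial^\sigma_j U_{r,t}\phi\in C^+$ by Condition \ref{con:set2}\ref{con:fifi}, \eqref{id:CO} applies to it with terminal time $r$ and produces a drift-free term $U_{s,r}\partial^\sigma_j U_{r,t}\phi(X_s)$ together with a further stochastic integral. Relabelling the summation indices, the former is identified with $\sum_{\bj\in[d]^1}I^\bj_{s,t}(f^\bj_{s,t})$ and the latter with $\sum_{\bj\in[d]^2}I^\bj_{s,t}(g^\bj_{s,t})$ through \eqref{def:f} and \eqref{def:g}. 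Only the last, remainder, sum in \eqref{chaos:n} will be refined at each step.

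For the inductive step, assume \eqref{chaos:n} at order $n$ and refine its remainder $\sum_{\bj\in[d]^{n+1}}I^\bj_{s,t}(g^\bj_{s,t})$. Fix $\bj=(j_1,\dots,j_{n+1})\in[d]^{n+1}$ and, for a.e.\ time tuple with $s<r_{n+1}<\cdots<r_1<t$, write $\Psi:=\partial^\sigma_{j_n}U_{r_n,r_{n-1}}\cdots\partial^\sigma_{j_1}U_{r_1,t}\phi$, so that the innermost integrand of $I^\bj_{s,t}(g^\bj_{s,t})$ equals $\Xi(X_{r_{n+1}})$ with $\Xi:=\partial^\sigma_{j_{n+1}}U_{r_{n+1},r_n}\Psi$. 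Iterating Condition \ref{con:set2}\ref{con:fifi} shows $\Psi\in C^+$ and hence $\Xi\in C^+$, so \eqref{id:CO} applies to $\Xi$ with terminal time $r_{n+1}$. Its drift-free term $U_{s,r_{n+1}}\Xi(X_s)$ equals $f^\bj_{s,t}(r_1,\dots,r_{n+1})$ by \eqref{def:f}, and the integrand $\partial^\sigma_{j_{n+2}}U_{r_{n+2},r_{n+1}}\Xi(X_{r_{n+2}})$ of its stochastic term equals $g^{\bj'}_{s,t}(r_1,\dots,r_{n+2})$ by \eqref{def:g}, where $\bj'=(j_1,\dots,j_{n+2})\in[d]^{n+2}$. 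Substituting this into the innermost integral, using linearity of the It\^o integral, integrating over the remaining variables and summing over $\bj$ gives
\begin{equation*}
  \sum_{\bj\in[d]^{n+1}}I^\bj_{s,t}(g^\bj_{s,t})=\sum_{\bj\in[d]^{n+1}}I^\bj_{s,t}(f^\bj_{s,t})+\sum_{\bj'\in[d]^{n+2}}I^{\bj'}_{s,t}(g^{\bj'}_{s,t}),
\end{equation*}
in which the first sum is the new $k=n+1$ chaos layer and the second is the new remainder. This is exactly the passage from order $n$ to order $n+1$ in \eqref{chaos:n}, closing the induction.

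The algebraic identification above is the easy part; the main obstacle is the stochastic-analytic bookkeeping that makes it rigorous. Concretely, I must verify that every nested It\^o integral in \eqref{chaos:n} is a well-defined element of $L^2(\Omega)$ and that substituting the It\^o expansion of the innermost integrand into the surrounding integrals is legitimate. For this I would propagate through the ordered simplex the two quantitative facts already available from the proof of \eqref{id:CO}: the contraction bound $\|U_{s,t}\phi\|_\infty\le\|\phi\|_\infty$, which controls the $f^\bj$-layer, and the conditional martingale estimate $\int_s^{t'}\E^{\cff_s}|\nabla^\sigma U_{r,t}\psi(X_r)|^2\,dr\le\|\psi\|_\infty^2$, which, for a.e.\ fixed outer times, controls the innermost stochastic integral and justifies the truncation limits $t'\uparrow r_{n+1}$ taken precisely as in \eqref{id:CO}. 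The delicate point is that the successive integrands $\Psi,\Xi$ are themselves gradients of propagator images, whose supremum norms may degenerate as consecutive times coalesce; one must check that the time-ordering $t>r_1>\cdots>r_{n+1}>s$ renders the resulting singularities integrable over the simplex, so that the outer integrations converge and commute with the substitution. Finally, the joint measurability in the time variables of $f^\bj_{s,t}$ and $g^\bj_{s,t}$, required before integrating the pointwise-in-time a.s.\ identities produced by \eqref{id:CO}, follows from the continuity properties of $U$ and of $X$ recorded in Condition \ref{con:set2}.
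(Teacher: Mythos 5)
Your proposal is correct and follows exactly the route the paper takes: the paper derives the $n=2$ case by applying \eqref{id:CO} to $\partial^\sigma_j U_{r,t}\phi$ (admissible by Condition \ref{con:set2}\ref{con:fifi}) and then declares the iteration straightforward, which is precisely your induction on the remainder term. The additional bookkeeping you flag (integrability over the simplex, measurability in the time variables) is sensible diligence but goes beyond what the paper records, since its proof of this statement is literally the single word ``Straightforward.''
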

  \begin{proof}
    Straightforward.
  \end{proof}
  Let $\{\cff^W_t\}$ be the augmented filtration generated by $W$ which satisfies the usual conditions.
  \begin{theorem}\label{thm:phistr} Assuming Condition \ref{con:set2}.
    For every function $\phi$ in $C^+$, $\phi(X_t)$ is $\cff^W_t$-measurable if and only if
    \begin{equation}\label{con:VK}
      \lim_{n\to\infty}\sum_{\bj\in[d]^n} \int_{(s_1,\dots,s_n)\in[0,t]^n_>}U_{0,s_n}[\partial^\sigma_{j_n} U_{s_n,s_{n-1}}\cdots \partial^\sigma_{j_1} U_{s_1,t}\phi(x)]^2ds_1\dots ds_n=0\,.
    \end{equation}
    In such case, for every $s\le t$, we have
    \begin{equation}\label{chaos:infinity}
      \phi(X_t)=U_{s,t}\phi(X_s)+ \sum_{k=1}^\infty\sum_{\bj\in[d]^k} I^\bj_{s,t}(f_{s,t}^\bj)\,,
    \end{equation}
    where $f^\bj$'s are defined in \eqref{def:f} and the series converges in $L^2(\Omega)$. 
  \end{theorem}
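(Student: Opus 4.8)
The plan is to read everything off the finite chaos expansion \eqref{chaos:n} together with the orthogonality properties of iterated It\^o integrals. Since $\phi\in C^+\subset C_b(\Rd)$, the variable $\phi(X_t)$ is bounded, hence in $L^2(\Omega)$, and $\cff^W_t\subset\cff_t$. I work first with $s=0$, where $X_0=x$ is deterministic, so the integrands $f^\bj_{0,t}$ in \eqref{def:f} are \emph{deterministic} while the $g^\bj_{0,t}$ in \eqref{def:g} are merely adapted. Write
\begin{equation*}
  \phi(X_t)=S_n+R_n,\quad S_n:=U_{0,t}\phi(x)+\sum_{k=1}^n\sum_{\bj\in[d]^k}I^\bj_{0,t}(f^\bj_{0,t}),\quad R_n:=\sum_{\bj\in[d]^{n+1}}I^\bj_{0,t}(g^\bj_{0,t}).
\end{equation*}
First I would record two facts about iterated integrals. (a) \emph{Isometry:} for fixed length $k$ the integrals $\{I^\bj_{s,t}\}_{\bj\in[d]^k}$ are mutually orthogonal and $\|I^\bj_{s,t}(h)\|_{L^2}^2=\int_{[s,t]^k_>}\E|h(s_1,\dots,s_k)|^2\,ds_1\cdots ds_k$ for adapted $h$, by applying the one-step It\^o isometry $k$ times (the Kronecker deltas from distinct Brownian components forcing orthogonality across $\bj$). (b) \emph{Level orthogonality:} an iterated integral of length $n+1$ with adapted integrand is orthogonal in $L^2(\Omega)$ to every iterated integral of length $\le n$ with deterministic integrand; this is an induction on levels, the base case being that a stochastic integral has mean zero.

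Next I would compute the two quantities that drive the proof. Using (a) with $k=n+1$, then evaluating the second moment of $g^\bj_{0,t}$ through the Markov/propagator identity $\E\,\Psi(X_r)=U_{0,r}\Psi(x)$ for bounded continuous $\Psi$ (applicable because each factor $\partial^\sigma_{j}U_{\cdot,\cdot}\phi$ lies in $C^+$, so $\Psi:=[\partial^\sigma_{j_{n+1}}U_{s_{n+1},s_n}\cdots\partial^\sigma_{j_1}U_{s_1,t}\phi]^2$ is bounded uniformly continuous, hence in $\fgg_T(X)$ by Definition \ref{def.reg.sol}), I obtain
\begin{equation*}
  \|R_n\|_{L^2}^2=\sum_{\bj\in[d]^{n+1}}\int_{[0,t]^{n+1}_>}U_{0,s_{n+1}}\big[\partial^\sigma_{j_{n+1}}U_{s_{n+1},s_n}\cdots\partial^\sigma_{j_1}U_{s_1,t}\phi\big]^2(x)\,ds_1\cdots ds_{n+1},
\end{equation*}
which is exactly the summand indexed by $n+1$ in \eqref{con:VK}. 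On the other hand, since each $f^\bj_{0,t}$ is deterministic, $I^\bj_{0,t}(f^\bj_{0,t})$ lies in the homogeneous Wiener chaos of order $|\bj|$; combining this with fact (b), which gives $R_n\perp\bigoplus_{k\le n}\mathcal H_k$ where $\mathcal H_k$ is the order-$k$ chaos, I would identify $S_n$ as the truncation at order $n$ of the Wiener chaos expansion of $\E[\phi(X_t)\mid\cff^W_t]$. By completeness of the chaos decomposition in $L^2(\cff^W_t)$ this gives $S_n\to\E[\phi(X_t)\mid\cff^W_t]$ in $L^2$, whence $R_n\to\phi(X_t)-\E[\phi(X_t)\mid\cff^W_t]$.

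With these computations the equivalence is immediate: condition \eqref{con:VK} says exactly that $\|R_n\|_{L^2}\to0$, while the previous limit identifies the $L^2$-limit of $R_n$ as $\phi(X_t)-\E[\phi(X_t)\mid\cff^W_t]$; hence \eqref{con:VK} holds if and only if $\phi(X_t)=\E[\phi(X_t)\mid\cff^W_t]$, i.e. if and only if $\phi(X_t)$ is $\cff^W_t$-measurable. Finally, to obtain \eqref{chaos:infinity} for arbitrary $s\le t$, I would apply \eqref{chaos:n} with this $s$ and bound its remainder $R^{(s)}_n:=\sum_{\bj\in[d]^{n+1}}I^\bj_{s,t}(g^\bj_{s,t})$ by the same isometry and propagator computation; since the integration domain $[s,t]^{n+1}_>$ sits inside $[0,t]^{n+1}_>$ and the integrand is nonnegative, $\|R^{(s)}_n\|_{L^2}^2\le\|R_n\|_{L^2}^2\to0$ under \eqref{con:VK}. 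Passing to the limit $n\to\infty$ in \eqref{chaos:n} then yields the $L^2$-convergent series \eqref{chaos:infinity}. The main obstacle I anticipate is the careful bookkeeping behind facts (a)--(b) and the verification that the squared integrands genuinely lie in the class where $\E\,\Psi(X_r)=U_{0,r}\Psi(x)$ applies; once these are in place, the remainder is orthogonal-projection formalism.
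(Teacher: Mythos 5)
Your proof is correct and follows essentially the same route as the paper's: expand via \eqref{chaos:n} with $s=0$, invoke the Wiener--It\^o chaos decomposition of $L^2(\Omega,\cff^W_t)$ together with orthogonality of iterated integrals, and identify the $L^2$-norm of the remainder with the quantity in \eqref{con:VK} through the iterated It\^o isometry and the Markov/propagator identity $\E\,\Psi(X_r)=U_{0,r}\Psi(x)$. Your packaging of both implications through the single limit $R_n\to\phi(X_t)-\E[\phi(X_t)\mid\cff^W_t]$ (projecting the conditional expectation rather than first assuming measurability, as the paper does in its forward direction) is a minor streamlining, and your domain-monotonicity bound $\|R^{(s)}_n\|_{L^2}\le\|R_n\|_{L^2}$ correctly fills in the general-$s$ case that the paper dismisses as ``analogous.''
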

  \begin{proof}
    The argument is essentially that of \cite{veretennikov1981strong,veretennikov1976explicit}. The formula \eqref{chaos:n} with $s=0$ gives
    \begin{equation}\label{tmp.phix}
      \phi(X_t)=U_{0,t}\phi(x)+ \sum_{k=1}^n\sum_{\bj\in[d]^k} I^\bj_{0,t}(f_{0,t}^\bj)+\sum_{\bj\in[d]^{n+1}} I^\bj_{0,t}(g_{0,t}^{\bj}).
    \end{equation}
    We note from \eqref{def:f} that each $f_{0,t}^\bj$ is deterministic.
    We recall that the space $L^2(\Omega,\cff^W_t)$ admits the Wiener-It\^o chaos decomposition
    \[
      L^2(\Omega,\cff^W_t)=\bigoplus_{n=0}^\infty\chh_n
    \]
    where for each $n\ge0$, $\chh_n$ is the closed linear space spanned by $\{I_{0,t}^\bj(f):f\in L^2([0,t]^n),\bj\in[d]^n\}$, see \cite[Theorems 1.1.1 and 1.1.2]{MR2200233}. Each $\chh_n$ is called the Wiener chaos of order $n$. In particular, $I^\bj_{0,t}(f_{0,t}^\bj)$ belongs to $\chh_n$ for each $\bj\in[d]^n$.

    Suppose that $\phi(X_t)$ is $\cff^W_t$-measurable. Since $\phi$ is bounded, it is obvious that $\phi(X_t)$ is square integrable. Then by Wiener-It\^o decomposition, we have
    \begin{equation}\label{tmp.pchaos}
      \phi(X_t)=\sum_{k=0}^\infty G_k
    \end{equation}
    where each $G_k$ belongs to $\chh_k$. Comparing \eqref{tmp.pchaos} with \eqref{tmp.phix} and taking into account orthogonality reveals that $G_0=U_{0,t}\phi(x)$, $G_k=\sum_{\bj\in[d]^k} I^\bj_{0,t}(f_{0,t}^\bj)$ for $1\le k\le n$ and
    \[
      \sum_{\bj\in[d]^{n+1}} I^\bj_{0,t}(g_{0,t}^{\bj})=\sum_{k=n+1}^\infty G_k.
    \]
    Since \eqref{tmp.pchaos} holds in $L^2(\Omega,\cff^W_t)$, this implies that
    \[
      \lim_n\E|\sum_{\bj\in[d]^{n+1}} I^\bj_{0,t}(g_{0,t}^{\bj})|^2=\lim_n\E|\sum_{k=n+1}^\infty G_k|= 0
    \]
    Taking into account \eqref{def:g}, we see that the above limit is equivalent to \eqref{con:VK}.

    On the other hand, if \eqref{con:VK} holds, then 
    \[
      \lim_n\E|\sum_{\bj\in[d]^{n+1}} I^\bj_{0,t}(g_{0,t}^{\bj})|^2=0.
    \]
    One can send $n\to\infty$ in \eqref{chaos:n} to obtain 
    \begin{equation*}
      \phi(X_t)=U_{0,t}\phi(x)+ \sum_{k=1}^\infty\sum_{\bj\in[d]^k} I^\bj_{0,t}(f_{0,t}^\bj)\,.
    \end{equation*}
    where the series converges in $L^2(\Omega)$.
    Since each term on the right-hand side of the above identity is $\cff^W_t$-measurable, it is evident that $\phi(X_t)$ is $\cff^W_t$-measurable.
    The identity \eqref{chaos:infinity} is obtained analogously to the previous one.
  \end{proof}
  We recall that a pathwise solution $(W,X,\{\cff_t\})$ is called strong if $X_t$ is $\cff^W_t$-measurable for every $t\ge0$.
  \begin{corollary}
    A regular pathwise solution $(W,X,\{\cff_t\})$ is a strong solution if and only if \eqref{con:VK} holds for every function $\phi$ in $C^+$ and every $t\ge0$.
   \end{corollary}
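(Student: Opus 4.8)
The plan is to derive this corollary almost immediately from Theorem \ref{thm:phistr}, the only genuine work being to pass from the measurability of the scalar random variables $\phi(X_t)$, for $\phi$ ranging over $C^+$, to the measurability of the $\Rd$-valued random variable $X_t$ itself. First I would record the trivial implication: if $(W,X,\{\cff_t\})$ is a strong solution, then $X_t$ is $\cff^W_t$-measurable for every $t$, hence $\phi(X_t)$ is $\cff^W_t$-measurable for every Borel $\phi$, in particular for every $\phi\in C^+$, and Theorem \ref{thm:phistr} then forces \eqref{con:VK} to hold for every $\phi\in C^+$ and every $t\ge0$.

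For the converse, I would assume \eqref{con:VK} holds for all $\phi\in C^+$ and all $t\ge0$. By Theorem \ref{thm:phistr} this gives that $\phi(X_t)$ is $\cff^W_t$-measurable for every $\phi\in C^+$ and every $t$. The plan is then to exhibit a countable family $\{\phi_n\}_n\subset C^+$ that separates the points of $\Rd$ and whose pullbacks generate the Borel $\sigma$-algebra. A convenient choice is $\phi_n(x)=(1+|x-q_n|^2)^{-1}$, with $\{q_n\}_n$ an enumeration of $\Q^d$; each such $\phi_n$ is smooth, bounded, with bounded derivatives, hence lies in $C^1_b(\Rd)\subset C^\alpha_b(\Rd)\subset C^+$ for every $\alpha\in(0,1)$. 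Since these functions separate points, I expect $\sigma(X_t)=\sigma(\phi_n(X_t):n\ge1)$; as each $\phi_n(X_t)$ is $\cff^W_t$-measurable, this yields $\sigma(X_t)\subset\cff^W_t$, i.e. $X_t$ is $\cff^W_t$-measurable, and since $t$ is arbitrary $(W,X,\{\cff_t\})$ is a strong solution.

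The only delicate point I anticipate is the identity $\sigma(X_t)=\sigma(\phi_n(X_t):n)$, which rests on the standard fact that a countable point-separating family of bounded continuous functions generates the Borel $\sigma$-algebra of a Polish space. I would justify it by noting that the evaluation map $\Phi=(\phi_n)_n:\Rd\to[0,1]^{\NN}$ is continuous and injective, hence, by a Kuratowski-type argument for Polish spaces, a Borel isomorphism onto the Borel subset $\Phi(\Rd)$; consequently $X_t^{-1}(\mathcal{B}(\Rd))=X_t^{-1}\big(\Phi^{-1}(\mathcal{B}([0,1]^{\NN}))\big)=\sigma(\phi_n(X_t):n)$. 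Here the completeness of the augmented filtration $\{\cff^W_t\}$ is used implicitly to absorb null sets, so that the resulting $\cff^W_t$-measurability of $X_t$ is genuine and not merely up to modification. No estimates are needed beyond what Theorem \ref{thm:phistr} already supplies, and the two implications together close the equivalence.
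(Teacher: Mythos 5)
Your proof is correct, and its overall structure coincides with the paper's: both directions are reduced to Theorem \ref{thm:phistr} (the ``only if'' direction being immediate), and the only substantive content of the corollary is the transfer of $\cff^W_t$-measurability from the scalar random variables $\phi(X_t)$, $\phi\in C^+$, to the $\Rd$-valued random variable $X_t$ itself. Where you differ from the paper is in how that transfer is carried out. The paper takes a sequence $\{\phi_n\}\subset C^+$ converging to the identity map uniformly on compact sets (componentwise smooth truncations, say), so that $X_t=\lim_n\phi_n(X_t)$ pointwise and the measurability of $X_t$ follows simply from closure of measurable functions under pointwise limits, the augmentation of $\{\cff^W_t\}$ absorbing the null sets. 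You instead use the countable point-separating family $\phi_n(x)=(1+|x-q_n|^2)^{-1}$, $q_n\in\Q^d$, and invoke a Lusin--Souslin/Kuratowski argument to get $\sigma(X_t)=\sigma(\phi_n(X_t):n\ge1)$. Both routes are valid; yours is marginally more general in that it only requires the test family to separate points, but it imports a nontrivial descriptive-set-theoretic fact where an elementary pointwise limit suffices. Either way, combining this with Theorem \ref{thm:phistr} and the validity of \eqref{con:VK} for all $\phi\in C^+$ and all $t\ge0$ closes the equivalence.
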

  \begin{proof}
    Suppose that  \eqref{con:VK} holds for every function in $\phi\in C^+$ and every $t\ge0$.
    Let $\{\phi_n\}$ be a sequence of functions in $C^+$ which converges to the identity map uniformly over compact sets. Then, for every $t\ge0$, $\phi_n(X_t)$ is $\cff^W_t$-measurable. This implies that $X_t$ is $\cff^W_t$-measurable for every $t\ge0$ and hence $(X,W)$ is a strong solution. The other direction is straightforward from Theorem \ref{thm:phistr}.
  \end{proof}
% section chaos_expansion (end)

\section{SDEs driven by fractional Brownian motions} % (fold)
\label{sec:sdes_driven_by_fractional_brownian_motions}
  Let $B^H$ be a standard fractional Brownian motion in $\Rd$ with Hurst parameter $H\in(0,\frac12) $. This means that $B^H$ is a Gaussian process in $\Rd$ with $B_0=0$ and covariance given by
  \begin{equation}\label{cov.fbm}
    \E(B^H_t\otimes B^H_s)=\frac12(t^{2H}+s^{2H}-|t-s|^{2H}) I_d.
    \quad\textrm{for all}\quad s,t\ge0.
  \end{equation}
  We consider the stochastic differential equation driven by fractional Brownian motion
  \begin{equation}\label{eqn:fSDE}
    X_t=x+\int_0^tb(r,X_r)dr+B^H_t \quad\forall t\in[0,T]\,,
  \end{equation}
  where $x\in\Rd$, $b$ is a Borel function in $[L^q([0,T];L^p(\Rd))]^d$, $p,q\in[1,\infty]$. In particular, when $p=q=\infty$, $b$ is uniformly bounded over $[0,T]\times\R^d$. 
  Since $b$ is only measurable in time, the integral $\int_0^t b(r,X_r)$ should be interpreted in Lebesgue sense. 
  We will always view each element of $[L^q([0,T];L^p(\Rd))]^d$ as a measurable function instead of an equivalent class of measurable functions. From this point of view, there is no ambiguity for writing $b(r,X_r)$. 
  
  We will show in the current section that two adapted solutions defined on the same filtered probability space coincide provided that
  \begin{equation}\label{con:fbmpq}
    H\frac dp+\frac1q<\frac12-H\,.
  \end{equation}
  Hereafter, we use the convention $1/\infty=0$. Existence and uniqueness in law of weak solutions were obtained earlier by Nualart and Ouknine \cite{MR2073441} under the condition
  \begin{equation}\label{con:weakfbm}
    H\frac dp+\frac1q<\frac12\,.
  \end{equation}
  In fact, the authors of \cite{MR2073441} consider \eqref{eqn:fSDE} in one dimension, however, their arguments, which rely on Girsanov transformation, can be easily extended to multi-dimensions, as we will explain below.
  Dimension one is special, because of the validity of a comparison principle for \eqref{eqn:fSDE}. In this case, pathwise uniqueness holds under condition \eqref{con:weakfbm}.
  To obtain pathwise uniqueness in high dimensions, we rely on stochastic sewing lemma, Theorem \ref{lem:Sew2}, which leads to the more restricted condition \eqref{con:fbmpq}.

  Equation \eqref{eqn:fSDE} was considered earlier in \cite{MR1934157,MR2073441,banos2015strong} and in \cite{MR2117951} when $H=1/2$.
  % Compare to \cite{MR2073441}, condition \eqref{con:weakfbm} is more general. 
  Nualart and Ouknine in \cite{MR1934157} obtain strong existence and pathwise uniqueness for \eqref{eqn:fSDE} in one dimension, where $b$ is a measurable function with linear growth. 
  Modulo a localization procedure, this result corresponds to $p=q=\infty$ in our case. 
  In \cite{MR2073441}, the same authors extend their previous result in \cite{MR1934157} to allow drifts in $L^q([0,T];L^p(\R))$, with $H\frac1p+\frac1q<\frac12$. 
  
  Ba\~nos et.al. \cite{banos2015strong} obtained existence of a unique strong solution for \eqref{eqn:fSDE} in multi-dimensions, $b\in L^1(\Rd;L^\infty([0,T],\Rd))\cap L^\infty(\Rd;L^\infty([0,T];\Rd)) $ and $H<\frac1{2(2+d)}$. 
  This is also a particular case of the results stated here.
  The approaches of \cite{MR2073441,banos2015strong}, which rely on Girsanov transformation, comparison principle and Malliavin calculus,  are different from the one presented here. 
  Although the condition \eqref{con:fbmpq} is not optimal, it appears to be new and is the best available for stochastic differential equations driven by fractional Brownian motions in multi-dimensions. We point the readers to the end of the current section for further discussions.

  To state the main results precisely, let us recall some relevant definitions.   
  For each $t\ge0$, we denote by $\cff^{B^H}_t$ the $\sigma$-field generated by $\{B^H_r:r\in[0,t]\}$ and the sets of null probability. In other words, $\{\cff^{B_H}_t\}$ is the augmented filtration generated by $B^H$.
  One can find a Wiener process $W$ such that
  \begin{equation}\label{id:fbm}
    B^H_t=\int_0^t K_H(t,s)dW_s\,,\quad \forall t\ge0\,.
  \end{equation}
  where  $K_H$ is the square integrable kernel given by 
  \begin{align*}
    K_H(t,s)=c_H\left[\left(\frac ts\right)^{H-\frac12}(t-s)^{H-\frac12}
    -\left(H-\frac12\right)s^{\frac12-H}\int_s^tu^{H-\frac32}(u-s)^{H-\frac12}du
    \right],
  \end{align*}
  for every $s<t$ and $c_H=\lt(\frac{2H \Gamma(\frac32-H)}{\Gamma(H+\frac12)\Gamma(2-2H)}\rt)^{1/2}$. A procedure to find $W$ is described in \cite[Section 5.1.3]{MR2200233}. For the formula of $K_H$ above, we refer to \cite[Proposition 5.1.3]{MR2200233}. In addition, $W$ can be chosen so that $B^H$ and $W$ generate the same filtration (\cite[page 280]{MR2200233}).
  % $F(a,b,c,z)$ being the Gauss hypergeometric function. 

  Let $(\cff_t)_{t\ge0}$ be a right-continuous filtration such that $\cff_0$ contains the $\P$-null sets. 
  Following \cite{MR1934157}, we say that $B^H$ is an $\{\cff_t\}$-fractional Brownian motion if there exists an $\{\cff_t\}$-Wiener process $W$ such that \eqref{id:fbm} holds.
  The previous paragraph can be summarized in the statement that every standard fractional Brownian motion $B^H$ is an $\{\cff^{B^H}_t\}$-fractional Brownian motion.

  An essential property of $\{\cff_t\}$-fractional Brownian motion which we exploit is the local nondeterminism property. Namely, there exists a positive constant $c$ such that
  \begin{equation}\label{NDP}
    \sigma_H(s,t)\ge c|t-s|^H \quad\forall s<t\,,
  \end{equation}
  where 
  \begin{equation}\label{def:sigma}
    \sigma^2_H(s,t):=\frac1d\E\left(|B^H_t-\E^{\cff_s}B^H_t|^2\right) =\int_s^t|K_H(t,r)
    |^2dr \,.
  \end{equation}
  This property was used in \cite{MR2073441} without proof.
  We give a simple proof of \eqref{NDP}. Without loss of generality, assume $d=1$. The fractional Brownian motion $B^H$ admits a moving average representation which was introduced by Mandelbrot and Van Ness \cite{MR242239}
  \[
    B^H_t=C(H)\int_{-\infty}^t[(t-r)_+^{H-\frac12}-(-r)^{H-\frac12}]d\widetilde W_r
    \quad\textrm{for every}\quad t\ge0,
  \]
  where $C(H)$ is a suitable normalizing constant and $\widetilde W$ is a two-sided Brownian motion. For each $t\ge0$, let $\widetilde\cff_t$ the the $\sigma$-algebra generated by $\{\widetilde W_r:r\le t\}$ and the sets of null probability. 
  Clearly $B^H_t$ is $\widetilde \cff_t$-measurable and hence $\cff^{B^H}_t\subset \widetilde \cff_t$. It follows that
  \begin{align*}
    \int_s^t|K_H(t,r)
    |^2dr=\E(B^H_t-\E^{\cff^{B^H}_s}B^H_s)^2\ge \E(B^H_t-\E^{\widetilde \cff_s}B^H_t)^2.
  \end{align*}
  Using the moving average representation, it is straightforward to see that
  \[
    \E(B^H_t-\E^{\widetilde \cff_s}B^H_t)^2=\E\left(C(H)\int_s^t(t-r)^{H-\frac12}d\widetilde W_r\right)^2= C_1(H)|t-s|^{2H}
  \]
  where $C_1(H)=(C(H)(H+\frac12)^{-1})^2$ is a positive number. From here we obtain \eqref{NDP}.

  By a pathwise solution to \eqref{eqn:fSDE} on $(\Omega,\cff,\P)$, we mean a triplet $(B^H,X,\{\cff_t\}_{t\in[0,T]})$ such that
  \begin{enumerate}[(i)]
    \item $\{\cff_t\}$ is a right-continuous filtration, $\cff_0$ contains $\P$-null sets, $X$ and $B^H$ are $\{\cff_t\}$-adapted,
    \item $B^H$ is an $\{\cff_t\}$-fractional Brownian motion,
    \item $X$ and $B^H$ satisfy \eqref{eqn:fSDE} a.s. for every $t\in[0,T]$.
  \end{enumerate}
  A pathwise solution is called a strong solution if $X$ is adapted to the augmented filtration generated by $B^H$, i.e. $\{\cff^{B^H}_t\}$. Since $B^H$ and $W$ generate the same filtration, $X$ is a strong solution iff it is adapted to the augmented filtration generated by $W$.
  % Here, $\{\cff_t\}_{t\ge0}$ is a right-continuous filtration and $\cff_0$ contains the sets of null probability.

  A weak solution to \eqref{eqn:fSDE} is a tuple $((\Omega,\cff,\P),B^H,X,\{\cff_t\}_{t\in[0,T]})$ such that $(\Omega,\cff,\P)$ is a complete probability space and $(B^H,X,\{\cff_t\}_{t\in[0,T]})$ is a pathwise solution to \eqref{eqn:fSDE} on $(\Omega,\cff,\P)$.

  The following theorems are our main results in the current section.
  \begin{theorem}[Nualart-Ouknine] \label{thm:weakfsde}
    Suppose that \eqref{con:weakfbm} holds. Then equation \eqref{eqn:fSDE} has a weak solution. Any two weak solutions have the same probability law. 
  \end{theorem}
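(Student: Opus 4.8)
The plan is to prove both assertions via Girsanov's theorem for fractional Brownian motion, following Nualart and Ouknine \cite{MR2073441} but carried out in $\Rd$. For \emph{weak existence} I would start from a complete probability space $(\Omega,\cff,\P)$ carrying a standard fBm $B^H$, its augmented filtration $\{\cff^{B^H}_t\}$, and the Wiener process $W$ of \eqref{id:fbm}. Setting $X_t=x+B^H_t$, the goal is to build an equivalent measure $\widetilde\P$ under which $\widehat B^H_t:=B^H_t-\int_0^t b(r,X_r)\,dr$ is again a standard fBm; then, since $X_t=x+\int_0^t b(r,X_r)\,dr+\widehat B^H_t$, the triple $(\widehat B^H,X,\{\cff^{B^H}_t\})$ is a weak solution of \eqref{eqn:fSDE} on $(\Omega,\cff,\widetilde\P)$. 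To this end I would represent the drift through the kernel operator, writing $\int_0^t b(r,x+B^H_r)\,dr=\int_0^t K_H(t,s)v_s\,ds$ with $v=\mathcal K_H^{-1}\bigl(\int_0^\cdot b(r,x+B^H_r)\,dr\bigr)$, where $\mathcal K_H^{-1}$ is the explicit inverse built from fractional integro-differentiation (see \cite[Ch.~5]{MR2200233}). The candidate density is the Dol\'eans exponential
\begin{equation*}
  L_T=\exp\Bigl(\int_0^T \wei{v_s,dW_s}-\tfrac12\int_0^T|v_s|^2\,ds\Bigr),
\end{equation*}
and $d\widetilde\P=L_T\,d\P$ produces the desired fBm provided $\E L_T=1$.

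The crux — and the step I expect to be the main obstacle — is proving $\E L_T=1$, i.e.\ that $L$ is a true martingale rather than a strict supermartingale. I would obtain this from an exponential integrability estimate for $\int_0^T|v_s|^2\,ds$ of Novikov type. Using the explicit form of $\mathcal K_H^{-1}$ one bounds $|v_s|$ by weighted fractional integrals of $r\mapsto b(r,x+B^H_r)$; the essential probabilistic input is that, by local nondeterminism \eqref{NDP} together with \eqref{def:sigma}, the (conditional) marginals of $B^H$ possess densities with Gaussian bounds of scale $\sigma_H(s,t)\asymp|t-s|^{H}$. Inserting these bounds and applying H\"older's inequality in space and time against the $[L^q_TL^p]$-norm of $b$, the relevant exponent turns out to be governed by $H\frac dp+\frac1q$, and condition \eqref{con:weakfbm} is exactly what yields finite exponential moments, hence $\E L_T=1$. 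It is convenient to prove the estimate first for bounded continuous drifts — where the martingale property is automatic — and then pass to general $b$ by approximation, the bounds being uniform under \eqref{con:weakfbm}.

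For \emph{uniqueness in law} I would run Girsanov in reverse. Given any weak solution $((\Omega,\cff,\P),B^H,X,\{\cff_t\})$, let $W$ be its $\{\cff_t\}$-Wiener process and set $v_s=\mathcal K_H^{-1}\bigl(\int_0^\cdot b(r,X_r)\,dr\bigr)(s)$, a measurable functional of the path $X$. The same exponential estimate, applied along the solution, shows that
\begin{equation*}
  R_T=\exp\Bigl(-\int_0^T\wei{v_s,dW_s}-\tfrac12\int_0^T|v_s|^2\,ds\Bigr)
\end{equation*}
defines an equivalent measure $d\widehat\P=R_T\,d\P$ under which $X-x$ is a standard fBm. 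For any bounded measurable $F$ on path space, $\E_\P[F(X)]=\E_{\widehat\P}[F(X)\,R_T^{-1}]$; since $X-x$ is an fBm under $\widehat\P$ and $R_T^{-1}$ is an explicit functional of $X$ alone (through $v$), the right-hand side is a single fixed functional of the law of a standard fBm, independent of the chosen weak solution. Hence all weak solutions share the same law, and specializing $F$ recovers the joint law of $(B^H,X)$.

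Finally, the extension from $d=1$ to general $\Rd$ is routine and uses no comparison principle: the Girsanov density factorizes over coordinates through the common kernel $K_H$, the property \eqref{NDP} holds coordinatewise, and the dimension enters the integrability threshold \eqref{con:weakfbm} only through the factor $\frac dp$ carried by the spatial $L^p$-norm of $b$.
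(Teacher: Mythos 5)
Your proposal is correct and follows essentially the same route as the paper: Girsanov transformation for fBm via the kernel representation \eqref{id:fbm}, with the whole weight of the argument resting on the Novikov-type exponential integrability of $\int_0^T|v_s|^2\,ds$, which the paper reduces (via Young's convolution inequality) to $\E\exp\bigl(\theta\int_0^T|b(r,B^H_r)|^2\,dr\bigr)<\infty$ and then establishes by a Khasminskii-type iteration using local nondeterminism \eqref{NDP} and heat-kernel bounds, the condition \eqref{con:weakfbm} being exactly $H\frac{d}{p/2}+\frac{1}{q/2}<1$ for $|b|^2\in L^{q/2}L^{p/2}$. The only presentational difference is that the paper delegates the Girsanov bookkeeping and the uniqueness-in-law argument to Nualart--Ouknine and isolates the multidimensional exponential-moment estimate as Lemma \ref{lem:khas}, whereas you spell those steps out; the substance is identical.
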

  \begin{theorem}[Pathwise uniqueness]\label{thm:puni}
    Suppose that \eqref{con:fbmpq} holds. Let $(B^H,X,\{\cff_t\})$ and $(B^H,\bar X,\{\cff_t\})$ be two pathwise solutions to \eqref{eqn:fSDE} starting from $x$ on $(\Omega,\cff,\P)$. Then $X$ and $\bar X$ are indistinguishable. 
  \end{theorem}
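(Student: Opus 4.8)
The plan is to show that $\psi_t:=X_t-\bar X_t$ vanishes in $L_m$ for each $t$ (for $m$ large, chosen below) and then to upgrade this to indistinguishability by path continuity. Since $X$ and $\bar X$ are driven by the \emph{same} fractional Brownian motion, the noise cancels and $\psi_t=\int_0^t[b(r,X_r)-b(r,\bar X_r)]\,dr$ with $\psi_0=0$; in particular $\psi$ is adapted with zero martingale part. First I would feed the two-parameter process
\[
  A_{s,t}:=\E^{\cff_s}\int_s^t[b(r,X_r)-b(r,\bar X_r)]\,dr
\]
into the stochastic sewing lemma. Exactly as in \eqref{id.Amarkov}, the tower property gives $\E^{\cff_s}\delta A_{s,u,t}=0$ for $s\le u\le t$, so hypothesis \eqref{con:dA1} of Theorem~\ref{lem:Sew2} holds with $\Gamma_1=0$. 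Granting the size bound \eqref{con:A} below, Theorem~\ref{lem:Sew2} produces a process $\caa$ with $\caa_0=0$; and since $A_{s,t}=\E^{\cff_s}(\psi_t-\psi_s)$ by construction, $\psi$ itself satisfies \eqref{est:A1'} and \eqref{est:A2} (the latter with both sides $0$), so the uniqueness part of Theorem~\ref{lem:Sew2} identifies $\caa=\psi$. Consequently $\psi$ inherits the quantitative bound $\|\psi_t-\psi_s\|_{L_m}\le C_4|t-s|^{\frac12+\varepsilon_4}$ with $C_4\lesssim\Gamma_4$.

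The analytic core is the size estimate \eqref{con:A} for $A$, in the sharpened form
\[
  \|A_{s,t}\|_{L_m}\lesssim\Big(\sup_{r\in[s,t]}\|\psi_r\|_{L_m}\Big)\,\|b\|_{L^q_{[s,t]}L^p}\,|t-s|^{\kappa},\qquad \kappa=1-H-H\tfrac dp-\tfrac1q,
\]
together with the weaker single-process bound $\|\psi_t-\psi_s\|_{L_m}\lesssim\|b\|_{L^q_{[s,t]}L^p}|t-s|^{1-H\frac dp-\frac1q}$ needed to verify \eqref{est:A1'}. Condition \eqref{con:fbmpq} is precisely $\kappa>\tfrac12$, so $\varepsilon_4:=\kappa-\tfrac12>0$ and $\tfrac12+\varepsilon_4=\kappa$; the single-process exponent is $>\tfrac12$ under \eqref{con:weakfbm}. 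Both estimates rest on the local nondeterminism property \eqref{NDP}: conditionally on $\cff_s$, the increment $B^H_r-B^H_s$ is Gaussian with variance $\gtrsim|r-s|^{2H}$, which yields the heat-kernel smoothing bounds $\|P_{|r-s|^{2H}}h\|_\infty\lesssim|r-s|^{-H\frac dp}\|h\|_{L^p}$ and $\|\nabla P_{|r-s|^{2H}}h\|_\infty\lesssim|r-s|^{-H-H\frac dp}\|h\|_{L^p}$ for $h\in L^p(\Rd)$. Writing the drift difference as $b(r,\bar X_r+\psi_r)-b(r,\bar X_r)$ and freezing the $\cff_s$-measurable part $\psi_s$, the finite difference produces the gradient (hence the extra factor $|r-s|^{-H}$) and the factor $\|\psi_s\|_{L_m}$; integrating in $r$ and applying H\"older in time gives the exponent $\kappa$. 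The main obstacle is that, conditionally on $\cff_s$, the law of $\bar X_r$ is not Gaussian because of its own drift. I would resolve this by a fractional Girsanov transformation (as in Nualart--Ouknine \cite{MR2073441}, available under \eqref{con:weakfbm}, which is implied by \eqref{con:fbmpq}) that turns $\bar X$ into a shifted fractional Brownian motion under an equivalent measure whose density has enough moments, so that the Gaussian smoothing bounds transfer back to $\P$ by H\"older's inequality at the cost of enlarging $m$; this is the solution-analogue of the averaging estimates of Section~\ref{sec:averaging_along_fractional_brown}. The residual contribution of $\psi_r-\psi_s$ is of the same order $|t-s|^\kappa$ and is absorbed into the oscillation term below.

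It then remains to run a Gr\"onwall-type continuity argument on short intervals. Fixing $s_0<t_0$ in $[0,T]$ and applying the construction above on $[s_0,t_0]$ identifies $\psi_\cdot-\psi_{s_0}$ with the process sewn there, whence
\[
  \sup_{t\in[s_0,t_0]}\|\psi_t-\psi_{s_0}\|_{L_m}\le\eta\Big(\|\psi_{s_0}\|_{L_m}+\sup_{t\in[s_0,t_0]}\|\psi_t-\psi_{s_0}\|_{L_m}\Big),\qquad \eta\lesssim\|b\|_{L^q_{[s_0,t_0]}L^p}|t_0-s_0|^{\frac12+\varepsilon_4}.
\]
Choosing a fixed mesh $h>0$ small enough that $\eta\le\tfrac12$ uniformly in $s_0$ (possible since $\|b\|_{L^q_{[s_0,s_0+h]}L^p}\le\|b\|_{L^q_TL^p}$ and $h^{\frac12+\varepsilon_4}\to0$), I obtain $\sup_{[s_0,s_0+h]}\|\psi_t\|_{L_m}\le(1+2\eta)\|\psi_{s_0}\|_{L_m}$, and in particular $\psi\equiv0$ on $[s_0,s_0+h]$ whenever $\psi_{s_0}=0$. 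Starting from $\psi_0=0$ and chaining over the finitely many intervals covering $[0,T]$ yields $\psi_t=0$ a.s.\ for every $t$. Finally, $X$ and $\bar X$ have a.s.\ continuous paths (each equals $x+B^H_\cdot$ plus an absolutely continuous term, and the single-process bound makes $\psi$ H\"older in $L_m$), so $\psi$ is a.s.\ continuous; vanishing a.s.\ at every rational time then forces $\psi\equiv0$ up to indistinguishability, which is the assertion of Theorem~\ref{thm:puni}.
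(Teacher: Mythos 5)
Your overall architecture (reduce to $\psi=X-\bar X$, feed the conditional increment into Theorem \ref{lem:Sew2}, then buckle/Gr\"onwall over short intervals) is a legitimate alternative to the paper's route, and the final chaining and continuity steps are fine. The genuine gap is at the analytic core: the claimed bound
$\|A_{s,t}\|_{L_m}\lesssim(\sup_{r}\|\psi_r\|_{L_m})\,\|b\|_{L^qL^p}\,|t-s|^{\kappa}$ for
$A_{s,t}=\E^{\cff_s}\int_s^t[b(r,X_r)-b(r,\bar X_r)]\,dr$ is not justified by the argument you sketch. Since $b$ is merely in $L^qL^p$, the ``finite difference produces the gradient'' step can only act on the heat-smoothed function $P_{\sigma^2_H(s,r)}b_r$, and the identity $\E^{\cff_s}b_r(B^H_r+z)=P_{\sigma^2_H(s,r)}b_r(\E^{\cff_s}B^H_r+z)$ requires $z$ to be $\cff_s$-measurable. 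But $X_r=B^H_r+\phi_r$ with $\phi_r=\int_0^r b(u,X_u)du$ \emph{not} $\cff_s$-measurable, so conditioning does not directly produce a smoothed kernel evaluated at a frozen point. Your two proposed fixes both fail as stated: (i) a single Girsanov transformation cannot simultaneously turn $X$ and $\bar X$ into fractional Brownian motions (the densities are different), and in any case $\E^{\cff_s}$ under $\P$ does not transform into $\E^{\cff_s}$ under $\tilde\P$ by a simple H\"older argument the way unconditional moments do in Proposition \ref{prop.fx}; (ii) the residual from freezing, $\E^{\cff_s}[b_r(B^H_r+\phi_r)-b_r(B^H_r+\phi_s)]$, suffers from exactly the same non-measurability problem and cannot be ``absorbed into the oscillation term'' without a separate argument --- it is not of order $|\phi_r-\phi_s|$ times anything controllable, because $b$ has no gradient.

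The paper circumvents this in Lemma \ref{lem.yyv} by never conditioning the true drift difference: the sewing germ is the fully frozen object $A_{s,t}[g]=\int_s^t\int_0^1\nabla(P_{\sigma^2_H(s,r)}g_r)(\E^{\cff_s}B^H_r+\theta\psi_s+(1-\theta)\bar\psi_s)\,d\theta\,dr$, in which only the exactly-Gaussian increment of $B^H$ is smoothed (via \eqref{NDP}) and the drift parts enter as frozen $\cff_s$-measurable arguments; the hypotheses \eqref{con:dA1}--\eqref{con:A} are then verified using the second-derivative bound \eqref{pf7} together with the $C^\tau L_m$-regularity of $\psi,\bar\psi$ from Proposition \ref{prop:regweaksoln} (where Girsanov \emph{is} safely used, at the level of unconditional moments). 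The identification of the sewn process with the actual drift difference is done first for smooth $f^n$ (where the $C^2_b$-norm controls the freezing error) and then by passing to the limit, producing a H\"older process $V$ with exponent $\tau_H(p,q)-H>\tfrac12$ under \eqref{con:fbmpq}; uniqueness then follows from the pathwise uniqueness of the linear Young equation $y_t=\int_0^t y_r\,dV_r$, rather than from a moment-level Gr\"onwall. If you want to keep your $L_m$-Gr\"onwall scheme, you would still need to route the key estimate through a frozen germ of the paper's type and a smooth-approximation/identification step; as written, the central inequality is asserted rather than proved.
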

  \begin{theorem}[Strong existence]\label{thm:fbm}
    Suppose that \eqref{con:fbmpq} holds. Then \eqref{eqn:fSDE} has a unique strong solution.
 \end{theorem}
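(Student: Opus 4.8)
The plan is to obtain Theorem \ref{thm:fbm} as an instance of the Yamada--Watanabe principle: weak existence together with pathwise uniqueness yields strong existence and strong uniqueness. Since $H>0$, condition \eqref{con:fbmpq} is strictly stronger than \eqref{con:weakfbm}, so Theorem \ref{thm:weakfsde} furnishes a weak solution $((\Omega,\cff,\P),B^H,X,\{\cff_t\})$ to \eqref{eqn:fSDE}, while Theorem \ref{thm:puni} supplies pathwise uniqueness. It remains to promote these two facts to the existence of a solution adapted to $\{\cff^{B^H}_t\}$ and to its uniqueness.

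The decisive observation is to view \eqref{eqn:fSDE} as an equation driven by the underlying Wiener process $W$. By \eqref{id:fbm}, $B^H$ is a fixed measurable functional of $W$, and $B^H$ and $W$ generate the same augmented filtration; hence a pathwise solution is strong exactly when $X$ is $\{\cff^W_t\}$-adapted, and the sole driving datum of the whole problem is the Brownian path $W$ (together with the deterministic Volterra kernel $K_H$). I would then run the standard Yamada--Watanabe glueing. Starting from any two weak solutions, I disintegrate their joint laws over the common law of $W$ and reassemble, on a single complete probability space, two processes $X,\bar X$ that solve \eqref{eqn:fSDE} with respect to one and the same $\{\cff_t\}$-fractional Brownian motion $B^H$ (equivalently, one and the same $W$). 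Pathwise uniqueness (Theorem \ref{thm:puni}) then forces $X=\bar X$ a.s. The usual measurable-selection step converts this into a Borel map $F$ with $X=F(x,W)$ almost surely; in particular $X$ is $\{\cff^W_t\}=\{\cff^{B^H}_t\}$-adapted, so the weak solution produced by Theorem \ref{thm:weakfsde} is in fact strong. Uniqueness of the strong solution is then immediate from Theorem \ref{thm:puni}, since any two strong solutions live on a common space and are driven by the same $B^H$.

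The main obstacle is that the Yamada--Watanabe machinery is classically phrased for Markovian, semimartingale-driven equations, whereas $B^H$ is neither Markov nor a semimartingale. The resolution is precisely the reduction of the previous paragraph: once the problem is recast as an equation whose only source of randomness is the Wiener process $W$---with the coefficient entering through the progressively measurable functional $(r,\omega)\mapsto b(r,X_r(\omega))$, a genuine Borel map because $b\in[L^q([0,T];L^p(\Rd))]^d$ is a bona fide function and $\int_0^\cdot b(r,X_r)dr$ is an honest Lebesgue integral---the abstract Yamada--Watanabe--Engelbert theorem for stochastic equations driven by a Brownian motion applies verbatim. The only further points to verify are the joint measurability of the solution in $(x,\omega)$ and that the disintegration yields, on the joint space, pairs $(W,X)$ and $(W,\bar X)$ with $W$ still an $\{\cff_t\}$-Wiener process and $B^H=\int_0^\cdot K_H\,dW$ the associated fractional Brownian motion; both are routine consequences of the representation \eqref{id:fbm} and the fact that the glueing preserves conditional laws given $W$.
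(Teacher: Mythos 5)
Your proposal is correct and follows essentially the same route as the paper: both reduce \eqref{eqn:fSDE} via \eqref{id:fbm} to an equation driven by the underlying Wiener process $W$, invoke the Yamada--Watanabe principle (weak existence from Theorem \ref{thm:weakfsde} plus pathwise uniqueness from Theorem \ref{thm:puni}), and conclude using the fact that $B^H$ and $W$ generate the same augmented filtration. The extra detail you supply on the glueing/disintegration and measurable selection is exactly the content the paper delegates to \cite[Corollary 1]{MR0278420}.
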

  Theorem \ref{thm:weakfsde} is an easy extension of \cite{MR1934157,MR2073441} using Girsanov transformation. Theorem \ref{thm:puni} relies on Lipschitz property of the map $\psi\mapsto\int b(r,B^H_r+\psi_r)dr $, which is derived using Theorem \ref{lem:Sew2}.
  The notion of $\{\cff_t\}$-fractional Brownian motion and formula \eqref{id:fbm} allow us to view equation \eqref{eqn:fSDE} as a stochastic integral equation driven by a Wiener process. Having established weak existence and pathwise uniqueness for equation \eqref{eqn:fSDE} in Theorems \ref{thm:weakfsde} and \ref{thm:puni}, an application of Yamada-Watanabe principle \cite{MR0278420} yields Theorem \ref{thm:fbm}.

  \subsection*{Weak existence and uniqueness in probability law} % (fold)
  \label{sub:weak_existence_and_uniqueness_in_law}
  
  % subsection weak_existence_and_uniqueness_in_law (end)
  Let us now discuss in more detail. We begin with the following lemma, which extends \cite[Lemma 3.2]{MR2073441}.

  \begin{lemma}\label{lem:khas}
    Let $B^H$ be an $\{\cff_t\}$-fractional Brownian motion.
    Let $(p,q)\in[1,\infty]^2$ satisfy $H\frac dp +\frac1q<1$.
    For every non-negative measurable function $h$ in $ L^q([0,T]; L^p(\Rd)) $, we have
    \begin{align}\label{est:eh}
      \E^{\cff_s}\int_s^Th(t,B^H_t)dt\lesssim \|h\|_{L^qL^p}(T-s)^{1-H\frac dp-\frac1q}
    \end{align}
    and
    \begin{align}\label{est:exph}
      \E\exp\(\int_0^Th(t,B^H_t)dt \)<\infty\,,
    \end{align}
    where $q'=\frac q{q-1}$ is the H\"older conjugate of $q$.
  \end{lemma}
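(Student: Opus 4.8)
The plan is to exploit the Gaussian structure of $B^H$ relative to $\{\cff_t\}$ in order to reduce \eqref{est:eh} to a bound on the $L^{p'}$-norm of a Gaussian density, and then to bootstrap \eqref{est:eh} into \eqref{est:exph} by the classical Khas'minskii argument. The crucial input is the observation that, conditionally on $\cff_s$, the law of $B^H_t$ (for $t>s$) is Gaussian with a \emph{deterministic} covariance. Indeed, writing $B^H_t=\int_0^tK_H(t,r)\,dW_r$ via the $\{\cff_t\}$-Wiener process $W$ from \eqref{id:fbm}, I would split $\int_0^t=\int_0^s+\int_s^t$: the first part is $\cff_s$-measurable and equals $\E^{\cff_s}B^H_t$, while the second part is independent of $\cff_s$ (since the increments of the driving $\{\cff_t\}$-Wiener process after time $s$ are independent of $\cff_s$) and is centred Gaussian with covariance $\sigma_H^2(s,t)I_d$ by \eqref{def:sigma}. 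Hence the conditional law of $B^H_t$ given $\cff_s$ is $\CN(\E^{\cff_s}B^H_t,\sigma_H^2(s,t)I_d)$.

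Granting this, the first step bounds, for each fixed $t>s$,
\[
  \E^{\cff_s}h(t,B^H_t)=\int_{\Rd}h(t,y)\,g_{s,t}(y)\,dy,
\]
where $g_{s,t}$ is the density of $\CN(\E^{\cff_s}B^H_t,\sigma_H^2(s,t)I_d)$. By H\"older's inequality this is at most $\|h(t,\cdot)\|_{L^p}\|g_{s,t}\|_{L^{p'}}$, where $p'$ is the conjugate exponent of $p$, and a direct computation gives $\|g_{s,t}\|_{L^{p'}}=C_{p,d}\,\sigma_H(s,t)^{-d/p}$, which by the local nondeterminism bound \eqref{NDP} is $\lesssim|t-s|^{-Hd/p}$. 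Integrating in $t$ (using $h\ge0$ to move $\E^{\cff_s}$ inside by Tonelli) and applying H\"older's inequality in time with exponents $q,q'$, I would obtain
\[
  \E^{\cff_s}\int_s^Th(t,B^H_t)\,dt\lesssim\|h\|_{L^qL^p}\Big(\int_s^T|t-s|^{-Hdq'/p}\,dt\Big)^{1/q'}.
\]
The assumption $H\frac dp+\frac1q<1$ is exactly $Hdq'/p<1$, so the time integral converges and equals a constant times $(T-s)^{1-Hd/p-1/q}$, giving \eqref{est:eh}; the boundary cases $p=\infty$ or $q=\infty$ follow by replacing the corresponding H\"older step with the supremum norm.

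For \eqref{est:exph} I would run the Khas'minskii scheme. Set $\delta:=1-H\frac dp-\frac1q>0$; then \eqref{est:eh} applied on any subinterval $[a,b]\subseteq[0,T]$ gives $\sup_{s\in[a,b]}\E^{\cff_s}\int_s^bh(t,B^H_t)\,dt\le C_0(b-a)^\delta$. Choose $\ell>0$ with $C_0\ell^\delta\le\frac12$ and partition $[0,T]$ into finitely many intervals $[s_j,s_{j+1}]$ of length at most $\ell$. The core estimate is that whenever $\sup_{s\in[a,b]}\E^{\cff_s}\int_s^bh\,dt\le K$, one has $\E^{\cff_a}\big(\int_a^bh\,dt\big)^n\le n!\,K^n$ for every $n$; this follows by induction on $n$ from the identity $\big(\int_a^bh\big)^n=n\int_a^bh(t_1,B^H_{t_1})\big(\int_{t_1}^bh\big)^{n-1}dt_1$, conditioning on $\cff_{t_1}$, the inductive bound $\E^{\cff_{t_1}}\big(\int_{t_1}^bh\big)^{n-1}\le(n-1)!\,K^{n-1}$, and the hypothesis on $\E^{\cff_a}\int_a^bh\,dt$, with non-negativity of $h$ used throughout. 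Summing the series with $K=\frac12$ gives $\E^{\cff_{s_j}}\exp\big(\int_{s_j}^{s_{j+1}}h\,dt\big)\le2$, and chaining the intervals from the last to the first by the tower property yields $\E\exp\big(\int_0^Th\,dt\big)\le2^{N}<\infty$, with $N$ the number of intervals.

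The Gaussian computation and the time integral are routine; the point requiring the most care is the first paragraph, namely verifying that the conditional law of $B^H_t$ given the \emph{abstract} filtration $\{\cff_t\}$ (not merely the filtration generated by $B^H$) is Gaussian with the stated deterministic variance. This is precisely where the notion of $\{\cff_t\}$-fractional Brownian motion and the representation \eqref{id:fbm} are essential: the independence of the increments of the driving $\{\cff_t\}$-Wiener process after time $s$ makes $\int_s^tK_H(t,r)\,dW_r$ independent of $\cff_s$ and identifies its variance with $\sigma_H^2(s,t)$ via \eqref{def:sigma}, after which \eqref{NDP} supplies the lower bound that drives the whole estimate.
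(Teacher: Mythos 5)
Your proof is correct. The argument for \eqref{est:eh} is essentially identical to the paper's: both condition on $\cff_s$, use the representation \eqref{id:fbm} to identify the conditional law of $B^H_t$ as $\CN(\E^{\cff_s}B^H_t,\sigma_H^2(s,t)I_d)$, bound the resulting heat-semigroup term by $\sigma_H(s,t)^{-d/p}\|h(t,\cdot)\|_{L^p}$ via H\"older, invoke \eqref{NDP}, and finish with H\"older in time. Where you genuinely diverge is in \eqref{est:exph}: you run the classical Khas'minskii scheme --- partition $[0,T]$ into intervals short enough that the one-step conditional bound is at most $\tfrac12$, prove $\E^{\cff_a}\big(\int_a^b h\big)^n\le n!K^n$ by induction, sum the geometric series, and chain the subintervals by the tower property --- whereas the paper expands $\exp$ directly over all of $[0,T]$, bounds the $n$-fold iterated integral $I_n$ by successive conditioning together with the Beta-function identity, and obtains a Mittag-Leffler-type series $\sum_n C^n\|h\|^n T^{n\delta}/\Gamma(n\delta+1)$. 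Both are valid; your route is more elementary and avoids the Gamma-function bookkeeping, while the paper's route yields as a by-product the explicit moment estimate \eqref{est:momenth}, $\E\big(\int_0^T h(t,B^H_t)dt\big)^n\le n!\,C^n\|h\|^n_{L^qL^p}T^{n\delta}/\Gamma(n\delta+1)$, which is reused later (e.g.\ in the proof of Proposition \ref{prop.fx}); your localized bound $n!K^n$ would not directly supply that global estimate without further work.
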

  \begin{proof}
    Note that $q'<\infty$ because $q>1$.
    By Tonelli's theorem, we have
    \begin{equation*}
      \E^{\cff_s}\int_s^Th(t,B^H_t)dt=\int_s^T\E^{\cff_s}h(t,B^H_t)dt=\int_s^T\E^{\cff_s}h(t,\E^{\cff_s}B^H_t+(B^H_t-\E^{\cff_s}B^H_t) )dt\,.
    \end{equation*}
    Using the definition of $\{\cff_t\}$-fractional Brownian motion, \eqref{id:fbm} and \eqref{def:sigma}, we see that $B^H_t-\E^{\cff_s}B^H_t=\int_s^t K_H(t,r)dW_r$, which has a centered normal distribution with covariance matrix $\sigma^2_H(s,t)I_d$ and is independent from $\cff_s$. Hence, we obtain
    \begin{equation*}
      \E^{\cff_s}\int_s^Th(t,B^H_t)dt
      =\int_s^TP_{\sigma^2_H(s,t)}[h(t,\cdot)](\E^{\cff_s}B^H_t)dt\,,
    \end{equation*}
    where $\{P_\sigma\}_{\sigma\ge0}$ is the heat semigroup associated to the kernel $(2 \pi \sigma)^{-\frac d2}e^{-|x-y|^2/(2 \sigma)}$.
    For each $\sigma>0$, $P_\sigma$ maps $L^p$ to $C_b(\R^d)$ and for every $\phi\in L^p(\Rd)$, we have
  \begin{equation*}
    \|P_{\sigma}\phi\|_\infty\lesssim \sigma^{-\frac d{2p}}\|\phi\|_{L^p}\,.
  \end{equation*}
    This bound follows from the H\"older inequality $\|p_\sigma * \phi\|_{\infty}\le\|p_\sigma\|_{L^{p'}}\|\phi\|_{L^p}$, where $p_\sigma$ is the Gaussian density of $P_\sigma$ and $\frac1{p'}+\frac1p=1$.
    In conjunction with \eqref{NDP} and H\"older inequality, we have
    \begin{align}
      \E^{\cff_s}\int_s^Th(t,B^H_t)dt
      &\lesssim\int_s^T|t-s|^{-H\frac dp}\|h(t,\cdot)\|_{L^p(\Rd)}dt
      \nonumber\\&\lesssim \|h\|_{L^qL^p}\(\int_s^T|t-s|^{-q'H\frac dp}dt \)^{\frac1{q'}}.
      \label{tmp.eh}
    \end{align}
    This yields \eqref{est:eh}.
    Then, by Taylor's expansion, we have
    \begin{align*}
      \E\exp\(\int_0^Th(t,B^H_t)dt \)=1+\sum_{n=1}^\infty I_n\,,
    \end{align*}
    where
    \begin{equation*}
      I_n=\E\int_0^T\int_{t_1}^T\cdots\int_{t_{n-1}}^T h(t_1,B^H_{t_1})\cdots h(t_n,B^H_{t_n})dt_n\cdots dt_1\,.
    \end{equation*}
    By conditioning successively on $\cff_{t_{n-1}},\dots, \cff_{t_1}$ and using \eqref{tmp.eh} we obtain
    \begin{multline*}
      I_n
      \le C^n\|h\|^n_{L^qL^p}
      \\\times\(\int_0^T\int_{t_1}^T\cdots\int_{t_{n-1}}^T (t_n-t_{n-1})^{-q'H\frac dp}\cdots (t_2-t_{1})^{-q'H\frac dp}t_1^{-q'H\frac dp} dt_n\cdots dt_1 \)^{\frac1{q'}}.
    \end{multline*}
    Integrating successively starting from $dt_n$ to $dt_1$, using the identity 
    \[
      \int_s^T(T-t)^{x-1}(t-s)^{y-1}dt=(T-s)^{x+y-1}\frac{\Gamma(x)\Gamma(y)}{\Gamma(x+y)} 
      \quad\textrm{for every}\quad x,y>0,
    \]
    it is straightforward to obtain
    \begin{align*}
      I_n\le \frac{C^n\|h\|^n_{L^qL^p} T^{n(1-H\frac dp-\frac1q)}}{\Gamma(n(1-H\frac dp-\frac1q)+1) }
    \end{align*}
    for some constant $C>0$ depending on $p,q,H$. Thus we have
    \begin{equation*}
      \E\exp\(\int_0^Th(t,B^H_t)dt \)\le \sum_{n=0}^\infty  \frac{C^n\|h\|^n_{L^qL^p} T^{n(1-H\frac dp-\frac1q)}}{\Gamma(n(1-H\frac dp-\frac1q)+1) }\,,
    \end{equation*}
    which implies finiteness of the exponential moment. 
  \end{proof}
  In the above proof, using the identity $\E\(\int_0^T h(t,B^H_t)dt \)^n=n!I_n $, we obtain the following estimate
  \begin{equation}\label{est:momenth}
    \E\(\int_0^T h(t,B^H_t)dt \)^n\le \frac{n!C^n\|h\|^n_{L^qL^p}T^{n(1-H\frac dp-\frac1q)}}{\Gamma(n(1-H\frac dp-\frac1q)+1)}
  \end{equation}
  for every positive integer $n$.
  \begin{proposition}[Girsanov transformation]\label{Prop.Gir}
    Let $B^H$ be an $\{\cff_t\}$-fractional Brownian motion in $\Rd$ with Hurst parameter $H\in(0,1/2)$.
    Consider the shifted process
    \[
      \tilde B^H_t=B^H_t+\int_0^t h_s ds
    \]
    where $(h_s)_{s\in[0,T]}$ is an $\{\cff_t\}$-adapted process in $\Rd$ with integrable trajectories. We define
    \begin{align*}
      v_s
      % =K_H^{-1}\(\int_0^\cdot b(r,X_r)dr \)(s)
      =\frac1{\Gamma(\frac12-H)} s^{H-\frac12}\int_0^s(s-r)^{-\frac12-H}r^{\frac12-H}h_rdr\,.
    \end{align*}
    Assume that
    \begin{enumerate}[label={\upshape(\roman*)}]
      \item\label{g1} $v$ belongs to $L^2([0,T];\Rd)$, almost surely, 
      \item\label{g2} $\E(\xi_T)=1$ where
      \[
        \xi_T=\exp\left\{-\int_0^Tv_s\cdot dW_s-\frac12\int_0^T\left|v_s\right|^2ds \right\}.
      \]
    \end{enumerate}
    Then the shifted process $\tilde B^H$ is an $\{\cff^{B^H}_t\}$-fractional Brownian motion with Hurst parameter $H$ under the new probability $\tilde\P$ defined by $d\tilde\P=\xi_Td\P$.
  \end{proposition}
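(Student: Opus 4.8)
The plan is to reduce the statement to the classical Girsanov theorem for Brownian motion by identifying the Wiener process that drives $\tilde B^H$. The natural starting point is the representation \eqref{id:fbm}, which exhibits $B^H$ as the image of $W$ under the integral operator with kernel $K_H$. First I would establish the deterministic identity
\begin{equation*}
  \int_0^t h_s\,ds=\int_0^t K_H(t,s)\,v_s\,ds \qquad\textrm{for every } t\in[0,T],
\end{equation*}
where $v$ is the process defined in the statement. This is precisely the assertion that $v=K_H^{-1}\bigl(\int_0^\cdot h_s\,ds\bigr)$, with $K_H$ understood as the operator $(K_Hg)(t)=\int_0^t K_H(t,s)g(s)\,ds$ on $L^2([0,T];\Rd)$. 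For $H<\tfrac12$ the inverse of $K_H$ is given by a composition of Riemann--Liouville fractional integration and differentiation operators weighted by powers of the time variable, and the explicit formula for $v_s$ is exactly the output of this composition applied to the primitive $\int_0^\cdot h_s\,ds$. I would verify this through the known factorization of $K_H$ into fractional operators (as in the references on Malliavin calculus for fractional Brownian motion), using Fubini to justify interchanging the order of integration in the composition.

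Granting this identity, I would set $\tilde W_t:=W_t+\int_0^t v_s\,ds$ and rewrite
\begin{equation*}
  \tilde B^H_t=B^H_t+\int_0^t h_s\,ds=\int_0^t K_H(t,s)\,dW_s+\int_0^t K_H(t,s)\,v_s\,ds=\int_0^t K_H(t,s)\,d\tilde W_s,
\end{equation*}
so that $\tilde B^H$ carries exactly the same kernel representation relative to $\tilde W$ as $B^H$ carries relative to $W$. Because the fractional operators producing $v$ are causal, $v$ inherits the adaptedness of $h$, hence is $\{\cff_t\}$-adapted; together with assumption \ref{g1} this guarantees that the stochastic integral $\int_0^\cdot v_s\,dW_s$, the drift $\int_0^\cdot v_s\,ds$, and therefore the exponential $\xi_T$ are all well defined.

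Next I would invoke Girsanov's theorem for the Wiener process $W$: assumption \ref{g2}, namely $\E\xi_T=1$, upgrades the exponential supermartingale $\xi$ to a genuine density, so under $\tilde\P$ with $d\tilde\P=\xi_T\,d\P$ the process $\tilde W$ is a standard Brownian motion. Combined with the representation $\tilde B^H_t=\int_0^t K_H(t,s)\,d\tilde W_s$ and the very definition of an $\{\cff_t\}$-fractional Brownian motion, this shows $\tilde B^H$ is a fractional Brownian motion with Hurst parameter $H$ under $\tilde\P$, relative to the filtration generated by $\tilde W$. Finally, the invertibility of $K_H$ lets one recover $W$ from $B^H$ and $\tilde W$ from $\tilde B^H$, so $\tilde W$ and $\tilde B^H$ generate the same filtration; together with the adaptedness of $v$ this identifies the relevant filtration and yields the stated conclusion for $\{\cff^{B^H}_t\}$.

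The main obstacle is the first step, the kernel-inversion identity $v=K_H^{-1}\bigl(\int_0^\cdot h\bigr)$. Everything downstream is a mechanical application of classical Girsanov once that deterministic identity is in place, but proving it requires the somewhat delicate fractional-calculus machinery attached to $K_H$ for $H<\tfrac12$ — in particular controlling the singular exponents $-\tfrac12-H$ and $H-\tfrac12$ appearing in the definition of $v$ and ensuring the resulting integrals converge — which is precisely why the restriction $H\in(0,\tfrac12)$ enters at this point.
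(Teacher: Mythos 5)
Your proposal is correct and follows essentially the same route as the paper: the paper's proof likewise reduces to the classical Girsanov theorem for $W$ via the representation \eqref{id:fbm}, observing that $v$ is chosen precisely so that $\int_0^t K_H(t,s)v_s\,ds=\int_0^t h_r\,dr$ (citing Nualart--Ouknine for this kernel-inversion identity and for the one-dimensional argument). Your write-up simply makes explicit the steps that the paper delegates to that reference.
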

  \begin{proof}
    This result is derived from the Girsanov transformation of multidimensional Brownian motion $W$ via the representation \eqref{id:fbm}. The specific form of $v$ is chosen so that $\int_0^t K_H(t,s)v_sds=\int_0^th_rdr$ (see \cite[Eq. (13)]{MR1934157}). The proof follows the same arguments as \cite[Theorem 2]{MR1934157} with obvious modifications to multidimensional setting. 
  \end{proof}
  \begin{proof}[Proof of Theorem \ref{thm:weakfsde}]
    This result can be obtained following the approach of \cite{MR2073441}. The only difference here is that we work on arbitrary finite dimension.

    In \cite{MR2073441}, a weak solution solution is constructed using Girsanov transformation, which relies on the validity of the following inequality (Novikov's condition)
    \begin{equation}\label{tmp:expv}
      \E\exp\(\theta\int_0^T|v_s|^2ds \)<\infty
    \end{equation}
    where $\theta$ is any positive number  and
    \begin{align*}
      v_s
      % =K_H^{-1}\(\int_0^\cdot b(r,X_r)dr \)(s)
      =\frac1{\Gamma(\frac12-H)} s^{H-\frac12}\int_0^s(s-r)^{-\frac12-H}r^{\frac12-H}b(r,B^H_r)dr\,.
    \end{align*}
    See \cite[Lemma 3.3 and equation (3.11)]{MR2073441}. 
    These formulas are unchanged in multi-dimensions (see Proposition \ref{Prop.Gir}).
    As in \cite{MR1934157,MR2073441}, using the fact that $H<\frac12$ and Young's inequality, we have
    \begin{align}\label{tmp.vb}
      \int_0^T|v_s|^2ds\lesssim\int_0^T \left|\int_0^s(s-r)^{-\frac12-H}b(r,B^H_r)dr\right|^2ds
      \lesssim \int_0^T|b(s,B^H_s)|^2ds\,.
    \end{align}
    Hence, to obtain \eqref{tmp:expv}, it suffices to show that
    \begin{equation}\label{tmp:expb}
      \E\exp\(\theta\int_0^T|b(r,B^H_r)|^{2}dr\)<\infty
    \end{equation}
    for any $\theta>0$.
    Observe that the function $h=|b|^{2}$ belongs to $L^{q/2}([0,T];L^{p/2}(\R^d))$.
    The condition \eqref{con:weakfbm} ensures that
    \begin{equation*}
      H\frac d{p/2}+\frac1{q/2}<1\,.
    \end{equation*}
    Hence, we can apply Lemma \ref{lem:khas} to obtain \eqref{tmp:expb}. From here, the arguments used in the proofs of \cite[Theorems 3.3 and 3.4]{MR2073441} are applicable, which yield existence and uniqueness in law of weak solutions to \eqref{eqn:fSDE}.
  \end{proof}
  \begin{remark}
    In \eqref{tmp.vb}, one could use Hardy-Littlewood inequality (see \cite[Theorem 1, p. 119]{MR0290095}) to obtain 
    \[
      \int_0^T|v_s|^2ds \lesssim \left(\int_0^T|b(s,B^H_s)|^{\frac1{1-H}}ds\right)^{2(1-H)}\,,
    \]
    which is an improvement over \eqref{tmp.vb}. If this inequality is used instead of \eqref{tmp.vb} in the previous proof, the integrability condition \eqref{tmp:expb} should be replaced by
    \[
      \E\exp\left\{\theta\(\int_0^T|b(r,B^H_r)|^{\frac1{1-H}}dr\)^{2(1-H)}\right\}<\infty.
    \]
    However, verifying this condition using Taylor expansion and the moment estimate \eqref{est:momenth} also leads to condition \eqref{con:weakfbm}.
  \end{remark}
  
  For later purposes, we state the following result which is analogous to \cite[Lemma 3.3]{MR2073441}.
  \begin{lemma}\label{lem:GirX}
    Let $(B^H,X,\{\cff_t\}_{t\in[0,T]})$ be a pathwise solution of \eqref{eqn:fSDE} defined on a probability space $(\Omega,\cff,\P)$. Set
    \begin{equation}\label{def.v}
      v_s=\frac1{\Gamma(\frac12-H)} s^{H-\frac12}\int_0^s(s-r)^{-\frac12-H}r^{\frac12-H}b(r,X_r)dr\,.
    \end{equation}
    Assume that \eqref{con:weakfbm} holds. Then $v$ belongs to $L^2([0,T];\Rd)$ almost surely and
    \begin{equation}\label{def.xit}
      \xi_T=\exp\(-\int_0^Tv_s \cdot dW_s-\frac12\int_0^T|v_s|^2ds \)
    \end{equation}
    defines a random variable such that the measure $\tilde \P$ given by $d\tilde\P=\xi_T d\P$ is a probability measure equivalent to $\P$. Moreover, for every $\theta\in\R$, there exists a constant $K_\theta$ depending on $\theta,T,K,H,p,q$ and on $\|b\|_{L^qL^p}$ such that
    \begin{equation*}
      \E^{\tilde\P}\xi^\theta_T+\E^\P \xi^\theta_T\le K_\theta\,.
    \end{equation*}
  \end{lemma}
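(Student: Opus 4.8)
The plan is to realize $\tilde\P$ through a \emph{localized} Girsanov transform and to exploit that, under the candidate measure, the equation \eqref{eqn:fSDE} forces $X-x$ to be a standard fractional Brownian motion, so that every quantity of interest becomes amenable to the Khasminskii-type estimate of Lemma \ref{lem:khas}. The starting observation is that, exactly as in \eqref{tmp.vb}, Young's inequality applied to the kernel defining $v$ in \eqref{def.v} gives
\[
  \int_0^T |v_s|^2\,ds \lesssim \int_0^T |b(s,X_s)|^2\,ds,
\]
so every claim reduces to controlling $\int_0^T|b(s,X_s)|^2\,ds$, where $h:=|b|^2$ lies in $L^{q/2}([0,T];L^{p/2}(\Rd))$ and, crucially, condition \eqref{con:weakfbm} is equivalent to $H\frac{d}{p/2}+\frac1{q/2}<1$, which is precisely the hypothesis of Lemma \ref{lem:khas} for $h$.

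First I would localize. For $n\ge1$ set $\tau_n=\inf\{t\in[0,T]:\int_0^t|b(s,X_s)|^2\,ds\ge n\}\wedge T$, a stopping time, and put $v^n=v\,1_{[0,\tau_n]}$. Since $\int_0^T|v^n_s|^2\,ds\lesssim n$ is bounded, Novikov's condition holds trivially and $\xi^n_t=\exp(-\int_0^tv^n_s\cdot dW_s-\frac12\int_0^t|v^n_s|^2\,ds)$ is a genuine $\P$-martingale with $\E^\P\xi^n_T=1$; let $d\tilde\P_n=\xi^n_T\,d\P$. By Girsanov for $W$, the process $\tilde W^n_\cdot=W_\cdot+\int_0^\cdot v^n_s\,ds$ is a $\tilde\P_n$-Brownian motion, and feeding it through \eqref{id:fbm} produces a $\tilde\P_n$-fractional Brownian motion $\tilde B^{H,n}$; the defining property of $v$ (Proposition \ref{Prop.Gir} and \cite[Eq.~(13)]{MR1934157}) gives $\tilde B^{H,n}_t=B^H_t+\int_0^{t\wedge\tau_n}b(r,X_r)\,dr$, so that $X_s-x=\tilde B^{H,n}_s$ for all $s\le\tau_n$. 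Consequently, as $\tilde B^{H,n}$ is a standard fractional Brownian motion under $\tilde\P_n$ with a law independent of $n$, Lemma \ref{lem:khas} applied to $\theta h$ yields the uniform bound
\[
  \E^{\tilde\P_n}\exp\Big(\theta\!\int_0^{T\wedge\tau_n}\!|b(s,X_s)|^2\,ds\Big) \le \E^{\tilde\P_n}\exp\Big(\theta\!\int_0^T\!|b(s,x+\tilde B^{H,n}_s)|^2\,ds\Big) \le C_\theta
\]
for every $\theta>0$, with $C_\theta$ depending only on $\theta,T,H,p,q$ and $\|b\|_{L^qL^p}$.

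Next I would close the Girsanov loop. On $\{\tau_n<T\}$ one has $\int_0^{T\wedge\tau_n}|b(s,X_s)|^2\,ds\ge n$, so the uniform bound gives $\tilde\P_n(\tau_n<T)\le e^{-\theta n}C_\theta$. Writing $\P(\tau_n<T)=\E^{\tilde\P_n}[(\xi^n_T)^{-1}1_{\{\tau_n<T\}}]$ and applying Cauchy--Schwarz under $\tilde\P_n$---together with the fact that $(\xi^n_T)^{-1}$ has uniformly bounded second $\tilde\P_n$-moment (expand it as a stochastic exponential in $\tilde W^n$ times $\exp(\int|v^n|^2)$ and invoke the bound above)---yields $\P(\tau_n<T)\lesssim e^{-\theta n/2}$, which is summable. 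Hence $\tau_n\uparrow T$ almost surely, so $\int_0^T|b(s,X_s)|^2\,ds<\infty$ a.s.\ and $v\in L^2([0,T];\Rd)$ a.s., proving (i). The same Cauchy--Schwarz transfer, now applied to $\exp(c\int_0^{T\wedge\tau_n}|b|^2)$, gives $\sup_n\E^\P\exp(c\int_0^{T\wedge\tau_n}|b(s,X_s)|^2\,ds)<\infty$, whence by monotone convergence $\E^\P\exp(c\int_0^T|b(s,X_s)|^2\,ds)<\infty$ for every $c>0$. This uniform integrability of $\{\xi^n_T\}$ lets me pass to the limit $\xi^n_T\to\xi_T$ and conclude $\E^\P\xi_T=1$, so $\tilde\P$ defined by \eqref{def.xit} is a probability measure; since $\xi_T>0$ a.s., $\tilde\P$ and $\P$ are equivalent.

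Finally, the moment bounds. For any $\alpha\in\R$, decomposing $\xi_T^\alpha=\mathcal{E}_T(-\alpha\!\int v\,dW)\exp(\tfrac{\alpha^2-\alpha}{2}\int_0^T|v|^2)$ and applying Cauchy--Schwarz under $\P$ gives $\E^\P\xi_T^\alpha\le(\E^\P\mathcal{E}_T(-2\alpha\!\int v\,dW))^{1/2}(\E^\P\exp((\alpha^2-\alpha)\int_0^T|v|^2))^{1/2}$; the first factor is at most $1$ as a nonnegative supermartingale, while the second is finite by the exponential estimate above combined with $\int|v|^2\lesssim\int|b(X)|^2$. This produces the constant $K_\alpha$. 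Since $d\tilde\P=\xi_T\,d\P$, one has $\E^{\tilde\P}\xi_T^\theta=\E^\P\xi_T^{\theta+1}\le K_{\theta+1}$, and together with $\E^\P\xi_T^\theta\le K_\theta$ this is the stated inequality after relabelling constants. The genuine difficulty throughout is the chicken-and-egg character of the Girsanov argument: the favorable identity $X-x\stackrel{\tilde\P}{=}$ fBM, on which every estimate rests, holds only under the measure $\tilde\P$ whose well-definedness ($\E^\P\xi_T=1$) is itself what must be proved; the localization by $\tau_n$ and the uniform-in-$n$ estimates are designed precisely to break this circularity.
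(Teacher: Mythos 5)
Your strategy---localize with stopping times, apply Girsanov under the localized measure so that $X-x$ becomes a fractional Brownian motion up to the stopping time, invoke the exponential estimate of Lemma \ref{lem:khas} under the new measure, and transfer back by Cauchy--Schwarz---is precisely the argument that the paper delegates to \cite[Lemma 3.3]{MR2073441}: the paper's own proof is a one-sentence citation observing that the Nualart--Ouknine argument carries over to $d>1$ once \eqref{est:exph} is available. Your reduction $\int_0^T|v|^2\,ds\lesssim\int_0^T|b(s,X_s)|^2\,ds$, the observation that $|b|^2\in L^{q/2}L^{p/2}$ with $H\frac{d}{p/2}+\frac{1}{q/2}<1$ under \eqref{con:weakfbm}, the uniform bound on the second $\tilde\P_n$-moment of $(\xi^n_T)^{-1}$, the uniform integrability of $\{\xi^n_T\}$, and the final manipulation of stochastic exponentials are all sound (modulo an immaterial slip: the exponent in your last Cauchy--Schwarz factor should be $2\alpha^2-\alpha$, not $\alpha^2-\alpha$). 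One cosmetic error: the identity $\tilde B^{H,n}_t=B^H_t+\int_0^{t\wedge\tau_n}b(r,X_r)\,dr$ is false for $t>\tau_n$, because the Volterra kernel $K_H$ in \eqref{id:fbm} is not local in $t$; what is true, and all you actually use, is that $\tilde B^{H,n}$ is a $\tilde\P_n$-fBM on all of $[0,T]$ which coincides with $X-x$ on $[0,\tau_n]$.

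There is, however, one genuine gap, in the Borel--Cantelli step. You assert that on $\{\tau_n<T\}$ one has $\int_0^{T\wedge\tau_n}|b(s,X_s)|^2\,ds\ge n$. This uses continuity of the running integral $F(t)=\int_0^t|b(s,X_s)|^2\,ds$ at $\tau_n$, which holds only where $F$ is locally finite---and finiteness of $F$ is part of what the lemma asserts. The function $F$ is increasing, left-continuous and $[0,\infty]$-valued; its only possible discontinuity is a single jump from a finite value to $+\infty$ at $t^*=\sup\{t:F(t)<\infty\}$. On the event that such a jump occurs at some $t^*<T$, one has $\tau_n=t^*$ and $F(T\wedge\tau_n)=F(t^*)<n$ for all large $n$, so $\{\tau_n<T\}\not\subseteq\{F(T\wedge\tau_n)\ge n\}$ and the exponential tail bound is silent on this event. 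Your subsequent uniform bound $\sup_n\E^\P\exp(c\,F(T\wedge\tau_n))<\infty$ does not rescue it either, since there $\sup_nF(T\wedge\tau_n)=F(t^*)<\infty$ while $F(T)=\infty$; and the identification of $X$ with an fBM under $\tilde\P_n$ holds only on $[0,\tau_n]$, so the failure of local integrability of $|b(\cdot,X_\cdot)|^2$ to the right of $t^*$ cannot be read off the fBM law. (For $p=\infty$ the issue is vacuous, since then $F(T)\le\|b\|_{L^qL^p}^2T^{1-2/q}$ deterministically.) As written, the argument proves the conclusions of the lemma only on the complement of this jump event; an additional argument is needed to show that event is null when $p<\infty$.
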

  \begin{proof}
    In \cite[Lemma 3.3]{MR2073441}, this result relies on Girsanov transformation and the validity of inequality \eqref{est:exph}. Hence it can be carried over in a multi-dimensional setting.
  \end{proof}
  \begin{proposition}\label{prop.fx}
    Let $(B^H,X,\{\cff_t\}_{t\in[0,T]})$ be a pathwise solution of \eqref{eqn:fSDE} defined on a probability space $(\Omega,\cff,\P)$ and $(p_1,q_1)$ be in $[1,\infty]^2$ such that $H\frac d{p_1}+\frac1{q_1}<1$. Assume that \eqref{con:weakfbm} holds. Then for every Borel function $f$ in $L^{q_1}([0,T];L^{p_1}(\Rd))$, every integer $m\ge2$ and $s,t\in[0,T]$,
    \begin{equation*}
      \|\int_s^t f_r(X_r)dr\|_{L_m}\le C(T,m,p,q,\|b\|_{L^qL^p})\|f\|_{L^{q_1}L^{p_1}} |t-s|^{1-H\frac d{p_1} -\frac1{q_1} }\,.
    \end{equation*}
  \end{proposition}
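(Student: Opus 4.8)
The plan is to transfer the moment estimate for functionals of the fractional Brownian motion $B^H$ (established in Lemma \ref{lem:khas}, in particular the moment bound \eqref{est:momenth}) to the solution process $X$ by means of the Girsanov transformation furnished by Lemma \ref{lem:GirX}. The crucial observation is that, writing $\tilde\P$ for the measure with $d\tilde\P=\xi_T\,d\P$ given by \eqref{def.xit}, the process $X-x=B^H+\int_0^\cdot b(r,X_r)\,dr$ is, under $\tilde\P$, a standard fractional Brownian motion with Hurst parameter $H$; this is exactly Proposition \ref{Prop.Gir} applied with $h_r=b(r,X_r)$, whose hypotheses are verified in the proof of Lemma \ref{lem:GirX} under condition \eqref{con:weakfbm}. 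Consequently, under $\tilde\P$ the functional $\int_s^t f_r(X_r)\,dr$ may be analysed as a functional of a genuine fractional Brownian motion, so that the estimates of Lemma \ref{lem:khas} apply; returning to $\P$ is then a matter of H\"older's inequality combined with the uniform moment bounds on $\xi_T$.

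First I would fix $m\ge2$ and $(s,t)$, set $Y=|\int_s^t f_r(X_r)\,dr|$, and record that since $d\P=\xi_T^{-1}\,d\tilde\P$, the Cauchy--Schwarz inequality gives
$$\E^\P Y^m=\E^{\tilde\P}\big[\xi_T^{-1}Y^m\big]\le\big(\E^{\tilde\P}\xi_T^{-2}\big)^{1/2}\big(\E^{\tilde\P}Y^{2m}\big)^{1/2}.$$
The first factor is at most $K_{-2}^{1/2}$ by the bound $\E^{\tilde\P}\xi_T^\theta\le K_\theta$ of Lemma \ref{lem:GirX} with $\theta=-2$, a constant depending only on $T,H,p,q$ and $\|b\|_{L^qL^p}$. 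It thus remains to control $\E^{\tilde\P}Y^{2m}$ using the fractional-Brownian structure of $X-x$ under $\tilde\P$.

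For this second factor I would put $g_r(y)=|f_r(x+y)|$, so that $Y\le\int_s^t g_r(X_r-x)\,dr$ and, by translation invariance of Lebesgue measure, $\|g\|_{L^{q_1}L^{p_1}}=\|f\|_{L^{q_1}L^{p_1}}$. Running the iterated-integral moment computation behind \eqref{est:momenth} on the interval $[s,t]$ instead of $[0,T]$ (conditioning successively and invoking the local nondeterminism bound \eqref{NDP} exactly as in the proof of Lemma \ref{lem:khas}) yields, for every integer $n$,
$$\E^{\tilde\P}\Big(\int_s^t g_r(X_r-x)\,dr\Big)^n\le\frac{n!\,C^n\,\|f\|_{L^{q_1}L^{p_1}}^n\,|t-s|^{n(1-H\frac d{p_1}-\frac1{q_1})}}{\Gamma\big(n(1-H\frac d{p_1}-\frac1{q_1})+1\big)},$$
which is meaningful precisely because $H\frac d{p_1}+\frac1{q_1}<1$. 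Choosing $n=\lceil 2m\rceil$ and using that $\tilde\P$ is a probability measure (so $\|Y\|_{L^{2m}(\tilde\P)}\le\|Y\|_{L^n(\tilde\P)}$) gives $\E^{\tilde\P}Y^{2m}\le C\,\|f\|_{L^{q_1}L^{p_1}}^{2m}\,|t-s|^{2m(1-H\frac d{p_1}-\frac1{q_1})}$. Inserting this into the Cauchy--Schwarz bound above and taking $m$-th roots produces the asserted inequality.

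The main obstacle I anticipate is the careful bookkeeping needed to re-run the iterated-integral moment computation of Lemma \ref{lem:khas} on a general sub-interval $[s,t]$, so as to extract exactly the power $|t-s|^{1-H\frac d{p_1}-\frac1{q_1}}$ rather than a power of $T$, and to confirm that this computation uses only the fractional-Brownian-motion law of $X-x$ under $\tilde\P$ together with the local nondeterminism property \eqref{NDP}. A secondary point requiring care is that the two hypotheses play distinct roles that must hold simultaneously: condition \eqref{con:weakfbm} on $b$ is what guarantees the existence and the uniform moment bounds of the Girsanov density $\xi_T$ (Lemma \ref{lem:GirX}), while the weaker condition $H\frac d{p_1}+\frac1{q_1}<1$ on $f$ is precisely what makes the moment series for $f(X)$ converge.
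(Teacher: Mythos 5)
Your proposal is correct and follows essentially the same route as the paper: Girsanov via Lemma \ref{lem:GirX} to make $X-x$ a fractional Brownian motion under $\tilde\P$, Cauchy--Schwarz with the uniform bound on $\E^{\tilde\P}\xi_T^{-2}$, and the iterated-integral moment estimate \eqref{est:momenth} for the remaining factor. The extra care you take in localizing the moment computation to $[s,t]$, translating by $x$, and replacing $f$ by $|f|$ is exactly the bookkeeping the paper leaves implicit in its one-line conclusion.
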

  \begin{proof}
    Define $v$ and $ \xi_T$ by \eqref{def.v} and \eqref{def.xit}, respectively. By Lemma \ref{lem:GirX}, the process $v$ satisfies conditions \ref{g1} and \ref{g2} of Proposition \ref{Prop.Gir}. Hence, under the measure $\tilde \P$ given by $d\tilde\P=\xi_Td\P$, the process $X$ is a fractional Brownian motion with Hurst parameter $H$. 
    By Cauchy-Schwarz inequality, we have
    \begin{align*}
      \E^{\P}|\int_s^tf_r(X_r)dr |^m
      &=\E^{\tilde\P}|\int_s^tf_r(X_r)dr|^m \xi_T^{-1}
      \\&\le \(\E^{\tilde\P}\xi_T^{-2}\)^{\frac12 }\(\E^{\tilde\P}|\int_s^tf_r(X_r)dr|^{2m }\)^{\frac12}\,.
    \end{align*}
    We now apply Lemma \ref{lem:GirX} and \eqref{est:momenth} to obtain the result.
  \end{proof}
  As an application, we derive a regularity property for pathwise solutions.
  \begin{proposition}\label{prop:regweaksoln}
    Let $(B^H,X,\{\cff_t\}_{t\in[0,T]})$ be a pathwise solution of \eqref{eqn:fSDE} defined on a probability space $(\Omega,\cff,\P)$. Put $\psi=X-B^H$ and assume that \eqref{con:weakfbm} holds. Then for every $m\ge2$ and $s,t\in[0,T]$,
    \begin{equation*}
      \|\psi_t- \psi_s\|_{L_m}\le C(T,m,p,q,\|b\|_{L^qL^p}) |t-s|^{1-H\frac dp-\frac1q}\,.
    \end{equation*}
  \end{proposition}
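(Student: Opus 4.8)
The key observation is that forming $\psi = X - B^H$ cancels the driving noise completely, so the increment of $\psi$ is a pure additive functional of $X$. Indeed, since $X$ solves \eqref{eqn:fSDE}, we have $\psi_t = x + \int_0^t b(r,X_r)\,dr$, whence for every $s\le t$
\[
  \psi_t - \psi_s = \int_s^t b(r,X_r)\,dr.
\]
The right-hand side is well-defined as a Lebesgue integral by the very definition of a pathwise solution. The plan is therefore to reduce the desired H\"older-type moment bound entirely to the moment estimate for additive functionals of $X$ established in Proposition \ref{prop.fx}.

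First I would check that the hypotheses of Proposition \ref{prop.fx} are met with the choice $f = b$ and $(p_1,q_1) = (p,q)$. The standing assumption \eqref{con:weakfbm}, namely $H\frac dp + \frac1q < \frac12$, in particular yields $H\frac dp + \frac1q < 1$, which is precisely the admissibility condition required there; moreover $m\ge2$ is given and $b\in[L^q([0,T];L^p(\Rd))]^d$. Applying Proposition \ref{prop.fx} (componentwise, if one prefers to keep $f$ scalar) gives
\[
  \Big\|\int_s^t b(r,X_r)\,dr\Big\|_{L_m}\le C(T,m,p,q,\|b\|_{L^qL^p})\,\|b\|_{L^qL^p}\,|t-s|^{1-H\frac dp-\frac1q}.
\]
Combining this with the identity above and absorbing the fixed factor $\|b\|_{L^qL^p}$ into the constant (which already depends on $\|b\|_{L^qL^p}$) produces exactly the claimed estimate.

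I do not expect any genuine obstacle at this stage: all the analytic substance sits upstream in Proposition \ref{prop.fx}, whose proof runs through the Girsanov change of measure of Lemma \ref{lem:GirX} (under which $X$ becomes a fractional Brownian motion), the moment estimate \eqref{est:momenth} for functionals of $B^H$, and the local nondeterminism property \eqref{NDP} that produces the exponent $1-H\frac dp-\frac1q$. Since those ingredients are already assembled, the only points worth stating carefully are the cancellation $\psi_t-\psi_s=\int_s^t b(r,X_r)\,dr$ and the remark that \eqref{con:weakfbm} is comfortably stronger than the admissibility condition needed to invoke the earlier proposition.
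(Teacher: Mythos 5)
Your proposal is correct and follows exactly the paper's own argument: note the identity $\psi_t-\psi_s=\int_s^t b(r,X_r)\,dr$ and apply Proposition \ref{prop.fx} with $f=b$ and $(p_1,q_1)=(p,q)$, the admissibility condition $H\frac dp+\frac1q<1$ being implied by \eqref{con:weakfbm}. Nothing further is needed.
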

  \begin{proof}
    Note that $\psi_t- \psi_s=\int_s^t b_r(X_r)dr$. The result is a direct application of Proposition \ref{prop.fx}.
  \end{proof}
  \subsection*{Pathwise uniqueness and strong existence} % (fold)
  \label{sub:pathwise_uniqueness}
  
  % subsection pathwise_uniqueness (end)
  We denote
  \begin{equation}\label{def.tauH}
    \tau_H(p,q)=1-H\frac dp-\frac1q\,.
  \end{equation}
  It is useful to observe that the conditions \eqref{con:weakfbm} and \eqref{con:fbmpq} are respectively equivalent to
  \begin{equation}
    \tau_H(p,q)>\frac12 
    \quad\textrm{and}\quad \tau_H(p,q)>H+\frac12\,.
  \end{equation}
  We recall that $\{P_\sigma\}_{\sigma\ge0}$ is the heat semigroup and $\sigma_H$ is defined in \eqref{def:sigma}. The following lemma will be needed.
  \begin{lemma}
    For every $f\in L^q([0,T];L^p(\Rd) )$ and every $s<t$, we have
    \begin{align}
      \label{pf2}&\int_s^t\|\nabla P_{\sigma^2_H(s,r)}f_r\|_\infty dr\lesssim\|f\|_{L^qL^p} |t-s|^{\tau_H(p,q)-H}\,,\quad&\textrm{if }& \tau_H(p,q)>H\,,
      \\\label{pf7}&\int_s^t\|\nabla^2P_{\sigma^2_H(s,r)}f_r\|_\infty dr\lesssim\|f\|_{L^qL^p} |t-s|^{\tau_H(p,q)-2H}\,,\quad&\textrm{if }& \tau_H(p,q)>2H\,.
    \end{align}
  \end{lemma}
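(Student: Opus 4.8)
The plan is to reduce both inequalities to a single pointwise-in-time smoothing estimate for the Gaussian semigroup, then insert the local nondeterminism bound \eqref{NDP} and integrate in $r$ using H\"older's inequality in the time variable. First I would record the basic $L^p\to L^\infty$ bounds for the first and second derivatives of $P_\sigma$. Writing $p_\sigma(z)=(2\pi\sigma)^{-d/2}e^{-|z|^2/(2\sigma)}$ for the Gaussian density and $p'$ for the H\"older conjugate of $p$, Young's (equivalently H\"older's) inequality gives $\|\nabla P_\sigma\phi\|_\infty\le\|\nabla p_\sigma\|_{L^{p'}}\|\phi\|_{L^p}$ and $\|\nabla^2 P_\sigma\phi\|_\infty\le\|\nabla^2 p_\sigma\|_{L^{p'}}\|\phi\|_{L^p}$. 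The scaling identity $\nabla^k p_\sigma(z)=\sigma^{-(d+k)/2}(\nabla^k p_1)(z/\sqrt\sigma)$ then yields $\|\nabla^k p_\sigma\|_{L^{p'}}\lesssim\sigma^{-k/2-\frac{d}{2p}}$ for $k=1,2$, so that
\begin{equation*}
  \|\nabla P_\sigma\phi\|_\infty\lesssim\sigma^{-\frac12-\frac{d}{2p}}\|\phi\|_{L^p}\,,\qquad \|\nabla^2 P_\sigma\phi\|_\infty\lesssim\sigma^{-1-\frac{d}{2p}}\|\phi\|_{L^p}\,.
\end{equation*}

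Next I would substitute $\sigma=\sigma^2_H(s,r)$ and invoke the local nondeterminism lower bound \eqref{NDP}, $\sigma^2_H(s,r)\ge c^2|r-s|^{2H}$. As the exponents above are negative, this converts the semigroup bounds into powers of $|r-s|$, namely $\|\nabla P_{\sigma^2_H(s,r)}f_r\|_\infty\lesssim|r-s|^{-H-H\frac dp}\|f_r\|_{L^p}$ and $\|\nabla^2 P_{\sigma^2_H(s,r)}f_r\|_\infty\lesssim|r-s|^{-2H-H\frac dp}\|f_r\|_{L^p}$. Integrating in $r$ over $[s,t]$ and applying H\"older's inequality with exponents $q,q'$ in time gives, for $k=1,2$,
\begin{equation*}
  \int_s^t\|\nabla^k P_{\sigma^2_H(s,r)}f_r\|_\infty\,dr\lesssim\|f\|_{L^qL^p}\Big(\int_s^t|r-s|^{-q'(kH+H\frac dp)}\,dr\Big)^{1/q'}\,.
\end{equation*}

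Finally I would carry out the exponent bookkeeping. The time integral on the right converges at $r=s$ exactly when $q'(kH+H\frac dp)<1$, i.e. $kH+H\frac dp+\frac1q<1$, which is precisely $\tau_H(p,q)>kH$ (the convention $1/\infty=0$ covering the endpoint cases $q=\infty$ and $p=\infty$); this is the stated hypothesis. Evaluating the convergent integral produces $(t-s)^{1/q'-kH-H\frac dp}$, and since $\frac1{q'}-kH-H\frac dp=1-\frac1q-H\frac dp-kH=\tau_H(p,q)-kH$, this gives \eqref{pf2} for $k=1$ and \eqref{pf7} for $k=2$. The argument is essentially a direct computation; the only point demanding genuine care is checking that the power $kH+H\frac dp$ manufactured by combining the Gaussian smoothing estimate with \eqref{NDP} coincides with the convergence threshold of the time integral exactly at the hypothesis $\tau_H(p,q)>kH$, and treating the degenerate endpoints $p=\infty$ or $q=\infty$ (where $p'=1$ or $q'=1$) consistently.
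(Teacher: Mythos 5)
Your argument is correct and is essentially the paper's own proof: both rest on the Gaussian smoothing bounds $\|\nabla^k P_\sigma\phi\|_\infty\lesssim\sigma^{-k/2-d/(2p)}\|\phi\|_{L^p}$ (the paper's \eqref{Sch:f2}, obtained exactly as you do via H\"older/Young against the kernel), followed by H\"older in time with exponents $q,q'$ and the local nondeterminism bound \eqref{NDP}; the only cosmetic difference is that you insert \eqref{NDP} before the time integration while the paper does it after. The exponent bookkeeping and the identification of the convergence threshold with $\tau_H(p,q)>kH$ match the paper exactly.
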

  \begin{proof}
    We recall that for every $\sigma>0$, $P_\sigma$ maps $L^p(\Rd)$ to $C^2_b(\R^d)$. In addition, for every $\phi\in L^p(\Rd)$, applying H\"older inequality, we have
    \begin{gather}
      \label{Sch:f2}\|\nabla P_{\sigma}\phi\|_\infty\lesssim \sigma^{-\frac d{2p}-\frac12}\|\phi\|_{L^p}
      \quad\textrm{and}\quad
      % \\\label{Sch:f2}\textrm{and}\quad
      \|\nabla^2 P_{\sigma}\phi\|_\infty\lesssim \sigma^{-\frac d{2p}-1}\|\phi\|_{L^p}\,.
    \end{gather}
    Applying the former inequality in \eqref{Sch:f2} and H\"older inequality yields
    \begin{align*}
      \int_s^t\|\nabla P_{\sigma^2_H(s,r)}f_r\|_\infty dr
      &\lesssim\int_s^t|\sigma_H(s,r)|^{-\frac dp-1}\|f_r\|_{L^p}dr
      \\&\lesssim\|f\|_{L^qL^p} \(\int_s^t|\sigma_H(s,r)|^{-q'\frac dp-q'}dr\)^{\frac1{q'}}\,,
    \end{align*}
    where $q'=\frac q{q-1}$, the H\"older conjugate of $q$.
    Using \eqref{NDP}, it is evident that
    \begin{equation*}
        \(\int_s^t|\sigma_H(s,r)|^{-q'\frac dp-q'}dr\)^{\frac1{q'}}\lesssim \(\int_s^t|r-s|^{-Hq'\frac dp-Hq'}dr\)^{\frac1{q'}}
        \lesssim |t-s|^{1-H\frac dp-\frac1q-H}\,.
    \end{equation*}
    The above two inequalities imply \eqref{pf2}. The estimate \eqref{pf7} is obtained analogously by applying H\"older inequality and the later inequality in \eqref{Sch:f2}.
  \end{proof}
  \begin{remark}
    In the previous proof, we have used approximations of $L^{q_1}L^{p_1}$-functions by smooth functions, which requires that $q_1,p_1$ are finite.
  \end{remark}
  Let us now fix a filtered probability space $(\Omega,\cff,\P,\{\cff_t\}_{t\in[0,T]})$ such that $\cff_0$ contains $\P$-null sets. On this probability space, $B^H$ is an $\{\cff_t\}$-fractional Brownian motion. The following result is a kind of division property.

  \begin{lemma}\label{lem.yyv}
    Let $(B^H,X,\{\cff_t\})$ and $(B^H,\bar X,\{\cff_t\})$ be two pathwise solutions to \eqref{eqn:fSDE}.
    Let  $f$ be in $L^{q_1}([0,T];L^{p_1}(\Rd))$, $q_1,p_1\in[1,\infty)$. Assume that \eqref{con:weakfbm} holds, $\tau_H(p_1,q_1)>H+\frac12$  and $\tau_H(p,q)+\tau_H(p_1,q_1)>1+2H$.
    Putting $\psi=X-B^H$ and $\bar \psi=\bar X-B^H$, there exist modifications of $\psi,\bar \psi$ and a H\"older continuous process $V=V(f,\psi,\bar \psi)$ such that with probability one,
    \begin{equation}\label{eqn:yyv}
      \int_0^tf_r(X_r)dr-\int_0^tf_r(\bar X_r)dr=\int_0^t(\psi_r-\bar \psi_r)\cdot dV_r\,, \quad\forall t\in[0,T].
    \end{equation}
    In the above, $\int_0^\cdot (\psi_r-\bar \psi_r)\cdot d V_r $ is a (well-defined) Young integral. 
  \end{lemma}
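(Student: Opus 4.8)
The plan is to build the process $V$ by applying the stochastic sewing lemma (Theorem \ref{lem:Sew2}) to an explicit two-parameter process, and then to identify the Young integral $\int(\psi-\bar\psi)\cdot dV$ with the left-hand side of \eqref{eqn:yyv} by a smooth approximation. Throughout I fix continuous (indeed H\"older) modifications of $\psi$ and $\bar\psi$, which exist by Proposition \ref{prop:regweaksoln} and Kolmogorov's theorem; recall that $\|\psi_t-\psi_s\|_{L_m}+\|\bar\psi_t-\bar\psi_s\|_{L_m}\lesssim|t-s|^{\tau_H(p,q)}$ for every $m\ge2$. Writing $X^\theta_r=\bar X_r+\theta(X_r-\bar X_r)=B^H_r+\bar\psi_s+\theta(\psi_s-\bar\psi_s)$ with the drift frozen at $s$ (abusing notation, $X^{\theta,s}_r$), I would define, for each component $i$ and each $(s,t)\in[0,T]^2_\le$,
\[
  A^{(i)}_{s,t}=\int_s^t\int_0^1\partial_i P_{\sigma^2_H(s,r)}f_r\big(\E^{\cff_s}B^H_r+\bar\psi_s+\theta(\psi_s-\bar\psi_s)\big)\,d\theta\,dr,
\]
which is the rigorous meaning of $\int_s^t\E^{\cff_s}\big[\int_0^1\nabla f_r(X^{\theta,s}_r)\,d\theta\big]dr$: since $B^H_r-\E^{\cff_s}B^H_r$ is centered Gaussian with covariance $\sigma^2_H(s,r)I_d$ and independent of $\cff_s$, one has $\E^{\cff_s}[g(B^H_r)]=P_{\sigma^2_H(s,r)}g(\E^{\cff_s}B^H_r)$, and $\nabla P_\sigma f_r$ is an honest bounded function for $\sigma>0$. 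Thus $A=(A^{(i)})$ is well defined, $\cff_s$-measurable (hence $\{\cff_t\}$-adapted) and $L_m$-integrable, and \eqref{pf2} gives
\[
  \|A_{s,t}\|_{L_m}\le\int_s^t\|\nabla P_{\sigma^2_H(s,r)}f_r\|_\infty\,dr\lesssim\|f\|_{L^{q_1}L^{p_1}}\,|t-s|^{\tau_H(p_1,q_1)-H},
\]
so \eqref{con:A} holds with $\varepsilon_4=\tau_H(p_1,q_1)-H-\tfrac12>0$ by hypothesis.

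The heart of the argument is verifying \eqref{con:dA1}. I would introduce the auxiliary quantity $A'_{u,t}$, defined exactly like $A_{u,t}$ but with the spatial shift frozen at $s$ rather than $u$ (i.e.\ $\bar\psi_s+\theta(\psi_s-\bar\psi_s)$ inside, but smoothing $\sigma^2_H(u,r)$ and center $\E^{\cff_u}B^H_r$). Since that shift is $\cff_s$-measurable, the tower property together with the conditional Gaussian structure of $B^H$ (applied twice, and made rigorous by approximating $f_r$ by smooth functions and using the Schauder bounds) yields $\E^{\cff_s}A'_{u,t}=A_{s,t}-A_{s,u}$, whence $\E^{\cff_s}\delta A_{s,u,t}=\E^{\cff_s}(A'_{u,t}-A_{u,t})$. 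The difference $A'_{u,t}-A_{u,t}$ involves only the change of the spatial shift from $(\psi_s,\bar\psi_s)$ to $(\psi_u,\bar\psi_u)$; estimating it with the Lipschitz bound $\|\nabla^2 P_\sigma f_r\|_\infty$ of $\nabla P_\sigma f_r$, the contraction property of $\E^{\cff_s}$, Proposition \ref{prop:regweaksoln} and \eqref{pf7} (valid since $\tau_H(p_1,q_1)>H+\tfrac12>2H$), I get
\[
  \|\E^{\cff_s}\delta A_{s,u,t}\|_{L_m}\le\|A'_{u,t}-A_{u,t}\|_{L_m}\lesssim\|f\|_{L^{q_1}L^{p_1}}\,|t-s|^{\tau_H(p_1,q_1)+\tau_H(p,q)-2H}.
\]
The exponent exceeds $1$ precisely when $\tau_H(p,q)+\tau_H(p_1,q_1)>1+2H$, which is assumed, giving \eqref{con:dA1} with $\varepsilon_1=\tau_H(p,q)+\tau_H(p_1,q_1)-1-2H>0$. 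Theorem \ref{lem:Sew2} then produces $V:=\cii[A]$, adapted, $L_m$-integrable, with $\|V_t-V_s\|_{L_m}\lesssim|t-s|^{\tau_H(p_1,q_1)-H}$; choosing $m$ large and invoking Kolmogorov gives a modification of $V$ that is a.s.\ $\gamma$-H\"older for every $\gamma<\tau_H(p_1,q_1)-H$. As $\tau_H(p_1,q_1)-H>\tfrac12$ and $\psi-\bar\psi$ is $\gamma'$-H\"older for $\gamma'<\tau_H(p,q)$ with $\gamma+\gamma'>1$ (because $\tau_H(p,q)+\tau_H(p_1,q_1)-H>1$), the Young integral $\int_0^\cdot(\psi_r-\bar\psi_r)\cdot dV_r$ is well defined pathwise.

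It remains to establish \eqref{eqn:yyv}, which I would do by approximation. Let $\{f^n\}$ be a mollification of $f$ with $f^n\to f$ in $L^{q_1}L^{p_1}$ (finiteness of $p_1,q_1$ is used here). For smooth $f^n$ the mean value identity gives, pathwise, $f^n_r(X_r)-f^n_r(\bar X_r)=(\psi_r-\bar\psi_r)\cdot\int_0^1\nabla f^n_r(X^\theta_r)\,d\theta$, and $V^n_t:=\int_0^t\int_0^1\nabla f^n_r(X^\theta_r)\,d\theta\,dr$ is absolutely continuous, so the corresponding Young integral reduces to a Lebesgue integral and \eqref{eqn:yyv} holds verbatim for $(f^n,V^n)$. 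Using the characterization of the sewn process (Remark \ref{rem.intf}(i)), one checks $V^n=\cii[A^n]$ with $A^n$ built from $f^n$ as above. Now $\int_s^t f^n_r(X_r)\,dr\to\int_s^t f_r(X_r)\,dr$ in $L_m$ by Proposition \ref{prop.fx} applied to $f^n-f$, and likewise for $\bar X$; meanwhile $V^n-V=\cii[A^n-A]$, so the sewing estimates above applied to $A^n-A$ together with the Garsia--Rodemich--Rumsey inequality (as in Corollary \ref{cor:Acont}) give $V^n\to V$ in $C^\gamma([0,T])$ in probability for some $\gamma>1-\tau_H(p,q)$. Stability of the Young integral under H\"older convergence of the integrator then forces $\int_0^t(\psi_r-\bar\psi_r)\cdot dV^n_r\to\int_0^t(\psi_r-\bar\psi_r)\cdot dV_r$, and passing to the limit in the identity for $f^n$ yields \eqref{eqn:yyv}.

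The main obstacle is the estimate \eqref{con:dA1}: legitimizing the distributional object $\nabla f$ forces one to freeze both the conditioning time and the drift argument, and it is only the cancellation $\E^{\cff_s}A'_{u,t}=A_{s,t}-A_{s,u}$ --- the martingale/tower structure of the fractional-Brownian-smoothed increments --- that annihilates the dangerous first-order term, leaving a genuinely higher-order drift-freezing error whose order is exactly balanced by the hypothesis $\tau_H(p,q)+\tau_H(p_1,q_1)>1+2H$.
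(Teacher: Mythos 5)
Your proposal is correct and follows essentially the same route as the paper's proof: the same frozen-coefficient increment $A_{s,t}$ built from $\nabla P_{\sigma^2_H(s,r)}f_r$ evaluated at $\E^{\cff_s}B^H_r+\theta\psi_s+(1-\theta)\bar\psi_s$, the same tower-property cancellation reducing $\E^{\cff_s}\delta A_{s,u,t}$ to a drift-freezing error controlled by \eqref{pf7} and the $\tau_H(p,q)$-H\"older bound on $\psi,\bar\psi$, Theorem \ref{lem:Sew2} to produce $V$, and the identical smooth-approximation/uniqueness argument to identify $V[f^n]=\cii[A^n]$ and pass to the limit in the Young integral.
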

  \begin{proof}
    Herein, we put $\tau=\tau_H(p,q)$ and $\tau_1=\tau_H(p_1,q_1)$.
    Let $\{f^n\}$ be a sequence of functions in $C^2_b([0,T];\Rd)$ convergent to $f$ in $L^{q_1}([0,T];L^{p_1}(\Rd))$. Then, it is evident that
    \begin{equation}\label{tmp,fnv}
      \int_0^tf_r^n(X_r)dr-\int_0^tf_r^n(\bar X_r)dr=\int_0^t(\psi_r-\bar \psi_r)\cdot dV_r[f^n]\,, \quad\forall t\in[0,T],
    \end{equation}
    where
    \begin{align*}
      V_t[f^n]=\int_0^t\int_0^1 \nabla f_r^n(B^H_r+\theta \psi_r+(1- \theta)\bar\psi_r)d \theta dr\,.
    \end{align*}
    Our strategy is to pass through the limit $n\to\infty$ in \eqref{tmp,fnv}.
    From Proposition \ref{prop.fx} and Garsia-Rodemich-Rumsey inequality, we see that left-hand side of \eqref{tmp,fnv} converges in probability uniformly on $\left[0,T\right]$ to
    \begin{align*}
      \int_0^tf_r(B^H_r+\psi_r)dr-\int_0^tf_r(B^H_r+\bar \psi_r)dr\,.
    \end{align*}
    From Proposition \ref{prop:regweaksoln}, choosing $m$ sufficiently large and applying Kolmogorov continuity theorem, we see that $\psi$ and $\bar \psi$ have continuous modifications in $C^{\frac12+\varepsilon}([0,T];\Rd)$ for some $\varepsilon>0$. Hence, by continuity of Young integrations, to pass through the limit $n\to\infty$ in the right-hand side of \eqref{tmp,fnv}, it suffices to show that $V[f^n]$ converges to a process $V[f]$ in $C^{\frac12+\varepsilon}([0,T])$ in probability for some $\varepsilon>0$. This is accomplished below via the stochastic sewing lemma, which is similar to Proposition \ref{prop:defA} and Corollary \ref{cor:Acont}.

    We first construct an auxiliary process.
    For each $g\in L^{q_1}([0,T];L^{p_1}(\Rd))$, consider
    \begin{equation*}
      A_{s,t}[g]=\int_s^t\int_0^1 \nabla (P_{\sigma^2_H(s,r)} g_r)(\E^{\cff_s}B^H_r+\theta\psi_s+(1- \theta)\bar\psi_s)d \theta dr\,.
    \end{equation*} 
    Let $m\ge2$ be fixed. From \eqref{pf2}, we have
    \begin{align*}
      \|A_{s,t}[g]\|_{L_m}\lesssim\|g\|_{L^{q_1}L^{p_1}}|t-s|^{\tau_1-H}
      \,.
    \end{align*}
    To simplify the notation, we denote $z_s^\theta=\theta \psi_s+(1- \theta)\bar \psi_s $. Then
    \begin{align*}
      &\E^{\cff_s}\delta A_{s,u,t}[g]
      \\&=\E^{\cff_s} \int_u^t\int_0^1 \(\nabla (P_{\sigma^2_H(u,r)} g_r)(\E^{\cff_u}B^H_r+z^\theta_s)-\nabla (P_{\sigma^2_H(u,r)} g_r)(\E^{\cff_u}B^H_r+z^\theta_u)\)d \theta dr\,.
    \end{align*}
    Applying \eqref{pf7}, we have
    \begin{align*}
      \|\E^{\cff_s}\delta A_{s,u,t}[g]\|_{L_m}\lesssim\|g\|_{L^{q_1}L^{p_1}}(\|\psi\|_{C^\tau L_m}+\|\bar \psi\|_{C^\tau L_m}) |t-s|^{\tau_1+\tau-2H}\,.
    \end{align*}
    (Recall that $\|\cdot\|_{C^\tau L_m}$ is defined at the beginning of Section \ref{sec:additive_functionals}.)
    From our assumptions, $\tau_1-H>\frac12$ and $\tau_1+\tau-2H>1$, so by Theorem \ref{lem:Sew2}, there exists an adapted process in $C^{\tau_H(p_1,q_1)-H}L_m$, denoted by $\caa[g]$, such that
    \begin{align}\label{tmp.v1}
      \|\caa_t[g]-\caa_s[g]\|_{L_m}\lesssim\|g\|_{L^{q_1}L^{p_1}}(1+\|\psi\|_{C^\tau L_m}+\|\bar \psi\|_{C^\tau L_m})|t-s|^{\tau_1-H}\,,
    \end{align}
    % and
    % \begin{align}\label{tmp.v2}
    %   \|\E^{\cff_s}(\caa_t[g]-\caa_s[g])-A_{s,t}[g]\|_{L_m}\lesssim\|g\|_{L^{q_1}L^{p_1}}(\|\psi\|_{C^\tau L_m}+\|\bar \psi\|_{C^\tau L_m})|t-s|^{\tau_1+\tau-2H}\,,
    % \end{align}
    for every $s\le t$ in $[0,T]$. By choosing $m$ sufficiently large and applying Kolmogorov continuity theorem, we see that $\caa[g]$ has continuous modification in $C^\alpha([0,T])$ for every $\alpha<\tau_1-H$.

    We claim that $V[f^n]=\caa[f^n]$. Indeed, since $\nabla f^n$ is bounded and Lipschitz, applying Proposition \ref{prop:regweaksoln}, it is straightforward to verify that for every $s\le t$,
    \begin{align*}
      \|V_t[f^n]-V_s[f^n]\|_{L_m}\le\|\nabla f^n\|_{C_b([0,T]\times\Rd)}|t-s|
    \end{align*}
    and
    \begin{align*}
      &\|\E^{\cff_s}(V_t[f^n]-V_s[f^n])-A_{s,t}[f^n]\|_{L_m}
      \\&=\|\E^{\cff_s}\int_s^t\int_0^1\left[\nabla f^n_r(B^H_r+z^\theta_r)-\nabla f^n_r(B^H_r+z^\theta_s)\right]d \theta dr\|_{L_m}
      \\&\le\|f^n\|_{C^2_b}(\|\psi\|_{C^{\tau} L_m}+\|\bar \psi\|_{C^{\tau} L_m})|t-s|^{1+\tau}.
    \end{align*}
    Hence, by uniqueness of Theorem \ref{lem:Sew2}, we must have $V_t[f^n]=\caa_t[f^n]$ a.s. for every $t\in[0,T]$.

    We now define $V[f]$ as the continuous modification of $\caa[f]$. Using the estimate \eqref{tmp.v1}, we see that $\lim_nV[f^n]=V[f]$ in $C^{\tau_1-H}_TL_m$. By choosing $m$ sufficiently large and applying Garsia-Rodemich-Rumsey inequality, it follows that the sequence $\{V[f^n]\}$ converges to $V[f]$ in $C^{\frac12+\varepsilon}([0,T];\Rd)$ in probability for some $\varepsilon>0$. This completes the proof. 
  \end{proof}
  
  We now have enough material to show pathwise uniqueness for \eqref{eqn:fSDE}.
  \begin{proof}[Proof of Theorem \ref{thm:puni}]
    Suppose $(B^H,X,\{\cff_t\}_{t\in[0,T]}),(B^H,\bar X,\{\cff_t\}_{t\in[0,T]})$ are two pathwise solutions to \eqref{eqn:fSDE} defined on the same probability space $(\Omega,\cff,\P)$ such that $X_0=\bar X_0=x$. From Proposition \ref{prop:regweaksoln}, using Kolmogorov continuity theorem, we see that $\psi,\bar \psi$ are a.s. H\"older continuous with exponent $\frac12+\varepsilon$ for some $\varepsilon>0$.
    Then with probability one, 
    \begin{equation*}
      \psi_t-\bar \psi_t=\int_0^t b_r(B^H_r+\psi_r)dr-\int_0^tb_r(B^H_r+\bar\psi_r)dr
      \quad\forall t\in[0,T]\,.
    \end{equation*}
    Since $[0,T]$ has finite length, we can assume without loss of generality that $q<\infty$. We consider two cases. 

    \textit{Case 1: $p<\infty$.}
    From Lemma \ref{lem.yyv}, (for a.s. $\omega$) we can rewrite the above equation to
    \begin{equation}\label{tmp.yy}
      \psi^i_t-\bar \psi^i_t=\int_0^t(\psi_r-\bar \psi_r)\cdot dV^i_r\,, \quad\forall t\in[0,T]\,,\forall i=1,\dots,d\,,
    \end{equation} 
    where for each $i$, $\psi^i,\bar \psi^i$ are respectively the $i$-th components of $\psi,\bar \psi$, $V^i=V(b^i,\psi,\bar \psi)$ is the process defined in Lemma \ref{lem.yyv}. 
    Note that each integral on the right-hand side \eqref{tmp.yy} is a Young integral. Equation \eqref{tmp.yy} is a Young differential equation, which has uniqueness (\cite{MR2397797}). 
    This implies that $\psi=\bar \psi$ and hence $X=\bar X$.  

    \textit{Case 2: $p=\infty$.} For each $n$, we denote $\chi_n(x)=\mathbf{1}_{\{|x|\le n\}}$. Choose $p_1\in[1,\infty)$ such that $H\frac d{p_1}+\frac1q<\frac12-H$. 
    Let $\Omega^*\in\cff$ be such that on $\Omega^*$, $\psi,\bar \psi$ are H\"older continuous with exponent $\frac12+\varepsilon$ for some $\varepsilon>0$ and
    \begin{equation*}
      \int_0^t[b \chi_n](r,X_r)dr-\int_0^t[b \chi_n](r,\bar X_r)dr=\int_0^t(\psi_r-\bar \psi_r)\cdot dV^{(n)}_r \quad\forall t\in[0,T],\forall n\ge1.
     \end{equation*} 
    Here, $V^{(n)}=(V^{(n),1},\dots,V^{(n),d})$ and for each $j=1,\dots,d$, $V^{(n),j}=V(b^j \chi_n,\psi,\bar \psi)$ is the process constructed in Lemma \ref{lem.yyv}. It is clear that we can find $\varepsilon>0$ so that $\P(\Omega^*)=1$.
    
    Let us fix an arbitrary $\omega$ in $ \Omega^*$. We define
    \[
      \sigma_n(\omega)=\inf\{t\in[0,T]:|\psi_t(\omega)|>n \textrm{ or }|\bar \psi_t(\omega)|>n\}\,.
    \]
    For every $t\in[0,\sigma_n(\omega)]$, we have
    \begin{align*}
      \psi_t(\omega)-\bar \psi_t(\omega)
      &=\int_0^t [b \chi_n](r,X_r(\omega))dr-[b \chi_n](r,\bar X_r(\omega))dr
      \\&=\int_0^t(\psi_r(\omega)-\bar \psi_r(\omega))\cdot dV^{(n)}_r(\omega)\,.
    \end{align*}
    As in the first case, uniqueness of Young differential equations implies that $\psi_t(\omega)=\bar \psi_t(\omega)$ for every $t\in[0,\sigma_n(\omega)]$. It is obvious that $\lim_n \sigma_n(\omega)=T$. Hence, we conclude that $\psi(\omega)=\bar \psi(\omega)$ on $[0,T]$, which implies $X=\bar X$.   
  \end{proof}
  \begin{proof}[Proof of Theorem \ref{thm:fbm}]
    Uniqueness of strong solutions is a consequence of pathwise uniqueness obtained in Theorem \ref{thm:puni}. To show existence of a strong solution, we rely on Yamada-Watanabe's result \cite[Corollary 1]{MR0278420}. 
    We recall the identity \eqref{id:fbm} and observe that $(B^H,X,\{\cff_t\})$ is a solution to \eqref{eqn:fSDE} if and only if $(W,X,\{\cff_t\})$ satisfies (i) $W$ is an $\{\cff_t\}$-Wiener process, (ii) $X$ is $\{\cff_t\}$-adapted and (iii) with probability one
    \begin{equation}\label{eqn:KW}
      X_t=x+\int_0^tb(r,X_r)dr+\int_0^tK_H(t,s)dW_s\,,\quad\forall t\in[0,T]\,.
    \end{equation}
    The results of Theorems \ref{thm:weakfsde} and \ref{thm:fbm} are transfered in obvious ways to equation \eqref{eqn:KW}.
    The argument of \cite{MR0278420} is then applicable to equation \eqref{eqn:KW} as it relies only on properties of Wiener processes, weak existence and pathwise uniqueness. In particular, it does not depend on a particular form of the equation. This shows existence of a strong solution to \eqref{eqn:KW}. Since $B^H$ and $W$ generate the same filtration, a strong solution to \eqref{eqn:KW} is also a strong solution to \eqref{eqn:fSDE}. This completes the proof.
  \end{proof}
  
  When $H=1/2$, a well-known result of Krylov and R\"ockner \cite{MR2117951} states that \eqref{eqn:fSDE} has unique strong solution if 
  \begin{equation}\label{kry}
    \frac dp+\frac2q<1 \quad\Leftrightarrow \quad \tau_{1/2}(p,q)>\frac12\,.
  \end{equation}
  Our condition \eqref{con:fbmpq} is therefore not optimal because it forces $p,q\to\infty$ when $H\to\frac12^-$.
  The fraction $\frac12$ which appears in \eqref{kry} is the same as the one in \eqref{con:dA2} and seems to be fixed for any $H<\frac12$ in order to apply Girsanov transformation.
  It is then natural to expect the following result.
  \begin{conjecture}
    Equation \eqref{eqn:fSDE} has a unique strong solution provided that
    \begin{equation}
      \tau_H(p,q)>\frac12 \,,
    \end{equation}
    which is indeed equivalent to \eqref{con:weakfbm}.
  \end{conjecture}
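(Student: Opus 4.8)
The plan is to isolate the single missing ingredient. Weak existence under $\tau_H(p,q)>\frac12$ is already Theorem~\ref{thm:weakfsde} (this is exactly \eqref{con:weakfbm}), and the Yamada--Watanabe argument carried out in the proof of Theorem~\ref{thm:fbm} on the Wiener-driven form \eqref{eqn:KW} deduces existence and uniqueness of a strong solution from weak existence together with pathwise uniqueness. Hence the whole conjecture reduces to upgrading Theorem~\ref{thm:puni} from the range $\tau_H(p,q)>H+\frac12$ down to $\tau_H(p,q)>\frac12$.

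First I would pinpoint where the extra $H$ is spent. In the present proof it enters through Lemma~\ref{lem.yyv}: the drift difference is linearised,
\[
  b_r(B^H_r+\psi_r)-b_r(B^H_r+\bar\psi_r)=\Big(\int_0^1\nabla b_r(B^H_r+\theta\psi_r+(1-\theta)\bar\psi_r)\,d\theta\Big)\cdot(\psi_r-\bar\psi_r),
\]
so the problem is recast as a Young differential equation whose driver $V$ carries a spatial derivative of $b$; each derivative costs a factor $\sigma_H(s,r)^{-1}\asymp(r-s)^{-H}$ in the Schauder bound \eqref{pf2}, giving $V\in C^{\tau_H(p,q)-H}$ and forcing $\tau_H(p,q)-H>\frac12$. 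My plan is to bypass the Young reduction and apply the stochastic sewing lemma directly to $\Phi:=\psi-\bar\psi$, which satisfies $\Phi_t-\Phi_s=\int_s^t[b_r(B^H_r+\psi_r)-b_r(B^H_r+\bar\psi_r)]\,dr$. Freezing the shift at the left endpoint, I would set
\[
  A_{s,t}=\int_s^t\E^{\cff_s}\big[b_r(B^H_r+\psi_s)-b_r(B^H_r+\bar\psi_s)\big]\,dr,
\]
and use the conditional Gaussian structure together with local nondeterminism \eqref{NDP}, namely $\E^{\cff_s}b_r(B^H_r+a)=P_{\sigma_H^2(s,r)}b_r(\E^{\cff_s}B^H_r+a)$ for $\cff_s$-measurable $a$. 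The idea is to exploit the martingale/remainder decomposition of Theorem~\ref{cor:1}: the remainder estimates \eqref{est:J}--\eqref{est:J'} ask only for a control of the \emph{conditional} second increment $\E^{\cff_s}\delta A_{s,u,t}$ of order $|t-s|^{1+\varepsilon}$, while the martingale estimate \eqref{est:M} exploits the half-order gain of the fluctuation term, so that the $dr$-averaging should buy back half a power there. The target are bounds linear in $\sup_{r\in[s,t]}\|\Phi_r\|_{L_m}$, which feed into a singular-kernel Gr\"onwall inequality forcing $\Phi\equiv0$; the borderline $p=\infty$ or $q=\infty$ cases are absorbed, as in the proof of Theorem~\ref{thm:puni}, by the cutoff $\chi_n$ and a stopping time.

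The hard part — and the reason this remains a conjecture — is the conditional second-increment estimate. Taylor expanding $A$ to keep the bound linear in $\Phi$ produces, after conditioning, terms of the schematic form $\int_u^t\nabla b_r(\cdots)\cdot(\Phi_s-\Phi_u)\,dr$ together with a genuinely second-order term $\int_u^t\nabla^2 b_r(\cdots)\,\Phi_u\cdot(\psi_s-\psi_u)\,dr$; smoothing $\nabla b$ or $\nabla^2 b$ against the heat kernel again costs $(r-s)^{-H}$ or $(r-s)^{-2H}$, and a direct accounting via \eqref{pf2}--\eqref{pf7} reproduces exactly the exponent $\tau_H(p,q)-H$ and hence the threshold $H+\frac12$. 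To reach $\tau_H(p,q)>\frac12$ one must extract a cancellation between the two smoothing scales $\sigma_H^2(s,r)$ and $\sigma_H^2(u,r)$ so that the leading singular contribution disappears and the surviving term inherits the extra half-power of time regularity that $\Phi$ enjoys by Proposition~\ref{prop:regweaksoln}. Making this cancellation quantitative, uniformly in the moment order $m$ required by the sewing constants, is precisely the estimate that current first-order (Schauder/Girsanov) techniques cannot deliver; it is of the same nature as a \emph{stochastic reconstruction} bound, which, as noted in Section~\ref{sec:stochastic_sewing_lemma}, is not yet available. The discussion preceding the statement, that the fraction $\frac12$ is tied to the use of Girsanov, points to the same wall: any proof of the conjecture will have to replace the derivative-based linearisation by a second-order sewing that recovers the lost factor $H$.
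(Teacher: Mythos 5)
This statement is an open problem: the paper offers no proof of it, and says so explicitly right after the Conjecture (``The validity of this conjecture is out of reach at the moment of writing''), so there is no argument of the paper's to compare yours against. Your proposal is accordingly not a proof but a reduction plus a research plan, and you are candid about that. The reduction is correct and matches the paper's own architecture: weak existence and uniqueness in law under $\tau_H(p,q)>\frac12$ is Theorem \ref{thm:weakfsde}, and the Yamada--Watanabe argument in the proof of Theorem \ref{thm:fbm}, run on the Wiener-driven form \eqref{eqn:KW}, converts pathwise uniqueness into strong existence; so everything hinges on extending Theorem \ref{thm:puni} from \eqref{con:fbmpq} down to \eqref{con:weakfbm}. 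Your diagnosis of where the factor $H$ is lost (the linearisation in Lemma \ref{lem.yyv}, which forces a spatial derivative of $b$ and hence the cost $(r-s)^{-H}$ per derivative in \eqref{pf2}) is also accurate.

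The gap is that the route you propose does not actually avoid this cost, so it cannot close the conjecture even in outline. To run Theorem \ref{lem:Sew2} (or Theorem \ref{cor:1}) on $\Phi=\psi-\bar\psi$ with the germ $A_{s,t}=\int_s^t\E^{\cff_s}\bigl[b_r(B^H_r+\psi_s)-b_r(B^H_r+\bar\psi_s)\bigr]\,dr$ and then buckle via a Gr\"onwall-type argument, you need hypothesis \eqref{con:A} with a constant proportional to $\sup_r\|\Phi_r\|_{L_m}$; but bounding $P_{\sigma_H^2(s,r)}b_r(\cdot+\psi_s)-P_{\sigma_H^2(s,r)}b_r(\cdot+\bar\psi_s)$ linearly in $|\Phi_s|$ already requires $\|\nabla P_{\sigma_H^2(s,r)}b_r\|_\infty$, which by \eqref{NDP} and \eqref{Sch:f2} costs exactly $(r-s)^{-H\frac dp-H}$ and returns the exponent $\tau_H(p,q)-H$ in the very first sewing estimate --- before $\delta A$, the martingale/remainder splitting of Theorem \ref{cor:1}, or any BDG gain enters. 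So the bottleneck is not only the ``conditional second-increment cancellation'' you flag at the end: the first-order germ bound itself reproduces the threshold $H+\frac12$, and the half-power gain from the martingale part of the sewing lemma cannot help a term that is deterministic in the relevant variable $\Phi_s$. Any proof of the conjecture must estimate the drift difference without paying a full spatial derivative of $b$ (or use a genuinely different mechanism, e.g.\ a Davie-type cancellation in the time variable), and no such estimate is supplied or sketched here. As a statement of the problem your proposal is reasonable; as a proof it establishes nothing beyond what Theorems \ref{thm:weakfsde}--\ref{thm:fbm} already give.
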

  \noindent The validity of this conjecture is out of reach at the moment of writing and is an interesting problem by its own. 
% section sdes_driven_by_fractional_brownian_motions (end)
\section{Averaging along fractional Brownian motions} % (fold)
\label{sec:averaging_along_fractional_brown}
  Let $B^H$ be an $\{\cff_t\}$-fractional Brownian motion in $\Rd$ with $H\in(0,1)$ and $f$  be an element in $L^q_T\C^\nu(\Rd)$ for some $\nu<1$. 
  We recall Definition \ref{def.Besov} of $\C^\nu(\Rd)$ for $\gamma\le0$. For $\nu\in(0,1)$, $\C^\nu(\Rd)$ is the same as the space of bounded H\"older continuous functions $C^\nu_b(\Rd)$. The space $L^q_T\C^\nu(\Rd)$ is defined at the beginning of Section \ref{sec:additive_functionals} with $\chh=\C^\nu(\Rd)$. 
  In the current section, we investigate the almost sure joint-H\"older regularity of the following map
  \begin{equation*}
    (t,x)\mapsto\int_0^t f(r,B^H_r+x)dr\,.
  \end{equation*}
  % Since $f(r,\cdot)$ is a distribution, the integratio
  In particular, we seek for almost sure estimates of the type
  \begin{equation}\label{tmp:jholder}
    |\int_s^t f(r,B^H_r+x)dr-\int_s^tf(r,B^H_r+y)dr|\lesssim |t-s|^{\gamma}|x-y|^\alpha
  \end{equation}
  uniformly for $s,t,x,y$ in compact sets, for some suitable exponents $\gamma, \alpha>0$.
  This type of estimates plays a key role in the work \cite{MR3505229} of Catellier and Gubinelli in obtaining path-by-path uniqueness for stochastic differential equations of the type
  \begin{equation}\label{tmp.CG}
    dX_t=b(t,X_t)dt+dB^H_t\,, \quad b\in L^\infty_T\C^\nu\,.
  \end{equation}
  They observe that the remainder process $\psi=X-B^H$ satisfies a non-linear Young integral equation in which the driving space-time vector field is $(t,x)\mapsto \int_0^tb(r,B^H_r+x)dr$. The a.s. estimate \eqref{tmp:jholder} then provides necessary regularity for this space-time vector field so that the equation for $\psi$ has a unique solution.
  Let us point out that the approach of \cite{MR3505229} is an extension of the earlier work \cite{MR2377011} of Davie.
  Non-linear Young integral equations are also considered in \cite{MR3581224} although connections with equation \eqref{tmp.CG} and Davie's work were not observed there.

  Well-posedness for non-linear Young differential equations are well-understood (see \cite[Section 2]{MR3505229} or \cite[Section 3]{MR3581224}). Hence we will not consider them here. We rather provide another way of obtaining estimates of the type \eqref{tmp:jholder}, using stochastic sewing lemma (more specifically Theorem \ref{lem:Sew2}).
  Our approach allows for time-dependent vector fields, which are not considered in \cite{MR3505229}.
  Comparing the two papers, most of estimates in \cite{MR3505229} can be recovered from ours by simply setting the parameter $q$  to be $\infty$. In addition, our treatment provides explicit H\"older exponents in space and time simultaneously.

  Let $\{\cff_t\}_{t\ge0}$ be the natural filtration generated by $B^H$. 
  Let us consider for each $s<t$ and $f\in L^q_T\C^\nu(\Rd)$, the increment process
  \begin{equation*}
    A_{s,t}^x[f]=\int_{[s,t]}[P_{\sigma^2_H(s,r)}f_r](\E^{\cff_s}B^H_r+x)dr\,,
  \end{equation*}
  where $\{P_\sigma\}_{\sigma\ge0}$ is the heat semigroup associated to the kernel $(2 \pi \sigma)^{-\frac d2}e^{-|x-y|^2/(2 \sigma)}$, $\sigma_H$ is defined in \eqref{def:sigma}. If $f$ is a bounded Borel function, we can also write $A_{s,t}^x[f]=\int_{[s,t]}\E^{\cff_s}f_r(B^H_r+x)dr$.
    We recall that for every $\sigma>0$, $P_\sigma$ maps $\C^\nu(\Rd)$ to $C^2_b(\R^d)$. In addition, for every $\alpha\in(0,1]$ and for every $\phi\in \C^\nu(\Rd)$, we have (see  e.g. \cite[Prop. 2.4]{MR3785598})
  \begin{gather}
    \label{Sch:f3}\|P_{\sigma}\phi\|_\infty\lesssim \sigma^{\frac \nu2}\|\phi\|_{\C^\nu}
    \quad\textrm{and}\quad
    \|P_{\sigma}\phi\|_{\C^\alpha}\lesssim \sigma^{\frac {\nu- \alpha}2}\|\phi\|_{\C^\nu}\,.
  \end{gather}
  In analogy with \eqref{pf2} and \eqref{pf7}, it is straightforward to obtain following estimates
  \begin{gather}
    \label{pf3}\int_{[s,t]}\|P_{\sigma^2_H(s,r)}f_r\|_\infty dr\lesssim\|f\|_{L^q\C^\nu} |t-s|^{1+H\nu-\frac1q}\,,
    \\\label{pf4}\int_{[s,t]}\|P_{\sigma^2_H(s,r)}f_r\|_{\C^\alpha} dr\lesssim\|f\|_{L^q\C^\nu} |t-s|^{1+H(\nu- \alpha)-\frac1q}\,,
  \end{gather}
  for every $f\in L^q_T \C^\nu(\Rd)$ and every $s<t$, provided that $1+H(\nu- \alpha)-\frac1q>0$. These estimates are summarized in the following result.
  \begin{lemma}\label{lem:fAnu} Let $s<u<t$, $f\in L^q_T\C^\nu(\Rd)$ and $x,y\in\Rd$ be fixed. Suppose that $\alpha\in(0,1]$ and $1+H(\nu- \alpha)-\frac1q>0$, then
    \begin{gather}
      \label{est:PH1nu}|A^x_{s,t}[f]|\lesssim \|f\|_{L^q_T\C^\nu}|t-s|^{1+H \nu-\frac1q}
    \end{gather}
    and
    \begin{equation}
      \label{est:PH3nu}|A^{x}_{s,t}[f]-A^{y}_{s,t}[f]|\lesssim\|f\|_{L^q_T\C^\nu}|x-y|^\alpha|t-s|^{1+H(\nu- \alpha)-\frac1q}\,.
    \end{equation}
  \end{lemma}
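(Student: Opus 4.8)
The plan is to read off both inequalities directly from the two integral Schauder bounds \eqref{pf3} and \eqref{pf4} established just above, so that the proof is essentially a bookkeeping exercise. Note that the auxiliary point $u$ plays no role in either of the two stated estimates and may simply be ignored; the hypotheses $\alpha\in(0,1]$ and $1+H(\nu-\alpha)-\frac1q>0$ are exactly the conditions under which \eqref{pf4} is available.

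First I would prove \eqref{est:PH1nu}. By the first bound in \eqref{Sch:f3}, for each $r\in(s,t]$ the function $P_{\sigma^2_H(s,r)}f_r$ is bounded and continuous on $\Rd$, so evaluating it at the point $\E^{\cff_s}B^H_r+x$ and moving the absolute value inside the integral gives the pointwise bound $|A^x_{s,t}[f]|\le\int_{[s,t]}\|P_{\sigma^2_H(s,r)}f_r\|_\infty\,dr$. An immediate application of \eqref{pf3} then yields \eqref{est:PH1nu}. Here \eqref{pf3} already absorbs the local nondeterminism estimate \eqref{NDP} through the lower bound $\sigma_H(s,r)\ge c|r-s|^H$, which is precisely what keeps the time integral convergent.

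Next I would treat the spatial increment \eqref{est:PH3nu}. Writing the difference under a single integral sign, for fixed $r$ the two integrand evaluations differ only by a translation of the argument by $x-y$; since $\alpha\in(0,1]$, the second bound in \eqref{Sch:f3} (the $\C^\alpha$ estimate) gives $|[P_{\sigma^2_H(s,r)}f_r](\E^{\cff_s}B^H_r+x)-[P_{\sigma^2_H(s,r)}f_r](\E^{\cff_s}B^H_r+y)|\le\|P_{\sigma^2_H(s,r)}f_r\|_{\C^\alpha}\,|x-y|^\alpha$. Integrating in $r$ and invoking \eqref{pf4}, which is valid precisely because the standing hypothesis $1+H(\nu-\alpha)-\frac1q>0$ makes the resulting time integral finite, produces \eqref{est:PH3nu}.

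The entire content of the lemma thus lies in the preparatory estimates \eqref{pf3}--\eqref{pf4} rather than in any new argument, and I do not anticipate a genuine obstacle. The only point that deserves attention is keeping the exponent $1+H(\nu-\alpha)-\frac1q$ positive so that \eqref{pf4} applies; everything else reduces to the Schauder bounds for the heat semigroup in \eqref{Sch:f3} together with the nondeterminism property \eqref{NDP}.
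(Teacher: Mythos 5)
Your proof is correct and is exactly the paper's own argument: the lemma is stated immediately after \eqref{pf3}--\eqref{pf4} precisely as a summary of those two bounds, with \eqref{est:PH1nu} following from the sup-norm estimate and \eqref{est:PH3nu} from the $\C^\alpha$-norm estimate applied to the translated arguments. No further comment is needed.
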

  We define
  \begin{equation}
    \gamma_H(\nu,q)=1+H \nu-\frac1q\,.
  \end{equation}
  The following result is an analogue of Proposition \ref{prop:defA}.
    \begin{proposition}\label{prop:defAfbm}
    Let $m\ge2$ and $x\in\Rd$ be fixed. Suppose that
    \begin{equation}\label{con:nuqfbm}
      \gamma_H(\nu,q)>\frac12 \quad\Leftrightarrow \quad \nu>\frac1H\(\frac1q-\frac12\)\,.
    \end{equation} 
    There exists a linear map $\caa^x$ from $L^q_T\C^\nu(\Rd)$ to $C^{1+H \nu-\frac1q}_TL_m$ such that 
    \begin{enumerate}[label={\upshape(\alph*)}]
      \item\label{pa0fbm} For every $f\in L^q_T\C^\nu(\Rd)$ and every $0\le s\le t\le T$,
      \begin{equation}\label{est:Astfbm}
        \|\caa^x_t[f]-\caa^x_s[f]\|_{L_m}\lesssim \|f1_{[s,t]}\|_{L^q_T\C^\nu}|t-s|^{1+H\nu-\frac1q}\,.
      \end{equation}
      
      \item\label{pa2fbm} For every $f\in L^q_T\C^\nu(\Rd)$ and every $t\ge0$, $\caa_t[f]$ is the limit in $L_m$ of the Riemann sum
      \begin{equation*}
        \sum_i \int_{[{t_i},{t_{i+1}}]}P_{\sigma^2_H(t_i,r)}f_r(\E^{\cff_{t_i}}B^H_{r}+x)dr
      \end{equation*}
      as $\max_i|t_{i+1}-t_i|\to0$. Here $\{t_i\}_i$ is any partition of $[0,t]$.
      \item\label{pa1fbm} If $f$ is a  bounded measurable function, then $\caa_t^x[f]=\int_{[0,t]} f_r(B^H_r+x)dr $ a.s. for every $t\ge0$. 
    \end{enumerate}
  \end{proposition}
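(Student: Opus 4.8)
The plan is to mirror the proof of Proposition \ref{prop:defA}, applying the simplified stochastic sewing lemma (Theorem \ref{lem:Sew2}) to the two-parameter process $A^x[f]$. First I would fix $f\in L^q_T\C^\nu(\Rd)$ and record that $A^x_{s,t}[f]$ is $\cff_s$-measurable (since $\E^{\cff_s}B^H_r$ is), hence $\{\cff_t\}$-adapted, and that by \eqref{pf3} it is bounded \emph{deterministically} by a constant times $\|f\|_{L^q_T\C^\nu}|t-s|^{\gamma_H(\nu,q)}$, so it is $L_m$-integrable for every $m$. The bound \eqref{est:PH1nu} then gives condition \eqref{con:A} with exponent $\gamma_H(\nu,q)=\tfrac12+\varepsilon_4$, where $\varepsilon_4=\gamma_H(\nu,q)-\tfrac12>0$ precisely under the standing hypothesis \eqref{con:nuqfbm}. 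It thus remains to verify condition \eqref{con:dA1}, and in fact I expect to obtain it with $\Gamma_1=0$.

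The heart of the argument, and the step I expect to require the most care, is the identity $\E^{\cff_s}\delta A^x_{s,u,t}[f]=0$ for $s\le u\le t$, which is the fractional-Brownian analogue of \eqref{id.Amarkov}. Writing out $\delta A^x_{s,u,t}[f]$, the two surviving terms are integrals over $[u,t]$ of $[P_{\sigma^2_H(s,r)}f_r](\E^{\cff_s}B^H_r+x)$ and $[P_{\sigma^2_H(u,r)}f_r](\E^{\cff_u}B^H_r+x)$; the first is already $\cff_s$-measurable. For the second, the key observation is that the Volterra representation \eqref{id:fbm} gives $\E^{\cff_u}B^H_r=\int_0^u K_H(r,v)dW_v$, so that $\E^{\cff_u}B^H_r-\E^{\cff_s}B^H_r=\int_s^u K_H(r,v)dW_v$ is a centred Gaussian vector independent of $\cff_s$ with covariance $(\sigma^2_H(s,r)-\sigma^2_H(u,r))I_d$, by \eqref{def:sigma}. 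Conditioning on $\cff_s$ and using that $\E^{\cff_s}B^H_r+x$ is $\cff_s$-measurable turns the Gaussian average into one more heat-semigroup step,
\begin{equation*}
  \E^{\cff_s}[P_{\sigma^2_H(u,r)}f_r](\E^{\cff_u}B^H_r+x)=[P_{\sigma^2_H(s,r)-\sigma^2_H(u,r)}P_{\sigma^2_H(u,r)}f_r](\E^{\cff_s}B^H_r+x),
\end{equation*}
and the semigroup property $P_\alpha P_\beta=P_{\alpha+\beta}$ collapses the right-hand side to $[P_{\sigma^2_H(s,r)}f_r](\E^{\cff_s}B^H_r+x)$. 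Working throughout with the mollified functions $P_\sigma f_r$, which are genuine bounded continuous functions, sidesteps the fact that $f_r$ is only a distribution, and Fubini's theorem---justified by the deterministic bound above together with \eqref{pf3}---lets me interchange $\E^{\cff_s}$ with the $dr$-integration. Subtracting, the two $[u,t]$-integrals cancel, giving $\E^{\cff_s}\delta A^x_{s,u,t}[f]=0$ and hence \eqref{con:dA1} with $\Gamma_1=0$.

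With both hypotheses of Theorem \ref{lem:Sew2} in hand, applying it produces the adapted, $L_m$-integrable process $\caa^x[f]$ with $\caa^x_0[f]=0$; since $\Gamma_1=0$ forces $C_1=0$, estimate \eqref{est:A1'} reduces to \eqref{est:Astfbm}, which shows both \ref{pa0fbm} and that $\caa^x[f]\in C^{\gamma_H(\nu,q)}_TL_m$, while the Riemann-sum convergence built into Theorem \ref{lem:Sew2} is exactly \ref{pa2fbm}. Linearity of $\caa^x$ follows from linearity of $f\mapsto A^x[f]$ together with the uniqueness clause of Theorem \ref{lem:Sew2}. For \ref{pa1fbm}, in the spirit of Proposition \ref{prop.K}, I would set $\varphi_t=\int_{[0,t]}f_r(B^H_r+x)dr$ for bounded measurable $f$ and check the characterizing properties: the same conditioning computation (now for a genuine function $f_r$) gives $\E^{\cff_s}(\varphi_t-\varphi_s)=A^x_{s,t}[f]$, so \eqref{est:A2} holds with $C_1=0$, while $\|\varphi_t-\varphi_s\|_{L_m}\le|t-s|\,\|f\|_\infty\lesssim|t-s|^{\frac12+\varepsilon}$ on the bounded interval $[0,T]$ (choosing $\varepsilon<\tfrac12$) supplies \eqref{est:A1'}. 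The uniqueness part of Theorem \ref{lem:Sew2} then identifies $\varphi$ with $\caa^x[f]$, completing \ref{pa1fbm}.
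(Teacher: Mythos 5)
Your proposal is correct and follows essentially the same route as the paper: verify $\E^{\cff_s}\delta A^x_{s,u,t}[f]=0$, feed the bound \eqref{est:PH1nu} into Theorem \ref{lem:Sew2} as condition \eqref{con:A}, and identify $\caa^x[f]$ with the Lebesgue integral for bounded $f$ via the uniqueness clause, exactly as in Propositions \ref{prop:defA} and \ref{prop.K}. The only difference is that you spell out the conditional-Gaussian/semigroup computation behind the martingale identity, which the paper merely asserts.
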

  \begin{proof}
    Note that $\E^{\cff_s}\delta A_{s,u,t}^x[f]=0$ for every triplet $s<u<t$. Similar to Proposition \ref{prop:defA}, we can obtain \ref{pa0fbm} and \ref{pa2} by using \eqref{est:PH1nu} and Theorem \ref{lem:Sew2}. Suppose that $f$ is a bounded measurable function. Then we have $A_{s,t}^x[f]=\int_{[s,t]}\E^{\cff_s}f_r(B^H_r+x)dr$ and
    \[
      \|\int_{[s,t]}f_r(X_r)dr-A_{s,t}^x[f]\|_{L_m}\le 2\sup_{r,x}|f(r,x)||t-s|.
    \]
    Similar to Proposition \ref{prop.K}, we obtain \ref{pa1fbm} by uniqueness of Theorem \ref{lem:Sew2}.
  \end{proof}
  
  Hereafter, we fix an element $f$ in $L^q_T\C^\nu(\Rd)$ and write $\int_0^tf_r(B^H_r+x)dr $ for $\caa_t^x[f]$ whenever the hypotheses of Proposition \ref{prop:defAfbm} are met.
  \begin{proposition}
    For every $\alpha\in(0,1]$ such that
    \begin{equation}
      \gamma_H(\nu,q)>\frac12+H \alpha \quad\Leftrightarrow \quad \alpha<\nu-\frac1H\(\frac1q-\frac12\)\,,
    \end{equation}
    then 
    \begin{equation}\label{est:f2}
      \|\int_s^tf_r(B^H_r+x)dr-\int_s^tf_r(B^H_r+y)dr \|_{L_m}\lesssim\|f\|_{L^q_T\C^\nu} |t-s|^{\gamma_H(\nu,q)-H \alpha}|x-y|^\alpha.
    \end{equation}
  \end{proposition}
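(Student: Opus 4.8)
The plan is to obtain \eqref{est:f2} as a single application of the stochastic sewing lemma in the form of Theorem \ref{lem:Sew2}, applied to the \emph{difference} of the two increment processes. By the convention fixed just before Proposition \ref{prop:defAfbm}, together with additivity of $\caa^x$, we have $\int_s^tf_r(B^H_r+x)dr=\caa^x_t[f]-\caa^x_s[f]$, so the left-hand side of \eqref{est:f2} is $\|(\caa^x_t[f]-\caa^x_s[f])-(\caa^y_t[f]-\caa^y_s[f])\|_{L_m}$. I would therefore set
\[
  A_{s,t}:=A^x_{s,t}[f]-A^y_{s,t}[f],
\]
which is $\{\cff_t\}$-adapted and $L_m$-integrable since each of $A^x[f],A^y[f]$ is, and then verify that $A$ meets the hypotheses of Theorem \ref{lem:Sew2}.

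For condition \eqref{con:dA1}: since $\E^{\cff_s}\delta A^x_{s,u,t}[f]=0$ and $\E^{\cff_s}\delta A^y_{s,u,t}[f]=0$ (the martingale identity noted in the proof of Proposition \ref{prop:defAfbm}), we get $\E^{\cff_s}\delta A_{s,u,t}=0$; hence \eqref{con:dA1} holds with $\Gamma_1=0$ for any $\varepsilon_1>0$. For condition \eqref{con:A}: the standing assumption $\gamma_H(\nu,q)>\frac12+H\alpha$ forces $1+H(\nu-\alpha)-\frac1q=\gamma_H(\nu,q)-H\alpha>\frac12>0$, so the inequality \eqref{est:PH3nu} of Lemma \ref{lem:fAnu} applies and yields
\[
  \|A_{s,t}\|_{L_m}\lesssim\|f\|_{L^q_T\C^\nu}\,|x-y|^\alpha\,|t-s|^{\gamma_H(\nu,q)-H\alpha}.
\]
Thus \eqref{con:A} holds with $\Gamma_4\lesssim\|f\|_{L^q_T\C^\nu}|x-y|^\alpha$ and $\frac12+\varepsilon_4=\gamma_H(\nu,q)-H\alpha$, where $\varepsilon_4=\gamma_H(\nu,q)-H\alpha-\frac12>0$ precisely by the hypothesis.

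Theorem \ref{lem:Sew2} then produces a process $\caa$ with $\caa_0=0$ obeying \eqref{est:A1'}; since $\Gamma_1=0$ we may take $C_1=0$, so $\|\caa_t-\caa_s\|_{L_m}\le C_4|t-s|^{\gamma_H(\nu,q)-H\alpha}$ with $C_4\lesssim\|f\|_{L^q_T\C^\nu}|x-y|^\alpha$. It then remains to identify $\caa$ with $\caa^x[f]-\caa^y[f]$. This follows from the Riemann-sum characterization in Proposition \ref{prop:defAfbm}\ref{pa2fbm}: $\caa^x_t[f]$ and $\caa^y_t[f]$ are the $L_m$-limits of the Riemann sums of $A^x[f]$ and $A^y[f]$ along any partition of $[0,t]$, so their difference is the $L_m$-limit of the Riemann sums of $A$, which by Theorem \ref{lem:Sew2} (and the convergence it inherits from Theorem \ref{lem:Sew1}) is exactly $\caa_t$. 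Substituting this identification into the displayed bound gives \eqref{est:f2}.

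The argument contains no serious obstacle once Lemma \ref{lem:fAnu} is in hand: the analytic content — the Schauder smoothing \eqref{Sch:f3} and the local nondeterminism bound \eqref{NDP} feeding into \eqref{est:PH3nu} — is already established, and the identity $\E^{\cff_s}\delta A_{s,u,t}=0$ makes the drift part of the sewing vanish (so the exponent improves to exactly $\gamma_H(\nu,q)-H\alpha$ with no $|t-s|^{1+\varepsilon_1}$ correction). The only step demanding genuine care is the bookkeeping that matches the abstractly sewn process $\caa$ with the concrete difference $\caa^x[f]-\caa^y[f]$; this is where one must invoke the linearity of the Riemann sums together with the uniqueness clause of Theorem \ref{lem:Sew2}, rather than treat the equality as self-evident.
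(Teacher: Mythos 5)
Your proposal is correct and is exactly the paper's argument: the paper proves this proposition as ``a direct consequence of \eqref{est:PH3nu} and Theorem \ref{lem:Sew2}'', i.e.\ by sewing the difference $A^x_{s,t}[f]-A^y_{s,t}[f]$, using $\E^{\cff_s}\delta A^x_{s,u,t}[f]=0$ to kill the drift condition and \eqref{est:PH3nu} to supply \eqref{con:A} with $\Gamma_4\lesssim\|f\|_{L^q_T\C^\nu}|x-y|^\alpha$. Your explicit identification of the sewn process with $\caa^x[f]-\caa^y[f]$ via linearity of the Riemann sums is the right way to make the paper's implicit step rigorous.
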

  \begin{proof}
    This is a direct consequence of \eqref{est:PH3nu} and Theorem \ref{lem:Sew2}. 
  \end{proof}
  By choosing $m$ sufficiently large and applying the Kolmogorov continuity theorem, there exists a continuous modification of the random field $\{\int_0^tf_r(B^H_r+x)\}_{t,x}$. We then apply the multiparameter Garsia-Rodemich-Rumsey inequality in \cite{hule2012} (choosing $m$ sufficiently large) to see that there are arbitrarily small positive constants $\varepsilon_1,\varepsilon_2$ such that with probability one,
  \begin{multline}\label{est:fxy}
    |\int_s^tf_r(B^H_r+x)dr-\int_s^tf_r(B^H_r+y)dr|\lesssim\|f\|_{L^q_T\C^\nu} |t-s|^{\gamma_H(\nu,q)-H \alpha- \varepsilon_1}|x-y|^{\alpha- \varepsilon_2}
  \end{multline}
  for uniformly in $s,t\in[0,T]$ and $x,y$ in a compact set. 
  This procedure provides a method to obtain almost sure estimates of the type \eqref{tmp:jholder}. It can be applied to most of the moment estimates stated below.

  \begin{proposition}
    Suppose that $q\in[1,\infty)$ and
    \begin{equation}\label{con:n1fbm}
      \gamma_H(\nu,q)>\frac12+H \quad\Leftrightarrow \quad \nu>1+\frac1H\(\frac1q-\frac12\)\,,
    \end{equation}
    then the map $x\to\int_0^t f_r(B^H_r+x) $ is differentiable
    and for every fixed $t,x$
    \begin{equation}\label{id:gradf}
      \nabla \int_0^t f_r(B^H_r+x)dr=\int_0^t\nabla f_r(B^H_r+x)dr \quad \textrm{a.s.}
    \end{equation}
    In addition, the map $(t,x)\to \nabla\int_0^t f_r(B^H_r+x) $ has a continuous version. For every $\alpha\in(0,1]$ such that
    \begin{equation}
      \gamma_H(\nu,q)>\frac12+H+H \alpha \quad\Leftrightarrow \quad \alpha<\nu-1-\frac1H\(\frac1q-\frac12\)\,,  
    \end{equation} we have the following estimates for every $s<t$ and every $x,y\in\Rd$
    \begin{equation}\label{est:gradf1}
      \|\nabla\int_s^tf_r(B^H_r+x)\|_{L_m}\lesssim\|f\|_{L^q_T\C^\nu} |t-s|^{\gamma_H(\nu,q)-H}
    \end{equation}
    and
    \begin{equation}\label{est:gradf2}
      \|\nabla\int_s^tf_r(B^H_r+x)dr-\nabla\int_s^tf_r(B^H_r+y)dr\|_{L_m}\lesssim\|f\|_{L^q_T\C^\nu} |t-s|^{\gamma_H(\nu,q)-H-H \alpha}|x-y|^\alpha\,.
    \end{equation}
  \end{proposition}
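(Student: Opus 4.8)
The plan is to deduce the whole proposition from Proposition \ref{prop:defAfbm} applied to $\nabla f$ in place of $f$. Since $f\in L^q_T\C^\nu(\Rd)$ entails $\nabla f\in L^q_T\C^{\nu-1}(\Rd)$ with $\|\nabla f_r\|_{\C^{\nu-1}}\lesssim\|f_r\|_{\C^\nu}$, and since the regularity budget $\gamma_H(\nu-1,q)=\gamma_H(\nu,q)-H$ satisfies $\gamma_H(\nu-1,q)>\frac12$ precisely under \eqref{con:n1fbm}, Proposition \ref{prop:defAfbm} produces for each $x$ the process $\caa^x[\nabla f]$, which is the candidate for $\nabla\int_0^\cdot f_r(B^H_r+x)dr$. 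The two moment bounds then come for free: \eqref{est:gradf1} is \eqref{est:Astfbm} read off for $\nabla f$ at regularity $\nu-1$, and \eqref{est:gradf2} is \eqref{est:f2} read off for $\nabla f$, whose hypothesis $\gamma_H(\nu-1,q)>\frac12+H\alpha$ is exactly the displayed condition $\gamma_H(\nu,q)>\frac12+H+H\alpha$.

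The substantive point is the identity \eqref{id:gradf}, i.e.\ that $\caa^x[\nabla f]$ is genuinely the spatial gradient of $\caa^x[f]$. First I would verify this for a smooth drift: if $g\in L^q_TC^2_b$, then by Proposition \ref{prop:defAfbm}\ref{pa1fbm} both $\caa^x_t[g]$ and $\caa^x_t[\nabla g]$ coincide with honest Lebesgue integrals $\int_0^t g_r(B^H_r+x)dr$ and $\int_0^t\nabla g_r(B^H_r+x)dr$, so classical differentiation under the integral sign gives $\nabla_x\caa^x_t[g]=\caa^x_t[\nabla g]$ and, integrating along a segment, the fundamental-theorem-of-calculus identity
\begin{equation*}
  \caa^x_t[g]-\caa^{x_0}_t[g]=\int_0^1 (x-x_0)\cdot\caa^{x_0+\theta(x-x_0)}_t[\nabla g]\,d\theta.
\end{equation*}

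Next I would mollify. Using $q<\infty$ (so that smooth functions are dense in time) and approximating in a weaker Besov norm, I choose $\nu'<\nu$ close enough to $\nu$ that $\gamma_H(\nu',q)>\frac12+H$ (possible since \eqref{con:n1fbm} is a strict, open condition) and pick smooth $f^n\to f$ in $L^q_T\C^{\nu'}$ with $\sup_n\|f^n\|_{L^q_T\C^\nu}<\infty$. By linearity of $\caa^x$ and the bound \eqref{est:Astfbm} applied to $f^n-f$ at level $\nu'$ (respectively to $\nabla(f^n-f)$ at level $\nu'-1$), one gets $\sup_x\|\caa^x_t[f^n]-\caa^x_t[f]\|_{L_m}\to0$ and $\sup_x\|\caa^x_t[\nabla f^n]-\caa^x_t[\nabla f]\|_{L_m}\to0$ for each fixed $t$, the suprema over $x$ being finite because the sewing estimate is uniform in the spatial point. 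Hence the FTC identity for $f^n$ passes to the limit, the $d\theta$-integrand converging uniformly in $\theta$, yielding
\begin{equation*}
  \caa^x_t[f]-\caa^{x_0}_t[f]=\int_0^1 (x-x_0)\cdot\caa^{x_0+\theta(x-x_0)}_t[\nabla f]\,d\theta\quad\text{a.s.}
\end{equation*}
Since \eqref{est:gradf2} shows $z\mapsto\caa^z_t[\nabla f]$ is continuous (indeed H\"older) in $L_m$, dividing by $h$ along $x=x_0+he_i$ and letting $h\to0$ gives $\partial_i\caa^{x_0}_t[f]=\caa^{x_0}_t[\partial_i f]$ in $L_m$, hence a.s.\ for fixed $t,x$, which is \eqref{id:gradf}. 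Joint continuity of a version of $(t,x)\mapsto\caa^x_t[\nabla f]$ then follows from \eqref{est:gradf1}, \eqref{est:gradf2} and the multiparameter Kolmogorov/Garsia-Rodemich-Rumsey argument already used for $\caa^x[f]$.

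I expect the only real obstacle to be the interchange of limit and differentiation in the mollification step; the feature that makes it go through cleanly is that the estimate \eqref{est:Astfbm} is uniform in the spatial variable $x$, so the convergence $\caa^x[\nabla f^n]\to\caa^x[\nabla f]$ is uniform in $x$ and the integral representation survives the limit without compactness or dominated-convergence subtleties in the spatial argument.
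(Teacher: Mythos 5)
Your proof is correct and follows essentially the same route as the paper: apply Proposition \ref{prop:defAfbm} to $\nabla f\in L^q_T\C^{\nu-1}(\Rd)$ (noting $\gamma_H(\nu-1,q)=\gamma_H(\nu,q)-H$), verify the identity $\nabla\caa^x[g]=\caa^x[\nabla g]$ for smooth approximants, and pass to the limit using the sewing estimates, with \eqref{est:gradf1} and \eqref{est:gradf2} read off from \eqref{est:Astfbm} and \eqref{est:f2}. You are in fact somewhat more careful than the paper's rather terse argument at the limiting step: you justify the interchange of limit and gradient via the fundamental-theorem-of-calculus representation together with the continuity of $z\mapsto\caa^z_t[\nabla f]$, and you approximate in a weaker norm $\C^{\nu'}$ with $\nu'<\nu$ to sidestep the non-separability of H\"older--Besov spaces, both of which are points the paper glosses over.
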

  \begin{proof}
    We observe that $\nabla f$ belongs to $L^q_T\C^{\nu-1}(\Rd)$ with $\|\nabla f\|_{L^q_T\C^{\nu-1}}\le\|f\|_{L^q_T\C^\nu}$ and that condition \eqref{con:n1fbm} is equivalent to $\gamma_H(\nu-1,q)>\frac12$. By Proposition \ref{prop:defAfbm}, the random field $\{\int_0^t\nabla f_r(B^H_r+x)dr\}_{t,x} $ can be defined and has a continuous modification.
    Since $q<\infty$, we can choose a sequence $\{f^n\}$ in $\cee_T^2$ which converges to $f$ in $L^q_T\C^\nu$ (\cite[Lemma 1.2.19]{MR3617205}). Then from Proposition \ref{prop:defAfbm}, we have for each $t\in[0,T]$,
    \begin{equation*}
      \int_0^t\nabla f_r(B^H_r+x)dr=\lim_n\int_{[0,t]}\nabla f_r^n(B^H_r+x)dr \quad\textrm{in} \quad L_m
    \end{equation*}
    and for each $n$
    \begin{equation*}
      \int_{[0,t]}\nabla f_r^n(B^H_r+x)dr=\nabla\int_{[0,t]} f_r^n(B^H_r+x)dr\,.
    \end{equation*}
    The later identity is due to the fact that each $f_n$ is a finite linear combination of $C^2_b(\Rd)$-functions. This shows that $x\to\int_0^tf_r(B^H_r+x)dr$ is differentiable and \eqref{id:gradf} holds. The estimates \eqref{est:gradf1} and \eqref{est:gradf2} follow from identity \eqref{id:gradf} and estimates \eqref{est:Astfbm}, \eqref{est:f2} with $f$ replaced by $\nabla f$.  
  \end{proof}

% section averaging_along_fractional_brown (end)
% \bibliographystyle{abbrv}
% \bibliographystyle{nar}
% \bibliographystyle{plain}
% \bibliography{../bibliography/processes}
% \bibliography{../../bibliography/bib}
% \bib, bibdiv, biblist are defined by the amsrefs package.
\begin{bibdiv}
\begin{biblist}

\bib{MR1964949}{article}{
      author={Bass, Richard~F.},
      author={Chen, Zhen-Qing},
       title={Brownian motion with singular drift},
        date={2003},
        ISSN={0091-1798},
     journal={Ann. Probab.},
      volume={31},
      number={2},
       pages={791\ndash 817},
         url={https://doi.org/10.1214/aop/1048516536},
      review={\MR{1964949}},
}

\bib{butkovsky2019approximation}{misc}{
      author={Butkovsky, Oleg},
      author={Dareiotis, Konstantinos},
      author={Gerencsér, Máté},
       title={Approximation of sdes -- a stochastic sewing approach},
        date={2019},
}

\bib{MR0400380}{article}{
      author={Burkholder, D.~L.},
      author={Davis, B.~J.},
      author={Gundy, R.~F.},
       title={Integral inequalities for convex functions of operators on
  martingales},
        date={1972},
       pages={223\ndash 240},
      review={\MR{0400380}},
}

\bib{banos2015strong}{article}{
      author={Ba{\~n}os, David},
      author={Nilssen, Torstein},
      author={Proske, Frank},
       title={Strong existence and higher order {F}r\'echet differentiability
  of stochastic flows of fractional {B}rownian motion driven {SDE}'s with
  singular drift},
        date={2015},
     journal={arXiv preprint arXiv:1511.02717},
}

\bib{MR3785598}{article}{
      author={Cannizzaro, Giuseppe},
      author={Chouk, Khalil},
       title={Multidimensional {SDE}s with singular drift and universal
  construction of the polymer measure with white noise potential},
        date={2018},
        ISSN={0091-1798},
     journal={Ann. Probab.},
      volume={46},
      number={3},
       pages={1710\ndash 1763},
         url={https://doi.org/10.1214/17-AOP1213},
      review={\MR{3785598}},
}

\bib{MR3505229}{article}{
      author={Catellier, R.},
      author={Gubinelli, M.},
       title={Averaging along irregular curves and regularisation of {ODE}s},
        date={2016},
        ISSN={0304-4149},
     journal={Stochastic Process. Appl.},
      volume={126},
      number={8},
       pages={2323\ndash 2366},
         url={https://doi.org/10.1016/j.spa.2016.02.002},
      review={\MR{3505229}},
}

\bib{MR2377011}{article}{
      author={Davie, A.~M.},
       title={Uniqueness of solutions of stochastic differential equations},
        date={2007},
        ISSN={1073-7928},
     journal={Int. Math. Res. Not. IMRN},
      number={24},
       pages={Art. ID rnm124, 26},
         url={https://doi.org/10.1093/imrn/rnm124},
      review={\MR{2377011}},
}

\bib{MR3500267}{article}{
      author={Delarue, Fran\c{c}ois},
      author={Diel, Roland},
       title={Rough paths and 1d {SDE} with a time dependent distributional
  drift: application to polymers},
        date={2016},
        ISSN={0178-8051},
     journal={Probab. Theory Related Fields},
      volume={165},
      number={1-2},
       pages={1\ndash 63},
         url={https://doi.org/10.1007/s00440-015-0626-8},
      review={\MR{3500267}},
}

\bib{MR0058896}{book}{
      author={Doob, J.~L.},
       title={Stochastic processes},
   publisher={John Wiley \& Sons, Inc., New York; Chapman \& Hall, Limited,
  London},
        date={1953},
      review={\MR{0058896}},
}

\bib{MR838085}{book}{
      author={Ethier, Stewart~N.},
      author={Kurtz, Thomas~G.},
       title={Markov processes},
      series={Wiley Series in Probability and Mathematical Statistics:
  Probability and Mathematical Statistics},
   publisher={John Wiley \& Sons, Inc., New York},
        date={1986},
        ISBN={0-471-08186-8},
         url={https://doi.org/10.1002/9780470316658},
        note={Characterization and convergence},
      review={\MR{838085}},
}

\bib{MR2261056}{article}{
      author={Feyel, Denis},
      author={de~La~Pradelle, Arnaud},
       title={Curvilinear integrals along enriched paths},
        date={2006},
        ISSN={1083-6489},
     journal={Electron. J. Probab.},
      volume={11},
       pages={no. 34, 860\ndash 892},
         url={https://doi.org/10.1214/EJP.v11-356},
      review={\MR{2261056}},
}

\bib{MR2593276}{article}{
      author={Flandoli, F.},
      author={Gubinelli, M.},
      author={Priola, E.},
       title={Well-posedness of the transport equation by stochastic
  perturbation},
        date={2010},
        ISSN={0020-9910},
     journal={Invent. Math.},
      volume={180},
      number={1},
       pages={1\ndash 53},
         url={https://doi.org/10.1007/s00222-009-0224-4},
      review={\MR{2593276}},
}

\bib{MR3289027}{book}{
      author={Friz, Peter~K.},
      author={Hairer, Martin},
       title={A course on rough paths},
      series={Universitext},
   publisher={Springer, Cham},
        date={2014},
        ISBN={978-3-319-08331-5; 978-3-319-08332-2},
         url={https://doi.org/10.1007/978-3-319-08332-2},
        note={With an introduction to regularity structures},
      review={\MR{3289027}},
}

\bib{MR3581216}{article}{
      author={Flandoli, Franco},
      author={Issoglio, Elena},
      author={Russo, Francesco},
       title={Multidimensional stochastic differential equations with
  distributional drift},
        date={2017},
        ISSN={0002-9947},
     journal={Trans. Amer. Math. Soc.},
      volume={369},
      number={3},
       pages={1665\ndash 1688},
         url={https://doi.org/10.1090/tran/6729},
      review={\MR{3581216}},
}

\bib{MR1988703}{article}{
      author={Flandoli, Franco},
      author={Russo, Francesco},
      author={Wolf, Jochen},
       title={Some {SDE}s with distributional drift. {I}. {G}eneral calculus},
        date={2003},
        ISSN={0030-6126},
     journal={Osaka J. Math.},
      volume={40},
      number={2},
       pages={493\ndash 542},
         url={http://projecteuclid.org/euclid.ojm/1153493096},
      review={\MR{1988703}},
}

\bib{MR2604669}{book}{
      author={Friz, Peter~K.},
      author={Victoir, Nicolas~B.},
       title={Multidimensional stochastic processes as rough paths},
      series={Cambridge Studies in Advanced Mathematics},
   publisher={Cambridge University Press, Cambridge},
        date={2010},
      volume={120},
        ISBN={978-0-521-87607-0},
         url={https://doi.org/10.1017/CBO9780511845079},
        note={Theory and applications},
      review={\MR{2604669}},
}

\bib{garsiarodemich}{article}{
      author={Garsia, A.~M.},
      author={Rodemich, E.},
      author={Rumsey, H., Jr.},
       title={A real variable lemma and the continuity of paths of some
  {G}aussian processes},
        date={1970/1971},
        ISSN={0022-2518},
     journal={Indiana Univ. Math. J.},
      volume={20},
       pages={565\ndash 578},
      review={\MR{0267632 (42 \#2534)}},
}

\bib{MR2091358}{article}{
      author={Gubinelli, M.},
       title={Controlling rough paths},
        date={2004},
        ISSN={0022-1236},
     journal={J. Funct. Anal.},
      volume={216},
      number={1},
       pages={86\ndash 140},
         url={https://doi.org/10.1016/j.jfa.2004.01.002},
      review={\MR{2091358}},
}

\bib{MR3274562}{article}{
      author={Hairer, M.},
       title={A theory of regularity structures},
        date={2014},
        ISSN={0020-9910},
     journal={Invent. Math.},
      volume={198},
      number={2},
       pages={269\ndash 504},
         url={https://doi.org/10.1007/s00222-014-0505-4},
      review={\MR{3274562}},
}

\bib{MR3358965}{article}{
      author={Hairer, Martin},
      author={Labb\'{e}, Cyril},
       title={A simple construction of the continuum parabolic {A}nderson model
  on {${\bf R}^2$}},
        date={2015},
     journal={Electron. Commun. Probab.},
      volume={20},
       pages={no. 43, 11},
         url={https://doi.org/10.1214/ECP.v20-4038},
      review={\MR{3358965}},
}

\bib{hule2012}{article}{
      author={Hu, Yaozhong},
      author={Le, Khoa},
       title={A multiparameter {G}arsia-{R}odemich-{R}umsey inequality and some
  applications},
        date={2013},
        ISSN={0304-4149},
     journal={Stochastic Process. Appl.},
      volume={123},
      number={9},
       pages={3359\ndash 3377},
  url={http://dx.doi.org.www2.lib.ku.edu:2048/10.1016/j.spa.2013.04.019},
      review={\MR{3071383}},
}

\bib{MR3581224}{article}{
      author={Hu, Yaozhong},
      author={L\^{e}, Khoa},
       title={Nonlinear {Y}oung integrals and differential systems in
  {H}\"{o}lder media},
        date={2017},
        ISSN={0002-9947},
     journal={Trans. Amer. Math. Soc.},
      volume={369},
      number={3},
       pages={1935\ndash 2002},
         url={https://doi.org/10.1090/tran/6774},
      review={\MR{3581224}},
}

\bib{hairer2019averaging}{article}{
      author={Hairer, Martin},
      author={Li, Xue-Mei},
       title={Averaging dynamics driven by fractional brownian motion},
        date={2019},
     journal={arXiv preprint arXiv:1902.11251},
}

\bib{MR3652414}{article}{
      author={Hu, Yaozhong},
      author={L\^{e}, Khoa},
      author={Mytnik, Leonid},
       title={Stochastic differential equation for {B}rox diffusion},
        date={2017},
        ISSN={0304-4149},
     journal={Stochastic Process. Appl.},
      volume={127},
      number={7},
       pages={2281\ndash 2315},
         url={https://doi.org/10.1016/j.spa.2016.10.010},
      review={\MR{3652414}},
}

\bib{MR2397797}{incollection}{
      author={Hu, Yaozhong},
      author={Nualart, David},
       title={Differential equations driven by {H}\"{o}lder continuous
  functions of order greater than 1/2},
        date={2007},
   booktitle={Stochastic analysis and applications},
      series={Abel Symp.},
      volume={2},
   publisher={Springer, Berlin},
       pages={399\ndash 413},
         url={https://doi.org/10.1007/978-3-540-70847-6_17},
      review={\MR{2397797}},
}

\bib{MR1481650}{article}{
      author={Hu, Yaozhong},
       title={It\^{o}-{W}iener chaos expansion with exact residual and
  correlation, variance inequalities},
        date={1997},
        ISSN={0894-9840},
     journal={J. Theoret. Probab.},
      volume={10},
      number={4},
       pages={835\ndash 848},
         url={https://doi.org/10.1023/A:1022654314791},
      review={\MR{1481650}},
}

\bib{MR3617205}{book}{
      author={Hyt\"{o}nen, Tuomas},
      author={van Neerven, Jan},
      author={Veraar, Mark},
      author={Weis, Lutz},
       title={Analysis in {B}anach spaces. {V}ol. {I}. {M}artingales and
  {L}ittlewood-{P}aley theory},
      series={Ergebnisse der Mathematik und ihrer Grenzgebiete. 3. Folge. A
  Series of Modern Surveys in Mathematics [Results in Mathematics and Related
  Areas. 3rd Series. A Series of Modern Surveys in Mathematics]},
   publisher={Springer, Cham},
        date={2016},
      volume={63},
        ISBN={978-3-319-48519-5; 978-3-319-48520-1},
      review={\MR{3617205}},
}

\bib{MR2748616}{article}{
      author={Krylov, N.~V.},
      author={Priola, E.},
       title={Elliptic and parabolic second-order {PDE}s with growing
  coefficients},
        date={2010},
        ISSN={0360-5302},
     journal={Comm. Partial Differential Equations},
      volume={35},
      number={1},
       pages={1\ndash 22},
         url={https://doi.org/10.1080/03605300903424700},
      review={\MR{2748616}},
}

\bib{MR2117951}{article}{
      author={Krylov, N.~V.},
      author={R\"ockner, M.},
       title={Strong solutions of stochastic equations with singular time
  dependent drift},
        date={2005},
        ISSN={0178-8051},
     journal={Probab. Theory Related Fields},
      volume={131},
      number={2},
       pages={154\ndash 196},
         url={https://doi.org/10.1007/s00440-004-0361-z},
      review={\MR{2117951}},
}

\bib{MR1121940}{book}{
      author={Karatzas, Ioannis},
      author={Shreve, Steven~E.},
       title={Brownian motion and stochastic calculus},
     edition={Second},
      series={Graduate Texts in Mathematics},
   publisher={Springer-Verlag, New York},
        date={1991},
      volume={113},
        ISBN={0-387-97655-8},
         url={https://doi.org/10.1007/978-1-4612-0949-2},
      review={\MR{1121940}},
}

\bib{MR2314753}{book}{
      author={Lyons, Terry~J.},
      author={Caruana, Michael},
      author={L\'{e}vy, Thierry},
       title={Differential equations driven by rough paths},
      series={Lecture Notes in Mathematics},
   publisher={Springer, Berlin},
        date={2007},
      volume={1908},
        ISBN={978-3-540-71284-8; 3-540-71284-4},
        note={Lectures from the 34th Summer School on Probability Theory held
  in Saint-Flour, July 6--24, 2004, With an introduction concerning the Summer
  School by Jean Picard},
      review={\MR{2314753}},
}

\bib{MR1654527}{article}{
      author={Lyons, Terry~J.},
       title={Differential equations driven by rough signals},
        date={1998},
        ISSN={0213-2230},
     journal={Rev. Mat. Iberoamericana},
      volume={14},
      number={2},
       pages={215\ndash 310},
         url={https://doi.org/10.4171/RMI/240},
      review={\MR{1654527}},
}

\bib{MR1228209}{book}{
      author={Meyer, Yves},
       title={Wavelets and operators},
      series={Cambridge Studies in Advanced Mathematics},
   publisher={Cambridge University Press, Cambridge},
        date={1992},
      volume={37},
        ISBN={0-521-42000-8; 0-521-45869-2},
        note={Translated from the 1990 French original by D. H. Salinger},
      review={\MR{1228209}},
}

\bib{MR2250510}{book}{
      author={Marcus, Michael~B.},
      author={Rosen, Jay},
       title={Markov processes, {G}aussian processes, and local times},
      series={Cambridge Studies in Advanced Mathematics},
   publisher={Cambridge University Press, Cambridge},
        date={2006},
      volume={100},
        ISBN={978-0-521-86300-1; 0-521-86300-7},
         url={https://doi.org/10.1017/CBO9780511617997},
      review={\MR{2250510}},
}

\bib{MR242239}{article}{
      author={Mandelbrot, Benoit~B.},
      author={Van~Ness, John~W.},
       title={Fractional {B}rownian motions, fractional noises and
  applications},
        date={1968},
        ISSN={0036-1445},
     journal={SIAM Rev.},
      volume={10},
       pages={422\ndash 437},
         url={https://doi.org/10.1137/1010093},
      review={\MR{242239}},
}

\bib{MR1934157}{article}{
      author={Nualart, David},
      author={Ouknine, Youssef},
       title={Regularization of differential equations by fractional noise},
        date={2002},
        ISSN={0304-4149},
     journal={Stochastic Process. Appl.},
      volume={102},
      number={1},
       pages={103\ndash 116},
         url={https://doi.org/10.1016/S0304-4149(02)00155-2},
      review={\MR{1934157}},
}

\bib{MR2073441}{incollection}{
      author={Nualart, David},
      author={Ouknine, Youssef},
       title={Stochastic differential equations with additive fractional noise
  and locally unbounded drift},
        date={2003},
   booktitle={Stochastic inequalities and applications},
      series={Progr. Probab.},
      volume={56},
   publisher={Birkh\"{a}user, Basel},
       pages={353\ndash 365},
      review={\MR{2073441}},
}

\bib{MR2200233}{book}{
      author={Nualart, David},
       title={The {M}alliavin calculus and related topics},
     edition={Second},
      series={Probability and its Applications (New York)},
   publisher={Springer-Verlag, Berlin},
        date={2006},
        ISBN={978-3-540-28328-7; 3-540-28328-5},
      review={\MR{2200233}},
}

\bib{MR1725357}{book}{
      author={Revuz, Daniel},
      author={Yor, Marc},
       title={Continuous martingales and {B}rownian motion},
     edition={Third},
      series={Grundlehren der Mathematischen Wissenschaften [Fundamental
  Principles of Mathematical Sciences]},
   publisher={Springer-Verlag, Berlin},
        date={1999},
      volume={293},
        ISBN={3-540-64325-7},
         url={https://doi.org/10.1007/978-3-662-06400-9},
      review={\MR{1725357}},
}

\bib{MR0290095}{book}{
      author={Stein, Elias~M.},
       title={Singular integrals and differentiability properties of
  functions},
      series={Princeton Mathematical Series, No. 30},
   publisher={Princeton University Press, Princeton, N.J.},
        date={1970},
      review={\MR{0290095}},
}

\bib{veretennikov1981strong}{article}{
      author={Veretennikov, A~Ju},
       title={On strong solutions and explicit formulas for solutions of
  stochastic integral equations},
        date={1981},
     journal={Sbornik: Mathematics},
      volume={39},
      number={3},
       pages={387\ndash 403},
}

\bib{veretennikov1976explicit}{article}{
      author={Veretennikov, A~Ju},
      author={Krylov, Nikolai~Vladimirovich},
       title={On explicit formulas for solutions of stochastic equations},
        date={1976},
     journal={Mathematics of the USSR-Sbornik},
      volume={29},
      number={2},
       pages={239},
}

\bib{MR3860015}{article}{
      author={Yaskov, Pavel},
       title={Extensions of the sewing lemma with applications},
        date={2018},
        ISSN={0304-4149},
     journal={Stochastic Process. Appl.},
      volume={128},
      number={11},
       pages={3940\ndash 3965},
         url={https://doi.org/10.1016/j.spa.2017.09.023},
      review={\MR{3860015}},
}

\bib{young}{article}{
      author={Young, L.~C.},
       title={An inequality of the {H}\"older type, connected with {S}tieltjes
  integration},
        date={1936},
        ISSN={0001-5962},
     journal={Acta Math.},
      volume={67},
      number={1},
       pages={251\ndash 282},
         url={http://dx.doi.org.www2.lib.ku.edu:2048/10.1007/BF02401743},
      review={\MR{1555421}},
}

\bib{MR0278420}{article}{
      author={Yamada, Toshio},
      author={Watanabe, Shinzo},
       title={On the uniqueness of solutions of stochastic differential
  equations},
        date={1971},
        ISSN={0023-608X},
     journal={J. Math. Kyoto Univ.},
      volume={11},
       pages={155\ndash 167},
         url={https://doi.org/10.1215/kjm/1250523691},
      review={\MR{0278420}},
}

\end{biblist}
\end{bibdiv}

\section*{Acknowledgment} % (fold)
\label{sec:acknowledgment}
 The author thanks Leonid Mytnik, Oleg Butkovsky, Siva Athreya and Giuseppe Cannizzaro for interesting and motivating discussions.  He also thanks the editor, an associated editor and several anonymous referees for numerous constructive suggestions which help improve the presentation of the article.
\end{document}